\definecolor{darkgreen}{cmyk}{1,0,1,.2}
\definecolor{m}{rgb}{1,0.1,1}
\definecolor{green}{cmyk}{1,0,1,0}
\definecolor{test}{rgb}{1,0,0}   
\definecolor{cmyk}{cmyk}{0,1,1,0}
\newtheorem{Equation}{}[section]
\newtheorem{example}[Equation]{Example}
\newtheorem{theorem}[Equation]{Theorem}
\newtheorem{proposition}[Equation]{Proposition}
\newtheorem{lemma}[Equation]{Lemma}
\newtheorem{corollary}[Equation]{Corollary}
\newtheorem{definition}[Equation]{Definition}
\newtheorem{conjecture}[Equation]{Conjecture}
\newtheorem{remark}[Equation]{Remark}
\def\ch{\operatorname{ch}}
\def\dia{\operatorname{dia}}
\def\Iso{\operatorname{Iso}}
\def\vol{\operatorname{vol}}
\def\End{\operatorname{End}}
\def\Hom{\operatorname{Hom}}
\def\oH{\operatorname{H}}
\def\Im{\operatorname{Im}}
\def\Ind{\operatorname{Ind}}
\def\oK{\operatorname{K}}
\def\Ker{\operatorname{Ker}}
\def\Tr{\operatorname{Tr}}
\def\tr{\operatorname{tr}}
\def\C{\mathbb C}
\def\D{\mathbb D}
\def\R{\mathbb R}
\def\Z{\mathbb Z}
\def\T{\mathbb T}
\def\L{\mathbb L}
\def\H{\mathbb H}
\def\cA{{\mathcal A}}
\def\cB{{\mathcal B}}
\def\maB{{\mathcal B}}
\def\maC{{\mathcal C}}
\def\maG{{\mathcal G}}
\def\cG{{\mathcal G}}
\def\cH{{\mathcal H}}
\def\maS{{\mathcal S}}
\def\cU{{\mathcal U}}
\def\what{\widehat}
\def\wtit{\widetilde}
\def\te{\tilde e}
\def\wcB{\widetilde{\mathcal B}}
\def\wL{{\widetilde L}}
\def\cf{{\check f}}
\def\wf{\widetilde{f}} 
\def\cg{{\check g}}
\def\wg{\widetilde{g}}
\def\wN{\widetilde{N}}
\def\tvarphi{\widetilde{\varphi}}
\def\hf{\widehat{f}}
\def\dd{\displaystyle}
\def\pa{\partial}
\def\tri{\triangle}
\begin{document}



\title[ Twisted Higher Signatures for Foliations] 
{The Twisted Higher Harmonic Signature for Foliations} 


\author[M.-T. Benameur]{Moulay-Tahar Benameur}
\address{LMAM, UMR 7122 du CNRS, Universit\'{e} Paul Verlaine-Metz}
\email{benameur@univ-metz.fr}
\author[J.  L.  Heitsch]{James L.  Heitsch}
\address{Mathematics, Statistics, and Computer Science, University of Illinois at Chicago} 
\email{heitsch@math.uic.edu}
\address{ Mathematics, Northwestern University}

\begin{abstract}
We prove that the higher harmonic signature  of an even dimensional oriented Riemannian foliation $F$ of a compact Riemannian manifold $M$ with coefficients in a leafwise $U(p,q)$-flat complex bundle is a leafwise homotopy invariant.  We also prove the leafwise homotopy invariance of the twisted higher Betti classes.  Consequences for the Novikov conjecture for foliations and for groups are investigated.
\end{abstract}

\maketitle

\section{Introduction}

In this paper, we prove that the higher harmonic signature, $\sigma(F,E)$, of a $2\ell$ dimensional oriented Riemannian foliation $F$ of a compact  Riemannian manifold $M$, twisted by a leafwise flat complex bundle $E$ over $M$, is a leafwise homotopy invariant.   We also derive important consequences for the Novikov conjecture for foliations and for groups.  We assume that $E$ admits a non-degenerate {\em possibly indefinite} Hermitian metric  which is preserved by the leafwise flat structure.  As explained in \cite{Gromov}, this includes the leafwise $O(p,q)$-flat and the leafwise symplectic-flat cases.   We assume that the projection onto the  twisted leafwise harmonic forms in dimension $\ell$ is transversely smooth.  This is true whenever the leafwise parallel translation on $E$  defined by the flat structure is a bounded map, in particular whenever the preserved metric on $E$ is positive definite.   It is satisfied for important examples, e.g., the examples of Lusztig \cite{Lusztig} which proved the Novikov conjecture for free abelian groups, and it is always true whenever $E$ is a bundle associated to the normal bundle of the foliation.   In particular, the smoothness assumption is fulfilled for the (untwisted) leafwise signature operator.

Any metric on $M$ determines a metric on each leaf $L$ of $F$, so also on all covers of $L$.  The bundle $E \, | \, L$ can be pulled back to a  flat bundle (also denoted $E$) on any cover of $L$.  These leafwise metrics and the leafwise flat bundle $E$ determine leafwise Laplacians  $\Delta^E$ and Hodge $*$ operators on the differential forms on $L$ with coefficients in $E \,|\, L$, as well as on  all covers of $L$.  The Hodge operator determines an involution  which commutes with $\Delta^E$, so $\Delta^E$ splits as a sum $\Delta^E = \Delta ^{E,+} + \Delta^{E,-}$, in particular in dimension $\ell$,  $\Delta^E_{\ell} = \Delta^{E,+}_{\ell} + \Delta^{E,-}_{\ell}$.  To each leaf $L$ of $F$, we associate the formal difference of the (in general, infinite dimensional) spaces $\Ker( \Delta^{E,+}_{\ell} )$ and $\Ker(\Delta^{E,-}_{\ell})$ on $\wL$, the simply connected cover of $L$.  We assume that the Schwartz kernel of the projection onto $\Ker( \Delta^{E}_{\ell}) = \Ker( \Delta^{E,+}_{\ell} ) \oplus \Ker(\Delta^{E,-}_{\ell})$ varies smoothly transversely.   
Roughly speaking, transverse smoothness means that the $ \Ker( \Delta^{E,\pm}_{\ell})$ are ``smooth bundles over the leaf space of $F$".  We define a Chern-Connes character $\ch_a$ for such bundles which takes values in the Haefliger cohomology of $F$. The  higher harmonic signature of $F$ is defined as 
$$
\sigma(F,E) = \ch_a(\Ker(\Delta^{E,+}_{\ell}) )  -  \ch_a 
(\Ker(\Delta^{E,-}_{\ell})).
$$
Our main theorem is the following.

\medskip\noindent
{\bf Theorem \ref{main}. } {\em Suppose that  $M$ is a compact  Riemannian manifold,  with oriented Riemannian foliation $F$ of dimension $2\ell$, and that $E$ is a leafwise 
flat complex bundle over $M$ with a (possibly indefinite) non-degenerate Hermitian metric which is preserved by the leafwise flat structure.  Assume that the projection onto  $\Ker( \Delta^{E}_{\ell} )$ for the associated foliation $F_s$ of the homotopy groupoid of $F$  is transversely smooth.  Then $\sigma(F,E)$ is a leafwise homotopy invariant.}

\medskip

In particular, suppose that $M'$,  $F'$,  and $E'$ satisfy the hypothesis of Theorem  \ref{main},
and that $f:M \to M'$ is a  leafwise homotopy equivalence, which is leafwise oriented.  Set $E = f^*(E')$ with the induced leafwise flat structure and preserved metric. Then $f$ induces an isomorphism $f^*$ from the Haefliger cohomology of $F'$ to that of $F$, and 
$$
f^*(\sigma(F',E')) = \sigma(F,E).
$$
A priori, $\sigma (F,E)$ depends on the metric on $M$. However, it is an immediate corollary of Theorem \ref{main}  that it is independent of this metric since 
the identity map is a leafwise homotopy equivalence between $(M,F;g_0)$ and $(M,F;g_1)$.  
In general, $\sigma (F,E)$ depends  on the flat structure and the metric 
on $E$, in particular on the splitting of $E = E^+ \oplus E^-$ into positive (resp. negative) definite sub bundles.   
 
Our  techniques also give the leafwise homotopy invariance  of the twisted higher Betti classes. When the twisting bundle $E$ is trivial, this extends (in the Riemannian case)  the main theorem of \cite{H-L:1991}.

\medskip\noindent

{\bf Theorem \ref{betti}} 
{\em Suppose that  $M$ is a compact Riemannian manifold,  with oriented Riemannian foliation $F$ of dimension $p$.  Let $E$ be a leafwise flat complex bundle over $M$ with a (possibly indefinite) non-degenerate Hermitian metric which is preserved by the leafwise flat structure.  
Assume that the projection onto  $\Ker( \Delta^{E})$ for the associated foliation $F_s$ of the homotopy groupoid of $F$  is transversely smooth. Then the twisted higher 
Betti classes $\beta_j(F,E)$, $0 \leq j \leq p$, are  leafwise homotopy invariants. }

\medskip

We now give some background to place the results of this paper in context.

Let $M$ and  $M'$ be closed oriented manifolds with oriented foliations $F$ and $F'$.   
Let $\varphi: (M', F')\to (M, F)$ be an oriented, leafwise oriented, leafwise homotopy equivalence.
Denote the homotopy groupoid of $F$ by $\maG$, and let  $f: M \to B\maG$ be a classifying map  for $F$.   The BC Novikov conjecture predicts that for every 
$x\in H^*(B\maG; \R)$,
$$
\int_M \L (TF) \cup f^*x = \int_{M'} \L (TF') \cup (f\circ \varphi)^*x.
$$ 
It is easy to check that this conjecture reduces to the case where the leaves have even dimension. In the case of a foliation with a single closed leaf with fundamental
group $\Gamma$ and denoting 
by $f:M\to B\Gamma$ a classifying map for the universal cover of $M$, the BC Novikov conjecture 
reduces to the classical Novikov conjecture
$$
\int_M \L (TM) \cup f^*x = \int_{M'} \L (TM') \cup (f\circ \varphi)^*x, \quad \forall x\in H^*(B\Gamma; \R).
$$

A powerful approach to the Novikov conjecture was initiated by Kasparov in \cite{Kasparov}.   He actually proves a stronger version of the Novikov conjecture, 
namely the rational
injectivity of the famous Baum-Connes map \cite{KasparovSkandalis, HigsonKasparov, Lafforgue}. See  \cite{Tu99} for a proof of this injectivity for a 
large class of foliations, including  hyperbolic foliations. Note that it is still an open question whether the Baum-Connes map is rationally injective 
for Riemannian foliations. 

A second approach to the Novikov conjecture was initiated  by Connes and his collaborators \cite{ConnesMoscovici} and uses cyclic cohomology and the homotopy 
invariance of the  Miscenko symmetric signature in the $K$-theory of the reduced group $C^*$-algebra \cite{Kasparov, Mischschenko}.   
This method proved successful, \cite{CGM93}, for the largest known class of groups, including Gromov-hyperbolic groups.  For foliations, the homotopy invariance of 
the corresponding Miscenko class in the $K$-theory of the $C^*$-algebra of $\cG$  was explained in \cite{BaumConnes1, BaumConnes2} and proved independently  in 
\cite{KaminkerMiller} and \cite{HilsumSkandalis}.  It reduces  the BC Novikov conjecture to an extension problem in the $K$-theory of foliations, together with 
a cohomological longitudinal index formula. The extension problem was first solved by Connes for certain cocycles in  \cite{ConnesTransverse}, by using a highly non 
trivial analytic breakthrough.  For general cocycles, the extension problem is a serious obstacle and many efforts have been made in this direction
 \cite{Cuntz, CuntzQuillen, LMN, Nistor, Puschnigg, Meyer}.  See also the recent \cite{Carrillo} for an alternative approach. 

The present paper was inspired by a third method mainly due to  Lusztig \cite{Lusztig},  and to ideas of Gromov \cite{Gromov}. It relies on the fact that for
 discrete groups having {\em enough finite dimensional $U(p,q)$ representations}, the even cohomology of the  classifying space $B\Gamma$ is generated by 
$U(p,q)$ flat $K$-theory classes. The main theorem needed in this approach  is  the oriented homotopy invariance of the twisted signature by such $K$-theory classes.
 This approach has been extended in \cite{CGM, CGM93} to cover all the known cases, using the concept of groups having {\em enough almost representations} and almost 
flat $K$-theory classes. 

Recall that in non-commutative geometry, the index of an elliptic operator is usually defined as a certain $C^*$-algebra $K$ theory class constructed out of the operator 
itself, without reference to its kernel or cokernel.   In the special (commutative) sub-case of a fibration, the Chern character of  this operator $K$ theory class 
coincides with the Chern character of the index bundle determined by the operator.  
 In the (non-commutative) case of foliations, this equality is not known in general. 
See \cite{BHII}, where conditions are given for it to hold, as well  as \cite{Nistor} and the recent \cite{AGS}.  For the signature operator, and its twists by leafwise almost flat 
$K$-theory classes, the $C^*$-algebra $K$-theory index is well known to be a leafwise homotopy invariant of the foliation \cite{HilsumSkandalis}. However, 
in order to deduce explicit results on the BC Novikov conjecture for foliations, one needs to define a Chern-Connes character of this $C^*$-algebra $K$-theory class 
and to compute it.  Our approach to this problem is to use the index bundle of the twisted leafwise signature operator, whose Chern-Connes character in Haefliger 
cohomology is well defined as soon as the bundle is. It is therefore a natural problem to prove directly the homotopy invariance of the Chern-Connes character of the 
leafwise signature index bundle and its twists by leafwise (almost) 
flat $K$-theory classes.   

Our program to attack the BC Novikov conjecture for foliations consists of three steps. 
\begin{itemize} 
\item Given a $K$-theory class $y=[E^+] - [E^-]$ over $B\cG$, prove that the characteristic number  
$\dd \int_M \L (TF) \cup f^*\ch(y)$ equals the higher leafwise harmonic signature  twisted by $f^*y$.
\item Prove that the  higher leafwise harmonic signature  twisted by leafwise almost flat $K$-theory classes of the ambiant manifold is a  leafwise oriented, leafwise homotopy invariant. 
\item Prove that  complex bundles $E = E^+ \oplus E^-$, such that $[f^*E^+] - [f^*E^-] $ is a leafwise almost flat $K$-theory class, generate the $K$-theory of $B\cG$.
\end{itemize}
 It is clear that solving these three problems for a class of foliations implies the BC Novikov  conjecture for that class.  
The first step was partially completed in our previous papers \cite{BHI, BHII}, where we proved this equality under certain assumptions, which were subsequently removed in 
 \cite{AGS}, provided the bundle $E^+ \oplus E^-$ is {\em globally} flat.  We conjecture that the result is still true under the far less restrictive assumption that $E^+ \oplus E^-$ is only  {\em leafwise} flat. 
The second step is the goal of the present paper, when the coefficient bundle $E$ has a leafwise  flat structure and  the foliation is Riemannian.    
See  \cite{BHIV} for further results on this question. 

Our results so far on the third step rely on deep but now classical results of Gromov \cite{Gromov}, and allow us, (assuming our conjecture above),  to prove, for instance, the BC Novikov conjecture, 
without extra assumptions, for the subring of $H^*(B\cG;\R)$ generated by $H^1(B\cG;\R)$ and $H^2(B\cG;\R)$.  Again see  the forthcoming paper \cite{BHIV}.

Finally, we conjecture that the Riemannian assumption can be removed, and that 
the only serious obstacle now lies in the third step.

\tableofcontents

\vspace{-0.75cm}

We now briefly describe the contents of each section. Section 2 contains notation and some review. In Section 3, we construct the Chern-Connes character for transversely smooth 
idempotents, which takes values in the Haefliger cohomology of the foliation.   
In Section 4, we define the twisted higher harmonic signature, and prove that if the parallel translation using the flat structure on $E$ is bounded, then the projection to the twisted harmonic forms is transversely smooth.   
Section 5 contains two important concepts essential to the proof of our main theorem, namely the notion of a ``smooth bundle over the space of leaves of $F$", and the extension to such bundles of the classical Chern-Weil theory of characteristic classes.  This allows us to compare the characteristic classes of such bundles on different manifolds. 
Section 6 is concerned with the study of leafwise homotopy equivalences, and their induced maps on Haefliger cohomology and on leafwise Sobolev cohomologies. In general, leafwise homotopy equivalences do not behave well on Sobolev forms and cohomologies. To overcome these difficulties, we use two different constructions. The first, due to Hilsum-Skandalis \cite{HilsumSkandalis}, gives smooth bounded maps between Sobolev forms. The second, which uses 
the Whitney isomorphism between simplicial and smooth cohomology, gives us control of the leafwise cohomologies.  
In Section 7, we prove that the pull-backs under leafwise homotopy equivalences of certain smooth 
bundles over the space of leaves are still smooth bundles. 
Section 8 extends the notion of pulled-back connections. 
Section 9 contains the proof of the main theorem. 
In Section 10, we prove the equality between the twisted higher harmonic signature and the Chern-Connes character of the index bundle of the twisted leafwise signature operator.  We  explain how our methods extend to prove Theorem \ref{betti}.   We also conjecture a cohomological formula for  the  twisted higher harmonic signature,  which is already know to be true in some cases.  See \cite{Heitsch:1995, H-L:1999,BHII} and the forthcoming \cite{AGS}. 
Finally, in Section 11 we show how our results lead to important consequences for the Novikov conjecture for foliations and for groups. 

\medskip

{\em Acknowledgments.} We are indebted to J. Alvarez{-Lopez},  A. Connes, J. Cuntz, Y. Kordyukov, J. Renault, J. Roe, G.  Skandalis, D. Sullivan, and K. Whyte for 
many useful discussions. 
Part of this work was done while the first author was visiting the University of Illinois at Chicago, the second author was visiting the University of Metz, and 
both authors were visiting the Institut Henri Poincar\'{e} in Paris, and the Mathematisches Forschungsinstitut Oberwolfach.  Both authors are most grateful for the 
warm hospitality and generous support of their hosts.

\section{Notation and review}\label{note}

Throughout this paper $M$ denotes a smooth compact Riemannian manifold of
dimension $n$, and $F$ denotes an oriented {Riemannian } foliation of $M$ of
dimension $p = 2{\ell}$ and codimension $q$. So $n=p+q$.  The tangent bundle of $F$
is denoted by $TF$, its normal bundle by $\nu$, and its dual normal bundle by 
$\nu^*$.   {We assume that the metric on $M$, when restricted to $\nu$, is bundle like, so the holonomy maps of $\nu$ and $\nu^*$ are isometries.} A leaf of $F$ is denoted $L$.
We denote by $\cU$ a finite good cover of $M$ by foliation charts as 
defined in \cite{H-L:1990}.

If $V \to N$ is a vector bundle over a manifold $N$, we denote the
space of smooth sections by $C^{\infty}(V)$ or by $C^{\infty}(N;V)$ if
we want to emphasize the base space of the bundle.  The compactly
supported sections are denoted by $C^{\infty}_{c}(V)$ or
$C^{\infty}_{c}(N;V)$.  The space of differential $k$ forms on $N$ is
denoted $\cA^{k}(N)$, and we set $\cA^*(N) = \oplus_{k\geq
0}\cA^{k}(N)$.  The space of compactly supported  $k$ forms  is denoted
$\cA_c^{k}(N)$, and $\cA^*_c(N) = \oplus_{k\geq 0}\cA_c^{k}(N)$.   The de Rham exterior derivative is denoted $d$ or $d_N$.
The tangent and cotangent bundles of $N$ will be denoted $TN$ and $T^*N$.

The (reduced) Haefliger cohomology of $F$, \cite{Hae:1980}, \cite{BHII}, is given as
follows. For each $U_i \in {\cU}$, let $T_i\subset U_i$ be a
transversal and set $T=\bigcup\,T_i$.  We may assume that the closures
of the $T_i$ are disjoint.  Let $\cH$ be the holonomy pseudogroup
induced by $F$ on $T$.   Denote the exterior derivative by $d_T:\cA^k_c(T)\to 
\cA^{k+1}_c(T)$.   The usual Haefliger cohomology is defined using the 
quotient of $\cA^k_c(T)$ by the vector subspace $L^k$ generated by 
elements of the form $\alpha-h^*\alpha$ where $h\in \cH$ and 
$\alpha\in\cA^k_c(T)$ has support contained in the range of $h$.  The (reduced) Haefliger cohomology 
uses  the quotient of $\cA^k_c(T)$ by the closure $\overline{L^k}$ of  
$L^k$.   We take this closure in the following sense. (The reader should note that in previous papers, we said that we used the $C^{\infty}$ topology to take this closure, but in fact we used the one given here.)   $\overline{L^k}$ consists of all elements in $\omega \in \cA^k_c(T)$, so that there are sequences $\{ \omega_n \}, \{ \what{\omega}_n \} \subset L^k$ with $||\omega - \omega_n|| \to 0$ and $||d_T(\omega) -  \what{\omega}_n)|| \to 0$.   The norm $|| \, \cdot \, ||$ is the sup norm, that is $||\omega|| = \sup_{x \in T} ||\omega(x)||_x$,  where $||\, \cdot \,||_x$ is the norm on $(\wedge^k T^*T)_x$.  
Set  $\cA^k_c(M/F)= \cA^k_c(T)/ \overline{L^k}$. 
The exterior derivative $d_T$ induces a continuous differential
$d_H:\cA^k_c(M/F)\to \cA^{k+1}_c(M/F)$.  Note that $\cA^k_c(M/F)$ and
$d_H$ are independent of the choice of cover $\cU$.  In this paper, the
complex $\{\cA^*_c(M/F),d_H\}$ and its cohomology $\oH^*_c(M/F)$ will be 
called, respectively, the Haefliger forms and Haefliger cohomology of $F$.   The reader should note that this cohomology appears as a quotient in the general computation of cyclic homology for foliations carried out in \cite{BrylinskiNistor}.

As the bundle $TF$ is oriented, there is a continuous open surjective 
linear map, called integration over the leaves,
$$
\int_F :\cA^{p+k}(M)\longrightarrow \cA^k_c(M/F)
$$
which commutes with the exterior derivatives $d_{M}$ and $d_{H}$.  Given 
$\omega \in \cA^{p+k}(M)$,  write $\omega = \sum \omega_i$ where 
$\omega_i \in\cA^{p+k}_c(U_i)$.  Integrate $\omega_i$ along the fibers 
of the submersion $\pi_i:U_i\to T_i$ to obtain $\dd \int_{U_i} \, 
\omega_i  \in \cA^k_c(T_i)$.  Define $\dd \int_F \ \omega 
\in\cA^k_c(M/F) $ to be the class of $\dd \sum_i \, \int_{U_i} \, 
\omega_i$.  It is independent of the choice of the $\omega_i$ and of the 
cover $\cU$.  As $\dd \int_{F}$ commutes with $d_{M}$ and $d_{H}$, it 
induces the map $\dd \int_F :\oH^{p+k}(M;\R) \to \oH^k_c(M/F)$.

For convenience we will be working on the homotopy  groupoids (also called the monodromy groupoids) of our foliations, but our results extend to the holonomy groupoid, as well as any groupoids between these two extremes.  

Recall that  the homotopy groupoid ${\mathcal G}$ of $F$ consists of equivalence classes of paths $\gamma:[0,1]\to M$ such that the image of $\gamma$ is contained in a leaf of $F$.  Two such paths $\gamma_{1}$ and $\gamma_{2}$ are equivalent if they are in the same leaf and homotopy equivalent (with endpoints fixed) in that leaf.  Two classes may be composed if one ends where the second begins and the composition is just the juxtaposition of the two paths.  This makes $\cG$ a groupoid.  The space $\cG^{(0)}$ of units of $\cG$ consists of the equivalence classes of the constant paths, and we identify $\cG^{(0)}$ with $M$.

For Riemannian foliations, $\cG$ is a Hausdorff dimension $2p+q$ manifold, in fact a fibration.  The basic open sets defining its manifold structure are given as follows.  Given $U, V \in \cU$ and a leafwise path $\gamma$ starting in $U$ and ending in $V$, define $(U, \gamma, V)$ to
be the set of equivalence classes of leafwise paths starting in $U$
and ending in $V$ which are homotopic to $\gamma$ through a homotopy
of leafwise paths whose end points remain in $U$ and $V$
respectively.
It is easy to see, using the holonomy defined by $\gamma$ from a
transversal in $U$ to a transversal in $V$, that if $U, V \simeq
\R^{p} \times \R^{q}$, then $(U,\gamma,V) \simeq \R^{p} \times
\R^{p} \times\R^{q}$.

The source and range maps of the groupoid $\cG$ are the two natural maps 
$s$, $r:\cG \to M$ given by $s\bigl([\gamma]\bigr)=\gamma(0)$, 
$r\bigl([\gamma]\bigr)=\gamma(1)$.  $\cG$ has two natural transverse 
foliations $F_s$ and $F_r$ whose leaves are respectively 
$\wL_x=s^{-1}(x)$, and $\wL^x=r^{-1}(x)$, for each $x\in M$.  Note that 
$r:\wL_{x} 
\to L$ is the simply connected covering of $L$.  We will work with the foliation 
$F_s$.  Note that the intersection of  any leaf $\wL_x$ and any basic open 
set $(U, \gamma, V)$ consists of at most one placque of the foliation 
$F_s$ in $(U, \gamma, V)$, i.e.\ each $\wL_x$ passes through any $(U, 
\gamma, V)$  at most once.

There is a canonical lift of the normal bundle $\nu$ of $F$ to a bundle 
$\nu_{\cG}\subset T\cG$ so that $T\cG=TF_s\oplus TF_r\oplus\nu_{\cG}$,  and 
$r_*\nu_{\cG}=\nu$ and $s_*\nu_{\cG}=\nu$.  It is given as follows.  Let $[\gamma]\in \cG$ with 
$s\bigl([\gamma]\bigr)=x$, $r\bigl([\gamma]\bigr)=y$.  Denote by 
$\exp:\nu\to M$ the exponential map.  Given $X\in \nu_x$ and $t\in\R$
sufficiently small, there is a unique leafwise path $\gamma_t:[0,1]\to M$ 
so that
\begin{center}
i) $\gamma_t(0)=\exp (tX)$ \hspace{.3in}
ii) $\gamma_t(s)\in \exp (\nu_{\gamma(s)})$.\\
\end{center}
In particular $\gamma_0=\gamma$.  Thus the family $[\gamma_t]$ in
$\cG$ defines a tangent vector $\what X\in T\cG_{[\gamma]}$.  It is
easy to check that $s_*(\what X)=X$ and $r_*(\what X)$ is the
parallel translate of $X$ along $\gamma$ to $\nu_y$.  

The metric $g_0$ on $M$ induces a canonical metric $g_0$ on $\cG$ as
follows.  $T\cG=TF_s\oplus TF_r \oplus \nu_{\cG}$ and these bundles are 
mutually orthogonal.  So the normal bundle $\nu_s$ of $TF_s$ is $\nu_s = TF_r \oplus \nu_{\cG}$.  On $TF_r $,  $g_0$ is $s^*\bigl(g_0|TF \bigr)$, on $TF_s$ it is $r^*\bigl(g_0|TF 
\bigr)$, and on  $\nu_{\cG}$ it is $r^*\bigl(g_0|\nu 
\bigr)$, which, since $F$ is Riemannian and the metric on $\nu$ is bundle-like, is the same as $s^*\bigl(g_0|\nu \bigr)$.

We denote by $E$  a leafwise flat complex bundle over $M$.  This means that there is a connection $\nabla_E$ on $E$ over $M$ which, when restricted to any leaf $L$ of $F$, is a flat connection, i.e.\ its curvature $(\nabla_E)^2 \,| \, L = (\nabla_E \,| \, L)^2 =0$.   This is equivalent to the condition that the parallel translation defined by $\nabla_E \,| \, L$, when restricted to contractible loops in $L$, is the identity.  We assume that $E$   admits a (possibly indefinite) non-degererate Hermitian metric, denoted $\{ \cdot, \cdot \}$, which is preserved by the leafwise flat structure.   This means that if $\phi_1$ and $\phi_2$ are local leafwise flat sections of $E$, then their inner product $\{\phi_1,\phi_2\}$ is a locally constant function on each leaf.  More generally, it is characterized by the fact that for general sections 
$\phi_1$ and $\phi_2$, and for any vector field $X$ tangent to $F$, 
 $$
X  \{\phi_1,\phi_2\} \,\, = \,\,  \{\nabla_{E,X}\phi_1,\phi_2\} + \{\phi_1, \nabla_{E,X}\phi_2\}.
$$
We denote also by $E$ its pull back by $r$ to a leafwise (for the foliation $F_s$) flat bundle on $\cG$ along with its invariant metric and leafwise flat connection.  The context should make it clear which bundle we are using.    A splitting of $E$ is a decomposition $E = E^+ \oplus E^-$ (of $E$ on $M$) into an orthogonal sum of two sub-bundles so that the metric is $\pm$ definite on $E^{\pm}$.  Splittings always exist and any two are homotopic.  The splitting   defines an involution $\gamma$ of $E$.  If $\phi$ is a local section of $E$ with $\phi = \phi^+ + \phi^-$ where $\phi^{\pm}$ is a local section of $E^{\pm}$, then $\gamma\phi = \phi^+ - \phi^-$.  If we change the sign of the metric on $E^-$, we obtain a positive definite Hermitian metric  on $E^-$ and so also on $E$ over both $M$ and $\cG$.  In general,  this new metric on $E$, denoted $( \cdot, \cdot)$,  is not preserved by the flat structure.

\begin{example}\label{nateg}  Assume that the codimension of $F$ is even, say $q = 2k$.  Set $E = \wedge^{k} \nu^* \otimes \C$.  The bundles $\nu$ and $\nu^*$ have natural flat structures along the leaves given by the holonomy maps (which define flat local sections). Since the metric on $\nu$  is bundle-like, the induced volume form on $\nu^*$ is invariant under the holonomy of $F$.  Denote by $*_{\nu}$ the Hodge $*$ operator on $\wedge^* \nu^*$, and also its extension to $E$.  Given two elements $\phi_1$ and $\phi_2$ of $E_x$, set 
$$
\{\phi_1, \phi_2\} \,\, = \,\, \sqrt{-1}^{k^2}*_{\nu}(\phi_1 \wedge_{\nu} \overline{\phi_2}),
$$
where $\wedge_{\nu}: E \otimes E \to \wedge^{2k} \nu^*\otimes \C$.   We leave it to the reader to check that $E$ and $\{\cdot, \cdot \}$ satisfy the hypothesis of Theorem \ref{main}.
\end{example}

Denote by $\cA^*_c(F_{s},E)$ the graded algebra of leafwise (for $F_s$) differential forms on $\cG$ with coefficients in $E$ which have compact support when restricted to any leaf of $F_{s}$.   A Riemannian structure on $F$ induces one on $F_{s}$. As usual there is the leafwise Riemannian Hodge operator $*$, which gives 
an inner product on each $\cA^k_c(F_{s},E)$.   In particular, if $\alpha_1$ and $\alpha_2$ are leafwise $\R$ valued $k$ forms and $\phi_1$ and $\phi_2$ are sections of $E$, then 
$$ 
<\alpha_1 \otimes \phi_1,\alpha_2 \otimes \phi_2>(x) = \int_{\wL_x} (\phi_1, \phi_2) \alpha_1 \wedge *\alpha_2 = \int_{\wL_x} \{\phi_1, \gamma \phi_2\} \alpha_1 \wedge *\alpha_2.
$$
We denote by $\cA_{(2)}^*(F_{s},E)$ the field of Hilbert spaces over $M$ which is the leafwise $L^2$ completion of these differential forms under this inner product, i.e.
$$
\cA_{(2)}^* (F_{s},E)_x = L^2 (\wL_x; \wedge T^* F_s \otimes E).
$$
This is a {continuous} field of Hilbert spaces, {see} \cite{Connes:1979}.    Because $M$ is compact, the spaces $L^2 (\wL_x; \wedge^k T^* F_s  \otimes E)$ do not depend 
on our choice of metrics.   However,  the inner products on 
these spaces do depend on the metrics, as do the Hilbert norms, denoted $\| \cdot \|_0$. 

If $E$ is the one dimensional trivial bundle with the trivial flat structure, then $\cA_{(2)}^*(F_{s},E)$ is just the leafwise $L^2$ forms (now with coefficients in $\C$) for the foliation $F_s$ and is denoted $\cA_{(2)}^*(F_{s}, \C)$.

\section{Chern-Connes character for transversely smooth idempotents}\label{cc}

Since we need the ``transverse differential" and graded trace used in \cite{BHI} to define the Chern-Connes character, we now briefly recall that construction.

Consider the algebra  $C_c^{\infty}(\cG; \wedge F_s \otimes E)$ of smooth 
compactly supported sections over $\cG$ of the bundle, here denoted 
$\wedge F_s \otimes E$, whose fiber at $\gamma\in \cG$ is
$$
(\wedge F_s \otimes E)_\gamma = \Hom ((\wedge T^*F \otimes E)_{s(\gamma)}, (\wedge 
T^*F\otimes E)_{r(\gamma)}).
$$
If $\alpha \in C_c^{\infty}(\cG; \wedge F_s \otimes E )$, it defines the leafwise 
operator $A$ which acts on $\phi \in \cA_{(2)}^* (F_{s},E)_x$ by
$$
(A\phi)(\gamma) = \int_{\wL_x} \alpha(\gamma \gamma_1^{-1})\phi(\gamma_1) 
d\, \gamma_1,
$$ 
where $\gamma, \gamma_1 \in \wL_x$, {and we identify $(T^*F_s)_{\gamma}$ with $T^*F_{r(\gamma)}$.}
In  \cite{BHI}, we defined   a Chern-Connes character from the $K-$theory 
of this algebra to the Haefliger cohomology of the foliation, 
$$
\ch_a : K_0(C_c^{\infty}(\cG; \wedge F_s \otimes E)) \longrightarrow \oH_c^*(M/F),
$$
given as follows.
Consider the connection $\nabla$ on $\wedge T^*F_s \otimes E$ given by $\nabla = r^* 
(\nabla_F \otimes \nabla_E)$ where $\nabla_F$ is a connection on $\wedge T^*F$ defined by a 
connection on $T^*F$.    Then $\nabla:C^{\infty}(\wedge T^*F_s \otimes E) \to 
C^{\infty}( {T^{*}\cG} \otimes  \wedge T^*F_s \otimes E)$, and we may extend 
$\nabla$ to an operator of degree one on $C^{\infty}(\wedge{T^{*}\cG} 
\otimes \wedge T^*F_s \otimes E)$, where on decomposable sections $\omega
\otimes \phi $, with $\omega \in C^{\infty}(\wedge^k T^{*}\cG)$, 
$\nabla(\omega\otimes \phi) = d \omega \otimes \phi + (-1)^k \omega \wedge 
\nabla \phi .$   {The foliation $F_s$ has dual normal bundle $\nu_s^* = s^{*}(T^{*}M)$}, and $\nabla$ defines a {\em
quasi-connection} $\nabla^{\nu}$ acting on 
$C^{\infty}(\wedge \nu_s^*\otimes \wedge T^*F_s \otimes E)$ by the composition 
$$
C^{\infty}(\wedge \nu_s^*\otimes \wedge T^*F_s \otimes E) \stackrel{i}{\longrightarrow} 
C^{\infty}(\wedge{T^{*}\cG} \otimes \wedge T^*F_s \otimes E)\stackrel{\nabla}{\longrightarrow} 
C^{\infty}(\wedge{T^{*}\cG} \otimes \wedge T^*F_s \otimes E) \stackrel{p_{\nu}}{\longrightarrow}
C^{\infty}(\wedge \nu_s^*\otimes \wedge T^*F_s \otimes E),
$$
where $i$ is the inclusion and $p_{\nu}$ is induced by the
projection $p_{\nu}:T^{*}{\cG} \to \nu^{*}_{s}$ {determined  by the 
decomposition $T\cG = TF_s  \oplus 
\nu_s$. }  

Denote by $\pa_{\nu}: \End(C^{\infty}(\wedge \nu_s^*\otimes \wedge T^*F_s \otimes E)) \to
\End(C^{\infty}(\wedge \nu_s^*\otimes \wedge T^*F_s \otimes E))$ the linear operator 
given by the graded commutator
$$
\pa_{\nu} (T) = [\nabla^{\nu}, T].
$$
Recall: that 
$(\pa_{\nu})^2$ is given by the commutator with the curvature $\theta^{\nu} = 
(\nabla^{\nu})^2$ of  $\nabla^{\nu}$;  that $\theta^{\nu}$ is a leafwise 
differential operator which is at worst order one;  and that the 
derivatives of all orders of its coefficients are uniformly bounded, with 
the bound possibly depending on the order of the derivative.  See 
\cite{BHII}.

We may consider the algebra 
$$
\cA^*(M) \widehat{\otimes} _{C^{\infty}(M)}C_c^{\infty}(\cG;\wedge F_s \otimes E)
$$
as a subspace of the space of $ \cA^*(M) $-equivariant endomorphisms of
$C^{\infty}(\wedge \nu_s^*\otimes \wedge T^*F_s \otimes E)$ by using the $ 
\cA^*(M) $ module structure of $C^{\infty}(\wedge \nu_s^*\otimes \wedge T^*F_s \otimes E)$,  where for
$\phi \in C^{\infty}(\wedge \nu_s^*\otimes \wedge T^*F_s \otimes E)$, and $\omega \in
 \cA^*(M) $, we set
$$
\omega \cdot\phi = s^{*}(\omega) \wedge\phi.
$$
The operator $\pa_{\nu}$ maps  $ \cA^*(M)  \widehat{\otimes} 
_{C^{\infty}(M)}C_c^{\infty}(\cG;\wedge F_s \otimes E)$  to itself. 

Denote by $C^{\infty}(\cG;\wedge F_s \otimes E)$ the space of \underline{all} smooth 
sections over $\cG$ of $\wedge F_s \otimes E$. For  $T$ an element of $ \cA^*(M) 
\widehat{\otimes}_{C^{\infty}(M)}C^{\infty}(\cG;\wedge F_s \otimes E)$, define the trace of $T$
to be the Haefliger form $\Tr (T)$ given by
$$
\Tr(T) = \int_{F} \tr (T(\overline{x})) dx =  \int_{F} i^* (\tr (T \, | \, 
i(M))) dx,
$$
where $\overline{x}$ is the class of the constant path at $x$, $\tr(T(\overline{x}))$ is the $ \cA^*(M) $-equivariant trace of the Schwartz kernel of $T$ at $\overline{x}$, and so belongs to $\wedge T^* M_x$, and $dx$ is the leafwise volume form associated with the fixed orientation of the foliation $F$.  When restricted to the subspace 
$\cA^*(M) \widehat{\otimes} _{C^{\infty}(M)}C_c^{\infty}(\cG;\wedge F_s \otimes E)$, the map 
$$
\Tr:\cA^*(M) \widehat{\otimes} _{C^{\infty}(M)}C_c^{\infty}(\cG;\wedge F_s \otimes E) \longrightarrow \cA^*_{c}(M/F)
$$ 
is a graded trace which satisfies $\Tr \circ \pa_{\nu} = d_{H} \circ \Tr$, see  \cite{BHI}, and Lemma 6.3 of \cite{BHII}. Moreover, the equality $(\Tr \circ \pa_{\nu}) (T)= (d_{H} \circ \Tr) (T)$ extends to all transversely smooth operators $T$. See Definition \ref{TS} {below} and \cite{BHII}.

Since $\pa_{\nu} ^2$ is not necessarily zero, we used Connes' $X-$trick to 
construct a new graded differential algebra $({\wcB}, \delta)$ out of the 
graded quasi-differential  algebra $\cB=(\cA^*(M) \widehat{\otimes} 
_{C^{\infty}(M)}C_c^{\infty}(\cG;\wedge F_s \otimes E), \pa_{\nu})$.  See 
\cite{ConnesBook}, p.\ 229 for the definition of the grading, the 
extension of $\pa_{\nu}$ to the differential $\delta$, and the product 
structure on $\wcB$.  As a vector space, $\wcB$ is $M_2(\cB)$, the space 
of $2 \times 2$ matrices with coefficients in $\cB$, and $\cB$ embeds as a 
subalgebra of ${\wcB}$ by using the map
$$
T \hookrightarrow \left( \begin{array}{cc} T & 0 \\ 0 & 0
\end{array} \right).
$$
For homogeneous ${\widetilde T}\in{\wcB}$ of degree $k$, Connes defines
$$
\Phi ({\widetilde T}) = \Tr ({T}_{11}) - (-1)^k  \Tr({T}_{22} \theta^{\nu}),
$$
and extends to arbitrary elements by linearity.
The map $\Phi:{\wcB} \to \cA^*_c(M/F)$ is then a  graded trace,
and again we have $\Phi \circ \delta = d_H \circ \Phi$,  see  \cite{BHI}.

The (algebraic) Chern-Connes character in the even case is then the 
morphism 
$$
\ch_a:\oK_{0}(C_c^{\infty}(\cG;\wedge F_s \otimes E))  \longrightarrow 
\oH^*_{c}(M/F) 
$$
defined as follows.   Let $B = [\te_{1}] - [\te_{2}]$ be an element of
$\oK_{0}(C_c^{\infty}(\cG;\wedge F_s \otimes E))$, where $\te_{1}=(e_1,\lambda_1)$
and $\te_{2}=(e_2,\lambda_2)$.  The $\lambda_i$ are $N \times N$ matrices 
of complex numbers, and the $e_i$ are in 
$M_{N}(C_c^{\infty}(\cG;\wedge F_s \otimes E))$, the $N \times N$ matrices over 
$C_c^{\infty}(\cG;\wedge F_s \otimes E)$, which we may consider as elements of 
$M_{N}(\wcB)$.   Denote by $\tr:M_{N}(\wcB) \to \wcB$
the usual trace.
Then the Haefliger {form
$$
(\Phi\circ \tr)\Bigl{(}e_{1}\exp\left({\frac{-(\delta e_{1})^{2}}{2i 
\pi}}\right)\Bigr{)} - 
(\Phi\circ \tr) \Bigl{(}e_{2}\exp \left({\frac{-(\delta e_{2})^{2}}{2 i 
\pi}}\right)\Bigr{)}
$$
is closed and its Haefliger cohomology class} depends 
only on $B$, \cite{BHI}. This Haefliger cohomology class is precisely the 
Chern-Connes character of $B$.
So,
\begin{Equation}\label{ChernDef}
\hspace{1in} $ \dd  \ch_{a}(B) = \left[(\Phi\circ
\tr)\Bigl{(}e_{1}\exp\left({\frac{-(\delta
  e_{1})^{2}}{2i \pi}}\right)\Bigr{)}- (\Phi\circ \tr) 
\Bigl{(}e_{2}
\exp\left({\frac{-(\delta
  e_{2})^{2}}{2i\pi}}\right)\Bigr{)}\right].$
\end{Equation}

We want to consider the Chern-Connes characters of idempotents, such as the 
projection onto the twisted leafwise harmonic forms, which in general do not 
define elements of $\oK_0(C^{\infty}_c(\cG; \wedge F_s \otimes E))$.   The 
idempotents we are interested in are bounded leafwise smoothing
operators on $\wedge T^*F_s \otimes E$.   In order to define the Chern-Connes character of such idempotents, we need the concept of ``transverse smoothness" for $\cA^*(M)$ 
equivariant  bounded leafwise smoothing operators on $\wedge \nu^*_s 
\otimes \wedge T^* F_s \otimes E$.  If $H$ is such an operator, we can write it as 
$$
H = H_{[0]} + H_{[1]} + \cdots + H_{[n]},
$$
where $H_{[k]}$ is homogeneous of degree $k$, that is, for all $j$,
$$
H_{[k]}: C^\infty(\wedge^j \nu^*_s \otimes \wedge T^* F_s \otimes E) \to  
C^\infty(\wedge^{j+k} \nu^*_s \otimes \wedge T^* F_s \otimes E).
$$
Recall that for any $[\gamma] \in \cG$, $s_*:\nu_{s, 
[\gamma]}  \to TM_{s(\gamma)}$ is an isomorphism.   Thus any $X \in C^{\infty}(\wedge^k TM)$ 
defines a section, denoted $\what{X}$, of $\wedge^k \nu_s$. 
For such $X$, $i_{\what{X}} H_{[k]}$ is a  bounded {leafwise} smoothing operator  on $\wedge T^* F_s\otimes E$.  
For any vector field $Y$ on $M$, set
$$
\pa_{\nu}^Y  (i_X  H_{[k]}) = i_{\what{Y}} (\pa_{\nu}(i_{\what{X}} H_{[k]})),
$$
which (if it exists) is an operator  on  $\wedge T^* F_s \otimes E$.   

\begin{definition}\label{TS}
An $\cA^*(M)$ equivariant  bounded leafwise smoothing operator $H$ on 
$\wedge \nu^*_s \otimes \wedge T^* F_s\otimes E$ is transversely smooth provided 
that for
any $X \in C^{\infty}(\wedge^k TM)$, and any vector fields 
$Y_1, ... , Y_{m}$ on $M$,  the operator 
$$
\pa_{\nu}^{Y_1} ... \pa_{\nu}^{Y_{m}}(i_X H_{[k]})
$$
is a bounded leafwise smoothing operator on $ \wedge 
T^* F_s\otimes E$.
\end{definition}

\noindent
Any element of $C_c^{\infty}(\cG;\wedge F_s\otimes E)$ is transversely smooth.  
If the leafwise parallel translation along $E$ is a bounded map, then the projection onto the leafwise harmonic forms with coefficients in $E$ (for the foliation $F_s$)  is transversely smooth.   See Theorem \ref{GRthm} below. Since $\pa_{\nu}$ is a derivation, it is immediate that the composition of transversely smooth operators is transversely smooth. It is also easy to prove that the Schwartz kernel of any transversely smooth operator is a smooth section in all variables, see \cite{BHII}. 

If $K$ {is} a bounded leafwise smoothing operator on $ \wedge T^* F_s \otimes E$, we may extend it to an $\cA^*(M)$ equivariant bounded leafwise smoothing operator on $\wedge \nu^*_s \otimes \wedge T^* F_s \otimes E$ by  using the $\cA^*(M)$ module structure of $C^{\infty}(\wedge \nu^*_s \otimes \wedge T^* F_s \otimes E)$.
More specifically,   given $\phi \in C^{\infty}(\wedge \nu^*_s \otimes \wedge T^* F_s \otimes E)$,  write it as 
$$
\phi = \sum_j  s^*(\omega_j)  \otimes \phi_j ,
$$
where the $\omega_j \in \cA^*(M) $, and the  $\phi_j \in C^{\infty}( \wedge T^* F_s \otimes E)$.
Then 
$$
K(\phi) = \sum_j s^*( \omega_j)\otimes K(\phi_j) .
$$
It is easy to check that this is well defined.

The proof of Lemma 4.5 of \cite{BHII} extends easily to give the following.
\begin{lemma}\label{AK}  Suppose that $A$ is an {$\cA^*(M)$-equivariant}  leafwise differential operator of {finite} order on $\wedge \nu^*_s \otimes \wedge T^* F_s \otimes E$, and that  the derivatives of all orders of its coefficients are uniformly bounded, with the bound possibly depending on the order of the derivative.   Suppose that  $K$ is a bounded leafwise smoothing operator on $\wedge T^* F_s \otimes E$, and extend it to an $\cA^*(M)$-equivariant  bounded leafwise smoothing operator on $\wedge \nu^*_s \otimes \wedge T^* F_s \otimes E$.  Then  $AK$ and $KA$ are  $\cA^*(M)$-equivariant  bounded leafwise smoothing operators on $\wedge \nu^*_s \otimes \wedge T^* F_s \otimes E$.  If $K$ is transversely smooth, so are $AK$ and $KA$.
\end{lemma}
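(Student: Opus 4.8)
The plan is to reduce the statement to facts about leafwise Schwartz kernels together with the corresponding result on $\wedge T^*F_s \otimes E$, namely Lemma 4.5 of \cite{BHII}; the only new ingredient is that the extra exterior factor $\wedge \nu_s^*$ simply comes along for the ride. Recall that $\nu_s^* = s^*(T^*M)$ is leafwise (for $F_s$) flat, so the $\cA^*(M)$-equivariant extension $\widehat{K}$ of $K$ has leafwise kernel $k \otimes \Id$, with $k$ the kernel of $K$, and is homogeneous of $\wedge \nu_s^*$-degree $0$. Write $A = \sum_k A_{[k]}$ for the decomposition into $\wedge \nu_s^*$-homogeneous parts; each $A_{[k]}$ is a leafwise differential operator of finite order all of whose coefficients have uniformly bounded derivatives of every order.

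First I would prove that $AK$ and $KA$ are bounded leafwise smoothing operators. The leafwise kernel of $AK$ is obtained by applying $A$, leafwise and in the range variable, to the kernel of $\widehat{K}$; since $A$ is local with uniformly bounded coefficient derivatives, the result is again a smooth kernel with uniformly bounded leafwise derivatives and with the same decay near the diagonal that makes $\widehat{K}$ bounded, so $AK$ is a bounded leafwise smoothing operator, and it is $\cA^*(M)$-equivariant because $A$ and $\widehat{K}$ are. For $KA$ one instead transfers $A$ onto the kernel of $\widehat{K}$ in the source variable by a leafwise integration by parts, i.e.\ using the formal leafwise transpose $A^t$, which lies in the same class of operators ($\cA^*(M)$-equivariant, finite order, uniformly bounded coefficient derivatives); then the same argument applies. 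This is exactly the extension of Lemma 4.5 of \cite{BHII} to include the $\wedge \nu_s^*$ factor.

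The heart of the matter is transverse smoothness. Here the key auxiliary statement I would isolate is that the class $\mathcal{D}$ of $\cA^*(M)$-equivariant leafwise differential operators on $\wedge \nu_s^* \otimes \wedge T^*F_s \otimes E$ of finite order with uniformly bounded coefficient derivatives is stable under $\pa_{\nu}$, and that contracting a member of $\mathcal{D}$ with $\widehat{Y}$ for a vector field $Y$ yields an operator of the analogous class on $\wedge T^*F_s \otimes E$. The equivariance of $\pa_{\nu}(B) = [\nabla^{\nu}, B]$ is a direct check: since $\nabla^{\nu}$ is a quasi-connection, the $\cA^*(M)$-module terms cancel in the commutator. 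The control on the coefficient derivatives follows from the fact, recalled in Section~\ref{cc} and established in \cite{BHII}, that $\nabla^{\nu}$ and its curvature $\theta^{\nu}$ have coefficients controlled in precisely this way. Granting this, fix $X \in C^{\infty}(\wedge^k TM)$ and vector fields $Y_1, \dots, Y_m$. Using that $\widehat{K}$ has $\wedge \nu_s^*$-degree $0$ and commutes with $i_{\widehat{X}}$, one identifies $i_{\widehat{X}}(AK)_{[k]}$, as an operator on $\wedge T^*F_s \otimes E$, with $A_X \circ K$, where $A_X := i_{\widehat{X}} A_{[k]}$ read as an operator on $\wedge T^*F_s \otimes E$ lies in the above class; likewise $i_{\widehat{X}}(KA)_{[k]} = K \circ A_X$. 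Now apply $\pa_{\nu}^{Y_1} \cdots \pa_{\nu}^{Y_m}$ and expand using the graded Leibniz rule for $\pa_{\nu}$ together with the defining identity $\pa_{\nu}^{Y}(\cdot) = i_{\widehat{Y}} \pa_{\nu}(\cdot)$. Because after each step one is back to a degree-$0$ operator on $\wedge T^*F_s \otimes E$, the expansion collapses to a finite sum of composites $B' \circ K'$ (resp.\ $K' \circ B'$), where $B'$ is obtained from $A_X$ by applying some of the $\pa_{\nu}^{Y_i}$ and hence, by the auxiliary statement, is a leafwise differential operator with uniformly bounded coefficient derivatives, and $K'$ equals $\pa_{\nu}^{Y_{j_1}} \cdots \pa_{\nu}^{Y_{j_s}}(K)$ for some subset of the indices, hence is a bounded leafwise smoothing operator because $K$ is transversely smooth. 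By the first part of the lemma, each such composite is a bounded leafwise smoothing operator, so $AK$ and $KA$ are transversely smooth.

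The main obstacle I anticipate is organizational rather than analytic: carefully tracking how $\pa_{\nu}$ — a derivation that nonetheless satisfies $\pa_{\nu}^2 = [\theta^{\nu},\,\cdot\,] \neq 0$ — interacts with the $\wedge \nu_s^*$-grading $(\cdot)_{[k]}$ and with the contractions $i_{\widehat{X}}$ built into the very definition of transverse smoothness, so as to be certain the Leibniz expansion really does terminate in the advertised finite sum of (controlled differential operator) $\circ$ (smoothing operator supplied by transverse smoothness of $K$) pieces. The one genuinely new computation is the stability of $\mathcal{D}$ under $\pa_{\nu}$, but that is a routine adaptation of the estimates already carried out for $\nabla^{\nu}$ and $\theta^{\nu}$ in \cite{BHII}.
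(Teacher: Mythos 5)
Your argument is correct and follows precisely the route the paper itself intends: the paper gives no proof of Lemma \ref{AK}, merely asserting that ``the proof of Lemma 4.5 of \cite{BHII} extends easily,'' and your write-up supplies exactly the expected details — the kernel-level argument for boundedness/smoothness of $AK$ and $KA$ (passing to the formal leafwise transpose $A^t$ for $KA$), and an induction on the Leibniz expansion of $\pa_\nu^{Y_1}\cdots\pa_\nu^{Y_m}$ applied to compositions $A_X\circ K$, using that the extension of $K$ has $\wedge\nu_s^*$-degree zero so that $i_{\widehat X}$ and the homogeneous-part extraction pass through harmlessly. The one point you flag as needing verification — stability under $\pa_\nu^Y$ of the class of $\cA^*(M)$-equivariant leafwise differential operators with uniformly bounded coefficient derivatives — is indeed the crux, and your observation that it follows from the estimates already established for $\nabla^\nu$ and $\theta^\nu$ in \cite{BHII} (since $\pa_\nu^Y(A_X) = [\nabla^\nu_{\widehat Y}, A_X]$ introduces at most one extra transverse derivative of bounded coefficients and does not raise the leafwise order) is exactly what the paper's ``extends easily'' is pointing at.
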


For the convenience of the reader, we recall what the conditions on $A$ mean.  Write 
$A = \sum_0^nA_{[k]}$, where $A_{[k]}$ is homogeneous of degree $k$.
Let  $X \in C^{\infty}(\wedge^k TM)$ be a local section of norm one, and consider $i_{\what{X}}A_{[k]}$
which is a differential operator (say of order $d$) on $\wedge \nu^*_s \otimes  \wedge T^* F_s \otimes E$.  
Let $(U, \gamma, V)$ be a basic open set for $\cG$ where $U, V \in \cU$, the fixed good cover.  Then $U$ and $V$ come with fixed
coordinates $x_1,...,x_p, w_1,...,w_q$ and  $y_1,...,y_p, z_1, ... ,z_q$.  
The $x_i$ and $y_i$ are the leaf coordinates for $F$, and 
the $w_i$ and $z_i$ are the normal coordinates.    The   coordinates for $(U, \gamma, V)$ are then $x_1,...,x_p, y_1,...,y_p, z_1, ... ,z_q$, and the $y_1,...,y_p$ are the leaf coordinates for $F_s$.   
With respect to an orthonormal basis of $\wedge \nu^*_s \otimes  \wedge T^* F_s \otimes E$ on $(U, \gamma, V)$, 
$i_{\what{X}}A_{[k]}$ may be written as a matrix of operators of the form
$$
\sum_{|\alpha|=0}^{d}  a_{\alpha}(x,y,z) \frac{ \pa^{|\alpha|}}{\pa y_1^{\alpha_1} ... \pa y_p^{\alpha_p}},
$$
where the $a_{\alpha}$ are locally defined smooth functions.  Then each derivative of the $a_{\alpha}$ with respect to the variables $x,y,$ and $z$ is assumed to be globally bounded over all basic open sets for $\cG$, and the bound may depend on how many derivatives are taken.

Note that operators $A$ which are the pull backs of operators on $M$, such as $\theta^{\nu}$ and $\tau$, satisfy the hypothesis of Lemma \ref{AK}.  Using Lemma \ref{AK}, it is easy to show that being transversely smooth is independent of the choice of $\nabla^{\nu}$. 

Finally, we need the concept of $\cG$ invariant $\cA^*(M)$-equivariant operators.  
Suppose that $H = H_{[0]} + H_{[1]} + \cdots + H_{[n]}$
is an $\cA^*(M)$-equivariant  bounded leafwise smoothing operator acting on the sections of $\wedge \nu_s^*\otimes \wedge T^*F_s \otimes E$.
Then $H$ is $\cG$ invariant  provided it satisfies two requirements.

$(1)$ For any $X = X_1 \wedge \cdots \wedge X_k \in C^{\infty}(\wedge^k TM)$ with some $X_j \in 
C^{\infty}(TF)$, $i_{\what{X}} H_{[k]} = 0.$

This means that $H_{[k]}$ defines an operator  
$
H_{[k]}: C^\infty(\wedge^j \nu^*_{\cG} \otimes \wedge T^* F_s \otimes E) \to  
C^\infty(\wedge^{j+k} \nu^*_{\cG} \otimes \wedge T^* F_s \otimes E),
$
and that for $k > q$,   $H_{[k]}=0$.

Each $\gamma \in \wL_x^y \equiv \wL_x \cap \wL^y$, defines an action   
$
W_{\gamma}: C^{\infty}(\wL_{x}, \wedge^*  \nu^*_{\cG} \otimes  \wedge T^* F_s \otimes E) \to C^{\infty}(\wL_{y},  \wedge^* \nu^*_{\cG} \otimes \wedge T^* F_s \otimes E),
$
given by 
$$
[W_{\gamma}\xi](\gamma') = \xi (\gamma'\gamma), \quad \gamma'\in
\wL_{y}.
$$
Let $y' =r(\gamma')$, and note  that $[W_{\gamma}\xi](\gamma') \in  (\wedge^*  \nu^*_{\cG} \otimes  \wedge T^* F_s \otimes E)_{\gamma'}$, which we identify with $\wedge^*  \nu^*_y \otimes  (\wedge T^* F \otimes E)_{y'}$, while $\xi (\gamma'\gamma) \in  (\wedge^*  \nu^*_{\cG} \otimes  \wedge T^* F_s \otimes E)_{\gamma'\gamma}$, which we identify with $\wedge^*  \nu^*_x \otimes  (\wedge T^* F \otimes E)_{y'}$.  To effect this action, we identify $\wedge^*  \nu^*_x $ with $\wedge^*  \nu^*_y $ using the holonomy along $\gamma$.  The second requirement  of $H$ is:

$(2)$  For any $\gamma \in \wL_x^y$, 
$$
(\gamma \cdot H)_y \equiv W_{\gamma} \circ H_x \circ W_{\gamma}^{-1} = H_y,
$$
where $H_x$ is the action of $H$ on $ \wedge^*  \nu^*_{\cG} \otimes\wedge T^* F_s \otimes E \, | \, \wL_{x}$.  

Essentially then, $H$ is $\cG$ invariant means that it defines the same operator on each $\wL \subset s^{-1}(L)$ for each leaf $L$ of $F$.  Note that $\pa_{\nu}$ preserves $\cG$ invariant 
$\cA^*(M)$-equivariant transversely smooth operators.

\medskip

In \cite{BHII}, we extended our Chern-Connes character to $\cG$ invariant transversely 
smooth idempotents.   The essential result  needed  was Lemma 4.13 of that paper, which we state for further reference.
\begin{lemma}\label{trace} Suppose that $H$ and $K$ are $\cG$ invariant 
$\cA^*(M)$-equivariant transversely smooth operators acting on the sections of $\wedge \nu_s^*\otimes \wedge T^*F_s \otimes E$.  
Then
$$
 (\Phi \circ \tr) ([H,K]) = 0.
$$
\end{lemma}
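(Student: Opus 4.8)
The plan is to reduce the statement to the single fact that the Haefliger-form-valued trace $\Tr$ is a \emph{graded} trace on the $\cG$-invariant $\cA^*(M)$-equivariant transversely smooth operators, and then to repeat the computation by which $\Phi$ is shown to be a graded trace on the algebra $\wcB$ built from compactly supported sections in \cite{BHI}, checking that each step survives the passage to transversely smooth operators. First, $\tr([H,K])$ is a finite sum of graded commutators of matrix entries, and since taking entries preserves all the relevant properties, it is enough to treat the case $N=1$, i.e. $H,K\in\wcB$. Then, unwinding Connes' $X$-trick description of $\wcB=M_2(\cB)$, the entries of a product in $\wcB$ are universal polynomial expressions in the entries of the factors and in the curvature $\theta^\nu=(\nabla^\nu)^2$; since $\theta^\nu$ satisfies the hypotheses of Lemma \ref{AK} and compositions of transversely smooth operators are transversely smooth (both recalled above), every operator occurring below is again $\cG$-invariant, $\cA^*(M)$-equivariant and transversely smooth, and $\Phi\big(HK-(-1)^{|H||K|}KH\big)$ becomes an explicit finite combination of terms $\pm\Tr(P_1\cdots P_r)$ and $\pm\Tr(P_1\cdots P_r\,\theta^\nu)$ with the $P_i$ of this kind.

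The heart of the matter is the claim that $\Tr(PQ)=(-1)^{|P||Q|}\Tr(QP)$ for $\cG$-invariant $\cA^*(M)$-equivariant transversely smooth operators $P,Q$. To prove it I would, at each $x\in M$, express $\Tr(PQ)(x)$ as the $\wedge T^*M_x$-valued equivariant trace of the Schwartz kernel of $PQ$ at the unit $\overline x$, expand it as the convolution integral $\int_{\wL_x}\tr\big(k_P(\overline x,\gamma)\,k_Q(\gamma,\overline x)\big)\,d\gamma$ over the simply connected cover $\wL_x=s^{-1}(x)$, and compare with the analogous expression for $\Tr(QP)(x)$. The $\cG$-invariance (requirement $(2)$ above) makes these kernels invariant under simultaneous right translation in both arguments, hence descend to sections over $\cG$; the substitution $\gamma\mapsto\gamma^{-1}$, together with the inversion-invariance of the natural leafwise-times-transverse volume on $\cG$, then matches the two iterated integrals, the sign $(-1)^{|P||Q|}$ being the Koszul sign from interchanging the two $\wedge\nu_s^*$-valued factors, and the equality of the resulting Haefliger forms holding modulo the relations defining $\cA_c^*(M/F)$. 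Here transverse smoothness --- uniform bounds on the kernels and on all their transverse derivatives --- together with the compactness of $M=\cG^{(0)}$ and the leafwise $L^2$-boundedness built into ``bounded leafwise smoothing operator'' are exactly what legitimizes the interchange of integrations for operators that need not be compactly supported on $\cG$; carrying out this non-compact Fubini argument rigorously is the step I expect to be the main obstacle.

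It remains to treat the curvature terms. From the identity $\Tr\circ\pa_\nu=d_H\circ\Tr$, which holds for all transversely smooth operators (recalled above), and the fact that $\pa_\nu^2$ is the commutator with $\theta^\nu$, one gets
\[
\Tr\big([\theta^\nu,P]\big)=\Tr\big(\pa_\nu(\pa_\nu P)\big)=d_H\,\Tr\big(\pa_\nu P\big)=d_H^2\,\Tr(P)=0 ,
\]
so $\theta^\nu$ is central for $\Tr$. Combining this with the graded traciality of $\Tr$ just established, the terms in $\Phi\big(HK-(-1)^{|H||K|}KH\big)$ cancel in exactly the pattern by which $\Phi$ is a graded trace on $\wcB$ in \cite{BHI}, and therefore $(\Phi\circ\tr)([H,K])=0$.
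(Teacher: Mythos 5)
The paper itself does not reprove this lemma; it states it and refers to \cite{BHII}, Lemma 4.13. So there is no ``paper's own proof'' here to compare against line by line, and I will assess your argument on its own merits.

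Your overall strategy --- reduce everything to the claim that $\Tr$ is a graded trace on $\cG$-invariant $\cA^*(M)$-equivariant transversely smooth operators, and prove that by expanding $\Tr(PQ)$ and $\Tr(QP)$ as leafwise integrals, applying $\cG$-invariance and the cyclicity of the matrix trace, and then using the inversion $\gamma\mapsto\gamma^{-1}$ plus a Fubini/partition-of-unity argument to identify the two Haefliger classes --- is the natural and, I believe, correct one. You have also correctly identified where all the real work lies: showing that the difference $\Tr(PQ)-(-1)^{|P||Q|}\Tr(QP)$ actually lands in $\overline{L^k}$. One piece of reassurance you could have added: for $P$ a bounded leafwise smoothing operator, $k_P(\overline x,\cdot)$ and $k_P(\cdot,\overline x)$ are in $L^2(\wL_x)$ with norms bounded uniformly in $x$ (apply $P$ and $P^*$ to Dirac masses, which lie in negative Sobolev spaces), so the integrand $\tr\bigl(k_P(\overline x,\gamma)\,k_Q(\gamma,\overline x)\bigr)$ is absolutely integrable on each $\wL_x$ by Cauchy--Schwartz; the genuinely delicate point is not pointwise integrability but the exchange of the $s$-fibration and $r$-fibration in the Haefliger integral $\int_F$, which is where the relations defining $\cA^*_c(M/F)$ and the holonomy invariance of the transverse measure (the foliation being Riemannian) enter.

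Where your proposal overreaches is the detour through the $X$-trick. In the statement, $H$ and $K$ act on sections of $\wedge\nu_s^*\otimes\wedge T^*F_s\otimes E$; they are elements of $\cB$ (or $M_N(\cB)$), not general elements of $\wcB$. Since $\cB$ embeds into $\wcB$ as a subalgebra via $T\mapsto\left(\begin{smallmatrix}T&0\\0&0\end{smallmatrix}\right)$, one has $[H,K]$ computed in $\wcB$ equal to the image of $[H,K]$ computed in $\cB$, and on such elements $\Phi$ reduces to $\Tr$ of the $(1,1)$-entry. So $(\Phi\circ\tr)([H,K])=\Tr\bigl(\tr([H,K])\bigr)$ with no $\theta^\nu$-terms at all, and the lemma is exactly the graded trace property of $\Tr$. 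Your expansion of $\Phi$ into terms $\Tr(P_1\cdots P_r)$ and $\Tr(P_1\cdots P_r\theta^\nu)$, and the verification that $\theta^\nu$ is central for $\Tr$, are not wrong --- and the latter observation, via $\Tr([\theta^\nu,P])=\Tr(\pa_\nu^2 P)=d_H^2\Tr(P)=0$, is a correct and useful fact used elsewhere (e.g.\ in Theorem~\ref{newchern}) --- but they are superfluous for Lemma~\ref{trace} itself and would only be needed if you wanted the graded traciality of $\Phi$ on \emph{all} of $\wcB$. Trimming that part would leave exactly the core of the argument, which is sound modulo the Fubini step you already flagged.
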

\noindent Lemma \ref{trace}  and the proof of Theorem 4.1 of \cite{BHI}
immediately imply
\begin{theorem}\label{family}
Let $e$ be a $\cG$ invariant transversely smooth idempotent acting on the sections of $\wedge T^*F_s \otimes E$.  Then  
$$
(\Phi \circ \tr) \Bigl{(}e\exp\left({\frac{-(\delta e)^{2}}{2i 
\pi}}\right)\Bigr{)}
$$ 
is a closed Haefliger form whose  Haefliger cohomology class,  denoted  
$\ch_a(e)$, depends only on $e$.  In addition, if $e_t$, $0 \leq t \leq 
1$,  is a smooth family of such idempotents, then 
$$
\ch_a(e_0) = \ch_a(e_1).
$$ 
\end{theorem}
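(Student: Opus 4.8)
The plan is to transplant the classical argument that the Chern character of an idempotent is closed and has a class independent of the auxiliary choices, now working in the algebra of $\cG$ invariant, $\cA^*(M)$-equivariant, transversely smooth operators rather than in $\wcB$ itself. Two facts recalled above do all the work: first, $\pa_{\nu}$ (hence $\delta$, which is built from $\pa_{\nu}$ and multiplication by $\theta^{\nu}$, both of which preserve the class by Lemma \ref{AK}) preserves this class, and the class is closed under composition, so every operator produced by the manipulations below stays inside it; second, on this class $\Phi\circ\tr$ is still a graded trace (Lemma \ref{trace}) and still satisfies $\Phi\circ\delta=d_H\circ\Phi$. Note also that a $\cG$ invariant operator $H$ has $H_{[k]}=0$ for $k>q$, so every exponential series below is in fact a finite sum and no convergence question arises, and the Schwartz kernels of transversely smooth operators are smooth so that $\Tr$ (hence $\Phi\circ\tr$) is finite on all the operators involved. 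Here $e$ is a single idempotent, so no matrix amplification is needed.

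\textbf{Closedness.} Put $\omega_e=-(\delta e)^2/(2i\pi)$. Applying $\delta$ to $e^2=e$ in $\wcB$ gives $(\delta e)e+e(\delta e)=\delta e$, whence $e(\delta e)e=0$, $e(\delta e)=(\delta e)(1-e)$, $(\delta e)e=(1-e)(\delta e)$, and consequently $(\delta e)^2$, $\omega_e$ and $\exp\omega_e$ all commute with $e$. Since $\delta^2=0$ on $\wcB$ we have $\delta\omega_e=0$, hence $\delta(\exp\omega_e)=0$, hence $\delta(e\exp\omega_e)=(\delta e)\exp\omega_e$ because $e$ has degree zero. Therefore $d_H\,\ch_a(e)=(\Phi\circ\tr)\big(\delta(e\exp\omega_e)\big)=(\Phi\circ\tr)\big((\delta e)\exp\omega_e\big)$, using $\Phi\circ\delta=d_H\circ\Phi$. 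Finally write $\delta e=e(\delta e)(1-e)+(1-e)(\delta e)e$; moving the idempotents cyclically past $\exp\omega_e$ by the graded trace property of Lemma \ref{trace} (legitimate since $e$ commutes with $\exp\omega_e$) turns both terms into expressions containing $e(1-e)=0$ or $(1-e)e=0$, so $(\Phi\circ\tr)\big((\delta e)\exp\omega_e\big)=0$ and $\ch_a(e)$ is a closed Haefliger form.

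\textbf{Independence of choices and invariance under smooth families.} These are two instances of one transgression computation. Let $e_t$, $t\in[0,1]$, be a smooth family of $\cG$ invariant transversely smooth idempotents, so that $\dot e_t$ is again of this type; and, for the independence statement, let the connection defining $\delta$ and the curvature $\theta^{\nu}$ defining $\Phi$ be deformed along a smooth path, giving differentials $\delta_t$ with $\delta_t^2=0$ and $\Phi_t\circ\delta_t=d_H\circ\Phi_t$, the $t$-derivative of $\delta_t$ being commutation with a transversely smooth operator. Differentiating $(\Phi_t\circ\tr)(e_t\exp\omega_{e_t})$ in $t$ produces a sum of terms built from $\dot e_t$, $\delta_t e_t$, $\delta_t\dot e_t$ (and, in the connection case, the derivative of $\delta_t$); using the relations from $e_t^2=e_t$ (in particular $e_t\dot e_t e_t=0$), the graded trace property of Lemma \ref{trace}, and $\Phi_t\circ\delta_t=d_H\circ\Phi_t$, this derivative is rewritten as $d_H$ of a Haefliger form. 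Hence the Haefliger class is constant in $t$: it depends neither on the chosen connections nor, for a smooth family, on $t$. This is exactly the computation in the proof of Theorem 4.1 of \cite{BHI}, which now applies verbatim because Lemma \ref{trace} provides the graded trace property on the enlarged class of operators.

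The genuine difficulty is not the algebra, which is the standard Chern--Weil bookkeeping, but checking that every operator arising along the way — the $\delta$- and $\delta_t$-images of $e$ and $e_t$, their finite products, the (finite) exponentials, the transgression terms, and $\dot e_t$ for a smooth family — is again $\cG$ invariant, $\cA^*(M)$-equivariant and transversely smooth, so that Lemma \ref{trace}, the identity $\Phi\circ\delta=d_H\circ\Phi$, and the finiteness of $\Tr$ remain available. Granting the closure of the class under $\pa_{\nu}$ and composition recalled in the text, and the fact that in a smooth family $\dot e_t$ inherits transverse smoothness with the requisite uniform bounds, this is routine, but it is the point at which the hypotheses are actually used.
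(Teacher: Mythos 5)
Your proof is correct and follows essentially the same route as the paper: the paper simply notes that Lemma \ref{trace} together with the proof of Theorem 4.1 of \cite{BHI} immediately give the result, and what you have done is unpack the algebra that citation stands for — the identity $e(\delta e)e=0$, the commutation of $\exp\omega_e$ with $e$, the cyclic-trace killing of $(\Phi\circ\tr)((\delta e)\exp\omega_e)$, and the standard transgression for homotopy and connection independence — together with the observation that every intermediate operator stays in the class of $\cG$ invariant, $\cA^*(M)$-equivariant, transversely smooth operators (via the remarks after Definition \ref{TS} and Lemma \ref{AK}), which is exactly where Lemma \ref{trace} is needed.
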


The Haefliger class $\ch_a(e)$ is  the Chern-Connes character of $e$.

\begin{lemma}\label{samecc}
Two $\cG$ invariant transversely smooth idempotents which have the same 
image, have the same Chern-Connes character.  
\end{lemma}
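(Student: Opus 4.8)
The plan is to show that if $e$ and $e'$ are two $\cG$ invariant transversely smooth idempotents acting on $\cA_{(2)}^*(F_s,E)$ with the same image, then $e e' = e'$ and $e' e = e$, and then to connect $e$ to $e'$ by a smooth path of such idempotents so that Theorem \ref{family} applies. First I would record the purely algebraic observation: since $e$ and $e'$ are idempotents (not necessarily self-adjoint) with $\Image(e) = \Image(e')$, every vector in the common image is fixed by both, so $e e' = e'$ and $e' e = e$; symmetrically $e' e'' $-type relations hold, giving $e e' e = e$ and $e' e e' = e'$. These identities are exactly the classical ones used to prove that idempotents with the same image are ``conjugate'' in a suitable sense.

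Next I would produce the homotopy. The standard trick is to set, for $t \in [0,1]$,
$$
u_t = 1 + t(e' - e)e, \qquad v_t = 1 + t\, e(e' - e),
$$
or more symmetrically to use $e_t = e' + t(e - e')$ composed on both sides with $e$ and $e'$ — concretely one checks that $f_t := e'e + (1-t)(e - e'e)$ interpolates, or better, that the family
$$
e_t = e' \,+\, \big( \cos^2(\tfrac{\pi t}{2}) \big)(e - e')\, ,
$$
suitably corrected, is a path of idempotents from $e$ to $e'$. In practice the cleanest route is: because $e e' = e'$ and $e' e = e$, the element $g = 1 - e + e'$ satisfies $g^{-1} = 1 - e' + e$ (direct check using the product relations) and $g\, e\, g^{-1} = e'$; then $g_t = 1 - e + t^{-1}\cdots$ — rather, interpolate $g$ to the identity along $g_t = 1 + t(e' - e)$, note $g_t$ is invertible for all $t$ near the segment since $(e'-e)$ is ``small'' in the relevant sense (its square is $e' - e' e - e e' + e = e' - e' - e + e = -$ something nilpotent-like), and set $e_t = g_t\, e\, g_t^{-1}$. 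One must verify (i) each $e_t$ is an idempotent — automatic from conjugation; (ii) each $e_t$ is a bounded leafwise smoothing operator — clear since $e, e'$ are and $g_t, g_t^{-1}$ are finite polynomials in them; (iii) each $e_t$ is $\cG$ invariant and transversely smooth — this follows because $\cG$ invariance and transverse smoothness are preserved by composition (stated in the excerpt just after Definition \ref{TS}) and by finite linear combinations, and $g_t^{-1}$ is again a polynomial in $e, e'$; (iv) the family is smooth in $t$ in the sense required by Theorem \ref{family} — immediate since it is polynomial in $t$.

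Having the path, I would invoke Theorem \ref{family} to conclude $\ch_a(e) = \ch_a(e_0) = \ch_a(e_1) = \ch_a(e')$, which is the assertion of Lemma \ref{samecc}. The main obstacle I anticipate is purely a matter of bookkeeping rather than depth: one must make sure that the invertibility of $g_t = 1 + t(e' - e)$ genuinely holds within the operator class under consideration and that $g_t^{-1}$ stays in that class. Here the key point is that $(e' - e)$ need not be small in operator norm, so one should not rely on a Neumann series; instead one exploits the algebraic identity $(e' - e)^2 = e' - e'e - ee' + e = e' - e - e + e$ — more carefully, using $e'e = e$ and $ee' = e'$ one computes $(e'-e)^2 = e'^2 - e'e - ee' + e^2 = e' - e - e' + e = 0$, so $e' - e$ is square-zero, hence $g_t = 1 + t(e'-e)$ has honest two-sided inverse $1 - t(e'-e)$ for every $t$, with no convergence issue at all. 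This makes step (ii)–(iii) trivial and removes the only potential difficulty; the remainder is routine verification that conjugation by $g_t$ preserves each structural property, all of which were already established for compositions in the preceding part of Section \ref{cc}.
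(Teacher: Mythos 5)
Your approach is essentially the paper's, but filtered through a conjugation detour, and you have a sign error in the detour that needs fixing. With your conventions $ee' = e'$ and $e'e = e$, the conjugating element that sends $e$ to $e'$ is $g = 1 + e - e'$ (with $g^{-1} = 1 - e + e'$), \emph{not} $g = 1 - e + e'$ as you wrote: a direct check gives $(1 - e + e')\,e\,(1 - e' + e) = e(1 - e' + e) = 2e - e'$, which is not $e'$. Correspondingly the one-parameter family should be $g_t = 1 + t(e - e')$ rather than $1 + t(e' - e)$; with your choice, $g_1 e g_1^{-1} = 2e - e'$ and the path fails to terminate at $e'$. Once the sign is fixed, your nilpotency observation $(e' - e)^2 = 0$ shows $g_t^{-1} = 1 - t(e - e')$, and then $g_t\,e\,g_t^{-1} = (1-t)e + te'$ --- which is exactly the convex combination. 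So after the correction your homotopy \emph{is} the paper's homotopy, just described via conjugation.

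This is also where you overcomplicated things: you listed the linear path $e_t = te' + (1-t)e$ as one of several candidates, then hedged with ``composed on both sides'' and ``suitably corrected,'' and abandoned it for the conjugation route. In fact the linear path is already an idempotent on the nose --- using $ee' = e'$, $e'e = e$ one gets
$\bigl(te' + (1-t)e\bigr)^2 = t^2 e' + t(1-t)e + (1-t)t e' + (1-t)^2 e = te' + (1-t)e$,
and $\cG$ invariance, transverse smoothness, and smooth dependence on $t$ are immediate since it is a linear combination of $e$ and $e'$ with scalar coefficients. That one-line check plus Theorem \ref{family} is the entire argument the paper gives. The conjugation picture is a nice conceptual gloss (and your observation that $e' - e$ is square-zero is the right reason the formulas are so clean), but once you have the linear path you do not need $g_t$, its inverse, or any discussion of invertibility at all.
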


\begin{proof}
Suppose that $e^0$ and $e^1$ are two such idempotents.  Then $e^0 \circ 
e^1 = e^1$ and $e^1 \circ e^0 = e^0$, and the family $e^t = te^1 + 
(1-t)e^0$ is a smooth family of $\cG$ invariant transversely smooth 
idempotents connecting $e^0$ to $e^1$.  
Theorem \ref{family} then gives the result.
\end{proof}

\section{The twisted higher harmonic signature}\label{tsig}

We now define the twisted higher harmonic signature $\sigma (F,E)$.  The leafwise de Rham differential on $\cG$ extends to a closed operator on $\cA_{(2)}^*(F_{s}, \C)$ which coincides with the lifted one from the 
foliation $(M,F)$ and it is denoted by $d_s$. The leafwise formal adjoint of $d_s$ with respect to the Hilbert structure is well defined and is denoted by $\delta_s$, and $\delta_s = -*d_s*$. Denote by $\Delta$ the Laplacian given by  $\Delta = (d_s + \delta_s)^2 = d_s\delta_s + \delta_s d_s$, and denote by $\Delta_{k}$ its action on $\cA_{(2)}^k(F_{s}, \C)$.   The leafwise $*$ operator also gives the leafwise involution $\tau$ on $\cA_{(2)}^*(F_{s}, \C)$, where as usual, 
$$
\tau = {\sqrt{-1}} ^{k(k-1) +\ell} \, *
$$
on $\cA_{(2)}^k(F_{s}, \C)$, and it is easy to check that $\delta_s  = - \tau d_s \tau$, so  $\tau(d_s +  \delta_s) =  -(d_s+ \delta_s)\tau$, and $\Delta\tau = \tau \Delta$.

These operators extend to $\cA_{(2)}^*(F_{s},E)$ as follows.    Since the operators are all leafwise, local and linear, we need only define them for local sections of the form $\alpha \otimes \phi$ where 
$\alpha$ is a local $k$ form on $\wL$, and $\phi$ is a local section of $E \,|\,\wL$.   
Then 
$$
d_s(\alpha \otimes \phi) =  d_s\alpha \otimes \phi + (-1)^k \alpha \wedge \nabla^{\wL}_E \phi,  \quad  
\what{*}(\alpha \otimes \phi) =  *\alpha \otimes \gamma\phi,  \quad  
\what{\tau}(\alpha \otimes \phi) =  \tau\alpha \otimes \gamma\phi,  
$$
where $\nabla_E^{\wL} $ is $ \nabla_E$ restricted to $\wL$, so $\nabla^{\wL}_E \phi$ is a local section of  $T^*\wL \otimes E$.  We define the wedge  product $\alpha \wedge \nabla^{\wL}_E \phi$ (as a local section of  $\wedge^{k+1}T^*\wL \otimes E$) in the obvious way.
\begin{lemma}\label{defQ} We have
$$
\delta_s =  -\what{*}\,\, d_s\,\,\what{*}  =  -\what{\tau}\,\, d_s\,\,\what{\tau} .   
$$
\end{lemma}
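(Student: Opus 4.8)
The plan is to run the classical proof that $\delta=-{*}d{*}$ on an even-dimensional oriented Riemannian manifold, but leafwise on the leaves $\wL_x$ of $F_s$ (which are oriented and without boundary) and with the twisting by $E$ carried along. The key point is that the correct ``star'' for the twisted $L^2$ structure is $\what{*}$, not a naive $E$-valued Hodge star: the inner product on $\cA_{(2)}^*(F_s,E)$ is built from the positive metric $(\cdot,\cdot)=\{\cdot,\gamma\cdot\}$, whereas the leafwise flat connection $\nabla_E$ is compatible only with the preserved indefinite metric $\{\cdot,\cdot\}$, and the involution $\gamma$ occurring in $\what{*}$ is precisely what reconciles the two.

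For two $E$-valued leafwise forms $\xi_1,\xi_2$ write $\xi_1\,\bar\wedge\,\xi_2$ for the scalar-valued leafwise form obtained by wedging the form parts and applying $\{\cdot,\cdot\}$ to the coefficients (so it is complex-linear in $\xi_1$ and conjugate-linear in $\xi_2$). I would first record three facts. (a) $\langle\xi_1,\xi_2\rangle(x)=\int_{\wL_x}\xi_1\,\bar\wedge\,\what{*}\xi_2$; by sesquilinearity it suffices to check this on $\xi_i=\alpha_i\otimes\phi_i$, where it is immediate from $\what{*}(\alpha\otimes\phi)=*\alpha\otimes\gamma\phi$ and $(\cdot,\cdot)=\{\cdot,\gamma\cdot\}$. (b) The graded Leibniz rule $d_s(\xi_1\,\bar\wedge\,\xi_2)=d_s\xi_1\,\bar\wedge\,\xi_2+(-1)^{\deg\xi_1}\,\xi_1\,\bar\wedge\,d_s\xi_2$; on the form parts this is the ordinary Leibniz rule for $d_s$, and, computing in foliation coordinates, the cross terms produced by $\nabla_E$ match exactly because of the leafwise compatibility $X\{\phi_1,\phi_2\}=\{\nabla_{E,X}\phi_1,\phi_2\}+\{\phi_1,\nabla_{E,X}\phi_2\}$ for $X$ tangent to $F_s$. (c) Since $\gamma^2=\Id$ and $\dim\wL_x=p=2\ell$ is even, $\what{*}^2=(-1)^{j}$ on $\cA^j_c(F_s,E)$.

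Granting these, take smooth sections $\omega\in\cA^{k-1}_c(F_s,E)$ and $\eta\in\cA^{k}_c(F_s,E)$; on any single leaf $\wL_x$ they restrict to compactly supported forms, so $\omega\,\bar\wedge\,\what{*}\eta$ is a compactly supported $(p-1)$-form on $\wL_x$ and Stokes' theorem together with (b) gives $\int_{\wL_x}d_s\omega\,\bar\wedge\,\what{*}\eta=(-1)^{k}\int_{\wL_x}\omega\,\bar\wedge\,d_s\what{*}\eta$. Rewriting $d_s\what{*}\eta=(-1)^{k-1}\what{*}(\what{*}d_s\what{*}\eta)$ by (c) and applying (a) twice, this becomes $\langle d_s\omega,\eta\rangle=\langle\omega,-\what{*}d_s\what{*}\eta\rangle$ for all $x$ and all such $\omega,\eta$, which by definition of the leafwise formal adjoint (and the standard extension to the associated closed operators) is exactly $\delta_s=-\what{*}d_s\what{*}$. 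For the second equality, note that on $\cA^k_c(F_s,E)$ one has $\what{\tau}={\sqrt{-1}}^{\,k(k-1)+\ell}\,\what{*}$, so on $\cA^k_c(F_s,E)$ the operators $\what{\tau}d_s\what{\tau}$ and $\what{*}d_s\what{*}$ differ only by the scalar ${\sqrt{-1}}^{\,k(k-1)+(p-k+1)(p-k)+2\ell}$, which equals $1$ by the same parity check that underlies the untwisted identity $\delta_s=-\tau d_s\tau$; hence also $\delta_s=-\what{\tau}d_s\what{\tau}$.

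The step that needs the most care is (b), together with the way it meshes with (a): one must make sure the conjugate-linearity of $\{\cdot,\cdot\}$ in its second argument threads consistently through both the $L^2$ pairing and $d_s$, and that the $\nabla_E$ cross-terms cancel with the correct signs. This is exactly where the preserved-metric hypothesis is used, and where a sign or a conjugation slip would be easiest; everything else is bookkeeping with powers of $(-1)$ and $\sqrt{-1}$ together with Stokes' theorem on the leaves $\wL_x$.
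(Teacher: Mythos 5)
Your proof is correct and follows essentially the same route as the paper: both reduce to the observation that $\langle\xi_1,\xi_2\rangle = \int_{\wL_x}\xi_1\,\bar\wedge\,\what{*}\xi_2$ (the paper's pairing $Q(\xi_1,\what{*}\xi_2)$), use the leafwise compatibility of $\{\cdot,\cdot\}$ with $\nabla_E$ to control the cross terms, and integrate by parts (Stokes) on the boundaryless leaf $\wL_x$. The only organizational difference is that the paper reduces to the case where $\phi_1$ is a flat local section (which is enough by local spanning), so that $d_s(\alpha_1\otimes\phi_1)=d_s\alpha_1\otimes\phi_1$ and the Leibniz rule never has to be stated in full generality; you instead isolate the graded Leibniz rule for $\bar\wedge$ as its own step. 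Both verifications of the sign and $\sqrt{-1}$ bookkeeping are correct, and you actually supply the degree count ${\sqrt{-1}}^{\,k(k-1)+(p-k+1)(p-k)+2\ell}=1$ for the $\what{\tau}$ identity explicitly, which the paper leaves implicit.
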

Note that $d_s^2 = 0$, so also $\delta_s^2 =0$ since $\what{*}\, ^2 = \pm 1$. 
\begin{proof}
Consider two sections $\alpha_1 \otimes \phi_1$ and $\alpha_2 \otimes \phi_2$, and set 
$$ 
Q( \alpha_1 \otimes \phi_1,\alpha_2 \otimes \phi_2 ) (x) \quad = \quad  \int_{\wL_x} \{\phi_1, \phi_2\} \alpha_1 \wedge \alpha_2,   
$$
(and extend to all of $\cA_{(2)}^*(F_{s},E)$ by linearity).  Then
$$ 
<\alpha_1 \otimes \phi_1,\alpha_2 \otimes \phi_2> \quad = \quad  Q(\alpha_1 \otimes \phi_1, \what{*}(\alpha_2 \otimes \phi_2)).
$$
Now suppose that $\alpha_1$ is a local $k-1$ form on $\wL$, $\alpha_2$ is a local $k$ form on $\wL$, and $\phi_1$ is flat.   If $\phi_2$ is an arbitrary section of $E$,  set 
$\{\phi_1, \alpha_2 \otimes  \phi_2\} = \alpha_2 \{\phi_1,  \phi_2\}$.  (Note that $\alpha_2$ is $\C$ valued).
As $\{ \cdot, \cdot \}$ is preserved by the flat structure and $\phi_1$ is flat, it follows that on $\wL$, 
$\nabla_E^{\wL} \{\phi_1,\phi_2\} =  \{\phi_1, \nabla_E^{\wL} \phi_2\}.$
Acting on functions on $\wL$, 
$d_s = \nabla_E^{\wL}$, so
$$
\quad d_s \{\phi_1,\phi_2\} =  \{\phi_1, \nabla_E^{\wL} \phi_2\}.
$$
Then 
$$
<d_s(\alpha_1 \otimes \phi_1),\alpha_2 \otimes \phi_2>  \quad = \quad 
 \int_{\wL_x} \{\phi_1, \gamma \phi_2\} d_s \alpha_1 \wedge *  \alpha_2  \quad = \quad 
(-1)^k \int_{\wL_x} \alpha_1 \wedge d_s( \{\phi_1, \gamma \phi_2\}  *  \alpha_2 ),
$$
while
$$
<\alpha_1 \otimes \phi_1,   -\what{*}\,\,  d_s \, \,\what{*}(\alpha_2 \otimes \phi_2)> \quad = \quad
$$
$$ 
(-1)Q(\alpha_1 \otimes \phi_1,   \what{*}^2d_s\,\, \what{*}(\alpha_2 \otimes \phi_2) ) \quad = \quad 
(-1)^k  Q(\alpha_1 \otimes \phi_1,   d_s\,\, \what{*}(\alpha_2 \otimes \phi_2) ) \quad = \quad 
$$
$$
(-1)^k Q(\alpha_1 \otimes \phi_1,   (d_s *\alpha_2) \otimes \gamma \phi_2  +
(-1)^k *\alpha_2 \wedge \nabla_E^{\wL} \gamma\phi_2 )\quad = \quad 
$$
$$
(-1)^k \int_{\wL_x} \alpha_1 \wedge (d_s *\alpha_2)  \{ \phi_1,  \gamma \phi_2  \}  +
(-1)^k \alpha_1 \wedge *\alpha_2 \wedge  \{\phi_1, \nabla_E^{\wL} \gamma\phi_2 \} \quad = \quad 
$$
$$
(-1)^k \int_{\wL_x} \alpha_1 \wedge d_s  (*  \alpha_2 \{\phi_1, \gamma \phi_2\})  \quad = \quad 
(-1)^k \int_{\wL_x} \alpha_1 \wedge d_s( \{\phi_1, \gamma \phi_2\}  *  \alpha_2 ).
$$
\end{proof}

Denote by $\Delta^E$ the Laplacian given by  $\Delta^E = (d_s + \delta_s )^2 =d_s \,\, \delta_s +  \delta_s \, d_s$, and denote by $\Delta^E_{k}$ its action on $\cA_{(2)}^k (F_{s}, E)$.  Note that  $\what{\tau}$ is still an involution even at the bundle level, and that  
$\what{\tau}(d_s +  \delta_s) =  -(d_s+ \delta_s) \what{\tau}$ and
$\Delta^E\what{\tau}= \what{\tau} \Delta^E$ still hold.

As usual, the space of twisted harmonic forms $\Ker(\Delta^E)$ is related to the leafwise cohomology of the twisted forms. 
The space of closed $L^2$ forms in $\cA_{(2)}^*(F_{s}, E)$ is denoted by $Z^*_{(2)}(F_{s}, E)$ and it is a Hilbert subspace.  The space of exact $L^2$ forms in $\cA_{(2)}^*(F_{s}, E)$ is denoted by  $B^*_{(2)}(F_{s}, E)$, and {we denote }its closure by ${\overline B}^*_{(2)}(F_{s}, E)${. We} denote by $\oH^*_{(2)}(F_{s}, E)$ the leafwise  reduced twisted $L^2$ cohomology  of the foliation, that is
$$ 
\oH^*_{(2)}(F_{s}, E) = Z^*_{(2)}(F_{s}, E) / {\overline B}^*_{(2)}(F_{s}, E).
$$
Here is a well known Hodge result that we state for further use. See the Appendix of \cite{H-L:1990}     
\begin{lemma}\label{hodge}
The field $\Ker (\Delta^E_k)$ is a subfield of 
$Z^k_{(2)}(F_{s}, E)$, and 
$Z^k_{(2)}(F_{s}, E) = \Ker (\Delta^E_k) \oplus {\overline 
B}^k_{(2)}(F_{s}, E)$.  Thus
the natural projection $Z^k_{(2)}(F_{s}, E) \to \oH^k_{(2)}(F_{s}, E)$ 
induces by restriction 
an isomorphism $$\Ker (\Delta^E_k)\simeq \oH^k_{(2)}(F_{s}, E).
$$
In addition
$$
\cA_{(2)}^* (F_{s},E)=  \Ker(d_s + (d_s)^*) \oplus \overline{\Im(d_s)} \oplus \overline{\Im(\delta_s)}.
$$
That is, for each $x \in M$,
$$
L^2(\wL_{x}; \wedge T^*\wL_{x} \otimes (E \, | \, \wL_{x})) = \Ker(d^{x}_s + \delta^x_s) \oplus \overline{\Im(d^{x}_s)} \oplus \overline{\Im(\delta^{x}_s)}.
$$
\end{lemma}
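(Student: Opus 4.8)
The plan is to reduce every assertion to a statement about a single complete Riemannian manifold and then quote the standard $L^2$ Hodge theory (as in the Appendix of \cite{H-L:1990}). Fix $x\in M$ and work on the leaf $\wL_x$ of $F_s$, which is the simply connected cover of $r(\wL_x)$; since $M$ is compact, $\wL_x$ is complete with bounded geometry, and the flat bundle $E\,|\,\wL_x$, equipped with the positive definite metric $(\cdot,\cdot)$ and the flat connection, again has bounded geometry. By Lemma \ref{defQ}, $\delta_s^x=-\what{*}\,d_s^x\,\what{*}$ is the formal adjoint of $d_s^x$ for the $L^2$ inner product on $L^2(\wL_x;\wedge T^*\wL_x\otimes(E\,|\,\wL_x))$, and $\Delta_x^E=d_s^x\delta_s^x+\delta_s^xd_s^x$ is a formally self-adjoint second order elliptic operator whose principal symbol is that of the scalar Laplacian tensored with the identity of $E$. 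All the decompositions in the statement are defined fiberwise in $x$, so it suffices to establish them on each $\wL_x$; in the paper's notation this is exactly the last displayed identity, the other two being its reading in degree $k$.

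First I would collect the analytic input on $\wL_x$. Completeness makes $D^x=d_s^x+\delta_s^x$, initially defined on compactly supported smooth forms, essentially self-adjoint (by finite propagation speed for the associated wave equation, following Chernoff, or by Gaffney's theorem); hence $\Delta_x^E=(D^x)^2$ with its natural domain is self-adjoint, and the minimal and maximal closed extensions of $d_s^x$, $\delta_s^x$ and $\Delta_x^E$ coincide. If $\omega\in L^2$ with $\Delta_x^E\omega=0$, then elliptic regularity makes $\omega$ smooth, and cutting off against compactly supported functions with controlled gradients — available by completeness — legitimates the integration by parts
$$
0=\langle\Delta_x^E\omega,\omega\rangle=\|d_s^x\omega\|_0^2+\|\delta_s^x\omega\|_0^2,
$$
so $\omega\in\Ker d_s^x\cap\Ker\delta_s^x$. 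This gives $\Ker(\Delta_k^E)\subseteq Z^k_{(2)}(F_s,E)$ and, more precisely, $\Ker\Delta_x^E=\Ker D^x=\Ker d_s^x\cap\Ker\delta_s^x$, which is the first assertion.

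Next I would assemble the orthogonal decomposition. For the self-adjoint operator $D^x$ one has $L^2=\Ker D^x\oplus\overline{\Im D^x}$. Since $(d_s^x)^2=0$, the image $\Im d_s^x$ lies in $\Ker d_s^x=(\Im\delta_s^x)^{\perp}$ (using that $d_s^x$ is closed with adjoint $\delta_s^x$), so $\Im d_s^x\perp\Im\delta_s^x$; combined with $\Ker D^x=\Ker d_s^x\cap\Ker\delta_s^x$ from above, this yields $\overline{\Im D^x}=\overline{\Im d_s^x}\oplus\overline{\Im\delta_s^x}$, hence
$$
L^2(\wL_x;\wedge T^*\wL_x\otimes(E\,|\,\wL_x))=\Ker(d_s^x+\delta_s^x)\oplus\overline{\Im d_s^x}\oplus\overline{\Im\delta_s^x},
$$
which is the last displayed identity (and, letting $x$ vary, its global form on $\cA^*_{(2)}(F_s,E)$). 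Restricting to degree $k$, the closed forms are $Z^k_{(2)}(F_s,E)=\Ker d_s^x=(\overline{\Im\delta_s^x})^{\perp}$ in that degree, so the three-fold decomposition gives $Z^k_{(2)}(F_s,E)=\Ker(\Delta_k^E)\oplus\overline{B}^k_{(2)}(F_s,E)$, the second assertion. Passing to the quotient by $\overline{B}^k_{(2)}(F_s,E)$ then identifies $\Ker(\Delta_k^E)$ with $\oH^k_{(2)}(F_s,E)$ via the natural projection, the third assertion.

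The one place where genuine care is needed — the rest being routine Hilbert-space bookkeeping — is the analysis on the complete, non-compact leaf: essential self-adjointness of $D^x$ and the justification of the integration by parts above. The mild subtlety is that $(\cdot,\cdot)$ is not preserved by the flat connection, so $E\,|\,\wL_x$ is not a flat Hermitian bundle in the usual sense; but the only structural facts used are that $\delta_s^x$ is the $L^2$ formal adjoint of $d_s^x$ (Lemma \ref{defQ}) and that $\wL_x$ and $E\,|\,\wL_x$ have bounded geometry (immediate from compactness of $M$), which is precisely what the complete-manifold arguments require. One may therefore simply invoke the Appendix of \cite{H-L:1990}.
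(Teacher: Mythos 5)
Your proof is correct. The paper supplies no proof of this lemma — it merely cites the Appendix of \cite{H-L:1990} — and what you reproduce is precisely the standard $L^2$ Kodaira decomposition that reference contains: essential self-adjointness of $D^x=d_s^x+\delta_s^x$ on the complete leaf $\wL_x$ (Gaffney/Chernoff, using bounded geometry inherited from compactness of $M$), the consequent identification $\Ker\Delta^E_x=\Ker D^x=\Ker d_s^x\cap\Ker\delta_s^x$ via the cutoff integration by parts, the orthogonality $\Im d_s^x\perp\Im\delta_s^x$, the decomposition $L^2=\Ker D^x\oplus\overline{\Im d_s^x}\oplus\overline{\Im\delta_s^x}$, and the reading of this in each degree via $Z^k_{(2)}=\Ker d_s^x=(\overline{\Im\delta_s^x})^{\perp}$. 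You are also right to single out the genuine subtlety: $E\,|\,\wL_x$ with the positive-definite metric $(\cdot,\cdot)$ is not a flat Hermitian bundle, so one cannot simply quote the unitary flat case; the paper's Lemma \ref{defQ}, which you invoke to get $\delta_s^x=-\what{*}\,d_s^x\,\what{*}$ as the formal adjoint, is exactly what substitutes for the usual compatibility and lets the complete-manifold argument run unchanged. No gap.
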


We assume that the projection $P_{\ell}$  onto $\Ker(\Delta^E_{\ell})$ is transversely smooth.  It is a classical result that this projection is a bounded leafwise smoothing operator, so what we are really assuming is a form of smoothness in transverse directions.  This condition holds in many important cases, see the comments below after the statement of Theorem \ref{GRthm}.
Denote by $\cA^*_{\pm}(F_s, E)$ the $\pm 1$ eigenspaces of $\what{\tau}$, and by $\Ker(\Delta^{E \pm}_{\ell})$ the intersections $\cA^*_{\pm}(F_s, E) \cap \Ker(\Delta^E_{\ell})$.  
Denote by $\pi_{\pm}  = \frac{1}{2}(P_{\ell} \pm \what{\tau} \circ P_{\ell})$, and note that these are the projections onto $\Ker (\Delta^{E \pm}_{\ell})$, respectively.  Since the operator $\what{\tau} $ satisfies the hypothesis of Lemma \ref{AK}, both $\pi_{\pm}$  are transversely smooth, and their Chern-Connes characters  $\ch_a (\pi_{\pm})$ are well defined Haefliger cohomology classes.  

\begin{definition}
Suppose that the projection $P_{\ell}$ onto $\Ker (\Delta^E_{\ell})$  is 
transversely smooth.   The higher twisted harmonic signature $\sigma (F,E)$ is the
difference  
$$
\sigma (F,E) = \ch_a (\pi_{+}) - \ch_a (\pi_{-}).
$$
\end{definition}

To justify our claim that our assumption of transverse smoothness for $P_{\ell}$ holds in important cases, we have the following which is an extension of a result due to Gong and Rothenberg, \cite{GR}.
\begin{theorem}\label{GRthm}
If the leafwise parallel translation along $E$ is a bounded map, then the projection $P$ onto $\Ker(\Delta^E)$ is transversely smooth.
\end{theorem}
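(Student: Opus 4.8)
The plan is to reduce the assertion to the case of trivial coefficients, where the transverse smoothness of the projection onto the leafwise harmonic forms of $F_s$ is known (in the family setting this is Gong--Rothenberg \cite{GR}; for Riemannian foliations the needed statement is in \cite{BHII}), by trivializing $E$ along the leaves of $F_s$. Because each leaf $\wL_x$ of $F_s$ is the simply connected cover of the leaf $L=r(\wL_x)$, $\nabla_E$-parallel transport from a chosen point over each transversal produces a leafwise-flat frame of $E$ on $\cG$, hence an isomorphism $\cA^*_{(2)}(F_s,E)\cong\cA^*_{(2)}(F_s,\C)\otimes\C^{m}$ carrying $d_s$ to $d_s\otimes\Id$. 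Under it the preserved Hermitian metric becomes constant, while the positive definite metric defining the $L^2$ structure becomes the one twisted by a leafwise-smooth section $h$ of positive self-adjoint endomorphisms of $\C^m$ over $\cG$, namely (for an orthonormal choice of frame at the base point) $h([\gamma])=\Psi_\gamma^{*}\Psi_\gamma$, where $\Psi_\gamma$ is parallel transport along $\gamma$ and $*$ is the adjoint for the fixed positive definite metric on $E$. The hypothesis that leafwise parallel translation is bounded means precisely that $\|\Psi_\gamma^{\pm1}\|$ are uniformly bounded over $\cG$, so $cI\le h\le CI$ for uniform $c,C>0$; in particular composing the trivialization with multiplication by $h^{1/2}$ gives a unitary identification of $\cA^*_{(2)}(F_s,E)$ with $\cA^*_{(2)}(F_s,\C)\otimes\C^m$ conjugating $\Delta^E$ to $\widetilde\Delta=DD^{*}+D^{*}D$, where $D=h^{1/2}(d_s\otimes\Id)h^{-1/2}=(d_s\otimes\Id)+B$ and $B$ is a bounded order-zero leafwise operator built from $h^{1/2}\,d_s(h^{-1/2})$; correspondingly $P=h^{-1/2}\,\widetilde P\,h^{1/2}$, where $\widetilde P$ is the orthogonal projection onto $\Ker\widetilde\Delta$.

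Granting for the moment that $h^{\pm1/2}$, extended $\cA^*(M)$-equivariantly, satisfy the hypotheses on the operator $A$ in Lemma \ref{AK} — i.e.\ all their iterated transverse derivatives $\pa_\nu^{Y_1}\cdots\pa_\nu^{Y_m}h^{\pm1/2}$ are uniformly bounded over $\cG$ — Lemma \ref{AK} reduces the theorem to the transverse smoothness of $\widetilde P$. Since $\widetilde\Delta=(\Delta\otimes\Id)+R$ is a first-order perturbation of $\Delta\otimes\Id$ with coefficients built from $h^{\pm1}$ and their leafwise derivatives (all bounded), I would deduce the transverse smoothness of $\widetilde P$ from that of the untwisted projection $P_{0}$ onto $\Ker(\Delta\otimes\Id)$ by a Duhamel/perturbation argument: for $t>0$ the heat operators $e^{-t\widetilde\Delta}$ and $e^{-t(\Delta\otimes\Id)}$ are transversely smooth — one differentiates the Duhamel expansion against $R$ and absorbs the transverse derivatives of the coefficients using the leafwise Gaussian bounds, valid since $M$ is compact and $F$ is Riemannian, so the leaves of $F_s$ have bounded geometry — and then passes from the heat operators to $\widetilde P$ using Lemma \ref{trace} and the machinery of Section~\ref{cc} together with the untwisted input; when $0$ is uniformly isolated in the spectrum of $\widetilde\Delta$ one may instead write $\widetilde P$ as a contour integral of the resolvent and differentiate the resolvent identity directly.

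The crux is the claim about $h$, and it is here that the boundedness hypothesis — not merely the smoothness of $\nabla_E$ — is essential, exactly as in \cite{GR}. The leafwise derivatives of $h$ are bounded immediately, since $h$ is assembled from parallel transport of a smooth connection on the compact manifold $M$. For the transverse derivatives, the variation-of-parallel-transport formula expresses $\pa_\nu^{Y}h$ at $[\gamma]$ as an integral along $\gamma$ of the bounded mixed curvature $F_{\nabla_E}(TF,\nu)$ conjugated by partial parallel transports, and a priori this grows linearly in the leafwise length of $\gamma$, which is unbounded over $\cG$. The remedy uses that the preserved metric is $\nabla_E$-parallel, so that $F_{\nabla_E}$ is skew for it, and that the normal deformation defining $\pa_\nu$ moves along a $\nu$-parallel field — here the hypothesis that $F$ is Riemannian with bundle-like metric enters, the holonomy of $\nu$ being isometric — together with the uniform bounds $\|\Psi_\gamma^{\pm1}\|\le C$; combining these one checks that the integrand telescopes up to boundary contributions controlled by those bounds, so $\pa_\nu^{Y}h$ is in fact bounded, and an induction gives the bounds on all $\pa_\nu^{Y_1}\cdots\pa_\nu^{Y_m}h$ (whence on those of $h^{\pm1/2}$, since $h$ remains in a fixed compact set of positive operators). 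I expect this estimate to be the main obstacle of the proof. Restricting the conclusion to degree $\ell$ then yields in particular the transverse smoothness of $P_\ell$ used in the definition of $\sigma(F,E)$.
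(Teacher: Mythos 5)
Your reduction to trivial coefficients via leafwise $\nabla_E$-parallel frames, the appearance of the multiplication operator $h=\Psi_\gamma^*\Psi_\gamma$, the observation that boundedness of parallel translation forces $c\,\Id\le h\le C\,\Id$, and the unitary conjugation by $h^{1/2}$ are all sound, and are close in spirit to the local trivializations $\varphi_U$, $\psi_U$, $\Psi_U(x)$ that the paper builds. After that point, however, your route diverges from the paper's, and the divergence leaves a gap that your sketch does not close.

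The problematic step is the passage from the heat operators to $\widetilde P$. Duhamel gives transverse smoothness of $e^{-t\widetilde\Delta}$ for each fixed $t>0$, but $\widetilde P$ is only the strong-operator limit as $t\to\infty$, and that limit does not preserve transverse smoothness: the transverse-derivative bounds you obtain on $e^{-t\widetilde\Delta}$ will in general degenerate as $t\to\infty$ unless $0$ is uniformly isolated in the spectrum, which is not assumed and is generically false for a foliation. Invoking Lemma \ref{trace} and the Chern--Connes machinery of Section \ref{cc} does not help here; those are trace and cohomology statements for already-transversely-smooth idempotents, and say nothing about limits of families of heat operators. Moreover the ``untwisted input'' $P_0$ is of limited use, since $\Ker(\widetilde\Delta)$ bears no simple relation to $\Ker(\Delta\otimes\Id)$: a zero-order perturbation of $d_s$ can change the $L^2$-kernel drastically.

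The paper avoids this entirely, and this is the genuine methodological difference. After the same kind of trivialization, the paper also identifies \emph{all} the fibers $\wL_x$ over a chart with a fixed reference fiber $\wL_{x_0}$ (via the bounded-dilation transverse translation $\Phi_x$), so that the differential $d_s^{x_0}$ and hence the closed subspace $\overline{\Im\,d_s^{x_0}}$ become literally $x$-independent; only the $L^2$ metric $g_x$ varies. The $g_{x_0}$-orthogonal projection $R_{x_0}$ onto $\overline{\Im\,d_s^{x_0}}$ is then a \emph{constant} operator, and the $g_x$-orthogonal projection $R_x$ onto the same subspace is recovered purely algebraically from it: writing $g_x(u,v)=g_{x_0}(G_xu,v)$ one has $R_{x_0}^{*x}=G_x^{-1}R_{x_0}^{*}G_x$, $Q_x=\Id+(R_{x_0}-R_{x_0}^{*x})(R_{x_0}^{*x}-R_{x_0})$, $R_x=R_{x_0}R_{x_0}^{*x}Q_x^{-1}$; a parallel construction using the Hodge-type operator $\tau_x$ gives the projection $S_x$ onto $\overline{\Im\,\delta_s^{x}}$, and $P_x=\Id-(R_x+S_x)$. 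All transverse derivatives of $P_x$ then reduce, by Leibniz and by bootstrapping the Neumann series for $Q_x^{-1}$, to transverse derivatives of the multiplication operators $G_x^{\pm1}$ and $\tau_x^{\pm1}$. No heat kernel, no $t\to\infty$ limit, no spectral gap is needed. If you add the fiber identification $\Phi_x$ to your setup, your $h^{\pm1}$ becomes the $E$-factor of this $G_x^{\pm1}$, and the Duhamel step disappears.

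You are right to isolate the boundedness of the transverse derivatives of $h$ (equivalently of $G_x$) as the crux of any such proof; the paper addresses it only by asserting that $g_x$ is pulled back from a family of metrics on the compact $M$, and does not give the variation-of-parallel-transport estimate in detail. Your proposed telescoping using skewness of the curvature for $\{\cdot,\cdot\}$ and isometry of the normal holonomy is too vague to assess as written, so I would not count it as a proof of the crux; but this particular concern is shared by the paper rather than being a defect specific to your proposal. The concrete gap specific to your proposal is the heat-kernel-to-projection step.
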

The conclusion of Gong-Rothenberg is that the Schwartz kernel of $P$ is smooth in all its variables. 

For Riemannian foliations, $P$ is always transversely smooth for the classical signature operator 
(that is, with coefficients in the trivial one dimensional bundle) using either the holonomy or the homotopy groupoid.   $P$ is  transversely smooth whenever the preserved metric on $E$ is positive definite. It is smooth in important examples, e.g. Lusztig \cite{Lusztig}.  If the leafwise parallel translation along $E$ is a bounded map, $P$ is also transversely smooth using the holonomy groupoid, provided that the flat structure on $E$ over each holonomy covering has no holonomy (so using the flat structure to translate a frame of a single fiber of  $E \, | \, \wL$ to all of $\wL$ trivializes $E \, | \, \wL$).

It is an open question whether the projection to the leafwise harmonic forms has transversely smooth Schwartz kernel when $F$ is not Riemannian.  It is  satisfied for all foliations with compact leaves and Hausdorff groupoid \cite{EdwardsMillettSullivan, Epstein}.

Since the paper \cite{GR} has not been published, we give their proof here that $P$ depends smoothly on $x \in M$, and then show how to get transverse smoothness from it.

\begin{proof}
Let $U \subset M$ be a foliation chart and choose $x_0 \in U$.  Then there a diffeomorphism $\varphi_U: U \times \wL_{x_0}  \simeq   s^{-1}(U)$, and a bundle isomorphism $\psi_U:U \times (E \, | \, \wL_{x_0})  \simeq   E \, | \, s^{-1}(U)$, covering $\varphi_U$ and preserving the leafwise flat structure.  They are constructed as follows.
The normal bundle $\nu_s = TF_r \oplus \nu_{\cG} \simeq s^*(TM)$ defines a local transverse
translation for the leaves of the foliation $F_s$.  See \cite{Hurder, Wink}.  We may assume that  $U$ is the diffeomorphic image under $\exp_{x_0}$ of a neighborhood $\what{U}$ of $0 \in TM_{x_0}$.  Then for all $x \in U$, there is a unique $X \in \what{U}$ so that $x = 
\exp_{x_0}(X)$.  Define $\gamma_x:[0,1] \to M$ to be $\gamma_x (t) = 
\exp_{x_0} (tX)$.  {Given $x$ sufficiently close to $x_0$,  for any  $z \in \wL_{x_0}$} 
there is a unique path $\what{\gamma}_z(t)$ in $\cG$ so that 
$\what{\gamma}_z(0) = z$,  $\what{\gamma}_z(t) \in \wtit L_{\gamma_x 
(t)}$, and $\what{\gamma}_z' (t) \in (\nu_{s})_{\what{\gamma}_z(t)}$.  The 
transverse translate $\Phi_x (z)$ of $z$ to $\wtit L_x$ is just 
$\what{\gamma}_z(1)$.  $\Phi_x$ is a smooth diffeomorphism from $\wL_{x_0}$ to $\wL_x$, and we set $\varphi_U (x,z) = \Phi_x (z)$, which is a smooth diffeomorphism from $U \times \wL_{x_0}$  to   $s^{-1}(U)$.

Since we are using the homotopy groupoid, each $\wL$ is simply connected, so $E \, | \, \wL_{x}$ is a trivial bundle for each $x \in M$, and using the flat structure to translate a frame of a single fiber of  $E \, | \, \wL_x$ to all of $\wL_x$ trivializes $E \, | \, \wL_x$.  Choose a local orthonormal framing $e_1,...,e_k$ of $E \, | \, U$ (on $M$).
This framing is also a local framing of $E \, | \, i(U)$ (on $\cG$).
Using the leafwise flat structure of $E$ to translate it along the $\wL$, we get a leafwise flat framing $e^s_1,...,e^s_k$ of $E \, | \, s^{-1}(U)$.  For $(x, \sum_j a_j e^s_j (z)) \in U \times (E \, | \, \wL_{x_0})$, set
$$
\psi_U(x, \sum_j a_j e^s_j (z)) = \sum_j a_j e^s_j (\varphi_U(x,z)).
$$
That is, the image of $\phi \in E_z$ (where $z \in \wL_{x_0}$) under $\psi_U(x, \cdot)$ is obtained by first parallel translating $\phi$ along $\wL_{x_0}$ to $E_{i(x_0)}$, obtaining  $\sum_j a_j e_j (i(x_0))$, and then parallel translating $\sum_j a_j e_j (i(x))$ along $\wL_x$ to $E_{\varphi_U(x, z)}$.  It is clear that $\psi_U$ covers $\varphi_U$ and  preserves the leafwise flat structure.

There is a naturally defined bundle map 
$$
\Psi_U (x): \wedge T^*\wL_{x_0} \otimes (E \, | \, \wL_{x_0}) \to 
\wedge T^*\wL_{x} \otimes (E \, | \, \wL_{x})
$$ 
for each $x \in U$, which on a decomposable element $\alpha \otimes \phi \in (\wedge T^*\wL_{x_0} \otimes (E \, | \, \wL_{x_0}))_z$ is given by 
$$
\Psi_U (x)(\alpha \otimes \phi) = ((\Phi^{-1}_x)^*\alpha) \otimes \psi_U(x,\phi).
$$
We also denote by $\Psi_U$ the induced map 
$$
\Psi_U (x):C^{\infty}_c(\wL_{x_0}; \wedge T^*\wL_{x_0} \otimes (E \, | \, \wL_{x_0})) \to 
 C^{\infty}_c(\wL_x; \wedge T^*\wL_{x} \otimes (E \, | \, \wL_{x})).
$$ 
$\Psi_U (x)$  is invertible, commutes with the extended de Rham operators, and  depends smoothly on $x$.  Note that $\Phi^{-1}_x$ is a diffeomorphism of uniformly
bounded dilation (as is $\Phi_x$).   If the leafwise parallel translation along $E$ is a bounded map, then the map $\psi_U$ is a bounded map, and $\Psi_U$ extends to the following commutative diagram,

\medskip
\begin{picture}(415,80)
\put(55,60){$L^2(\wL_{x_0}; \wedge T^*\wL_{x_0} \otimes (E \, | \, \wL_{x_0})) $}
\put(105,50){ $\vector(0,-1){20}$}
\put(55,10){$L^2(\wL_{x}; \wedge T^*\wL_{x} \otimes (E \, | \, \wL_{x}))$}
\put(75,38){$\Psi_U (x)$}
\put(203,70){$d^{x_0}_s$}
\put(190,64){\vector(1,0){40}}
\put(190,13){\vector(1,0){40}}
\put(203,19){$d^{x}_s$}
\put(240,60){$L^2(\wL_{x_0}; \wedge T^*\wL_{x_0} \otimes (E \, | \, \wL_{x_0})) $}
\put(260,50){ $\vector(0,-1){20}$}
\put(240,10){$L^2(\wL_{x}; \wedge T^*\wL_{x} \otimes (E \, | \, \wL_{x})).$}
\put(270,38){$\Psi_U (x)$}
\end{picture}
\vspace{-2cm}
\begin{Equation}\label{GRCD}
\end{Equation}
\vspace{1.5cm}\noindent
So $\Psi_U (x)(\Ker(d^{x_0}_s)) \subset \Ker(d^{x}_s)$ and
$\Psi_U (x)(\overline{\Im(d^{x_0}_s)}) \subset \overline{\Im(d^{x}_s)}.$  By Lemma \ref{hodge}, we have 
$$
L^2(\wL_{x_0}; \wedge T^*\wL_{x_0} \otimes (E \, | \, \wL_{x_0})) = \Ker(d^{x_0}_s + \delta^{x_0}_s) \oplus \overline{\Im(d^{x_0}_s)} \oplus \overline{\Im(\delta^{x_0}_s)},
$$
and
$$
L^2(\wL_{x}; \wedge T^*\wL_{x} \otimes (E \, | \, \wL_{x})) = \Ker(d^{x}_s + \delta^{x}_s) \oplus \overline{\Im(d^{x}_s)} \oplus \overline{\Im(\delta^{x}_s)}.
$$
With respect to these decompositions, we may write
$$
\Psi_U (x) = \left[\begin{array}{ccc} 
\Psi_{11} (x)   &   0   &   \Psi_{13} (x) \\
&&\\
\Psi_{21} (x)   &  \Psi_{22} (x)  &   \Psi_{23} (x) \\
&&\\
0   &   0   &   \Psi_{33} (x)
\end{array} \right].
$$
It follows immediately that $\Psi_{22} (x): \overline{\Im(d^{x_0}_s)} \to  \overline{\Im(d^{x}_s)} $ is an invertible map which depends smoothly on $x$.  Let $R_{x_0}:L^2(\wL_{x_0}; \wedge T^*\wL_{x_0} \otimes (E \, | \, \wL_{x_0})) \to \overline{\Im(d^{x_0}_s)}$ be the orthogonal projection.   
Define
$$
\wtit{R}_x  \,\, = \,\,  \Psi_{22}(x) R_{x_0} \Psi^{-1}_U(x), \quad  \text{which equals}  \quad\Psi_U(x) R_{x_0} \Psi^{-1}_U(x),
$$
since $\Psi_{22}(x) R_{x_0} = \Psi_U(x) R_{x_0}$.
Then $\wtit{R}_x$ is an idempotent which varies smoothly in $x$, and has image $\overline{\Im(d^{x}_s)}$.  However, it might not be an orthogonal projection.  Set 
$$
Q_x =  I +(\wtit{R}_x - \wtit{R}^*_x)(\wtit{R}^*_x - \wtit{R}_x).
$$
Then $Q_x$ is an invertible self adjoint operator which depends smoothly on $x$, and
the orthogonal projection $R_{x}:L^2(\wL_{x}; \wedge T^*\wL_{x} \otimes (E \, | \, \wL_{x})) \to \overline{\Im(d^{x}_s)}$  is just 
$$
R_{x} \,\, = \,\, \wtit{R}_x \, \wtit{R}^*_x \, Q^{-1}_x,
$$ 
so $R_{x}$ depends smoothly on $x$.

Let $\tau_x$ be the Hodge type operator such that $\delta^{x}_s =  \pm \tau^{-1}_x \what{d}^x_s \tau_x$ , where $\what{d}^x_s$ is the differential associated with the antidual bundle $\overline{E}^*$ of $E$.  The operator $\tau^{-1}_x$ maps $\overline{\Im(\what{d}^{x}_s)}$ onto  $\overline{\Im(\delta^{x}_s)}$.  Set
$\what{S}_x = \tau^{-1}_x   \wtit{S}_x   \tau_x$, where $\wtit{S}_x$ is the operator for $\what{d}^x_s$ corresponding to the operator $\wtit{R}_x$ for $d^x_s$.  The argument above, with $E$ replaced by its antidual $\overline{E}^*$, shows that $\wtit{S}_x$, so also $\what{S}_x$, is an idempotent depending smoothly on $x$.  Note that $\what{S}_x$ has image $\overline{\Im(\delta^{x}_s)}$.  As above, we get that the orthogonal projection $S_x:L^2(\wL_{x}; \wedge T^*\wL_{x} \otimes (E \, | \, \wL_{x})) \to \overline{\Im(\delta^{x}_s)}$  depends smoothly on $x$.  Thus the orthogonal projection $P = I -(R_{x} + S_{x})$ depends smoothly on $x$.

\medskip
We now show that $P$ is transversely smooth.  To do this, we view everything  on $U \times \wL_{x_0}$, using   $\varphi_U$, and $\psi_U$.  Thanks to Diagram \ref{GRCD}, 
we are reduced to considering the operator $d^{x_0}_s:L^2(\wL_{x_0}; \wedge T^*\wL_{x_0} \otimes (E \, | \, \wL_{x_0})) \to L^2(\wL_{x_0}; \wedge T^*\wL_{x_0} \otimes (E \, | \, \wL_{x_0}))$ acting over each point $x \in U$, that is the twisted leafwise de Rham operator on the foliation  $U \times \wL_{x_0}$.   We use $\phi_U$ and $\psi_U$ to pull back the structures on $s^{-1}(U)$ and we use the same notation to denote these pull backs.  In particular, we have the connection $\nabla$ and the normal bundle $\nu_s$ used to define $\pa_{\nu}$.  The leafwise projection $P_x$ onto the twisted leafwise harmonics depends on the leafwise metrics on $\wL_{x_0}$ and  $E \, | \, \wL_{x_0}$, which vary with $x \in U$.   

First we prove that we may assume that the normal bundle $\nu_s$ is the bundle $TU \subset T(U \times \wL_{x_0})$.  Denote the operator corresponding to $\pa_{\nu}$ constructed using $TU$ by $\pa_U$.  Given a (bounded) vector field $Y$ on $U$, we have two lifts, $\what{Y}$ to $\nu_s$ and $\what{Y}_0$ to $TU$.   
The difference $\what{Y} - \what{Y}_0$ is tangent to the fibers $\wL_{x_0}$, so the difference of the operators $\pa_{\nu}^Y - \pa_U^Y = [\nabla_{\what{Y}} - \nabla_{\what{Y}_0}, \cdot]  = [\nabla_{\what{Y}-\what{Y}_0}, \cdot]$ is the commutator with a leafwise differential operator of order one, whose coefficients and all their derivatives are uniformly bounded, with the bound possibly depending on the order of the derivative.  
For $s \in \Z$, we denote by $W_s = W_s^*(\wL_{x_0}, E)$ the usual $s$-th Sobolev space which  is the completion of $C^{\infty}_c(\wL_{x_0}; \wedge T^*\wL_{x_0} \otimes (E \, | \, \wL_{x_0}))$ under the usual $s$-th Sobolev norm. 
Then $\Upsilon(Y) := \nabla_{\what{Y}} - \nabla_{\what{Y}_0}$ defines a bounded leafwise operator from any  $W^*_s$ to $W^*_{s-1}$, and both $\Upsilon(Y) P_x$ and $P_x \Upsilon(Y)$ are  bounded leafwise smoothing operators since $P_x$ is leafwise smoothing.  As 
$$
\pa_{\nu}^Y P_x = \pa_U^Y P_x  +   [\Upsilon(Y), P_x],
$$ 
$\pa_{\nu}^Y P_x$ is a bounded leafwise smoothing operator  if and only  $\pa_U^Y P_x$ is. 

Now assume that for all $Y_1, Y_2$, $\pa^{Y_1}_U P_x$and $\pa^{Y_2}_U\pa^{Y_1}_U P_x $ are bounded leafwise smoothing operators.  Again using the fact that  $\pa_{\nu}^Y  = \pa_U^Y   +   [\Upsilon(Y), \cdot]$, we have
$$
\pa^{Y_2}_{\nu}\pa^{Y_1}_{\nu} P_x  =   \pa_U^{Y_2}\pa_U^{Y_1} P_x   + 
[\pa^{Y_2}_U \Upsilon(Y_1),P_x]     +
[ \Upsilon(Y_1),\pa^{Y_2}_U P_x]  +
[\Upsilon(Y_2), \pa_U^{Y_1} P_x]    +
[\Upsilon(Y_2), [\Upsilon(Y_1), P_x]].
$$
which is a bounded leafwise smoothing operator since  $\pa^{Y_2}_U \Upsilon(Y_1)$ has the same properties as $\Upsilon(Y_1)$, namely it is a leafwise differential operator  of order one, whose coefficients and all their derivatives are uniformly bounded, with the bound possibly depending on the order of the derivative.  As the arguement is symmetric in $\pa_{\nu}$ and $\pa_U$,   $\pa^{Y_1}_{\nu} P_x $and $\pa^{Y_2}_{\nu}\pa^{Y_1}_{\nu} P_x $ are bounded leafwise smoothing operators
if and  only if $\pa^{Y_1}_U P_x $and $\pa^{Y_2}_U\pa^{Y_1}_U P_x$ are.  
Continuing in this manner, we have that 
$\pa^{Y_1}_{\nu} P_x $, $\pa^{Y_2}_{\nu}\pa^{Y_1}_{\nu} P_x$, ..., and $\pa^{Y_{m}}_{\nu}...\pa^{Y_1}_{\nu} P_x$
are bounded leafwise smoothing operators
if and  only if 
$\pa^{Y_1}_U P_x $,  $\pa^{Y_2}_U\pa^{Y_1}_U P_x$, ...,  and $\pa^{Y_{m}}_U...\pa^{Y_1}_U P_x$ are.  Thus we may assume that $\nu_s = TU$.

Next we show that we may use any connection we please, provided it is in the same bounded geometry class as $\nabla$.   Suppose that $\pa_0$ is another derivation constructed from the connection $\nabla^0$ in the  same bounded geometry class as $\nabla$.  Then 
$\pa_{\nu}^Y - \pa_0^Y = [\nabla^{\nu}_Y - \nabla^{0, \nu}_Y, \cdot ]$, and $\nabla^{\nu}_Y - \nabla^{0, \nu}_Y$ is a leafwise differential operator of order zero, whose coefficients and all their derivatives are uniformly bounded, with the bound possibly depending on the order of the derivative.  So $\nabla^{\nu}_Y - \nabla^{0, \nu}_Y$ defines a bounded operator from any Sobolev space $W^s$ to itself.  Proceeding just as we did above, we have that 
$\pa^{Y_1}_{\nu} P_x $, $\pa^{Y_2}_{\nu}\pa^{Y_1}_{\nu} P_x$, ...,  and $\pa^{Y_{m}}_{\nu}...\pa^{Y_1}_{\nu} P_x$ are bounded leafwise smoothing operators if and  only if $\pa^{Y_1}_0 P_x $,  $\pa^{Y_2}_0 \pa^{Y_1}_ 0P_x$, ...,  and $\pa^{Y_{m}}_0...\pa^{Y_1}_0 P_x$ are.  Thus, we are reduced to showing that $\pa_0^{Y_{m}}... \pa_0^{Y_1}(P)$ is a bounded leafwise smoothing operator.

The connection we choose is that pulled back from $L_{x_0}$ under the obvious map 
$U \times \wL_{x_0} \to L_{x_0}$.  We leave it to the reader to show that this is in the same bounded geometry class as $\nabla$.  Now we can choose coordinates on $U$ so we may think of $U = \D^n$ with coordinates, $x_1, ..., x_n$, and $x_0 = (0,...,0)$.   When we do, 
$$
\pa_0^{\pa/\pa x_{i_{m}}} ... \pa_0^{\pa/\pa x_{i_1}}P_x = \pa^{m}P_x/\pa x_{i_{m}} ... \pa x_{i_1}.
$$ 
Thus we are reduced to considering a smooth family of smoothing operators $P_x$ acting on the space of sections of $\wedge T^*\wL_{x_0} \otimes (E \, | \,  \wL_{x_0})$.  The parameter $x$ determines the metric $g_x$ we use on this space, and $P_x$ is the associated projection onto the twisted harmonic sections.  Note that the associated Sobolev spaces $W^*_s$ are the same for all the $g_x$ since these metrics are all in the same bounded geometry class.  The norms on $W^*_s$ do depend on the parameter $x$.   However they are all comparable, so we may assume that we have a single norm $|| \cdot ||_s$ on each $W^*_s$, which is independent of $x$.

Denote $ \pa^{m}/ \pa x_{i_{m}} ... \pa x_{i_1}$  by $ \pa^{m}_{i_{m} ... {i_1}}$.    We need to prove that for all $s$ and $k \geq 0$, $  \pa^{m}_{i_{m} ... {i_1}}P_x $ defines a bounded map from $W^*_s$ to $W^*_{s+k}$.    Given $K: W^*_s \to W^*_{s+k}$, denote the $s, s+k$ norm of $K$ by 
$||K||_{s+k,s}$.  Then 
$$
||K||_{s+k,s} = ||(1 +\Delta)^{(s+k)/2}K(1 +\Delta)^{-s/2}||_{0,0}, 
$$where $\Delta$ is the Laplacian associated to the metric on $\wedge T^*\wL_{x_0} \otimes (E \, | \, \wL_{x_0})$.  Since the norms associated to different metrics are comparable, we may use any metric $g_x$ with associated Laplacian $\Delta_x$ we like.  
Now $P_x   =  (1 +\Delta_x)^{(s+k)/2}P_x  (1 +\Delta_x)^{-s/2}$, so
$$
\pa_i P_x \,\, = \,\, 
\pa_i ((1 +\Delta_x)^{(s+k)/2}P_x  (1 +\Delta_x)^{-s/2}) \,\, = \,\, 
$$
$$  
\pa_i (1 +\Delta_x)^{(s+k)/2}   P_x (1 +\Delta_x)^{-s/2} +   (1 +\Delta_x)^{(s+k)/2} \pa_i P_x (1 +\Delta_x)^{-s/2}  +  (1 +\Delta_x)^{(s+k)/2} P_x \pa_i (1 +\Delta_x)^{-s/2},
$$
which gives
$$
(1 +\Delta_x)^{(s+k)/2} \pa_i P_x (1 +\Delta_x)^{-s/2} \,\, = \,\,  
$$
$$\pa_i P_x   - \pa_i (1 +\Delta_x)^{(s+k)/2}   P_x (1 +\Delta_x)^{-s/2} -
 (1 +\Delta_x)^{(s+k)/2} P_x \pa_i (1 +\Delta_x)^{-s/2}.
$$
So,
$$
|| \pa_i P_x ||_{s+k,s} \,\, = \,\, ||(1 +\Delta_x)^{(s+k)/2} \pa_i P_x (1 +\Delta_x)^{-s/2} ||_{0,0} \,\, = \,\,
$$
$$
||\pa_i P_x  - \pa_i (1 +\Delta_x)^{(s+k)/2}    P_x (1 +\Delta_x)^{-s/2} -
 (1 +\Delta_x)^{(s+k)/2} P_x \pa_i (1 +\Delta_x)^{-s/2}  ||_{0,0}      \,\, \leq \,\,
$$
$$
||\pa_i P_x   ||_{0,0} \,\, +  \,\,  ||   \pa_i (1 +\Delta_x)^{(s+k)/2} P_x (1 +\Delta_x)^{-s/2}  ||_{0,0} \,\, +  \,\,  || (1 +\Delta_x)^{(s+k)/2} P_x \pa_i (1 +\Delta_x)^{-s/2}   ||_{0,0}.
$$
Now for any $r$,  $(1 +\Delta_x)^{r/2}$ and $\pa_i (1 +\Delta_x)^{r/2}$ are leafwise differential operators of order $r$, whose coefficients and all their derivatives are uniformly bounded, with the bound possibly depending on the order of the derivative, {\em  but independent of} $x$.  So they define bounded operators from $W^*_s$ to $W^*_{s-r}$, for any $s$, with bound independent of $x$.
Since $P_x$ is leafswise smoothing, it defines a bounded operator from any $W^*_r$ to any $W^*_s$, whose bound is also independent of $x$, since  
$$
|| P_x ||_{s,r}  \,\, = \,\, ||  (1 +\Delta_x)^{-s/2}P_x   (1 +\Delta_x)^{-r/2}||_{0,0}  \,\, = \,\, || P_x ||_{0,0} \,\, \leq \,\, 1,
$$
Thus we have 
$$
||\pa_i (1 +\Delta_x)^{(s+k)/2}   P_x (1 +\Delta_x)^{-s/2}  ||_{0,0} \,\, \leq \,\,  
||\pa_i (1 +\Delta_x)^{(s+k)/2}  ||_{0,s+k}  \,\, || P_x ||_{s+k,-s}  \,\, ||  (1 +\Delta_x)^{-s/2}  ||_{-s,0}
$$ 
is bounded independently of $x$.  Similarly
$|| (1 +\Delta_x)^{(s+k)/2} P_x \pa_i (1 +\Delta_x)^{-s/2} ||_{0,0}$ is bounded independently of $x$.  Thus $ \pa_i P_x : W^*_s \to W^*_{s+k}$ is bounded if and only if $\pa_i P_x: W^*_0 \to W^*_0$ is.  

Now for any $m$ and and any $r$,  $\pa^{m}_{i_m ... i_1}(1 +\Delta_x)^{r/2}$ is also a  leafwise differential operator of order $r$, whose coefficients and all their derivatives are uniformly bounded, with the bound possibly depending on the order of the derivative, but independent of $x$.  Using this fact, a straightforward induction argument shows that 
$\pa^{m}_{i_m ... i_1}P_x :W^*_s \to W^*_{s+k}$ is bounded if and only if 
$\pa^{m}_{i_m ... i_1}P_x:W^*_0 \to W^*_0$ is.

Now we have (working on $W^*_0 = L^2(\wL_{x_0}; \wedge T^*\wL_{x_0} \otimes (E \, | \, \wL_{x_0})) $) that  $P_x = I -(R_x + S_x)$, where  $R_x$ is the orthogonal projection  $R_x:L^2(\wL_{x_0}; \wedge T^*\wL_{x_0} \otimes (E \, | \, \wL_{x_0}))  \to \overline{\Im(d^{x_0}_s)} \subset L^2(\wL_{x_0}; \wedge T^*\wL_{x_0} \otimes (E \, | \, \wL_{x_0}))$ obtained using the metric $g_x$. 
At the point $x$,  $R_{x_0}$ also has image $\overline{\Im(d^{x_0}_s)}$, but $R_{x_0}$ might not be an orthogonal projection using the metric $g_x$.  As above $R_{x}$ is given by
$$
R_{x} \,\, = \,\, R_{x_0} \, R_{x_0}^{*x} \, Q^{-1}_x,
$$
where
$$
Q_x =  I +(R_{x_0} - R_{x_0}^{*x})(R_{x_0}^{*x} - R_{x_0}).
$$
and $R_{x_0}^{*x}$ is  the adjoint of $R_{x_0}$ constructed using the metric $g_x$.
Since $I = Q_x Q_x^{-1}$, we have that 
$$
0  \,\, = \,\, \pa_i I    \,\, = \,\, \pa_i(Q_x Q_x^{-1})  \,\, = \,\, 
(\pa_i Q_x ) Q_x^{-1}   +   Q_x  (\pa_i Q_x^{-1}).
$$
So
$$
\pa_i Q_x^{-1}  \,\, = \,\, -Q_x^{-1}(\pa_i Q_x ) Q_x^{-1},
$$
and a boot-strapping argument shows that  $\pa^{m}_{i_{m}...{i_1}}(Q_x^{-1})$ is bounded if $\pa^{m}_{i_{m} ... {i_1}}Q_x$ is.  It follows that  $\pa^{m}_{i_{m} ...  x_{i_1}}R_x$ is bounded if  $\pa^{m}_{i_{m} ... {i_1}}R_{x_0}$  and $\pa^{m}_{i_{m} ... {i_1}}R_{x_0}^{*x}$ are bounded.  As $\pa_i R_{x_0}  = 0$ for all $i$, we are reduced to considering $R_{x_0}^{*x}$.

We may write the metric $g_x$ as $g_x(u,v)=g_{x_0}(G_x u, v)$ where $G_x$ is a nonnegative self-adjoint (invertible) operator with respect to $g_{x_0}$, as is its inverse.  Since $g_x$ is the pull back of a family of metrics defined on the compact manifold $M$,  $G_x$ is smooth in all its variables, and it and all its derivatives are uniformly bounded, and the same is true for the inverse $G_x^{-1}$.   Thus for all $m$, both $\pa^{m}_{i_m ... i_1}G_x$ and  $\pa^{m}_{i_m ... i_1}G_x^{-1}$ define bounded operators on $L^2(\wL_{x_0}; \wedge T^*\wL_{x_0} \otimes (E \, | \, \wL_{x_0}))$ (since they are order zero differential operators).  For any bounded operator $A$ on $L^2(\wL_{x_0}; \wedge T^*\wL_{x_0} \otimes (E \, | \, \wL_{x_0}))$, the adjoint of $A$ with respect to $g_x$ is
$$
A^{*x} = G_x^{-1} A^* G_x,
$$
where $A^*$ is the adjoint with respect to $g_{x_0}$.   It follows immediately that for all $m$, 
$\pa^{m}_{i_{m} ... {i_1}}R_{x_0}^{*x} =   \pa^{m}_{i_{m} ... {i_1}}(G_x^{-1}R_{x_0}^{*}G_x)$ is a bounded operator on  
$W^*_0 = L^2(\wL_{x_0}; \wedge T^*\wL_{x_0} \otimes (E \, | \, \wL_{x_0}))$, since $\pa_i  R_{x_0}^{*} = 0$ for all $i$.  Thus for all $m$, 
$\pa^{m}_{i_{m} ... {i_1}}R_{x}$ is a bounded operator on  
$W^*_0$.  

It remains to show that for all $m$, $\pa^{m}_{i_{m} ... {i_1}}S_{x}$ is a bounded operator on  
$W^*_0$.  To do this we may proceed as we did above, using the operators $\wtit{S}_x$ and  $\what{S}_x$.  We need only observe that the Hodge type operator $\tau_x$ has the same properties that $G_x$ does.  Thus for all $m$,
$\pa^{m}_{i_{m} ... {i_1}}P_x =  -(\pa^{m}_{i_{m} ... {i_1}}R_{x} + \pa^{m}_{i_{m} ... {i_1}}S_{x})$  is a bounded operator on  $W^*_0$, and we conclude that
$P_x$ is transversely smooth. 

\end{proof}

\begin{proposition}  If $P$ is transversely smooth, then the
 projections onto $\cA^*_{\pm}(F_s,E) \cap (\Ker(\Delta^E_k) \oplus  
\Ker(\Delta^E_{p-k}))$, $k \neq \ell$, and  $\Ker(\Delta^{E\pm}_{\ell})$  are 
transversely smooth.
\end{proposition}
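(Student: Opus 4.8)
The plan is to reduce everything to Lemma \ref{AK}, applied to two auxiliary operators on $\wedge\nu_s^*\otimes\wedge T^*F_s\otimes E$: the leafwise degree projections $p_j$ and the involution $\what{\tau}$. Both are pull-backs of bundle maps on $M$, hence leafwise differential operators of order zero with uniformly bounded coefficients, so both satisfy the hypotheses of Lemma \ref{AK}; and $P$ is transversely smooth by assumption. Everything else is a computation with these inputs.

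First I would isolate the degree pieces of $P$. For each $j$ let $p_j$ be the $\cG$-invariant, $\cA^*(M)$-equivariant projection onto leafwise forms of degree $j$; it is a bundle endomorphism, hence satisfies the hypotheses of Lemma \ref{AK}. Since $\Delta^E$ preserves leafwise degree, $\Ker(\Delta^E)=\bigoplus_j\Ker(\Delta^E_j)$ and $P$ commutes with every $p_j$, so $P_j:=p_jP=Pp_j$ is the (bounded leafwise smoothing) projection onto $\Ker(\Delta^E_j)$. Applying Lemma \ref{AK} with $A=p_j$ and $K=P$ shows that $P_j$ is transversely smooth, and it is $\cG$-invariant since $P$, $p_j$ and $\pa_{\nu}$ all respect $\cG$-invariance. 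In particular, for each $k$ the operator
$$
Q_k:=P_k+P_{p-k}\quad(\text{with }Q_\ell:=P_\ell\text{ when }k=\ell)
$$
is a transversely smooth, $\cG$-invariant, bounded leafwise smoothing projection onto $V_k:=\Ker(\Delta^E_k)\oplus\Ker(\Delta^E_{p-k})$ (and $V_\ell=\Ker(\Delta^E_\ell)$).

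Next I would treat the $\what{\tau}$-eigenspaces. As already noted, $\what{\tau}$ satisfies the hypotheses of Lemma \ref{AK}. Because $\what{\tau}$ commutes with $\Delta^E$ and carries leafwise degree $j$ to degree $p-j$, it preserves $\Ker(\Delta^E)$ and interchanges $\Ker(\Delta^E_k)$ with $\Ker(\Delta^E_{p-k})$; since $\what{\tau}^2=\Id$, it restricts to an involution of $V_k$. Put
$$
\pi^k_{\pm}:=\tfrac12\bigl(Q_k\pm\what{\tau}\circ Q_k\bigr).
$$
Using $Q_k^2=Q_k$, the identity $\what{\tau}(V_k)=V_k$ (hence $Q_k\what{\tau}Q_k=\what{\tau}Q_k$), and $\what{\tau}^2=\Id$, a short computation gives $(\pi^k_\pm)^2=\pi^k_\pm$; and since the image of $\pi^k_\pm$ lies in $V_k$, where $\pi^k_\pm$ acts as $\tfrac12(\Id\pm\what{\tau}|_{V_k})$, that image is exactly $\cA^*_\pm(F_s,E)\cap V_k$. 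For $k=\ell$ this recovers the operator $\pi_\pm$ of the earlier definition.

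Finally, transverse smoothness: $Q_k$ is transversely smooth by the first step, so Lemma \ref{AK} with $A=\what{\tau}$ and $K=Q_k$ shows $\what{\tau}\circ Q_k$ is transversely smooth, and hence so is the linear combination $\pi^k_\pm$; $\cG$-invariance is inherited as before. I do not expect a genuine obstacle here, since the argument is bookkeeping with Lemma \ref{AK}; the only points demanding care are verifying that $\what{\tau}$ preserves $V_k$ and that $\pi^k_\pm$ is the claimed idempotent with the claimed image, both of which come down to the degree-reversing property of $\what{\tau}$ together with $\what{\tau}^2=\Id$.
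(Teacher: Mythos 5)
Your proof is correct and follows essentially the same route as the paper: reduce to the degree-$k$ projections $P_k$, compose with $\what{\tau}$, and invoke Lemma \ref{AK}. The only cosmetic difference is that for $k\neq\ell$ the paper writes the idempotent as $P_k\pm\what{\tau}\circ P_k$ (using $P_k\circ\what{\tau}\circ P_k=0$), whereas you take the orthogonal projection $\tfrac12(Q_k\pm\what{\tau}\circ Q_k)$ with $Q_k=P_k+P_{p-k}$ — both are transversely smooth $\cG$-invariant idempotents with the required image.
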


\begin{proof}
Denote by $P_k$ the projection onto $\Ker( \Delta^E_k)$.  It is immediate that 
$P$ is transversely smooth if and only if all the $P_k$ are transversely 
smooth.  For $k \neq \ell$, the projection onto $\cA^*_{\pm}(F_s,E) \cap 
(\Ker(\Delta^E_k) \oplus  \Ker(\Delta^E_{p-k}))$ is given by $\pi_k^\pm=P_k 
\pm \tau \circ P_k$, (since $P_k \circ \tau \circ P_k = 0$ in those cases), and the projection onto 
$\Ker(\Delta^{E\pm}_{\ell})$ is given by
$\pi_\pm = \frac{1}{2}(P_{\ell} \pm \tau \circ P_{\ell})$.   As the operator $\tau$ 
satisfies the hypothesis of Lemma \ref{AK}, and each $P_k$ is transversely 
smooth, so is each  $\tau \circ P_k$, so all of the projections are also 
transversely smooth.  
\end{proof}

\section{Connections, curvature, and the Chern-Connes character } \label{ccnc} 

We now give an alternate construction of the Chern-Connes characters 
$\ch_a(\pi_+)$ and $\ch_a(\pi_-)$  using ``connections"  and 
``curvatures" defined on  ``smooth sub-bundles" of
$\cA_{(2)}^*(F_{s}, E)$.
\begin{definition}
A smooth subbundle  of $\cA_{(2)}^* (F_{s},E)$ over $M/F$ is a $\cG$ 
invariant transversely smooth idempotent $\pi_0$ acting on $\cA_{(2)}^* 
(F_{s},E)$.  
\end{definition}

\begin{example}
\begin{enumerate}
\item Any idempotent in the algebra of superexpeonentially decaying operators on $\wedge T^*F_s \otimes E$, defined in \cite{BHII}, is a smooth subbundle  of $\cA_{(2)}^* (F_{s},E)$ over $M/F$. So, 
any smooth compactly supported idempotent is a smooth subbundle  of $\cA_{(2)}^* (F_{s},E)$ over $M/F$.

\item   The Wassermann  idempotent of the leafwise signature operator, as defined for instance in \cite{BHII},  is a very important special case of $(1)$ above.  In this case we take $E = M \times \C$.

\item A paradigm for such a smooth subbundle is given by projection onto the 
kernel of a leafwise elliptic operator acting on $\cA_{(2)}^* (F_{s},E)$ 
(induced from a leafwise elliptic operator on $F$). { In particular,} the projections 
$\pi_{+}$ and $\pi_-$. 
\end{enumerate}
\end{example}

\begin{definition}
{The space  $C^\infty_2(\wedge T^*F_s \otimes E)$ consists of all
elements} $\xi \in C^{\infty}(\cG;\wedge T^*F_s \otimes E) \cap 
\cA^*_{(2)}(F_{s},E)$ such that for any quasi-connection $\nabla^{\nu}$, 
and any  vector fields $Y_1, ... , Y_{m}$ on $M$,  \
$$
\nabla^{\nu}_{Y_1} ...\nabla^{\nu}_{Y_{m}}(\xi)
 \in  C^{\infty}(\cG; \wedge T^*F_s  \otimes E) \cap  \cA^*_{(2)}(F_{s}, E),
$$
{where $\nabla^{\nu}_{Y_i} = i_{\what{Y_i}}\nabla^{\nu}$.}
\end{definition}

Note that if $\xi \in C^{\infty}(\cG;\wedge T^*F_s \otimes E) $, $
\nabla^{\nu}_{Y_1} ...\nabla^{\nu}_{Y_{m}}(\xi)$ is automatically in 
$C^{\infty}(\cG;\wedge T^*F_s \otimes E)$, and that if $\xi \in C^\infty_2 (\wedge 
T^*F_s \otimes E)$, then 
$\nabla^{\nu}_{Y_1} ...\nabla^{\nu}_{Y_{m}}(\xi) \in C^\infty_2 (\wedge 
T^*F_s \otimes E)$.   Note also that $C^{\infty}_c(\cG;\wedge T^*F_s \otimes E) \subset 
C^\infty_2 (\wedge T^*F_s \otimes E)$.

\begin{proposition}\label{transxi}
If $H$ is a transversely smooth operator on $\wedge T^*F_s \otimes E$, then $H$ maps 
$C^\infty_2(\wedge T^*F_s \otimes E)$ to itself.
\end{proposition}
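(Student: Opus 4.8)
The plan rests on two facts. First, the iterated transverse commutator $\pa_{\nu}^{Y_1}\cdots\pa_{\nu}^{Y_m}H$ of a transversely smooth operator $H$ is again transversely smooth: by Definition~\ref{TS}, transverse smoothness of $H$ is exactly the statement that all such operators (for every ordering and every length) are bounded leafwise smoothing, and adjoining further $\pa_{\nu}^{Z}$'s merely lengthens the string and stays in that class. Second, the key sublemma: if $T$ is a transversely smooth operator on $\wedge T^*F_s\otimes E$ and $\eta\in C^{\infty}_2(\wedge T^*F_s\otimes E)$, then $T\eta\in C^{\infty}(\cG;\wedge T^*F_s\otimes E)\cap\cA^{*}_{(2)}(F_s,E)$. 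Indeed, $T\eta$ is leafwise $L^2$ because $T$ is leafwise bounded, and $T\eta$ is smooth over $\cG$ because the Schwartz kernel of a transversely smooth operator is smooth in all variables (recalled after Definition~\ref{TS}): working over a foliation chart $U$ with the trivializations $\varphi_U\colon U\times\wL_{x_0}\simeq s^{-1}(U)$ and $\psi_U$ of the proof of Theorem~\ref{GRthm}, $T$ becomes a smoothly varying family of leafwise smoothing operators on $\wL_{x_0}$, and one may differentiate $T\eta$ under the leafwise integral any number of times, the domination required being supplied by the uniform leafwise $L^2$ bounds on the transverse derivatives of the kernel of $T$ (transverse smoothness) paired with those on the transverse derivatives of $\eta$ ($\eta\in C^{\infty}_2$).

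Granting these, fix a quasi-connection $\nabla^{\nu}$ and a vector field $Y$ on $M$ and set $\nabla^{\nu}_Y=i_{\what{Y}}\nabla^{\nu}$; since $\nabla^{\nu}$ raises and $i_{\what{Y}}$ lowers the $\wedge\nu_s^{*}$-degree by one, $\nabla^{\nu}_Y$ restricts to an operator on $\wedge T^*F_s\otimes E$. As $H$ is $\cA^{*}(M)$-equivariant, $i_{\what{Y}}$ commutes with $H$, so $\pa_{\nu}^{Y}H=[\nabla^{\nu}_Y,H]$, whence the Leibniz rule $\nabla^{\nu}_Y(H\eta)=(\pa_{\nu}^{Y}H)\,\eta+H\,(\nabla^{\nu}_Y\eta)$ holds for every section $\eta$. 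Iterating it, a routine induction on $m$ shows that $\nabla^{\nu}_{Y_1}\cdots\nabla^{\nu}_{Y_m}(H\xi)$ is a finite sum of terms
$$
\bigl(\pa_{\nu}^{Y_{i_a}}\cdots\pa_{\nu}^{Y_{i_1}}H\bigr)\bigl(\nabla^{\nu}_{Y_{j_b}}\cdots\nabla^{\nu}_{Y_{j_1}}\xi\bigr),
$$
one for each splitting of $\{1,\dots,m\}$ into two disjoint, internally ordered subsets.

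To conclude, take $\xi\in C^{\infty}_2(\wedge T^*F_s\otimes E)$. For $m=0$ the sublemma (with $T=H$, $\eta=\xi$) gives $H\xi\in C^{\infty}(\cG;\wedge T^*F_s\otimes E)\cap\cA^{*}_{(2)}(F_s,E)$. For $m\geq 1$, in each summand above the operator $\pa_{\nu}^{Y_{i_a}}\cdots\pa_{\nu}^{Y_{i_1}}H$ is transversely smooth by the first fact, while $\nabla^{\nu}_{Y_{j_b}}\cdots\nabla^{\nu}_{Y_{j_1}}\xi\in C^{\infty}_2(\wedge T^*F_s\otimes E)$ as noted immediately before the Proposition; so the sublemma applies to each summand and the whole sum lies in $C^{\infty}(\cG;\wedge T^*F_s\otimes E)\cap\cA^{*}_{(2)}(F_s,E)$. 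Since $\nabla^{\nu}$ was arbitrary and transverse smoothness of $H$ (hence of its $\pa_{\nu}$-commutators) does not depend on that choice, this is exactly the assertion $H\xi\in C^{\infty}_2(\wedge T^*F_s\otimes E)$.

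The only place genuine work is needed is the sublemma, i.e.\ that a leafwise bounded, leafwise smoothing operator with globally smooth Schwartz kernel sends a $C^{\infty}_2$-section to a section that is smooth over $\cG$; this is where the local trivializations of Theorem~\ref{GRthm} and the transverse-smoothness estimates on the kernel genuinely enter, and it plays here the role Lemma~\ref{AK} plays for operators. Everything else is the purely formal bookkeeping of the higher Leibniz expansion.
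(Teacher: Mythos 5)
Your proof is correct and follows the same route as the paper: the commutator identity $\nabla^{\nu}_Y(H\xi)=(\pa^Y_{\nu}H)\xi+H\,\nabla^{\nu}_Y\xi$ plus an induction on the number of transverse derivatives, resting on the base observation that a transversely smooth operator sends a $C^{\infty}_2$-section into $C^{\infty}(\cG;\wedge T^*F_s\otimes E)\cap\cA^*_{(2)}(F_s,E)$. You have merely made explicit the ``obvious induction'' (via the fully expanded Leibniz rule over ordered splittings) and unpacked the paper's ``it follows easily'' step into your sublemma; both elaborations are accurate.
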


\begin{proof}
Let $\xi \in C^\infty_2(\wedge T^*F_s \otimes E)$.   As $H$ is transversely smooth,  
it follows easily that $H \xi \in  C^{\infty}(\cG;\wedge T^*F_s \otimes E) \cap  
\cA^*_{(2)}(F_{s}, E)$.  Fix a quasi-connection $\nabla^{\nu}$, and let 
$Y$ be a vector field on $M$. { Then}
$$
\nabla^{\nu}_ Y(H\xi)  =   \nabla^{\nu}_ YH\xi  - H  \nabla^{\nu}_ Y\xi +  
H  \nabla^{\nu}_ Y\xi  = (\pa_{\nu}^Y H)  \xi +  H  (\nabla^{\nu}_ Y\xi), 
$$
which is in $C^{\infty}(\cG;\wedge T^*F_s \otimes E) \cap  \cA^*_{(2)}(F_{s},E)$, 
since $H$ and $\pa_{\nu}^Y H$ 
are transversely smooth, and $\xi$ and $\nabla^{\nu}_ Y\xi$ are  in 
$C^\infty_2(\wedge T^*F_s \otimes E)$. 
An obvious induction argument now shows that $H\xi \in C^\infty_2(\wedge 
T^*F_s \otimes E)$.
\end{proof}

Let $\pi_0$ be a smooth subbundle  of $\cA_{(2)}^* (F_{s},E)$ over $M/F$. 
\begin{definition}
A smooth section of $\pi_0$ is an element $\xi \in C^\infty_2 (\wedge 
T^*F_s \otimes E)$ which satisfies $\pi_0 \xi = \xi$.  
The set of all smooth sections is denoted $C^{\infty}(\pi_0)$.
\end{definition}

The space $C^{\infty}(\pi_0)$ is a $C^\infty (M)$ module, where $(f\cdot 
\xi)([\gamma]) = f(s(\gamma)) \xi([\gamma])$.  In addition, 
$C^{\infty}(\pi_0) = \pi_0( C^\infty_2 (\wedge T^*F_s \otimes E)) \, \supset \,
\pi_0(C^{\infty}_c(\cG;\wedge T^*F_s \otimes E))$.  

\begin{definition}  Denote by $C^{\infty}(\wedge T^*M;\pi_0)$ the 
collection of all smooth sections of $\wedge T^*M$ with coefficients in 
$C^{\infty}(\pi_0)$, and by 
$C^{\infty}_c(\wedge T^*M;\wedge T^*F_s  \otimes E)$ the collection of all smooth 
sections of $\wedge T^*M$ with coefficients in $C^{\infty}_c(\cG;\wedge T^*F_s \otimes E)$.
\end{definition}

There are natural actions of $\cA^*(M)$  on $C^{\infty}(\wedge T^*M;\pi_0)$ and 
$C^{\infty}_c(\wedge T^*M;\wedge T^*F_s \otimes E)$, and under these actions
$$
C^{\infty}(\wedge T^*M;\pi_0) \simeq \cA^*(M) {{\hat{\otimes}}}_{C^{\infty}(M)} 
C^{\infty}(\pi_0),
$$
and
$$
C^{\infty}_c(\wedge T^*M;\wedge T^*F_s \otimes E) \simeq \cA^*(M) 
{{\hat{\otimes}}}_{C^{\infty}(M)} C^{\infty}_c(\cG;\wedge T^*F_s \otimes E),
$$
with the right completions.
Thus $\pi_0:C^{\infty}_c(\cG;\wedge T^*F_s \otimes E) \to C^{\infty}(\pi_0)$ extends 
to the $\cA^*(M)$ equivariant map
$$
\pi_0:C^{\infty}_c(\wedge T^*M; \wedge T^*F_s \otimes E) \to C^{\infty}(\wedge 
T^*M;\pi_0).
$$

  A {\it local invariant} element is a {local section $\xi$ of $\cA^*_{(2)}(F_{s}, E)$}
defined on an open 
subset $U \subset M$ so that for any leafwise path $\gamma_1$ in $U$,
$\xi([\gamma]) = \xi([\gamma \gamma_1])$ for all $\gamma$ 
with $s(\gamma) =r(\gamma_1)$.
Local invariant elements are common.  In particular, any locally defined 
element  $\xi \in \cA^*_{(2)}(F_{s},E)$ defines local  invariant 
elements.  Suppose that $\xi$ is defined on a foliation chart $U \subset 
M$ for $F$, and let $P_x$ be  the placque in $U$ containing the point 
$x$.   Given $y \in P_x$, let $\gamma_y$ be a path in $P_x$ starting at 
$x$ and ending at $y$.  Define $\widetilde \xi_y \in L^2(\wL_y; \wedge 
T^*F_s \otimes E)$ by $\widetilde \xi_y([\gamma]) = \xi_x([\gamma\gamma_y] )$.   
Then $\widetilde \xi$ is a local invariant element of  $\cA^* 
_{(2)}(F_{s}, E)$ defined along $P_x$.  By restricting $\xi$ to a transversal 
$T$ in a foliation chart $U$ and then extending invariantly to $\widetilde 
\xi$ we obtain local invariant elements of $\cA^*_{(2)}(F_{s}, E)$ defined 
over $U$.   One can of course extend this construction from chart to chart 
as far as one likes, for example along any path $\gamma:[0,1] \to L$ in a 
leaf $L$.   If $\gamma$ is a closed loop, the section at $1$ will 
not agree in general with the section at $0$, so one does not in 
general obtain global invariant sections this way.  

\begin{definition}\label{connection}
A connection $\nabla$ on $\pi_0$ is a linear map 
$$
\nabla:C^{\infty}(\wedge T^*M;\pi_0) \to C^{\infty}(\wedge T^*M;\pi_0)
$$ 
of degree one, so that 
\begin{enumerate}
\item for  $\omega \in \cA^k (M)$ and  $\xi \in C^{\infty}(\pi_0)$,
$\nabla(\omega \otimes \xi) = d_M\omega \otimes \xi + (-1)^k \omega 
\wedge  \nabla \xi;$
\item for local invariant $\xi \in C^{\infty}(\pi_0)$, and $X \in  
C^{\infty}(TF)$, $\nabla_X \xi  =0$, i.\ e.\  $\nabla$ is 
flat along $F$;
\item  $\nabla$ is invariant under the right action of $\cG$;
\item the leafwise operator $\nabla \pi_0 - \pi_0 
\nabla^{\nu}\pi_0:C^{\infty}_c(\wedge T^*M;\wedge T^*F_s \otimes E)  \to 
C^{\infty}(\wedge T^*M;\pi_0)$ is transversely smooth.
\end{enumerate}
\end{definition}

The usual proof shows that since $\nabla$ satisfies (1), it is local in 
the sense that $\nabla \xi (x)$ depends only on $\xi \, | \, U$ where $U$ 
is any open set in $M$ with $x \in U$.   

For $\nabla$ to be invariant under the right action of $\cG$ means the 
following.  Let $\gamma$ be a leafwise path in $M$ from $x = s(\gamma)$ to 
$y = r(\gamma)$.   Let $\xi$ be a local invariant section of $\pi_0$ 
defined on a neighborhood of the path $\gamma$.    For $X\in \nu_x$, we 
may use the natural flat structure on $\nu$ to parallel translate $X$ to 
$\gamma_{*}(X) \in \nu_y$.  Then we require,
$$
\nabla_X \xi =  (R_{\gamma})^{-1} \nabla_{\gamma_{*}(X)} 
\xi=R_{\gamma^{-1}} \nabla_{\gamma_{*}(X)} \xi,
$$
where the isomorphism $R_{\gamma}:L^2 
(\wL_{s(\gamma)}; \wedge T^*F_s \otimes E) \to L^2 (\wL_{r(\gamma)}; \wedge T^*F_s \otimes E)$ 
is given by  $R_{\gamma} (\xi)([\gamma_1]) = \xi[(\gamma_1 \gamma])$.   
Note that this condition does not depend on the choice of normal bundle 
$\nu$ because the ambiguity involves things of the form $\nabla_Y \xi$ 
where $Y \in TF$.  But this is zero because $\nabla$ is flat along $F$.

To see that $\nabla \pi_0 - \pi_0 \nabla^{\nu}\pi_0$ is a leafwise operator, let
$\xi \in C^{\infty}_c(\cG;\wedge T^*F_s \otimes E)$, and $\omega \in  \cA^k (M).$
Then
$$
(\nabla \pi_0 - \pi_0 \nabla^{\nu}\pi_0) (\omega \otimes \xi) =  \pi_0(\nabla  
-  \nabla^{\nu}) \pi_0(\omega \otimes  \xi)  =  
 \pi_0( \nabla  -  \nabla^{\nu})(\omega \otimes  \pi_0( \xi)) =  
$$
$$  
 \pi_0 \Bigl{(} d_{M}\omega  \otimes  \pi_0( \xi) + (-1)^k \omega \wedge 
\nabla \pi_0( \xi) - d_{M}\omega  \otimes \pi_0( \xi) -  (-1)^k \omega
\wedge \nabla^{\nu}\pi_0( \xi)\Bigr{)} = $$
 $$
(-1)^k \pi_0\Bigl{(}\omega \wedge(\nabla -  \nabla^{\nu})\pi_0( 
\xi)\Bigr{)}  = (-1)^k \omega \wedge (\nabla \pi_0 - \pi_0 \nabla^{\nu}\pi_0)  
\xi,
$$
so $\nabla \pi_0 - \pi_0 \nabla^{\nu} \pi_0$ is a leafwise operator.

Next we show that $C^{\infty}_c(\wedge T^*M;\wedge T^*F_s \otimes E)$  is in the 
domain of $ \pi_0 \nabla^{\nu} \pi_0$.  
We identify 
$C^{\infty}_c(\wedge T^* M;\wedge T^*F_s \otimes E)$ with the subspace 
$C^{\infty}_c(\wedge  \nu_s^* \otimes \wedge  T^*F_s \otimes E))$ of 
$C^{\infty}(\wedge  \nu_s^* \otimes \wedge  T^*F_s \otimes E))$.  Now $\pa_{\nu} 
(\pi_0) = [\nabla^{\nu}, \pi_0]$, and (by assumption) it is transversely smooth.  
Thus we have 
$$
\nabla^{\nu} \pi_0 =  \pi_0 \nabla^{\nu} + \pa_{\nu}(\pi_0),
$$
\noindent  so
$$
\pi_0 \nabla^{\nu} \pi_0 =  \pi_0 \nabla^{\nu} + \pi_0\pa_{\nu}(\pi_0).
$$
The domain of the operator on the right contains $C^{\infty}_c(\wedge  
\nu_s^* \otimes \wedge  T^*F_s  \otimes E)$.

\begin{lemma}\label{existconn}
$\pi_0 \nabla^{\nu}$ is a connection on $\pi_0$.
\end{lemma}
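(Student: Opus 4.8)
The plan is to set $\nabla := \pi_0\nabla^{\nu}$ and to verify directly the four conditions of Definition~\ref{connection}, after first checking that this formula makes sense on $C^{\infty}(\wedge T^*M;\pi_0)$. If $\xi\in C^{\infty}(\pi_0)$ then $\xi\in C^\infty_2(\wedge T^*F_s\otimes E)$, so for every vector field $Y$ on $M$ the section $\nabla^{\nu}_Y\xi$ again lies in $C^\infty_2(\wedge T^*F_s\otimes E)$ by the definition of $C^\infty_2$; since $\pi_0$ is transversely smooth, Proposition~\ref{transxi} gives $\pi_0\nabla^{\nu}_Y\xi\in C^\infty_2$, and idempotency of $\pi_0$ puts it in $C^{\infty}(\pi_0)$. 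Writing $\nabla^{\nu}\xi$ against a local frame $s^*(dx_i)$ of $\nu_s^*=s^*(T^*M)$ and pulling the forms out through the $\cA^*(M)$-equivariant extension of $\pi_0$, one finds $\pi_0\nabla^{\nu}\xi\in\cA^1(M)\,\widehat{\otimes}_{C^{\infty}(M)}\,C^{\infty}(\pi_0)\subset C^{\infty}(\wedge T^*M;\pi_0)$, and the general element is handled by the module identification recorded just before Definition~\ref{connection}. This also shows $\nabla$ has degree one, and there are no domain issues precisely because $C^{\infty}(\pi_0)\subset C^\infty_2$.

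For condition (1), the quasi-connection satisfies $\nabla^{\nu}(s^*\omega\wedge\xi)=s^*(d_M\omega)\wedge\xi+(-1)^k s^*\omega\wedge\nabla^{\nu}\xi$ for $\omega\in\cA^k(M)$ (using $d(s^*\omega)=s^*d_M\omega$); applying $\pi_0$, pulling the forms out by equivariance, and using $\pi_0\xi=\xi$ yields exactly $\nabla(\omega\otimes\xi)=d_M\omega\otimes\xi+(-1)^k\omega\wedge\nabla\xi$. Condition (4) is immediate: by construction $\nabla\pi_0=\pi_0\nabla^{\nu}\pi_0$, so $\nabla\pi_0-\pi_0\nabla^{\nu}\pi_0=0$, which is certainly transversely smooth. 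For condition (3), the quasi-connection $\nabla^{\nu}$ is built from the connection $r^*(\nabla_F\otimes\nabla_E)$, pulled back along $r$, together with the projection $p_{\nu}$, both of which are invariant under the right action of $\cG$ (this is the content of the remark that $\pa_{\nu}$ preserves $\cG$-invariant operators); combined with the hypothesis that $\pi_0$ is $\cG$-invariant, this gives $\cG$-invariance of $\pi_0\nabla^{\nu}$, and the stated independence of the condition from the choice of $\nu$ follows once condition (2) is known.

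The only condition requiring a genuine argument is (2), flatness along $F$ on local invariant sections, and this is where I expect the real work to be. The key point is that for $X\in C^{\infty}(TF)$ the lift $\widehat X$ appearing in $\nabla^{\nu}_X=i_{\widehat X}\nabla^{\nu}$ lies in $TF_r\subset\nu_s$, so $\nabla^{\nu}_X\xi=(r^*(\nabla_F\otimes\nabla_E))_{\widehat X}\,\xi$ is a covariant derivative of $\xi$ along an $r$-vertical direction; since the connection is pulled back via $r$, this covariant derivative is simply the ordinary derivative of the components of $\xi$ in a frame pulled back from $M$. Evaluating at $[\gamma]$ along the curve $t\mapsto[\gamma\delta_t^{-1}]$, where $\delta_t$ is the $X$-flow path out of $x=s(\gamma)$ (this curve has velocity $\widehat X$ and keeps $r$ fixed, equal to $r(\gamma)$), local invariance of $\xi$ forces $\xi_{x_t}([\gamma\delta_t^{-1}])=\xi_x([\gamma])$, a constant element of $(\wedge T^*F\otimes E)_{r(\gamma)}$; hence $\nabla^{\nu}_X\xi=0$ and therefore $\nabla_X\xi=\pi_0\nabla^{\nu}_X\xi=0$. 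The subtlety is identifying this curve correctly and checking that its velocity is indeed $\widehat X$ and that it is $r$-vertical; once that is done, and together with the purely formal verifications of (1), (3), (4) above, the lemma follows.
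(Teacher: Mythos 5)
Your proof is correct and follows essentially the same route as the paper: it verifies the four conditions of Definition~\ref{connection} after first confirming that $\pi_0\nabla^{\nu}$ maps $C^{\infty}(\wedge T^*M;\pi_0)$ to itself. The only cosmetic differences are that you package the well-definedness step via Proposition~\ref{transxi} applied to $\pi_0$ directly (the paper instead expands $\pi_0\nabla^{\nu}=\nabla^{\nu}\pi_0-\pa_{\nu}(\pi_0)$), and for condition (2) you argue along the curve $t\mapsto[\gamma\delta_t^{-1}]$ whereas the paper observes locally that $\xi=r^*\what{\xi}$ and uses $r_*\what{X}=0$; these are two phrasings of the same computation.
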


\begin{proof} 
Since $\pi_0$ commutes with the action of $\cA^*(M)$ on 
$C^{\infty}(\wedge T^*M;\pi_0)$, to show that $ \pi_0 \nabla^{\nu}$ maps 
the space $C^{\infty}(\wedge T^*M;\pi_0)$ to itself, we need only show that for any 
local section $\xi \in C^{\infty}(\pi_0)$, and any local vector field $X$ 
on $M$,
$$
(\pi_0 \nabla^{\nu} \xi)(X)  \, = \, \pi_0(i_{\what{X}} \nabla^{\nu} \xi)  \, = \,
\pi_0(\nabla^{\nu}_{X} \xi)
$$
is in $C^{\infty}(\pi_0)$, where $\what{X}$ is the lift of $X$ to $\nu_s$.
As $\xi \in C^{\infty}(\pi_0)$, it is in $C^{\infty}_2(\wedge T^*F_s \otimes E)$ and $ \pi_0(\xi) = \xi$, so $\nabla^{\nu}_{X} \pi_0 \xi = \nabla^{\nu}_{X} \xi \in C^{\infty}_2(\wedge T^*F_s \otimes E)$.  
As $\pi_0$ is transversely smooth, $i_{\what{X}} \pa_{\nu}(\pi_0)( \xi) \in C^{\infty}_2(\wedge T^*F_s \otimes E)$.  Since $\pi_0 \nabla^{\nu} =  \nabla^{\nu} \pi_0 + \pa_{\nu}(\pi_0)$,  we have $(\pi_0 \nabla^{\nu} \xi)(X) \in C^{\infty}_2(\wedge T^*F_s \otimes E)$.
Finally, as $\pi_0^2 = \pi_0$, $\pi_0(\pi_0(\nabla^{\nu}_{X}  \xi)) = \pi_0(\nabla^{\nu}_{X}  \xi)$.  Thus 
$( \pi_0 \nabla^{\nu} \xi)(X) \in C^{\infty}(\pi_0)$, and  $ \pi_0 \nabla^{\nu}$ maps $C^{\infty}(\wedge T^*M;\pi_0)$ to itself.

The operator $ \pi_0 \nabla^{\nu}$ satisfies $(1)$ because $\pi_0$ commutes with the 
action of $\cA^*(M)$ on $C^{\infty}(\wedge T^*M;\pi_0)$.  In 
particular, for $\omega \in \cA^k(M)$ and $\xi \in C^{\infty}(\pi_0)$, 
we have
$$
\pi_0 \nabla^{\nu} ( s^*\omega \otimes  \xi) \, = \,  
\pi_0\rho_{\nu}(r^* (\nabla_F \otimes \nabla_E) (s^*\omega \otimes \xi))  \, = \, 
$$
$$
\pi_0  \rho_{\nu}\Bigr{(}d_{\cG} (s^*\omega)  \otimes \xi + (-1)^k s^*\omega \wedge r^*( \nabla_F \otimes \nabla_E) \xi\Bigr{)}    \, = \, 
$$
$$
\pi_0  \rho_{\nu}(s^*d_M \omega \otimes  \xi) +  (-1)^k  \pi_0 
\rho_{\nu}(s^* \omega \wedge  r^* (\nabla_F \otimes \nabla_E) \xi)    \, = \,
$$
$$
s^*d_M \omega  \otimes  \pi_0\xi + (-1)^k s^* \omega \wedge  \pi_0 \rho_{\nu} r^* (\nabla_F \otimes \nabla_E) \xi  \, = \,
d_M\omega  \otimes \xi + (-1)^k \omega \wedge  \pi_0 \nabla^{\nu}\xi.
$$

To show that $ \pi_0 \nabla^{\nu}$ satisfies $(2)$, let $X \in TF_x$, $\xi$ be a local 
invariant section of $\pi_0$ defined near $x$, and $[\gamma] \in \wL_x$.  
The fact that $\xi$ is invariant means that there is a section 
$\what{\xi}$ of $\wedge T^* F \otimes E$ defined in a neighborhood of $r(\gamma)$ so 
that $\xi = r^* \what{\xi}$ in a neighborhood of $[\gamma]$.    Recall 
that $\nu_s = TF_r \oplus \nu_{\cG}$.  Since $X \in TF_x$, $\what{X} \in 
TF_r$ and $r_*(\what{X})= 0$.  Now 
$$
 \pi_0 \nabla^{\nu}_X \xi  = \pi_0(  r^* (\nabla_F \otimes \nabla_E)_{\what{X}}(\xi)),
$$
but at $[\gamma]$, 
$$ 
\Bigl{(} r^* (\nabla_F \otimes \nabla_E)_{\what{X}}(\xi)\Bigr{)}[\gamma] =  
(\nabla_F \otimes \nabla_E)_{r_*(\what{X}[\gamma]) }\what{\xi} = 
(\nabla_F \otimes \nabla_E) _{0}\what{\xi} =  0,
$$
so $ \pi_0 \nabla^{\nu}_X \xi  = 0.$

We leave it to the reader to check that $ \pi_0 \nabla^{\nu}$ satisfies $(3)$ of 
Definition \ref{connection}, which is a straight forward computation, 
using the fact that for $X \in \nu_x$ and $[\gamma] \in \wL_x$, 
$r_*(\what{X}_{[\gamma]}) = \gamma_*(X)$, the parallel translate of $X$ 
along $\gamma$ to $\nu_{r(\gamma)}$.  

$ \pi_0 \nabla^{\nu}$ obviously satisfies $(4)$.  
\end{proof}

\begin{remark}
If $\what{\nabla}^{\nu}$ is another partial connection, then the difference 
$\what{\nabla}^{\nu} - \nabla^{\nu}$ is a leafwise operator which satisfies the hypothesis of Lemma \ref{AK}, so  $\pi_0 \what{\nabla}^{\nu}\pi_0  - \pi_0 \nabla^{\nu}\pi_0  = 
\pi_0 ( \what{\nabla}^{\nu} - \nabla^{\nu})\pi_0$ is transversely smooth and $\pi_0 \what{\nabla}^{\nu}$ is also a connection on $\pi_0$.  So, as in the classical case, the space of connections is an affine space whose linear part is composed of transversely smooth operators.
\end{remark}

Now suppose that $\nabla$ is any connection on $\pi_0$.  Define the curvature $\theta$ of $\nabla$ to be 
$$
\theta = \nabla^2.
$$
The usual computation shows that $\theta$ is a leafwise operator, that is 
\begin{lemma}\label{local}
For any  $\omega \in \cA^*(M)$ and any $\xi \in C^{\infty}(\pi_0)$, {$\nabla ^2(\omega \otimes \xi) = \omega \wedge \nabla ^2(\xi).$}
\end{lemma}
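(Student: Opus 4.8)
The plan is to reduce to homogeneous $\omega$ and then run the standard Leibniz computation, using nothing beyond axiom (1) of Definition \ref{connection} together with $d_M^2 = 0$. First, since both sides of the asserted identity are linear in $\omega$, I would assume $\omega \in \cA^k(M)$ for a fixed $k$. The one preliminary point is to upgrade axiom (1) to a graded Leibniz rule for the wedge action of $\cA^*(M)$ on $\pi_0$-valued forms: given $\psi \in C^\infty(\wedge T^*M;\pi_0)$, written via the identification $C^\infty(\wedge T^*M;\pi_0) \simeq \cA^*(M)\,\widehat{\otimes}_{C^\infty(M)}\,C^\infty(\pi_0)$ as $\psi = \sum_j \eta_j \otimes \xi_j$, one has $\omega \wedge (\eta_j \otimes \xi_j) = (\omega \wedge \eta_j)\otimes\xi_j$, and then a one-line check using (1) and $d_M(\omega\wedge\eta_j) = d_M\omega\wedge\eta_j + (-1)^k\omega\wedge d_M\eta_j$ gives
\[
\nabla(\omega\wedge\psi) \;=\; d_M\omega\wedge\psi \;+\; (-1)^k\,\omega\wedge\nabla\psi ,
\]
the sign depending only on $k=\deg\omega$, not on $\deg\psi$.

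With this in hand I would simply apply $\nabla$ twice. By (1),
\[
\nabla(\omega\otimes\xi) \;=\; d_M\omega\otimes\xi \;+\; (-1)^k\,\omega\wedge\nabla\xi ,
\]
and applying $\nabla$ again, using $d_M^2 = 0$ on the first summand (noting $d_M\omega \in \cA^{k+1}(M)$) and the Leibniz rule above on the second,
\[
\nabla^2(\omega\otimes\xi) \;=\; (-1)^{k+1} d_M\omega\wedge\nabla\xi \;+\; (-1)^k\bigl( d_M\omega\wedge\nabla\xi + (-1)^k\,\omega\wedge\nabla^2\xi\bigr) \;=\; \omega\wedge\nabla^2\xi ,
\]
the two $d_M\omega\wedge\nabla\xi$ terms cancelling because of the opposite signs $(-1)^{k+1}$ and $(-1)^k$. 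This is precisely the claimed identity, and since the argument is local in $M$ it applies to general $\omega\in\cA^*(M)$ by linearity.

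There is no genuine obstacle here: the content is purely formal and identical to the classical proof that the curvature of a connection is tensorial. The only facts particular to this setting that enter are that $\nabla$ is in fact defined on all of $C^\infty(\wedge T^*M;\pi_0)$ — so that $\nabla^2$ makes sense on $\omega\otimes\xi$, which is built into Definition \ref{connection} — and that the $\cA^*(M)$-action is compatible with the tensor decomposition, which is recorded in the identification above. The upshot is that $\theta = \nabla^2$ is $\cA^*(M)$-linear, hence, as asserted, a leafwise operator.
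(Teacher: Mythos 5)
Your proof is correct and is precisely what the paper means by ``the usual computation'': reduce to homogeneous $\omega$, extend axiom (1) of Definition \ref{connection} to a graded Leibniz rule $\nabla(\omega\wedge\psi) = d_M\omega\wedge\psi + (-1)^{\deg\omega}\omega\wedge\nabla\psi$, and then verify that the cross terms $d_M\omega\wedge\nabla\xi$ cancel in $\nabla^2(\omega\otimes\xi)$, leaving $\omega\wedge\nabla^2\xi$. This is the same formal argument the paper invokes without writing out.
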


Denote by $C^{\infty}(\wedge T^*M; \cA^{*}_{(2)}(F_s \otimes E))$ the space of 
all smooth sections of $\wedge T^*M$ with coefficients in  $\cA^{*}_{(2)}(F_s \otimes E)$.  Smoothness means that the section is smooth when viewed as a section of $\wedge \nu_s^* \otimes 
\wedge T^*F_s \otimes E$ over $\cG$. 
Extend $\nabla$ to an operator on $C^{\infty}(\wedge T^*M; 
\cA^{*}_{(2)}(F_s \otimes E))$,  by composing it with the obvious extension of 
$\pi_0$ to $C^{\infty}(\wedge T^*M; \cA^{*}_{(2)}(F_s \otimes E))$.   The 
curvature of $\nabla \circ \pi_0$, is given by 
$(\nabla \circ \pi_0)^2 =  \nabla \circ \pi_0 \circ \nabla \circ \pi_0 = 
\nabla \circ \nabla \circ \pi_0 = \theta \circ \pi_0,$ since $\pi_0 \circ 
\nabla = \nabla$.   
We will also denote these new operators by $\nabla$ and $\theta$.   Note 
that although $\nabla$ is an operator which differentiates transversely to 
the foliation $F_s$, the operator $\theta$ is a purely leafwise operator, 
thanks to Lemma \ref{local}.    Also note that 
$$
\theta = \pi_0 \theta = \theta \pi_0.
$$
\begin{lemma}
$\theta$ is  transversely smooth.
\end{lemma}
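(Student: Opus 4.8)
The plan is to reduce to the canonical connection $\pi_0\nabla^{\nu}$ of Lemma \ref{existconn} and then control the difference, using throughout only the stability of transverse smoothness under composition and under $\pa_{\nu}$, together with Lemma \ref{AK}. I would first record the algebraic facts to be used repeatedly. Applying $\pa_{\nu}$ to $\pi_0^2=\pi_0$ gives $\pa_{\nu}(\pi_0)\pi_0+\pi_0\pa_{\nu}(\pi_0)=\pa_{\nu}(\pi_0)$, hence $\pi_0\,\pa_{\nu}(\pi_0)\,\pi_0=0$. Since $\nabla$ carries $C^{\infty}(\wedge T^*M;\pi_0)$ into itself, the operator $S:=\nabla\pi_0-\pi_0\nabla^{\nu}\pi_0$ satisfies $\pi_0S=S=S\pi_0$, and it is transversely smooth by Definition \ref{connection}(4); it is also $\cG$-invariant, being the difference of the $\cG$-invariant connections $\nabla$ and $\pi_0\nabla^{\nu}$, so $\pa_{\nu}(S)$ is transversely smooth, and likewise $\pa_{\nu}(\pi_0)$ is transversely smooth. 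Finally, $\theta^{\nu}=(\nabla^{\nu})^2$ satisfies the hypotheses of Lemma \ref{AK}.

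Next I would compute the curvature $\theta_0=(\pi_0\nabla^{\nu}\pi_0)^2=\pi_0\nabla^{\nu}\pi_0\nabla^{\nu}\pi_0$ of the canonical connection (extended as in the text). Substituting $\nabla^{\nu}\pi_0=\pa_{\nu}(\pi_0)+\pi_0\nabla^{\nu}$ into the two inner occurrences of $\nabla^{\nu}\pi_0$ and using $\pi_0\,\pa_{\nu}(\pi_0)\,\pi_0=0$ and $(\nabla^{\nu})^2=\theta^{\nu}$, all transverse derivatives cancel, leaving
\[
\theta_0 \;=\; \pi_0\,\theta^{\nu}\,\pi_0 \;+\; \pi_0\,\pa_{\nu}(\pi_0)\,\pa_{\nu}(\pi_0)\,\pi_0 .
\]
The first summand is transversely smooth by Lemma \ref{AK} applied to $\theta^{\nu}$ and the transversely smooth operator $\pi_0$, followed by a composition; the second is a composition of transversely smooth operators. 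Hence $\theta_0$ is transversely smooth.

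For a general connection $\nabla$ I would write $\nabla\pi_0=\pi_0\nabla^{\nu}\pi_0+S$ and expand the curvature (viewed, as in the text, as the extended operator $\theta=\theta\pi_0$):
\[
\theta \;=\; (\nabla\pi_0)^2 \;=\; \theta_0 \;+\; \pi_0\nabla^{\nu}\pi_0\,S \;+\; S\,\pi_0\nabla^{\nu}\pi_0 \;+\; S^2 .
\]
Each of the two cross terms still contains the transverse operator $\nabla^{\nu}$, but those parts cancel: using $\pi_0S=S=S\pi_0$, the relation $[\nabla^{\nu},\pi_0]=\pa_{\nu}(\pi_0)$, and $[\nabla^{\nu},S]=\pa_{\nu}(S)$ (with the sign appropriate to $\nabla^{\nu}$ and $S$ both of odd degree, so that $\nabla^{\nu}S=\pa_{\nu}(S)-S\nabla^{\nu}$), one finds $\pi_0\nabla^{\nu}\pi_0\,S=\pi_0\,\pa_{\nu}(S)-S\nabla^{\nu}$ and $S\,\pi_0\nabla^{\nu}\pi_0=S\,\pa_{\nu}(\pi_0)+S\nabla^{\nu}$, whose sum is $\pi_0\,\pa_{\nu}(S)+S\,\pa_{\nu}(\pi_0)$. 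Thus
\[
\theta \;=\; \theta_0 \;+\; \pi_0\,\pa_{\nu}(S) \;+\; S\,\pa_{\nu}(\pi_0) \;+\; S^2 ,
\]
and every term on the right is a composition of transversely smooth operators ($\theta_0$ by the previous step, and $\pa_{\nu}(S)$, $\pa_{\nu}(\pi_0)$, $S$ by the first paragraph). Therefore $\theta$ is transversely smooth.

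I expect the only real obstacle to be the bookkeeping in the last step: one must keep $S$ flanked by $\pi_0$'s throughout and take the correct sign in the graded commutator $[\nabla^{\nu},S]=\nabla^{\nu}S+S\nabla^{\nu}$, so that the two occurrences of $S\nabla^{\nu}$ cancel rather than reinforce each other; everything else is a routine application of the closure properties of transversely smooth operators and of Lemma \ref{AK}. As a check, the final formula exhibits $\theta$ as a leafwise operator, consistent with Lemma \ref{local}.
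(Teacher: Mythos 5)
Your argument is correct and follows essentially the same route as the paper: expand $\theta=(\pi_0\nabla^{\nu}\pi_0+S)^2$, isolate $\theta_0=(\pi_0\nabla^{\nu}\pi_0)^2=\pi_0\theta^{\nu}\pi_0+\pi_0\pa_{\nu}(\pi_0)\pa_{\nu}(\pi_0)$ using $\pi_0\pa_{\nu}(\pi_0)\pi_0=0$ and Lemma \ref{AK}, and absorb the cross terms into a graded commutator. The paper writes those cross terms as $\pi_0\pa_{\nu}(A)\pi_0$ rather than your $\pi_0\pa_{\nu}(S)+S\pa_{\nu}(\pi_0)$, but a short computation shows these agree, so the two proofs differ only in bookkeeping.
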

\begin{proof}
Set  $A =  \pi_0 \nabla \pi_0 - \pi_0 \nabla^{\nu} \pi_0$, a transversely smooth operator.    Then
$$
\theta = ( \pi_0 \nabla \pi_0 )^2 = \pi_0 \nabla^{\nu} \pi_0 \nabla^{\nu} \pi_0 + 
\pi_0 \nabla^{\nu} \pi_0 A \pi_0 + \pi_0 A\pi_0 \nabla^{\nu} \pi_0 + A^2.
$$
As $A$ is transversely smooth, so is $A^2$.   Since $\pi_0 A = A \pi_0 = 
A$, the terms
$$
\pi_0 \nabla^{\nu} \pi_0 A \pi_0 + \pi_0 A\pi_0 \nabla^{\nu} \pi_0 = \pi_0 \nabla^{\nu}A 
\pi_0 + \pi_0 A \nabla^{\nu} \pi_0  =  \pi_0  [\nabla^{\nu},A] \pi_0 =  \pi_0  \pa_{\nu}(A) \pi_0, 
$$
which is transversely smooth.  Now
$\nabla^{\nu} \pi_0  = \pi_0 \nabla^{\nu} + \pa_{\nu}(\pi_0)$, so
$$ 
\pi_0 \nabla^{\nu} \pi_0 \nabla^{\nu} \pi_0 = 
\pi_0 (\nabla^{\nu})^2 \pi_0 +\pi_0 \pa_{\nu}(\pi_0)\nabla^{\nu} \pi_0 =
\pi_0 \theta^{\nu} \pi_0 +\pi_0 \pa_{\nu}(\pi_0) \pi_0 \nabla^{\nu}  +\pi_0 
\pa_{\nu}(\pi_0)  \pa_{\nu}(\pi_0).
$$ 
The curvature $\theta^{\nu} = (\nabla^{\nu})^2$ satisfies the hypothesis of of Lemma \ref{AK}.    As $\pi_0$ is transversely smooth, it follows from Lemma \ref{AK} that $\pi_0 \theta^{\nu} \pi_0$ is  transversely smooth.  Using the facts that $\pa_{\nu}$ is a derivation and $\pi_0$ is an idempotent, it is a simple exercise to show that $\pi_0 \pa_{\nu}(\pi_0) \pi_0 = 0$.   Finally, $ \pi_0 \pa_{\nu}(\pi_0)  \pa_{\nu}(\pi_0)$ is the composition of  transversely smooth operators, so transversely smooth.  Thus  $\theta$ is transversely smooth.
\end{proof}

Set 
$$
\pi_0 e^{-\theta/2i\pi} =  \pi_0 + \sum_{k=1}^{[n/2]}\frac {(-1)^k 
\theta^{k}}{(2i\pi)^k k!},
$$
and consider the Haefliger form $\dd \Tr(\pi_0 e^{-\theta/2i\pi})$.  (Note 
that $2i \pi$ is the complex number.)

\begin{theorem}\label{newchern}
The Haefliger form $\dd  \Tr(\pi_0 e^{-\theta/2i\pi})$ is closed and its 
cohomology class does not depend on the connection used to define it.
\end{theorem}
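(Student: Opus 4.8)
The plan is to run the classical Chern--Weil argument in the transverse (Haefliger) setting. The two structural facts that replace Stokes' theorem and the trace property are, respectively, the identity $\Tr\circ\pa_{\nu}=d_{H}\circ\Tr$ on transversely smooth operators and the vanishing of $\Tr$ on graded commutators of $\cG$-invariant transversely smooth operators; the latter is Lemma~\ref{trace} read through the multiplicative subalgebra embedding $\cB\hookrightarrow\wcB$ used to construct $\ch_a$ (under which $\Phi\circ\tr$ restricts to $\Tr$), so that $\Tr([H,K])=0$ for $\cG$-invariant transversely smooth $H,K$. Together with the already established facts that $\theta$ is a transversely smooth leafwise operator with $\theta=\pi_{0}\theta=\theta\pi_{0}$, that the connections on $\pi_{0}$ form an affine space whose linear part consists of transversely smooth operators (Lemma~\ref{existconn} and the remark after it), and that $\pi_{0}\pa_{\nu}(\pi_{0})\pi_{0}=0$, this is essentially all that is needed.

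The key step I would isolate is a ``covariant Stokes formula'': for any connection $\nabla$ on $\pi_{0}$ and any homogeneous $\cG$-invariant transversely smooth operator $S$ with $\pi_{0}S=S\pi_{0}=S$, one has $d_{H}\Tr(S)=\Tr([\nabla,S])$, the bracket denoting the graded commutator. To prove it, write $\nabla=\pi_{0}\nabla^{\nu}\pi_{0}+A$ with $A=\pi_{0}(\nabla-\nabla^{\nu})\pi_{0}$ transversely smooth, and $\pi_{0}\nabla^{\nu}\pi_{0}=\pi_{0}\nabla^{\nu}+\pi_{0}\pa_{\nu}(\pi_{0})$. By Lemma~\ref{trace} the traces of $[A,S]$ and of $[\pi_{0}\pa_{\nu}(\pi_{0}),S]$ vanish, so $\Tr([\nabla,S])=\Tr([\pi_{0}\nabla^{\nu},S])=\Tr(\pi_{0}\pa_{\nu}(S))$, the last equality by the Leibniz rule and $\pi_{0}S=S\pi_{0}=S$. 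On the other hand $\pa_{\nu}(S)=\pi_{0}\pa_{\nu}(S)+\pa_{\nu}(\pi_{0})S$, and $\Tr(\pa_{\nu}(\pi_{0})S)=0$: since $\pi_{0}\pa_{\nu}(\pi_{0})\pi_{0}=0$ one has $\pa_{\nu}(\pi_{0})S=(1-\pi_{0})\pa_{\nu}(\pi_{0})\pi_{0}\,S\pi_{0}$, and cycling the trailing $\pi_{0}$ to the front by the graded trace property produces the factor $\pi_{0}(1-\pi_{0})=0$. Hence $\Tr([\nabla,S])=\Tr(\pa_{\nu}(S))=d_{H}\Tr(S)$; the identical computation applies when $S$ has odd degree.

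Closedness is then immediate. Since $\pi_{0}\theta^{k}=\theta^{k}$ and $\theta^{k}=0$ for $2k>n$, we have $\Tr(\pi_{0}e^{-\theta/2i\pi})=\Tr(\pi_{0})+\sum_{k\ge1}\frac{(-1)^{k}}{(2i\pi)^{k}k!}\Tr(\theta^{k})$, a finite sum. The covariant Stokes formula gives $d_{H}\Tr(\theta^{k})=\Tr([\nabla,\theta^{k}])=0$ by the Bianchi identity $[\nabla,\theta]=[\nabla,\nabla^{2}]=0$ and graded Leibniz, while $d_{H}\Tr(\pi_{0})=\Tr([\nabla,\pi_{0}])=0$ because, as operators on the ambient space, $\nabla=\nabla\pi_{0}=\pi_{0}\nabla$. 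For independence of the connection I would use the usual transgression: for two connections $\nabla_{0},\nabla_{1}$ on $\pi_{0}$ set $\nabla_{t}=(1-t)\nabla_{0}+t\nabla_{1}$ (again a connection on $\pi_{0}$, by the affine structure) and $\dot\nabla=\nabla_{1}-\nabla_{0}$, which is transversely smooth since the $d_{M}$-parts cancel; then $\dot\theta_{t}=[\nabla_{t},\dot\nabla]$, and using cyclicity of $\Tr$, the Bianchi identity, and the covariant Stokes formula applied to the odd operator $\theta_{t}^{k-1}\dot\nabla$ one obtains $\frac{d}{dt}\Tr(\pi_{0}e^{-\theta_{t}/2i\pi})=d_{H}\bigl(\frac{-1}{2i\pi}\Tr(\pi_{0}e^{-\theta_{t}/2i\pi}\dot\nabla)\bigr)$. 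Integrating over $t\in[0,1]$ exhibits the difference of the two Haefliger forms as $d_{H}$ of a transgression form, so their classes in $\oH^{*}_{c}(M/F)$ coincide.

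The routine points I would dispatch briefly: that the Haefliger forms $\Tr(\theta^{k})$ are well defined and finite (they are integrals over the compact $M$ of pointwise traces of leafwise smoothing operators), that $\nabla_{t}$ is genuinely a connection on $\pi_{0}$, and that every operator entering an application of Lemma~\ref{trace} (such as $\theta_{t}^{k-1}\dot\nabla$ or $(1-\pi_{0})\pa_{\nu}(\pi_{0})\pi_{0}S\pi_{0}$) is $\cG$-invariant and transversely smooth, the latter by Lemma~\ref{AK} and the stability of transverse smoothness under composition. The one place where I expect real care to be needed is the covariant Stokes formula: tracking the $\pi_{0}$-insertions and the signs in the graded trace identity, and verifying that the graded trace / cyclicity property of $\Tr$ genuinely extends from $C^{\infty}_{c}(\cG;\wedge F_{s}\otimes E)$ to $\cG$-invariant transversely smooth operators through the embedding into $\wcB$. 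Everything after that is formal.
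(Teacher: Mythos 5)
Your argument is correct and is essentially the paper's proof, with a cleaner organization: you abstract the computation in the paper's lemma (which directly shows $d_{H}\Tr(\theta^{k})=\Tr(\pa_{\nu}(\theta^{k}))=0$ using $\nabla=\pi_{0}\nabla^{\nu}\pi_{0}+A$, $\pi_{0}\pa_{\nu}(\pi_{0})\pi_{0}=0$, Lemma~\ref{trace}, and Bianchi) into the reusable identity $d_{H}\Tr(S)=\Tr([\nabla,S])$, then apply it. The one genuine addition is that you actually carry out the transgression argument for independence of the connection, which the paper dismisses as ``a standard argument in the theory of characteristic classes.''
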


\begin{proof}
The zero-th order term of  $   \Tr(\pi_0 e^{-\theta/2i\pi})$ is $  
\Tr(\pi_0)$, and since $\pi_0$ is a uniformly bounded  leafwise smoothing operator, we have (see \cite{BHII}),
$$
d_H \Tr (\pi_0) = \Tr (\pa_\nu (\pi_0))  = {\Tr (\pa_\nu (\pi^2_0))} = 2 \Tr (\pi_0 \pa_\nu (\pi_0)) = 2 \Tr (\pi_0 \pa_\nu (\pi_0) \pi_0) = 0.
$$
since $\pi_0$ is a ($\cG$ invariant transversely smooth) idempotent.

\begin{lemma} 
\hspace{0.5cm}   For  $k > 0$, $d_H   \Tr(\theta^{k}) \,\, = \,\  0.$
\end{lemma}

\begin{proof} 

First note that for $k > 0$, 
$$
[\nabla,\theta^k] \,\, = \,\ 
[\nabla, \nabla^{2k} ]\,\, = \,\ \nabla \circ \nabla^{2k} - \nabla^{2k} \circ \nabla \,\, = \,\ 0.
$$
Also note  that $\nabla \,\, = \,\  \pi_0 \nabla^{\nu} \pi_0  + A,$
where $A$ satisfies the hypothesis of Lemma \ref{trace}, as does 
$\theta^k$.  Thus  
$$  
0 \,\, = \,\ \Tr ([\nabla,\theta^k])  \,\, = \,\ 
\Tr ([\pi_0 \nabla^{\nu} \pi_0  + A,\theta^k]) \,\, = \,\
\Tr ([\pi_0 \nabla^{\nu} \pi_0 ,\theta^k])  \,\, = \,\ 
$$
$$
\Tr (\pi_0 \nabla^{\nu} \theta^k - \theta^k  \nabla^{\nu} \pi_0) \,\, = \,\
\Tr ((\pi_0 - 1) \nabla^{\nu} \theta^k +  \nabla^{\nu} \theta^k -  
\theta^k \nabla^{\nu} - \theta^k \nabla^{\nu} (\pi_0 - 1)) \,\, = \,\ 
$$
$$
\Tr ((\pi_0 - 1) \nabla^{\nu} \theta^k ) -
\Tr (\theta^k \nabla^{\nu} (\pi_0 - 1)) +
\Tr([\nabla^{\nu} ,\theta^k]).
$$
Note that the three terms are well defined since the three operators are $\cA^*(M)$-equivariant. 
As 
$\theta = \pi_0 \theta = \theta \pi_0$,  $\theta^k = \pi_0 \theta^k 
\pi_0$, and we have 
$$
\Tr ((\pi_0 - 1) \nabla^{\nu} \theta^k) \,\, = \,\ 
\Tr ((\pi_0 - 1) \nabla^{\nu}\pi_0 \theta^k \pi_0) \,\, = \,\
\Tr ((\pi_0 - 1) \pi_0 \nabla^{\nu} \theta^k \pi_0) +
\Tr ((\pi_0 - 1)  \pa_{\nu}(\pi_0)  \, \theta^k \pi_0) \,\, = \,\ 
0,
$$
since both terms are zero.
The first term is zero because $(\pi_0 - 1) \pi_0 = 0$.  The second term 
is zero because
both $(\pi_0 - 1)  \pa_{\nu}(\pi_0)  \, \theta^k$ and $ \pi_0$ are $\cG$ invariant 
and transversely smooth, so by Lemma \ref{trace},{
$$
\Tr ((\pi_0 - 1)  \pa_{\nu}(\pi_0) \, \theta^k \pi_0)
\,\, = \,\  \Tr (\pi_0(\pi_0 - 1)  \pa_{\nu}(\pi_0)  \, \theta^k ) = 0.
$$
}
Similarly,
$$
\Tr (\theta^k \nabla^{\nu}(\pi_0 - 1)) \,\, = \,\  0.
$$
Thus, 
$$ 
0 \,\, = \,\ \Tr([\nabla^{\nu} ,\theta^k]) \,\, = \,\  \Tr (\pa_{\nu}(\theta^k)).
$$
It follows easily from Lemma 6.3 of \cite{BHII} that $d_H \Tr (\theta^k) = \Tr (\pa_{\nu}(\theta^k))$,
so we have the Lemma. 
\end{proof}

To complete the proof of Theorem \ref{newchern}, we note that a standard argument in the theory of characteristic classes shows that
\begin{lemma} 
The Haefliger class of $\dd \Tr(\pi_0 e^{-\theta/2i\pi})$ does not depend 
on the choice of connection $\nabla$ on $ \pi_0 $. 
\end{lemma}
\end{proof}

\begin{definition}
The Chern-Connes character $\ch_a(\pi_0)$ of the transversely smooth idempotent $\pi_0$ 
is the cohomology class of the Haeflliger form
 $\dd \Tr(\pi_0 e^{-\theta/2i\pi})$, that is
$$\ch_a(\pi_0)=  [ \dd \Tr(\pi_0 e^{-\theta/2i\pi})].$$
\end{definition}

\begin{remark}
In \cite{Heitsch:1995}, \cite{BHI}, and \cite{BHII} we defined Chern-Connes 
characters for various objects.  It is clear from the results of those  
papers that the definition given here is consistent with those 
definitions.   In particular, if $\nabla = \pi_0 \nabla^{\nu}$ is a 
connection on $\pi_0$ constructed from a connection $\nabla_F \otimes \nabla_E$ on $\wedge T^*F \otimes E$, 
then the material in Section 5 of \cite{BHII} (which shows that the 
definitions of \cite{Heitsch:1995} and \cite{BHI} coincide) along with the 
comment after Definition 3.11 of \cite{BHII} shows that the Chern-Connes 
character given here for $\pi_0$ and the Chern-Connes character for $\pi_0$ given 
in \cite{BHII} are the same.
Thus all three constructions of $\ch_a(\pi_0)$ yield the same Haefliger 
class.
\end{remark}
\begin{remark}
Note that in Sections \ref{cc} and \ref{ccnc} we may replace the bundle $\wedge T^*F_s \otimes E$ by any bundle on $\cG$ induced by $r$ from a bundle on $M$, and the results are still valid. 
\end{remark}

Before leaving this section, we record some facts we will need later.
In particular, we show that  any connection $\nabla$ is local in the sense that for 
$X$ transverse to $F$ and any local invariant section $\xi$ of $\pi_0$,
$\nabla_X \xi $ depends only on $\xi$ restricted to any transversal $T$ 
with $X$ tangent to $T$. {See Corollary \ref{nablalocal} below}.

\begin{lemma} \label{spans}  Let $U$ be a coordinate chart for $F$.
There is a countable collection of smooth local invariant sections of 
$\pi_0$ on $U$ which spans $C^{\infty}(\pi_0) \,|\, U$ as a module over 
$C^{\infty}(U)$.
\end{lemma}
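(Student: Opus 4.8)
The plan is to produce the countable family by propagating, through the leafwise-invariant structure inside the chart, a fixed countable family of sections that already suffices along the transversals, and then to reduce an arbitrary smooth section of $\pi_0$ to such sections by an iterated Hadamard argument in the leaf directions. Write the chart as $U\cong P\times T$, with $P$ a placque carrying leaf coordinates $x_1,\dots,x_p$ and $T\subset U$ a transversal. As recalled just before Definition~\ref{connection}, restricting a local section of $\cA^*_{(2)}(F_s,E)$ to a transversal and extending it invariantly along the leafwise paths inside $U$ yields a local invariant section over all of $U$, and since $\pi_0$ is $\cG$ invariant this operation carries a $C^\infty(\pi_0)$-section over $s^{-1}(T')$ to a $C^\infty(\pi_0)$-section over $U$ for any transversal $T'\subset U$; in particular, a local invariant section of $\pi_0$ over $U$ is the invariant extension of its restriction to $T$. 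Moreover, since the lift of a leaf-direction vector field of $U$ is tangent to $F_r$ and killed by $r_*$ — the very mechanism by which $\pi_0\nabla^{\nu}$ is flat along $F$ in Lemma~\ref{existconn} — the operator $\pi_0$ commutes with differentiation and integration along the leaf directions of $U$, so $C^\infty(\pi_0)|_U$ is stable under these operations and under pullback by the leaf-direction flows, and Hadamard's lemma can be applied inside $C^\infty(\pi_0)|_U$.

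For the generators, note that $\wedge T^*F_s\otimes E$ is a smooth finite-rank bundle over the second countable manifold $s^{-1}(T')$, so $C^\infty_c(s^{-1}(T');\wedge T^*F_s\otimes E)$ is separable. I would fix a countable family of transversals $\{T_m\}\subset U$ through a countable dense subset of $P$, and for each $m$ a countable dense subset $\{\xi^{m}_n\}_n\subset C^\infty_c(s^{-1}(T_m);\wedge T^*F_s\otimes E)$. Applying the leafwise operator $\pi_0$, which is transversely smooth and hence preserves smoothness in all variables, gives smooth sections $\pi_0(\xi^{m}_n)$ over $s^{-1}(T_m)$ lying in the image of $\pi_0$; let $\sigma_{m,n}$ denote their invariant extensions over $U$. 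These countably many $\sigma_{m,n}$ are smooth local invariant sections of $\pi_0$ on $U$, and the claim to prove is that they span $C^\infty(\pi_0)|_U$ over $C^\infty(U)$ — precisely, that the $C^\infty(U)$-submodule they generate, closed under locally finite $C^\infty$-convergent sums, is all of $C^\infty(\pi_0)|_U$ (already density of this submodule in the natural Fr\'echet topology is what the later arguments require).

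To prove spanning, take $\eta\in C^\infty(\pi_0)|_U$ and a transversal $T'=\{x=c\}$; letting $\sigma$ be the invariant extension of $\eta|_{T'}$, the section $\eta-\sigma$ lies in $C^\infty(\pi_0)|_U$ and vanishes on $s^{-1}(T')$, so Hadamard's lemma (valid inside $C^\infty(\pi_0)|_U$ by the stability noted above) gives $\eta-\sigma=\sum_i(x_i-c_i)\mu_i$ with $\mu_i\in C^\infty(\pi_0)|_U$. Iterating this and localizing over $P$ by a partition of unity $\{\chi_m\}$ subordinate to the chosen dense points expresses $\eta$ as a locally finite $C^\infty$-convergent $C^\infty(U)$-combination — the coefficients being products of the $\chi_m$ with monomials in $x_i-c^{(m)}_i$ — of the invariant extensions of the restrictions to the $T_m$ of $\eta$ and of its iterated Hadamard sections, up to a remainder vanishing to infinite order along the $s^{-1}(T_m)$ that is absorbed by a Borel-type summation. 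Finally, each of these restrictions lies in $C^\infty(\pi_0)|_{s^{-1}(T_m)}$ and, after a leafwise cut-off and a reapplication of $\pi_0$, is a locally finite $C^\infty$-convergent combination of the $\pi_0(\xi^{m}_n)$ (by density of $\{\xi^m_n\}$ and continuity of $\pi_0$); invariantly extending, $\eta$ is exhibited inside the $C^\infty(U)$-submodule generated by the $\sigma_{m,n}$.

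The main obstacle is exactly the conversion in the last step: the natural decompositions of $\eta$ produce coefficients that vary along the leaves of $F_s$, whereas the module structure permits only multiplication by functions pulled back from $U$; trading one for the other forces the (still countable) generating family to absorb this leafwise freedom and forces passage to locally finite $C^\infty$-convergent sums, and one must check that these sums converge in the topology of $C^\infty(\pi_0)|_U$. This is where the transverse smoothness of $\pi_0$ and the uniform bounded-geometry estimates on the chart trivialization used in the proof of Theorem~\ref{GRthm} are needed; the remaining ingredients — separability, the iterated Hadamard bookkeeping, and the Borel summation — are routine.
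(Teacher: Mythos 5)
Your proof takes a genuinely different route from the paper's, and as written it has gaps in the steps that matter.  The paper's proof uses a \emph{single} transversal $T$ in $U$: it covers $s^{-1}(T)$ by basic charts $(U,\gamma,V)$, picks in each chart a countable family $\{\xi^{V,\gamma}_i\}$ supported in $(U,\gamma,V)\cap s^{-1}(T)$ so that any section restricted to $(U,\gamma,V)\cap s^{-1}(T)$ is a $C^\infty(T)$-linear combination of the $\xi^{V,\gamma}_i$, extends these invariantly over $U$, and applies $\pi_0$.  The point you underweight is that the $C^\infty(U)$-module structure already permits the coefficients to depend on the leaf coordinate $x$ of $U$ as well as on the transversal coordinate $z$; the only dependence the coefficients \emph{cannot} carry is the $F_s$-leaf coordinate $y$, and that is absorbed by the $\xi^{V,\gamma}_i$ themselves, which are functions of $(y,z)$.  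So for $\eta\in C^\infty(\pi_0)\,|\,U$, decomposing $\eta(x,\cdot,z)$ against the fixed $\xi^{V,\gamma}_i(\cdot,z)$ for each $(x,z)$ produces coefficients $a^{V,\gamma}_i(x,z)\in C^\infty(U)$, and then
$$
\eta \,\,=\,\, \pi_0\Bigl(\sum_{V,\gamma,i} a^{V,\gamma}_i\cdot\widetilde{\xi^{V,\gamma}_i}\Bigr)\,\,=\,\, \sum_{V,\gamma,i} a^{V,\gamma}_i\cdot\pi_0\bigl(\widetilde{\xi^{V,\gamma}_i}\bigr)
$$
by the $\cA^*(M)$-equivariance of $\pi_0$.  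No Hadamard factorization, iterated Taylor expansion, Borel resummation, or family of dense transversals is needed.

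Beyond being more elaborate than necessary, your route has an unjustified step in the middle and a hole at the end.  The middle step asserts absorbing the flat remainder ``by a Borel-type summation''; Borel--Whitney resummation produces a smooth function matching prescribed Whitney-compatible jet data, and you neither exhibit that jet data along $\bigcup_m s^{-1}(T_m)$, nor verify compatibility, nor argue that the resulting section lies in $C^\infty(\pi_0)\,|\,U$ and in the $C^\infty(U)$-span you are building.  The hole at the end is the reduction to the fixed family $\{\pi_0(\xi^m_n)\}$: you invoke density of $\{\xi^m_n\}$ and continuity of $\pi_0$, but density gives Fr\'{e}chet \emph{approximation}, not a convergent countable $C^\infty(U)$-linear \emph{expansion}, which is what ``spans as a module'' requires.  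Producing such an expansion is exactly the Schauder-type decomposition that the paper's choice of $\xi^{V,\gamma}_i$ encodes from the start; your detour does not bypass this analytic content, it postpones it while stacking additional convergence claims that you yourself flag as unchecked.
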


\begin{proof}
Let $T$ be a transversal in $U$.
 The set $s^{-1}(T)$ is covered by a countable collection of coordinate 
charts of the form $(U, \gamma,V)$.  In each chart, choose a countable 
collection of smooth sections  $\{\xi^{V, \gamma}_i\}$ of $\wedge T^*F_s \otimes E$  
with support in 
$(U, \gamma,V) \cap s^{-1}(T) $ so that for any section $\xi$ of  
$\cA^*_{(2)}(F_{s}, E)$,  $\xi \,|\,  (U, \gamma,V) \cap s^{-1}(T)$ may be 
written as a linear combination (over the functions on $s(U, \gamma,V)\cap 
T$) of the $\{\xi^{V, \gamma}_i\}$.   Now extend the elements of this set 
to local invariant sections over $U$, also denoted $\{\xi^{V, \gamma}_i\}$.
The collection of sections of $C^{\infty}(\pi_0) \,|\, U$
$$
{ \maS} = \bigcup_{V,\gamma, i} \pi_0 (\xi^{V, \gamma}_i),
$$
then spans $C^{\infty}(\pi_0) \,|\, U$ as a module over $C^{\infty}(U)$,  
and the $\pi_0 (\xi^{V, \gamma}_i),$ are locally invariant sections over 
$U$.

\end{proof}

As a consequence, we deduce the following.

\begin{corollary}
If two connections $\nabla$ 
and $ \widehat \nabla$ on $\pi_0$ agree on local invariant sections, then 
they are the same.   
\end{corollary}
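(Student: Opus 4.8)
The plan is to reduce everything to local invariant sections by combining Lemma~\ref{spans} with the Leibniz rule, property~(1) of Definition~\ref{connection}, and the locality of connections observed just after that definition.

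First I would note that $D := \nabla - \widehat\nabla$, regarded as an operator on $C^{\infty}(\wedge T^*M;\pi_0)$, is $\cA^*(M)$-linear: subtracting the two instances of property~(1) gives $D(\omega \otimes \xi) = (-1)^k \omega \wedge D\xi$ for $\omega \in \cA^k(M)$ and $\xi \in C^{\infty}(\pi_0)$, and in particular $D(f\xi) = f\,D\xi$ for $f \in C^{\infty}(M)$. Consequently $D$ is local (so it descends to an operator on smooth sections of $\pi_0$ over any chart $U$ for $F$, as is standard), and it suffices to prove that $D\xi = 0$ for every $\xi \in C^{\infty}(\pi_0)$. Fix a chart $U$ for $F$ and apply Lemma~\ref{spans}: there is a countable family $\{\sigma_i\}$ of smooth \emph{local invariant} sections of $\pi_0$ over $U$ spanning $C^{\infty}(\pi_0)\,|\,U$ as a $C^{\infty}(U)$-module, so any $\xi \in C^{\infty}(\pi_0)\,|\,U$ can be written $\xi = \sum_i f_i\,\sigma_i$ with $f_i \in C^{\infty}(U)$. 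Since $D$ is $C^{\infty}(U)$-linear and, by hypothesis, $D\sigma_i = 0$ for all $i$ (the $\sigma_i$ being local invariant sections), we obtain
\[
D\xi \;=\; \sum_i f_i\,D\sigma_i \;=\; 0 \qquad \text{on } U.
\]
Letting $U$ range over a cover of $M$ by $F$-charts then gives $D \equiv 0$ on $C^{\infty}(\pi_0)$, and by the $\cA^*(M)$-linearity of $D$ noted above, $D \equiv 0$ on all of $C^{\infty}(\wedge T^*M;\pi_0)$; that is, $\nabla = \widehat\nabla$.

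The one step needing care — and the main (mild) obstacle — is the justification of $D\xi = \sum_i f_i\,D\sigma_i$ when the decomposition furnished by Lemma~\ref{spans} involves infinitely many generators. I would dispatch this either by the continuity of a connection in the natural $C^{\infty}$ topology on $C^{\infty}(\pi_0)$ (so that $D$, a difference of connections, commutes with the limit), or, equivalently, by exploiting that $D$ is $C^{\infty}(M)$-linear and hence tensorial: $D\xi(x)$ depends only on $\xi(x)$, while by Lemma~\ref{spans} the values $\{\sigma_i(x)\}$ span the relevant fibre at each $x\in U$, so $D\xi(x)=0$ for every $x$. Everything else is routine bookkeeping.
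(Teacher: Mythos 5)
Your argument is exactly the one the paper intends: it states the corollary without proof, immediately after Lemma~\ref{spans}, as a direct consequence of that lemma together with the $C^{\infty}(M)$-linearity of the difference of two connections (coming from subtracting two instances of Definition~\ref{connection}(1)). Your attention to the possible infinite sum in Lemma~\ref{spans} is a reasonable extra precaution, and either of the two remedies you sketch is adequate in this setting.
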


Note that the bundle $E = r^*E$ is flat (in fact trivial) along the leaves of the other foliation $F_r$
of $\cG$, since its leaves are just $r^{-1}(x)$ for $x \in M$.  Denote by $d_r$ the obvious differential associated to $\wedge T^*F_r \otimes E$.
Given a local section $\xi \in C^\infty_2 (\wedge T^*F_s \otimes E)$, we may view $d_r \xi$ as a local element of $C^\infty (\wedge T^*M; \wedge T^*F_s  \otimes E)$.  Note that $d_r^2\xi =0$, and $\xi$ is locally invariant if and only if $d_r \xi = 0$.  
Note that for $\xi \in C^{\infty}(\pi_0)$ and $X \in C^{\infty}(TF)$,  $\nabla_X  \xi = d_r \xi (X)$.  To see this, write  $\xi =  \sum_j g_j \xi_j$, where $\xi_j  \in \cA^k_{(2)}(F_{s},E)$ are local invariant elements, and the $g_j$ are smooth local  functions  on $M$. 
Then Conditions  $(1)$ and $(2)$ of Definition \ref{connection} give
$$
\nabla_X \xi =    \sum_j d_M g_j (X)  \xi_j  =  \sum_j 
d_F g_j (X)  \xi_j  =  \sum_j d_F g_j (X)  \xi_j + g_j d_r  \xi_j (X) = 
d_r \xi (X).
$$

Let $U$ be a foliation chart for $F$ with transversal $T$, and $\nabla$ a 
connection on $\pi_0$. Then on $U$, $\nabla$ is the pull back of  $\nabla$ 
restricted to  $\pi_0 \, | \ T$.  More specifically, for $X$ tangent to 
$T$ and $\xi \in C^{\infty}(\pi_0 \, | \ T)$, with local invariant 
extension $\widetilde{\xi}$ to $C^{\infty}(\pi_0 \,|\, U)$, define
$$
\nabla^T_X \xi \equiv \nabla_X \widetilde{\xi}.
$$ 
We may assume that  $U \simeq \R^p \times T$ with coordinates $(x,t)$ and plaques $\R^p \times t$.    
Denote by  $\rho:U \to T$ the projection.    Let $x \in U$ and $X 
\in TM_x$,  
and set $T_x = x \times T$.   Write $X = X_F + \rho_*(X)$ where $X_F \in 
TF_x$ and $\rho_*(X)$ is tangent to $T_x$.   Let
$\xi \in  C^{\infty}(\pi_0 \,|\, U)$, and define the pull back connection 
$ \rho^*(\nabla^{T} )$ by 
$$
 \rho^*(\nabla^{T} )_X  \xi = d_r \xi(X_F)  + \nabla^{T}_{\rho_*(X)} (\xi 
\,|\,T_x) = 
 d_r \xi(X_F)  + \nabla_{\rho_*(X)} \widetilde{(\xi \,|\,T_x)},
$$
and extend to $C^{\infty}(\wedge T^*U;\pi_0)$ by using $(1)$ of Definition 
\ref{connection} and the fact that 
 $C^{\infty}(\wedge T^*U;\pi_0) \simeq \cA^*(U) \otimes_{C^{\infty}(U)} 
C^{\infty}(\pi_0\, | \, U)$.

Denote the curvature $(\nabla^T)^2$ of $\nabla^T$ by 
$\theta_T.$

\begin{proposition}\label{pullbacks}
$\nabla \,|\, U =  \rho^*(\nabla^{T} )$, and 
$\theta \, | \, U= \rho^*(\theta_{T})$.
\end{proposition}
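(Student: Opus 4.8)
The plan is to reduce both identities to the case of local invariant sections, where the definitions of $\nabla^T$ and $\rho^*$ collapse, and then to propagate the equality by the Leibniz rule and $\cA^*(U)$-linearity. First I would note that $\nabla|U$ is meaningful because $\nabla$ is local, that the operator $\rho^*(\nabla^T)$ built in the paragraph before the statement satisfies $(1)$ of Definition \ref{connection}, and that $\nabla$ does too. By Lemma \ref{spans}, $C^{\infty}(\pi_0)|U$ is generated over $C^{\infty}(U)$ by local invariant sections $\zeta$ of $\pi_0$ on $U$; hence, using $(1)$, it suffices to prove $\nabla_X\zeta = \rho^*(\nabla^T)_X\zeta$ for each such $\zeta$, every $x\in U$, and every $X\in TM_x$.

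For such a $\zeta$, I would split $X = X_F + \rho_*(X)$ with $X_F\in TF_x$ and $\rho_*(X)$ tangent to $T_x = x\times T$. On one side, $\nabla_{X_F}\zeta = 0$ by axiom $(2)$, so $\nabla_X\zeta = \nabla_{\rho_*(X)}\zeta$. On the other, $\rho^*(\nabla^T)_X\zeta = d_r\zeta(X_F) + \nabla^T_{\rho_*(X)}(\zeta|T_x)$; since $\zeta$ is local invariant, $d_r\zeta = 0$ kills the first term, and since a local invariant section is determined by its restriction to $T_x$, the local invariant extension $\widetilde{(\zeta|T_x)}$ coincides with $\zeta$, so $\nabla^T_{\rho_*(X)}(\zeta|T_x) = \nabla_{\rho_*(X)}\widetilde{(\zeta|T_x)} = \nabla_{\rho_*(X)}\zeta$. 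This gives $\rho^*(\nabla^T)_X\zeta = \nabla_X\zeta$, hence $\nabla|U = \rho^*(\nabla^T)$ (and in particular shows $\rho^*(\nabla^T)$ really is a connection on $\pi_0|U$).

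For the curvatures, $\theta = \nabla^2$ yields at once $\theta|U = (\nabla|U)^2 = (\rho^*(\nabla^T))^2$, so I would only need $(\rho^*(\nabla^T))^2 = \rho^*((\nabla^T)^2) = \rho^*(\theta_T)$, which is the familiar fact that curvature commutes with pull-back. To make this precise I would fix a local invariant spanning set $\{\zeta_\alpha\}$ of $\pi_0|U$ as in Lemma \ref{spans}, so that $\{\zeta_\alpha|T\}$ spans $\pi_0|T$ over $C^{\infty}(T)$, write $\nabla^T(\zeta_\alpha|T) = \sum_\beta\Gamma_\alpha^\beta\otimes(\zeta_\beta|T)$ with $\Gamma_\alpha^\beta\in\cA^1(T)$, and observe (using $d_r\zeta_\alpha = 0$ and the previous paragraph) that $\rho^*(\nabla^T)\zeta_\alpha = \sum_\beta\rho^*(\Gamma_\alpha^\beta)\otimes\zeta_\beta$; that is, the connection matrix of $\rho^*(\nabla^T)$ in this spanning set is the $\rho^*$-pull-back of the connection matrix of $\nabla^T$. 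Since pull-back along $\rho$ intertwines $d_T$ and $d_U$, the standard curvature computation then gives $(\rho^*(\nabla^T))^2\zeta_\alpha = \rho^*\bigl((\nabla^T)^2(\zeta_\alpha|T)\bigr) = \rho^*(\theta_T)\zeta_\alpha$. As both $(\rho^*(\nabla^T))^2$ and $\rho^*(\theta_T)$ are leafwise, hence $\cA^*(U)$-linear by the analogue of Lemma \ref{local}, agreement on the $\zeta_\alpha$ forces $\theta|U = \rho^*(\theta_T)$.

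The part that will require care, rather than the skeleton above, is the bookkeeping around local invariant sections: the identification of $\widetilde{(\zeta|T_x)}$ with $\zeta$, the equivalence between being local invariant and being $d_r$-closed, and the compatibility of this with the flatness-along-$F$ and $\cG$-invariance axioms for $\nabla$ (the latter being what makes $\nabla^T$ well defined and what keeps the spanning sections invariant under the constructions used). Once those facts are nailed down, both equalities are just a matter of unwinding the definitions of $\nabla^T$ and $\rho^*$.
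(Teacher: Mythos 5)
For the first equality your argument is, in substance, the one in the paper: you split $X = X_F + \rho_*(X)$ and handle each piece on a local invariant section using axiom~$(2)$, the vanishing of $d_r\zeta$, and $\widetilde{(\zeta|T_x)} = \zeta$; the paper reaches the same conclusion by checking two separate cases (arbitrary section with $X\in TF$, then invariant section with $X$ tangent to $T_x$) and then invoking agreement on local invariant sections. Same idea, slightly different bookkeeping.

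For the curvature you take a genuinely different route, and it is here that there is a gap to flag. The paper simply writes $\rho^*(\nabla^T) = d_r + \nabla^T$, squares, and observes that $d_r^2 = 0$ and $\nabla^T\circ d_r = -d_r\circ\nabla^T$ (the two operators differentiate in independent directions on $U\simeq\R^p\times T$), so the cross terms cancel and $\theta\xi = (\nabla^T)^2\xi = \rho^*(\theta_T)\xi$. You instead try to run a connection-matrix pull-back argument. The problem is that the spanning set produced by Lemma~\ref{spans} is countable and is not a frame: the sections $\zeta_\beta$ need not be linearly independent, and there are infinitely many of them. Consequently the ``connection matrix'' $\Gamma_\alpha^\beta$ in an expansion $\nabla^T(\zeta_\alpha|T) = \sum_\beta\Gamma_\alpha^\beta\otimes(\zeta_\beta|T)$ is not unique, the sum is a priori infinite, and invoking ``the standard curvature computation'' ($\Theta = d\Gamma + \Gamma\wedge\Gamma$) requires you to first justify that some choice of $\Gamma$ exists, that the relevant sums converge, and that the resulting operator is independent of the (non-unique) choice of $\Gamma$. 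None of this is addressed, and a careful version would cost more than the paper's two-line expansion of $(d_r+\nabla^T)^2$, which never introduces a frame at all. I would recommend replacing the connection-matrix step by the direct expansion: from your first part you already have $\rho^*(\nabla^T)|_{C^{\infty}(\pi_0|U)} = d_r + \nabla^T$, so it remains only to observe $d_r^2 = 0$ and that $d_r$ and $\nabla^T$ graded-anticommute, which follows because $d_r$ differentiates only in the plaque directions while $\nabla^T$ differentiates only in the transversal directions of the product chart $U\simeq\R^p\times T$.
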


\begin{proof}   
Let $\xi \in C^{\infty}(\pi_0 \,|\, U)$ and suppose that $X \in TF$, so 
$X_F = X$ and $\rho_*(X)=0$.   Then
$$
\rho^*(\nabla^{T} )_X \xi = d_r\xi(X) =  \nabla_X \xi.
$$
Next suppose that $\xi$ is local invariant, and
$X$ is tangent to $T_x$, so $X_F = 0$ and $\rho_*(X) = 
X$.  Then
$$
\rho^*(\nabla^{T} )_X \xi  =  \nabla_{\rho_*(X)} \widetilde{(\xi 
\,|\,T_x)} = \nabla_X \xi,
$$
since $\widetilde{(\xi \,|\,T_x)}  =   \xi$, as $\xi$ is local invariant.  
Thus $\nabla \,|\, U$ and $\rho^*(\nabla^{T} )$ agree on local invariant sections, 
so they are equal.

For the second equation, writing $ \rho^*(\nabla^{T} ) = d_r  + 
\nabla^{T}$, we have
$$
\theta \xi = d^2_r \xi + 
\nabla^{T} d_r \xi  +  d_r \nabla^{T} \xi +( \nabla^{T})^2\xi =
(\nabla^{T})^2\xi,
$$
since $d^2_r = 0$ and $\nabla^{T}  \circ d_r = - d_r \circ \nabla^{T} $.  
But, with the notation 
$ \rho^*(\nabla^{T} ) = d_r  + \nabla^{T}$, $(\nabla^{T} )^2\xi = 
\rho^*(\theta_{T})\xi$.
\end{proof}

The following is immediate.
\begin{corollary}\label{nablalocal}
$\nabla$ is local in the sense that for $X$ transverse to $F$ and any 
local invariant section $\xi$ of $\pi_0$,
$\nabla_X \xi $ depends only on $\xi \, | \, T$ where $T$ is any 
transversal with $X$ tangent to it.   
\end{corollary}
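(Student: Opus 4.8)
The plan is to read the Corollary off Proposition \ref{pullbacks}, which already expresses $\nabla$ on a foliation chart as a pull-back of its restriction to a transversal; no new computation is needed, which is why the statement is called immediate.

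First I would fix the basepoint $x_0$, choose a transversal $T$ through $x_0$ with $X \in T_{x_0}T$, and pick a foliation chart $U \simeq \R^p\times T$ for $F$ in which $T$ is the slice $T_{x_0} = x_0\times T$ through $x_0$. Proposition \ref{pullbacks} then gives $\nabla\,|\,U = \rho^*(\nabla^T)$, where $\rho\colon U\to T$ is the projection and, for $\eta \in C^\infty(\pi_0\,|\,U)$ and a tangent vector $Z$ written as $Z = Z_F + \rho_*(Z)$ with $Z_F \in TF$,
$$
\rho^*(\nabla^T)_Z\,\eta \;=\; d_r\eta(Z_F) \;+\; \nabla^T_{\rho_*(Z)}(\eta\,|\,T_{x_0}) \;=\; d_r\eta(Z_F) \;+\; \nabla_{\rho_*(Z)}\,\widetilde{(\eta\,|\,T_{x_0})}.
$$

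Next I would specialize this to a local invariant section $\xi$ of $\pi_0$. Local invariance means exactly $d_r\xi = 0$, so the first term drops out identically. Since $X$ is tangent to $T = T_{x_0}$ we have $X_F = 0$ and $\rho_*(X) = X$, and hence
$$
\nabla_X\xi \;=\; \nabla^T_X(\xi\,|\,T) \;=\; \nabla_X\,\widetilde{(\xi\,|\,T)}.
$$
The right-hand side manifestly depends only on $\xi\,|\,T$: if $\xi'$ is another local invariant section of $\pi_0$ with $\xi'\,|\,T = \xi\,|\,T$, then their local invariant extensions over $U$ agree, $\widetilde{(\xi'\,|\,T)} = \widetilde{(\xi\,|\,T)}$, so $\nabla_X\xi' = \nabla_X\xi$. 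This is the asserted locality; the companion identity $\theta\,|\,U = \rho^*(\theta_T)$ from the same Proposition yields the analogous statement for the curvature if it is wanted.

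The only place that needs a word of care — the ``hard part'', such as it is — is the bookkeeping that realizes the given transversal as a coordinate slice of a product chart and identifies the local invariant extension over $U$ of a section on $T$ with that of its restriction to each slice $T_x$. But this is automatic, since the holonomy of $F$ inside a single foliation chart is trivial: local invariance already forces $\xi\,|\,T_x$ to be the (trivial) holonomy transport of $\xi\,|\,T$, so there is no ambiguity. With this in hand the Corollary follows from Proposition \ref{pullbacks} with no further work.
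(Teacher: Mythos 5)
Your proposal is correct and follows exactly the route the paper intends: the Corollary is stated immediately after Proposition \ref{pullbacks} precisely because one reads off, from $\nabla\,|\,U = \rho^*(\nabla^T)$ and the formula $\rho^*(\nabla^T)_X\xi = d_r\xi(X_F)+\nabla_{\rho_*(X)}\widetilde{(\xi\,|\,T_x)}$, that for $\xi$ local invariant ($d_r\xi=0$) and $X$ tangent to $T$ (so $X_F=0$, $\rho_*(X)=X$) one has $\nabla_X\xi = \nabla^T_X(\xi\,|\,T)$. Your extra remarks about realizing $T$ as a coordinate slice and the triviality of holonomy in a chart merely flesh out what the paper calls ``immediate.''
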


\section{Leafwise maps} \label{lms}  

Let $M$ and $M'$ be compact Riemannian manifolds with oriented  foliations $F$ and $F'$.  {The results of this section do not require $F$ or $F'$ to be Riemannian.}
Let $f:M \to M'$ be a smooth leafwise homotopy equivalence which preserves the leafwise orientations.  (We need only assume transverse smoothness, and leafwise continuity.  A standard argument then allows $f$ to be approximated by a 
smooth map.)  Suppose that $E' \to M'$ is a leafwise flat complex bundle over $M'$ which satisfies the hypothesis of Theorem \ref{main}, and set $E = f^*(E')$. 
Let $g:M' \to M$ be a leafwise homotopy inverse of $f$.   Then there are 
leafwise homotopies  $h:M \times I \to M$ and $h':M' \times I \to M'$ with $I = [0,1]$, so 
that for all $x \in M, x' \in M'$
$$
h(x,0) =  x, \quad h(x,1) = g\circ f(x),\quad   h'(x',0) = x', \quad  
\text{and} \quad h'(x',1) = f\circ g(x').
$$

We begin by recalling two results on such leafwise maps from \cite{H-L:1991}.

\begin{lemma}[Lemma 3.17 of \cite{H-L:1991}]
Given finite coverings of $M$ and $M'$ by foliation charts, there is a 
number $N$ such that for 
each placque $Q$ of $M'$, there are at most $N$ plaques $P$ of $M$ such 
that $f(\overline{P}) \cap \overline{Q} \neq \emptyset$.
\end{lemma}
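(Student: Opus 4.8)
The plan is to prove this by a compactness argument that first confines all the relevant plaques of $M$ to a single leaf of $F$ and then bounds their number by a volume estimate inside that leaf. Fix the two finite covers by foliation charts. Since they are finite and the chart closures are compact, there is a uniform bound $D$ on the leafwise diameter of every plaque closure $\overline P$ in $M$, and a uniform bound $D'$ on the leafwise diameter of every plaque closure $\overline Q$ in $M'$ (here the diameters are measured in the intrinsic metric of the leaf containing the plaque). Since $M$ and $M'$ are compact and $f,g$ are smooth and leafwise, both are leafwise Lipschitz, say with constants $\Lambda_f,\Lambda_g$ (e.g.\ $\Lambda_f=\sup_M\|df|_{TF}\|$); and since the leafwise homotopy $h$ from $\Id_M$ to $g\circ f$ is smooth on the compact $M\times I$, it has bounded speed, so $d_L\bigl(x,gf(x)\bigr)\le C$ for every $x\in M$, where $L$ is the leaf through $x$ and $C:=\sup_{M\times I}\|\partial_t h\|$ (note $h(\{x\}\times I)$ stays in $L$).

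First I would localize to one leaf. Because $h$ is leafwise, $gf(L)\subseteq L$ for every leaf $L$ of $F$, and symmetrically $fg(L')\subseteq L'$ for every leaf $L'$ of $F'$; hence $f$ and $g$ induce mutually inverse maps on the leaf spaces of $F$ and $F'$. Consequently, if $P$ is a plaque of $M$ with $f(\overline P)\cap\overline Q\neq\emptyset$ and $L'_Q$ denotes the leaf of $F'$ containing $Q$, then the leaf containing $P$ must be the unique leaf $L_Q$ of $F$ with $f(L_Q)\subseteq L'_Q$; in particular $g(L'_Q)\subseteq L_Q$ as well. Thus every plaque we must count lies in $L_Q$, a leaf depending only on $Q$. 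Next, pick $a\in\overline P$ with $f(a)\in\overline Q$. For any $b\in\overline P$ we get $d_{L_Q}(b,gf(b))\le C$, while $f(a),f(b)\in f(\overline P)\subseteq L'_Q$ with $d_{L'_Q}(f(a),f(b))\le\Lambda_f D$, so applying $g$ gives $d_{L_Q}(gf(b),gf(a))\le\Lambda_g\Lambda_f D$ and $gf(a)\in g(\overline Q)$. Hence $\overline P$ is contained in $K_Q:=\{z\in L_Q:\ d_{L_Q}(z,g(\overline Q))\le D_1\}$ with $D_1:=C+\Lambda_g\Lambda_f D$, and $K_Q$ has leafwise diameter at most $D_2:=2D_1+\Lambda_g D'$, a bound independent of $Q$.

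Finally I would conclude by a volume bound inside $L_Q$. The leaves of $F$ with their induced metrics are complete (geodesics stay in compact $M$ and extend through foliation charts) and have Ricci curvature bounded below by a constant $\kappa$ independent of the leaf, because $M$ is compact, so the ambient curvature and the second fundamental form of the leaves are uniformly bounded and the Gauss equation applies. By Bishop--Gromov, every leafwise ball of radius $D_2$ in a leaf of $F$ has $p$-dimensional volume at most some $V=V(\kappa,D_2,p)$. On the other hand, for each chart $U_i$ of the chosen cover of $M$ the plaques of $U_i$ are pairwise disjoint, and each has $p$-volume at least some $v_i>0$, since the leafwise volume density is continuous and positive on the compact closure $\overline{U_i}$. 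As all the plaques we must count lie in $K_Q$, hence in a single leafwise $D_2$-ball, at most $V/v_i$ of them can be plaques of $U_i$; summing over the finitely many charts yields a bound $N:=\sum_i V/v_i$ (rounded up), which depends only on the two covers, the metrics, and $f,g,h$, and not on $Q$.

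The step I expect to be the main obstacle, and the one to set up most carefully, is the localization: deriving from the mere existence of the leafwise homotopies $h,h'$ that $f$ and $g$ induce mutually inverse maps of leaf spaces, and hence that the count takes place inside the single leaf $L_Q$. Once that is in place, the uniform Lipschitz/bounded-speed estimates and the bounded-geometry volume bound are routine. One should also note that a plaque of $M$ need not determine its host chart uniquely, but this only leads to harmless overcounting in the final sum.
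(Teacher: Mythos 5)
Your proof is correct. The paper cites this as Lemma~3.17 of \cite{H-L:1991} without reproducing the argument, so there is no in-text proof to compare line by line, but the logic is sound: the leafwise homotopy $h$ forces $gf(L)\subseteq L$, whence $f$ and $g$ induce mutually inverse bijections of leaf spaces and all admissible plaques lie in the single leaf $L_Q$; the uniform leafwise Lipschitz constants for $f,g$ on the compact $M$ and the bounded speed of the (smoothed) homotopy $h$ then yield the diameter bound $D_2$ on the region $K_Q$ containing every admissible $\overline P$; and completeness plus uniformly bounded geometry of the leaves (from compactness of $M$ and the Gauss equation) legitimizes Bishop--Gromov, after which the per-plaque lower volume bound and disjointness of plaques within each chart finish the count. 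One implicit point worth making explicit: you use that $\overline P$ and $\overline Q$ each lie in a single leaf, which holds for a good cover (each chart has compact closure inside a larger chart) and is exactly what makes $f(\overline P)$ land in a single leaf of $F'$ so that the localization step goes through.

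Where you most likely diverge from the original \cite{H-L:1991} proof is the final counting step. You close via Riemannian comparison geometry; the customary foliation-theoretic argument instead bounds, purely combinatorially, the number of plaques that a leafwise curve of length at most $D_2$ can cross, using finiteness of the cover and bounded valence of its nerve. Both are valid: yours is shorter to state given that leaves of a compact foliated manifold automatically have bounded geometry, while the combinatorial route avoids any curvature input. Also note that the present paper records the bound on the leafwise distance moved by $h$ as a corollary of the (stated) Lemma~3.16, whereas you derive it directly from $\sup_{M\times I}\|\partial_t h\|<\infty$ after smoothing $h$; this is a harmless reordering of dependencies.
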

Thus $f$ is leafwise uniformly proper and so induces a well defined map
$f^{*}:\oH^{*}_{c}(L'_{f(x)};\R) \to \oH^{*}_{c}(L_{x};\R)$.
In general this map does not extend to the leafwise $L^{2}$ forms, as  shown by simple examples.

\begin{lemma}[Lemma 3.16 of \cite{H-L:1991}] \label{l3.16}
For any finite cover of $M$ by foliation charts there is a number $N$ such 
that for each placque $P$ of $M$, there are at most $N$ plaques $Q$ such 
that {$h(\overline{Q} \times I ) \cap \overline{P} \neq \emptyset$.}
\end{lemma}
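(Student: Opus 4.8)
The plan is to reduce the statement to the elementary geometric fact that a leafwise ball of fixed radius in $M$ contains only a bounded number of pairwise disjoint plaques coming from any single chart of the cover $\cU$. First I would record a uniform length bound for the homotopy tracks: since $f$, $g$, and hence the leafwise homotopy $h$ may be taken smooth and $M\times I$ is compact, the leafwise velocity $\partial h/\partial t$ is bounded by some constant $C_0$, so for every $x\in M$ the leafwise path $t\mapsto h(x,t)$ has length at most $C_0$; in particular $d_{L_x}\!\bigl(x,h(x,t)\bigr)\le C_0$ for all $t\in I$. Next, fixing the finite good cover $\cU$, I would set $D=\sup\,\dia_L(\overline{Q})$ and $v_0=\inf\,\vol_L(Q)$, the sup and inf taken over all plaques $Q$ of $\cU$; both are finite and $v_0>0$ by compactness, provided the cover is normalized so that each closed plaque $\overline{P}$ sits well inside its chart and hence lies in a single leaf. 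Put $R=C_0+2D$.

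Now fix a plaque $P$, lying in the leaf $L:=L_P$, and a point $p_0\in\overline{P}$. Suppose $Q$ is a plaque with $h(\overline{Q}\times I)\cap\overline{P}\ne\emptyset$, say $h(y,t)\in\overline{P}$ with $y\in\overline{Q}$ and $t\in I$. Because the path $s\mapsto h(y,s)$ stays in a single leaf and meets $\overline{P}\subset L$, that leaf is $L$; since $h(y,0)=y$, this forces $y\in L$, so $\overline{Q}\subset L$. Then for every $q\in\overline{Q}$,
$$
d_L(q,p_0)\le d_L(q,y)+d_L\bigl(y,h(y,t)\bigr)+d_L\bigl(h(y,t),p_0\bigr)\le D+C_0+D=R,
$$
so $\overline{Q}$ is contained in the leafwise ball $B_L(p_0,R)$. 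Thus it remains only to bound, uniformly in $L$ and $p_0$, the number of plaques of $\cU$ contained in $B_L(p_0,R)$.

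For this last step I would invoke bounded geometry of the leaves: $M$ being compact, the second fundamental form of the leaves is bounded, so by the Gauss equation the leaves have sectional, hence Ricci, curvature bounded below by a constant independent of the leaf; the Bishop-Gromov inequality then gives $\vol_L B_L(p_0,R)\le V_0$ for a constant $V_0$ depending only on $R$. For each chart $U_i\in\cU$, the plaques of $U_i$ contained in $B_L(p_0,R)$ are pairwise disjoint and each has volume at least $v_0$, so there are at most $V_0/v_0$ of them; summing over the finitely many charts yields the bound $N=(\#\cU)\lceil V_0/v_0\rceil$.

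I expect the only genuine subtlety to be the uniform length bound on the homotopy tracks, i.e.\ the reduction to a leafwise-smooth $h$ on the compact $M\times I$ with bounded leafwise velocity, together with the bookkeeping needed to normalize the good cover so that closed plaques lie in single leaves and have controlled diameter and volume. Once those are in place the argument is a routine volume/packing count; in fact the volume comparison can be replaced by an elementary separation estimate, since any leaf path joining two distinct closed plaques of one chart must exit that chart and therefore has length bounded below by a uniform $\delta_0>0$, so the plaques of a fixed chart meeting $B_L(p_0,R)$ form a $\delta_0$-separated set, of cardinality bounded by the same curvature bound.
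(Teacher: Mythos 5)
This lemma is recalled in the present paper from \cite{H-L:1991} without proof, so there is no in-paper argument to compare against. Your argument is nevertheless essentially the standard one (length bound for the homotopy tracks, followed by a packing count), and I believe it is correct: the reduction of the counting problem to counting plaques of each chart inside a single leafwise ball $B_L(p_0,R)$ of uniform radius is exactly the right move, the disjointness of same-chart plaques in a fixed leaf makes the volume count work, and Bishop--Gromov (using bounded second fundamental form of the leaves and the Gauss equation to get a uniform lower Ricci bound on leaves) gives a legitimate uniform upper bound $V_0$ on $\vol_L B_L(p_0,R)$.

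The one point you rightly flag as a subtlety does deserve a second look. The paper only assumes $h$ is a leafwise homotopy with leafwise continuity (and transverse smoothness), and smoothing a leafwise homotopy between the identity and $g\circ f$ while preserving endpoints is standard but not entirely free. A cleaner route, which sidesteps regularity entirely, is a Lebesgue-number argument on the compact space $M\times I$: there is $\delta>0$ so that every set of diameter $<\delta$ is mapped by $h$ into a single foliation chart. Partitioning $I$ into $\lceil 1/\delta\rceil$ subintervals, each $h(\{y\}\times[t_j,t_{j+1}])$ lies in one plaque, hence has leafwise diameter $\le D$; summing gives $d_{L_y}(y,h(y,t))\le \lceil1/\delta\rceil D =: C_0$ uniformly in $y$, with no smoothness needed. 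From there your $R=C_0+2D$ and the packing count are unchanged. (Your aside about a $\delta_0$-separation of same-chart plaques is plausible with a normalized good cover but is stated a bit loosely; the volume argument is cleaner and should be taken as the main line.) This is also in the spirit of what I expect the original proof in \cite{H-L:1991} does --- a plaque-chain/covering count rather than a Riemannian comparison --- but both routes get there, and yours has the merit of making the relevant radius $R$ explicit.
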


Note that this lemma implies that there is a global bound on the leafwise 
distance that $h$ moves points, i.\ e.\  there is a global bound on the 
leafwise lengths of all the curves $\{\gamma_{x} \, | \, x \in M\}$, where 
$\gamma_{x}(t) = h(x,t)$.

\bigskip

We remark that since $f$ is a homotopy equivalence between $M$ and $M'$, the 
dimensions of $M$ and $M'$ are the same.

\begin{theorem}
$f$ induces an isomorphism $f^{*}:\oH^{*}_{c}(M'/F') \to \oH^{*}_{c}(M/F)$ 
on Haefliger cohomology with inverse $g^{*}$.
\end{theorem}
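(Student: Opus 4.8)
The plan is to build $f^{*}$ and $g^{*}$ directly on the Haefliger complexes and then to produce explicit chain homotopies, using the leafwise homotopies $h$ and $h'$, showing that $g^{*}\circ f^{*}$ and $f^{*}\circ g^{*}$ are the identity on cohomology. This is the analogue for Haefliger forms of the homotopy argument for leafwise cohomology in \cite{H-L:1991}.

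First, the functoriality. After refining the two good covers (which leaves the Haefliger complexes unchanged up to canonical isomorphism) one may assume that $f$ carries each chart $U_{i}\in\cU$ into a chart $U'_{j(i)}\in\cU'$; this refinement is possible by Lemma~\ref{l3.16} together with the leafwise uniform properness of $f$ established above, which bound the distortion of the plaque structure. Since $f$ is leafwise it sends each plaque of $F$ in $U_{i}$ into a single plaque of $F'$ in $U'_{j(i)}$, so $\pi'_{j(i)}\circ f|_{U_{i}}$ is constant on the plaques of $F$ and factors as $\phi_{i}\circ\pi_{i}$ for a smooth transverse map $\phi_{i}:T_{i}\to T'_{j(i)}$. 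On overlaps the $\phi_{i}$ intertwine the holonomy pseudogroups $\cH$ and $\cH'$, so that $(\phi_{i})$ is a morphism of holonomy pseudogroups in Haefliger's sense; patching the $\phi_{i}^{*}$ with a partition of unity on $T$ subordinate to $\{T_{i}\}$ defines $f^{*}:\cA^{k}_{c}(T')\to\cA^{k}_{c}(T)$. One then checks that $f^{*}$ commutes with $d_{T}$, that it sends $L^{k}$ and, via the sup-norm bounds furnished by the uniform properness, its closure $\overline{L^{k}}$ into $\overline{L^{k}}$, hence descends to $f^{*}:\cA^{k}_{c}(M'/F')\to\cA^{k}_{c}(M/F)$ commuting with $d_{H}$, and that the induced map on $\oH^{*}_{c}$ is independent of the cover, of the $\phi_{i}$ and of the partition of unity. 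The map $g^{*}$ is constructed the same way, with the roles of $M$ and $M'$ exchanged.

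Next, the homotopy step. I would show that leafwise maps joined by a leafwise homotopy $H:M\times I\to M'$ induce the same map on $\oH^{*}_{c}$. View $M\times I$ with the product foliation $F\times I$, whose holonomy pseudogroup is again $\cH$, and repeat the construction of the first step for $H$ to obtain transverse maps $\psi_{i}:T_{i}\times I\to T'_{j(i)}$. Set $K\omega=\sum_{i}\lambda_{i}\int_{I}\psi_{i}^{*}\omega$, where $\int_{I}$ extracts the $dt$-component and integrates over the interval fibre. The Cartan--Stokes computation yields $d_{H}K+Kd_{H}=f_{1}^{*}-f_{0}^{*}$ up to the $d\lambda_{i}$ correction terms, and those lie in $\overline{L^{k}}$ because the $\psi_{i}$ respect the pseudogroups. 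Applying this with $H=h$ gives $g^{*}\circ f^{*}=\mathrm{id}$ on $\oH^{*}_{c}(M/F)$, and with $H=h'$ gives $f^{*}\circ g^{*}=\mathrm{id}$ on $\oH^{*}_{c}(M'/F')$; hence $f^{*}$ is an isomorphism with inverse $g^{*}$.

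The conceptual core — the local factorizations through the transversals and the homotopy formula — is routine. The real work, and where the hypotheses of this section are essential, is keeping everything inside the reduced compactly supported complex: one must verify that $f^{*}$, $g^{*}$ and $K$ preserve $\overline{L^{k}}$ and that the a priori infinite sums over the cover converge in sup norm together with their $d_{T}$-images. This is exactly what Lemma~\ref{l3.16} and the uniform properness of $f$ (and of $h$) supply: uniform bounds on the number of plaques involved and on the leafwise displacement, which upgrade the formal chain-level identities to honest identities of continuous operators on $\cA^{*}_{c}(\cdot/\cdot)$. I expect this bookkeeping, rather than any new idea, to occupy most of the proof.
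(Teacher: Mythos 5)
Your plan is genuinely different from the one in the paper, and considerably heavier. You construct $f^*$ as a partition-of-unity average $\sum_i \lambda_i \phi_i^*$ at the chain level on $\cA^*_c(T')$ and then manufacture a chain homotopy operator $K$ from the leafwise homotopy $h$ to compare $(gf)^*$ with $\mathrm{id}^*$. The paper does neither. It first proves a small but essential lemma (Lemma~\ref{immerse}) that the induced transverse map $\what{f}:T\to T'$ is an immersion, and in the course of proving it records the observation that does all the work: along a transversal, the leafwise homotopy $h$ induces exactly the holonomy $h_\gamma$ of the path $t\mapsto h(x,t)$, so that $\what{g}\circ\what{f}=h_\gamma$ is itself an element of the holonomy pseudogroup. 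With this in hand, $f^*$ is defined on cohomology (not on chains) by choosing a representative $\phi'=\sum_{U}\phi'_U$ whose pieces $\phi'_U$ are supported inside $\what{f}(T)$ for the corresponding transversal $T$, and pulling each piece back by the local diffeomorphism $\what{f}|_T$. Because each $\what{f}|_T$ is a genuine diffeomorphism onto its image, the pullback commutes with $d$ on the nose, and there is no partition of unity, no $d\lambda_i$ correction, and no need to verify that an averaged sum preserves $\overline{L^k}$. Well-definedness reduces (via the same identity $\what{f}=\what{g}^{-1}\circ h_\gamma$) to the statement that two such choices differ by the pullback of a holonomy element, which by construction is trivial on Haefliger forms. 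The inverse property is then immediate for the same reason: $\what{f}^*\circ\what{g}^*=(\what{g}\circ\what{f})^*=h_\gamma^*=\mathrm{id}$ on $\oH^*_c(M/F)$, with the symmetric argument on the other side.

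What your version buys is generality: a chain-level $f^*$ defined on arbitrary forms and a homotopy formula that would, in principle, show invariance under any leafwise homotopy without knowing a priori that the transverse maps are immersions. What it costs is precisely the ``bookkeeping'' you flag at the end, and I would not call it routine. Two points in particular. First, you never establish that the $\phi_i$ are immersions, and while the formal pullback is defined without this, the claim that $\phi_i^*(\overline{L'^k})\subset\overline{L^k}$ and that the $d\lambda_i$ defect terms land in $\overline{L^k}$ does lean on the precise way the $\phi_i$ intertwine the pseudogroups, which is cleanest when they are local diffeomorphisms — and that is exactly the content of Lemma~\ref{immerse} that you skip. Second, your homotopy operator obscures what the paper makes explicit: in the Haefliger quotient, a leafwise homotopy restricted to a transversal is not merely homotopic to a holonomy element, it \emph{is} one, so no integration over $I$ is needed at all. (Also a small bookkeeping slip: $h$ is a homotopy from $\mathrm{id}_M$ to $gf$, so it gives $f^*\circ g^*=\mathrm{id}$ on $\oH^*_c(M/F)$, and $h'$ gives $g^*\circ f^*=\mathrm{id}$ on $\oH^*_c(M'/F')$; you have both the order of composition and the spaces swapped.) In short, your route is defensible but you are re-deriving, via Stokes on $T\times I$, a statement the quotient construction hands you for free once you notice that the transverse shadow of the leafwise homotopy is holonomy.
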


\begin{proof} The map $f$ induces a map $\what{f}$ on transversals.   In 
particular, suppose that $U$, and $U'$ are foliation charts of $M$  and 
$M'$ respectively, and that $f(U) \subset U'$.  
If $T$ and $T'$ are transversals of $U$ and $U'$, then $f$ induces the map 
$\what{f}:T \to T'$.

\begin{lemma} \label{immerse}
$\what{f}:T \to T'$ is an immersion.
\end{lemma} 

\begin{proof} Being an immersion is a local property, so by reducing the 
size of our charts if necessary, we may assume that $g(U') \subset U_{1}$ 
where $U_{1}$ is a foliation chart for $F$, with transversal $T_{1}$.  
Then $\what{g}:T' \to T_{1}$.  The leafwise homotopy $h$  induces a map 
$\what{h}:T \to T_{1}$.  In particular this is the map induced on 
transversals by the map $x \to h(x,1)$.  Since $h$ is continuous and 
leafwise, it is easy to see that $\what{h} = h_{\gamma}$ where 
$h_{\gamma}$ is the holonomy along the leafwise path $\gamma_x(t) = h( 
x,t)$, where $x \in T$.  Thus, $\what{h}$ is locally invertible.  Since $h$ 
is a homotopy of $gf$ to the 
identity, the composition, $\what{h}^{-1} \what{g} \what{f}:T \to T$ is 
the identity, so $\what{f}$ must be an immersion.
\end{proof}

Since $\what{g}$ must also be an immersion, it follows immediately that 
the codimensions of $F$ and $F'$ are the same, and so the dimensions of 
$F$ and $F'$ are also the same.

To construct the map $f^{*}:\oH^{*}_{c}(M'/F') \to \oH^{*}_{c}(M/F)$, we 
proceed as follows.  Let $\cU $ and $\cU' $ be finite good 
covers of $M$ and $M'$ respectively.  We may assume that for each $U \in \cU$, we 
have chosen a $U' \in \cU'$ so that $f(U) \subset U'$ and that the induced map on 
transversals $\what{f}:T \to T'$ is a diffeomorphism onto its image.  Let 
$\alpha' \in \oH^{*}_{c}(M'/F')$.  Since $f$ is onto, we may choose a 
Haefliger form
$\dd \phi' = \sum_{U \in \cU}\phi'_{U}$ in $\alpha'$, so that $\phi'_{U}$ 
has support in $\what{f}(T)$ where $T$ is a transversal in $U$.  We then 
define $\what{f}^{*}(\alpha')$ to be the class of the Haefliger form $\dd 
\sum_{U \in \cU} \what{f}^{*}(\phi'_{U}).$ 

The question of whether $\what{f}^{*}$ is well defined reduces to showing 
the following.

\begin{lemma}
Suppose that $U_{1}$ and $U_{2}$ are foliation charts on $M$ with 
transversals $T_{1}$ and $T_{2}$.  Suppose further that $\phi'$ is a 
Haefliger form on $M'$ with support  contained in $\what{f}(T_{1}) \cap 
\what{f}(T_{2})$.  Then as Haefliger forms on $M$, $[\what{f}\, 
|_{T_{1}}]^{*}(\phi') = 
[\what{f}\, |_{T_{2}}]^{*}(\phi')$.
\end{lemma}

\begin{proof}
Set $\what{f}_{i} = \what{f} \,\, | \,{T_{i}}$.  By writing $\phi'$ as a sum 
of Haefliger forms and 
reducing the size of their supports, we may assume that the support of 
$\phi'$ is contained in a transversal $T'$, that $\what{g}(T')$ is 
contained in a transversal $T$ of $M$ and that the holonomy maps 
$h_{i}:T_{i} \to T$ determined by the paths $\gamma_{i}(t) = h(x_{i},t)$, 
for $x_{i} \in T_{i}$ are defined on the supports of 
$\what{f}_{i}^{*}(\phi')$, respectively.  Further, we may suppose that all 
the maps $\what{f}_{1}, \what{f}_{2}, h_{1}, h_{2}$ and $\what{g}\, |_{T'}$
are diffeomorphisms onto their images.  Since $h$ is a homotopy of $gf$ to 
the identity, 
$\what{f}_{1} = \what{g}^{-1} \circ h_{1}$ and $\what{f}_{2} = 
\what{g}^{-1} \circ h_{2}$, so 
$\what{f}_{1}^{*}(\phi') = h_{1}^{*}\circ (\what{g}^{-1})^{*}(\phi')$ and 
$\what{f}_{2}^{*}(\phi') = h_{2}^{*}\circ (\what{g}^{-1})^{*}(\phi')$.
Thus, $\what{f}_{1}^{*}(\phi') = h_{1}^{*}\circ 
(h_{2}^{-1})^{*}(\what{f}_{2}^{*}(\phi'))$ so as Haefliger 
forms, $\what{f}_{1}^{*}(\phi') = \what{f}_{2}^{*}(\phi')$.
\end{proof}

It now follows easily that the induced map on Haefliger cohomology 
$f^{*}:\oH^{*}_{c}(M'/F') \to \oH^{*}_{c}(M/F)$ is an isomorphism  with 
inverse $g^{*}$.
\end{proof}

\begin{lemma} 
$f$ induces a well defined smooth leafwise map $\cf:\cG \to \cG'$, which is leafwise 
uniformly proper.
\end{lemma}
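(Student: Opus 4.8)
The plan is to define $\cf$ by $\cf([\gamma])=[f\circ\gamma]$ on homotopy classes of leafwise paths, and to verify in turn that this is well defined, that it is a smooth map of manifolds $\cG\to\cG'$, and that it is leafwise uniformly proper; the first two points are essentially formal, and the real work is in the third. For well-definedness: since $f$ sends leaves of $F$ into leaves of $F'$, $f\circ\gamma$ is a leafwise path in $M'$ whenever $\gamma$ is one in $M$, and $f$ carries a leafwise homotopy rel endpoints of $\gamma_0$ to $\gamma_1$ to such a homotopy of $f\circ\gamma_0$ to $f\circ\gamma_1$. Then $\cf$ is a groupoid homomorphism (juxtaposition of paths and constant paths are preserved), and it satisfies $s'\circ\cf=f\circ s$ and $r'\circ\cf=f\circ r$. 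Hence $\cf$ carries the leaf $\wL_x=s^{-1}(x)$ of $F_s$ into $\wL'_{f(x)}$, and the restriction $\cf_x\colon\wL_x\to\wL'_{f(x)}$ satisfies $r'\circ\cf_x=(f|_{L_x})\circ r$; that is, $\cf_x$ is precisely the lift of the smooth map $f|_{L_x}\colon L_x\to L'_{f(x)}$ through the simply connected covering maps $r$ and $r'$, and such a lift is automatically smooth.

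For smoothness of $\cf$ as a map of manifolds I would argue locally, since smoothness is a local condition. Near an arbitrary $[\gamma_0]\in\cG$, choose foliation charts $U\ni\gamma_0(0)$ and $V\ni\gamma_0(1)$ small enough that $f(U)$ and $f(V)$ are each contained in a single foliation chart, $U'$ and $V'$ respectively, of $M'$; then $(U,\gamma_0,V)$ is a basic open neighbourhood of $[\gamma_0]$, and pushing forward homotopies shows that $\cf$ maps $(U,\gamma_0,V)$ into the basic open set $(U',f\circ\gamma_0,V')$. In the coordinates $(x,y,z)\in\R^p\times\R^p\times\R^q$ on $(U,\gamma_0,V)$ recalled in Section~\ref{note} (with $x$, resp.\ $y$, the leaf coordinates of the source, resp.\ range, endpoint, and $z$ the common transverse coordinate) and the analogous coordinates on $(U',f\circ\gamma_0,V')$, the identities $s'\circ\cf=f\circ s$ and $r'\circ\cf=f\circ r$ exhibit $\cf$ on $(U,\gamma_0,V)$ as built, by composition and projection, from the smooth coordinate expressions of $f$ on $U$ and $V$ and from the smooth holonomy of $\gamma_0$ relating the transverse coordinates of $U$ and $V$; hence $\cf$ is smooth there.

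The heart of the matter is leafwise uniform properness, i.e.\ a bound $d_{\wL_x}(\tilde x_1,\tilde x_2)\le R\bigl(d_{\wL'_{f(x)}}(\cf_x\tilde x_1,\cf_x\tilde x_2)\bigr)$ with $R$ independent of $x$. The point to watch is that one cannot simply project a minimizing path down to $L_x$, because $r$ is only a leafwise local isometry and so controls distances in $L_x$ but not in the cover $\wL_x$; the estimate must be carried out upstairs, using the leafwise homotopy inverse. Let $\cg\colon\cG'\to\cG$ be induced by $g$ exactly as $\cf$ is induced by $f$, with leafwise restriction $\cg_{f(x)}\colon\wL'_{f(x)}\to\wL_{gf(x)}$; let $\gamma_h$ be the leafwise path $t\mapsto h(x,t)$ from $x$ to $gf(x)$, and $\Theta\colon\wL_{gf(x)}\to\wL_x$, $[\delta]\mapsto[\gamma_h\cdot\delta]$, the induced isometry (note $r\circ\Theta=r$). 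Lifting $h|_{L_x\times I}$ through $r$ gives a homotopy $\tilde h\colon\wL_x\times I\to\wL_x$ with $\tilde h(\cdot,0)=\Id$, and the square homotopy $(t,s)\mapsto h(\gamma(t),s)$ identifies $\tilde h(\cdot,1)$ with $\Theta\circ\cg_{f(x)}\circ\cf_x$. Since $r$ is a local isometry and all the curves $t\mapsto h(\cdot,t)$ have leafwise length at most a constant $K$ independent of the point (the consequence of Lemma~\ref{l3.16} noted above), this gives $d_{\wL_x}\bigl(\tilde y,\Theta\cg_{f(x)}\cf_x\tilde y\bigr)\le K$ for all $\tilde y\in\wL_x$; and $\cg_{f(x)}$ is leafwise Lipschitz with the same constant $\Lambda<\infty$ as $g$ (again because $r,r'$ are local isometries and $g$ is smooth on the compact $M'$). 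Two applications of the triangle inequality then yield
$$
d_{\wL_x}(\tilde x_1,\tilde x_2)\ \le\ 2K+\Lambda\,d_{\wL'_{f(x)}}(\cf_x\tilde x_1,\cf_x\tilde x_2),
$$
so $R(\rho)=2K+\Lambda\rho$ works, uniformly in $x$. I expect this last argument to be the main obstacle: the bounded leafwise homotopy $h$ and the leafwise Lipschitz inverse $g$ must be transported to the simply connected covers, because properness of $f$ on the leaves themselves does not control distances there.
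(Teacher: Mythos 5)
Your proposal is correct and follows the same overall strategy as the paper: define $\cf[\gamma]=[f\circ\gamma]$, pass the leafwise homotopy $h$ between $\mathrm{id}_M$ and $g\circ f$ up to the covers, and combine the resulting bounded displacement of $\Theta\circ\cg_{f(x)}\circ\cf_x$ from the identity with a (coarse) Lipschitz bound on $\cg$ and the triangle inequality.

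The one place where you genuinely diverge from the paper is in how the Lipschitz bound for $\cg$ on the covers is obtained. You observe that $g$ is leafwise $\Lambda$-Lipschitz because $M'$ is compact and $g$ is smooth, and that $\cg_{f(x)}\colon\wL'_{f(x)}\to\wL_{gf(x)}$ inherits the constant $\Lambda$ because $r$ and $r'$ are leafwise local isometries. This is clean and correct, but it uses the smoothness of $g$ in an essential way. The paper instead derives a coarse Lipschitz bound $m(P)/\epsilon$ directly from the good covers $\cU$ and $\cU'$ and the fact that each leaf of $F_s$ meets each basic open set $(U,\gamma,V)$ in at most one plaque: points of $\wL'$ at distance $<\epsilon$ land in a common $(U',\gamma',V')$, whose image under $\cg$ lies in a single plaque of diameter $<m(P)$. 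That argument works when $g$ is merely leafwise continuous (which is the regularity assumed before the smoothing remark), and it is the version the authors reuse elsewhere for the various coarse-geometric bounds in the paper. So your route buys a cleaner constant $2K+\Lambda\rho$ at the cost of relying on smoothness; the paper's buys robustness at the cost of a coarser constant $2B+m(P)\rho/\epsilon$. Otherwise the two proofs are essentially the same, down to the identification of $\tilde h(\cdot,1)$ with $\Theta\circ\cg_{f(x)}\circ\cf_x$ via the square homotopy $(t,s)\mapsto h(\gamma(t),s)$, which is indeed what makes the bound $d_{\wL_x}(\tilde y,\Theta\cg_{f(x)}\cf_x\tilde y)\le K$ legitimate.

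One small caution on notation: you write $\Theta[\delta]=[\gamma_h\cdot\delta]$, which is the opposite composition convention to the paper's $R_x[\gamma]=[\gamma\cdot\gamma_x]$ (in the paper, $\gamma\cdot\gamma_x$ traverses $\gamma_x$ first); this does not affect the argument but would be worth making explicit.
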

\begin{proof} {Set $\cf([\gamma]) = [f \circ \gamma]$. 
That $\cf$ is well defined and smooth is clear. Similarly, set $\cg([\gamma']) = [g \circ \gamma']$.} 

 {Let $\cU$ be a finite good cover of $M$.  Since $M$ is compact, there is a bound $m(P)$ on the diameter of any plaque in the cover $\cU$.  Then $m(P)$ is also a bound for any plaque of $F_s$ in the corresponding cover of $\cG$.  Let $\cU'$ be a finite good cover of $M'$, such that for each $U' \in \cU'$ there is $U \in \cU$ so that $g(U') \subset U$.   Given $(U', \gamma', V')$ in the cover of $\cG'$  corresponding to $\cU'$,  choose $U, V \in \cU$ with 
$g(U') \subset U$  and $g(V') \subset V$.   If we set  $\gamma = g \circ \gamma' $, then  $\cg(U', \gamma', V') \subset  (U, \gamma, V)$.
Because $\cU'$ is a good cover, there is  $\epsilon > 0$ so that if $z'_0,z'_1 \in \wL'$ with
$d_{\wL'}(z'_0,z'_1) < \epsilon$, then there is a $(U', \gamma', V')$ with  $z'_0,z'_1 \in  (U', \gamma', V')$, so    $\cg(z'_0), \cg(z'_1) \in(U, \gamma, V)$. Since $\cg(\wL') \cap (U, \gamma, V)$ consists of at most one plaque of $\cg(\wL')$,    it follows that $d_{\wL}(\cg(z'_0), \cg(z'_1)) <  m(P)$.  Thus, if $z'_t$ is a path  in $\wL'$ of length less than $C$, then $\cg \circ z'_t$ is a path in $\cg(\wL')$ of length less than $m(P) C /\epsilon$.}

Suppose that $f(x) = x'$ and let $A' \subset \wL'_{x'}$ have diameter $\dia(A') \leq C$.   Let
$z_0, z_1 \in \wL_x$ with $\cf(z_i) = z'_i \in A'$, and choose a path $z'_t$ in $\wL'_{x'}$ of length less than $C$ between $z'_0$ and $z'_1$.   Then $\cg \circ z'_t$ is a path in $\wL_{gf(x)}$ of length less than $m(P)C /\epsilon$.    Composition on the right by the path $\gamma_x(t) = h(x,t)$ is an isometry from $\wL_{gf(x)}$ to $\wL_x$.  So $(\cg \circ z'_t )\cdot \gamma_x$  is a path in $\wL_x$ of length less than $m(P) C /\epsilon$.   Thus 
$$
d_{\wL_x}([(\cg \circ z'_0 )\cdot \gamma_x],[(\cg \circ z'_1 )\cdot \gamma_x]) \leq m(P) C /\epsilon.
$$
By Lemma \ref{l3.16}, the path $\gamma_y$ has length bounded by say $B$, for all $y \in M$.   Set $y_i = r(z_i)$, and note that $[\gamma^{-1}_{y_i} \cdot (\cg \circ z'_i )\cdot \gamma_x]= z_i$, since $h$ is a leafwise homotopy equivalence between $g \circ f$ and the identity.  As 
$$
d_{\wL_x}(z_i , [(\cg \circ z'_i )\cdot \gamma_x]) =
d_{\wL_x}([\gamma^{-1}_{y_i} \cdot (\cg \circ z'_i )\cdot \gamma_x], [(\cg \circ z'_t )\cdot \gamma_x]) \leq 
\text{ length}(\gamma_{y_i}) \leq B,
$$
we have
$$
d_{\wL_x}(z_0 , z_1) \leq 2B + m(P)C/\epsilon.
$$
Thus $\dia(\cf^{-1}(A')) \leq 2B + m(P)\dia(A') /\epsilon$, and $\cf$ is leafwise uniformly proper.
\end{proof}
Thus $\cf$  induces a well defined map 
$\cf^{*}:\oH^{*}_{c}(\wL'_{f(x)};\R) \to \oH^{*}_{c}(\wL_{x};\R)$.  
As noted above, in general this map  does not induce  a well defined map on  leafwise $L^{2}$ forms.  We will use two different constructions  to deal with this 
problem.    First we adapt the construction  of the $L^2$ pull-back map of 
Hilsum-Skandalis in \cite{HilsumSkandalis} to our setting.  This has the 
advantage that it is transversely smooth.   However, it is not obvious that its action on 
leafwise $L^2$ cohomology respects the wedge product, so we 
will also use the construction in \cite{H-L:1991}, which is based on results of 
Dodziuk, \cite{Dodziuk}.  We assume the reader is familiar with Sobolov theory of spaces of 
sections of a vector bundle over a manifold.

For $s \in \Z$, denote by $W_s^*(F_s, E)$  the field of Hilbert spaces over 
$M$ given by $W_s^*(F_s, E)_x = W_s^*(\wL_x, E)$, the $s$-th Sobolev space of differential forms on $\wL_x$ with coefficients in $E \, | \, \wL_x$.    Just as it does for the leafwise $L^2$ 
forms, the compactness of $M$ implies that these spaces do not depend on 
our choice of Riemannian structure.  Note that $W_s^* \subset W_{s_1}^*$ if $s 
\geq s_1$, and set
$$
W_{\infty}^*(F_s, E) = \bigcap _{s \in \Z} W_s^*(F_s, E) \text{ and } 
W_{-\infty}^*(F_s, E) = \bigcup _{s\in \Z} W_s^*(F_s, E).
$$
Equip $W_{\infty}^* (F_s, E)$  with the induced locally convex topology.  

Let $i:M'\hookrightarrow \R^k$ be an imbedding of the compact manifold 
$M'$ in some Euclidean space $\R^k$, and identify $M'$ with its image.   
We assume for convenience that $k$ is even.  For $x'\in M'$ and $t\in 
\R^k$, define $p(x',t)$ to be the projection of the tangent vector $\dd 
X^t = \frac{d}{ds} \, | \, _{s=0}( x' +st)$ at $x'$ determined by $t$,  to 
the leaf  $L'_{x'}$ in $(M',F') \subset \R^k$.   In particular, first 
project $X^t$ to $TF'_{x'}$ and then exponentiate it to $L'_{x'}$, 
thinking of $L'_{x'}$ as a Riemannian manifold in its own right.  Since 
$M'$ is compact, we may choose a ball $B^k \subset \R^k$ so small that the 
restriction of the smooth map $p_f = p \circ (f, id):M \times B^k \to M'$ 
to any $p_f:L_x \times B^k \to L'_{f(x)}$ is a submersion.  
Lifting this map to the groupoids, we get 
$$
p_f : \maG \times B^k \longrightarrow \maG',
$$
which is a leafwise map if $\maG \times B^k$ is endowed with the foliation 
$F_s\times B^k$.   
Note that $p_f:\wL_x \times B^k \to \wL'_{f(x)}$ is the  map induced on 
the coverings by $p_f:L_x \times B^k \to L'_{f(x)}$.  In particular, 
$p_f([\gamma],t)$ is the composition of leafwise paths $P_f(\gamma,t)$ and 
$f \circ \gamma$, 
$$
p_f([\gamma],t) = [P_f(\gamma,t)\cdot  (f \circ \gamma)],
$$
where $P_f(\gamma,t): [0,1] \to L'_{f(r(\gamma))}$ is 
$$
P_f(\gamma,t)(s) = p_f(r(\gamma),st).
$$
To see that this is a smooth map,  let  $(U,\gamma,V) \times B^k$ and 
$(U', f \circ \gamma, V')$ be local coordinate charts on  $\cG \times B^k$ 
and $\cG'$, respectively, with coordinates $(w,y,z,t)$ and $(w',y',z')$.  
Then in these coordinates, 
$$
p_f(w,y,z,t) = (w'(f(w,y)),y'(f(w,y)),z'(p_f(y,z,t))),
$$
where the second $p_f$ is the map $p_f:V \times B^k \to V'$.  

The crucial fact about $p_f$ is that it has all the same essential 
properties of the projection $\pi_1:\cG \times B^k \to \cG$.  First note 
that, because $f$ {and $\cf$ are} leafwise uniformly proper and $M \times B^k$ is 
compact, both the maps denoted $p_f$ are also leafwise uniformly proper.  Second, we 
may assume that the metric on each $L_x \times B^k$ (respectively  $\wL_x 
\times B^k$) is the product of a fiberwise metric for the submersion $p_f$ 
and the pull-back under $p_f$ of the metric on $L'_{f(x)}$ (respectively 
$\wL'_{f(x)}$).  To see this, give $L \times B^k$  the product metric, 
using the standard metric on $B^k$.   The induced metric on $\wL \times 
B^k$ is then the product metric.  The  fibers of both submersions 
$p_f$ inherit a Riemannian metric, and we denote by  $dvol_{vert}$ the 
canonical $k$ form on both $L \times B^k$ and $\wL \times B^k$ whose 
restriction to  the oriented fibers of $p_f$ is the volume form.  
 Denote by $*$ the Hodge operator on both $L 
\times B^k$ and $\wL \times B^k$, and similarly for   $* '$ on $L'$ and 
$\wL'$. 
Consider the sub-bundle $p_f^* T^*F'  \subset T^*(F \times B^k)$, and its 
orthogonal complement ${p_f^* T^*F'  }^\perp$.  Define a new metric  on 
$T^*(F \times B^k) = p_f^* T^*F' \oplus {p_f^* T^*F'  }^\perp$ (and so 
also on $T^*(F_s \times B^k)$) by declaring that these sub-bundles are 
still orthogonal, and the new metric on ${p_f^* T^*F'  }^\perp$ is the 
same as the original, while the new metric on $p_f^* T^*F' $ is the 
pullback of the metric on $T^*F' $.  Denote the leafwise Hodge operator of 
the new metric by $\what{*}$.  As remarked above, this change of metric 
does not alter any of our Sobolev spaces.  In particular, note that 
for any non-zero $\alpha \in  \wedge^{\ell} T^*(F  \times B^k)$ and any  
$c \in \R^*_+$, 
$$
0 \,\, < \,\,  \frac{ c\alpha \wedge \what{*} c\alpha }{c\alpha \wedge *  
c\alpha} \,\, = \,\,   
 \frac{ \alpha \wedge \what{*} \alpha }{\alpha \wedge *  \alpha},
$$
so the compactness of the sphere bundle $(\wedge^{\ell} T^*(F \times B^k) 
-  \{0\})/ \R^*_+$ implies  that there  are $0 < C_1 <C_2$, so that for all 
$\alpha \in  \wedge^{\ell} T^*(F \times B^k)$,
$$   
C_1 \, \alpha \wedge * \alpha \,\, \leq \,\,
\alpha \wedge \what{*} \alpha  \,\, \leq \,\,   
C_2 \, \alpha \wedge * \alpha,
$$
where we identify the oriented volume elements of $L \times B^k$ at a 
point  with $\R^*_+$.
This property is inherited by the two induced metrics on $T^*(F_s \times 
B^k)$,  so the two norms used to define the Sobolev spaces $W^{ 
\ell}_s (F_s, E)$ are comparable.   Thus, we can substitute the second metric 
for the first, or what is more notationally convenient, assume that the 
first metric satisfies the same pull back property as the second.  

Simple computations give two immediate consequences of this assumption.  
Namely, for any $\alpha_1, \alpha_2 \in  \wedge^{\ell} T^* F'_s$,   
\begin{Equation} \label{eqstar1}
\hspace{1.0in}
 $p_f^*\alpha_1 \wedge * p_f^* \alpha_2  \,\, = \,\,   
 dvol_{vert}  \wedge p_f^* ( \alpha_1 \wedge * ' \alpha_2),$
\end{Equation} 
\noindent
and
\begin{Equation} \label{eqstar2}
\hspace{1.0in}
$dvol_{vert} \wedge p_f^*\alpha_1 \wedge * (dvol_{vert} \wedge 
p_f^*\alpha_2)\,\, = \,\,
dvol_{vert} \wedge p_f^*(\alpha_1 \wedge *' \alpha_2).$
\end{Equation} 
\noindent

Denote by $\pi_2:\cG \times B^k \to B^k$ the projection, and choose a 
smooth compactly supported $k$-form $\omega$ on $B^k$  whose integral is 
$1$.  We shall refer to such a form as a Bott form on $B^k$.  Denote by 
$e_\omega$ the exterior multiplication by the differential $k-$form 
$\pi_2^*\omega$ on $\cG \times B^k$.
For $\xi \in \cA_c^*(F'_s, E')$, we define $f^{(i,\omega)} (\xi) \in 
\cA_c^*(F_s, E)$ as 
$$
f^{(i,\omega)} (\xi) = (\pi_{1,*} \circ e_\omega \circ p_f^*) (\xi).
$$
The map $p_f : \maG \times B^k \longrightarrow \maG'$ is a leafwise (for $F_s \times B^k$) submersion extending $\cf$, so $p_f^* (\xi)$ is a leafwise form on $\cG \times B^k$ with coefficients in the bundle $p_f^* E'$. The map  $\pi_{1,*}$ is integration over the fiber of the projection $\pi_1:\cG \times B^k \to \cG$ of such forms.  In general,  the fiber of $p_f^* E'$ is not constant on fibers of the fibration $\pi_1:\cG \times B^k \to \cG$.  To correct for this, we use the parallel translation given by the flat structure of $p_f^* E'$ to identify all the fibers of $p_f^* E' \, | \, z \times B^k$ with $(p_f^* E')_{ (z,0)} = (\cf^* E')_z = (f^*E')_{r(z)}$.  This is well defined because the ball $B^k \subset \R^k$ is contractible, so parallel translation is independent of the path taken from $(z,0)$ to $(z,t)$ in $z \times B^k$.
\begin{proposition}\label{Smoothing}
For any $s \in \Z$, $f^{(i,\omega)}$ extends to a bounded operator from 
$W_s^*(F'_s, E')$ to $W_s^*(F_s, E)$.
\end{proposition}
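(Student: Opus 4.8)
This is the adaptation of the $L^2$ pull-back map of Hilsum--Skandalis \cite{HilsumSkandalis} to the foliations $F_s$, $F'_s$ with twisting bundles, and the proof runs along the same lines. The plan is to represent $f^{(i,\omega)}$ by a leafwise Schwartz kernel which is smooth, uniformly properly supported, and has all its leafwise derivatives uniformly bounded over $M$; boundedness on each $W_s^*(F_s,E)$ then follows from a generalized Schur estimate for $s=0$, from moving derivatives onto the kernel for integer $s>0$ (and interpolation for general $s>0$), and from duality for $s<0$.

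\emph{The kernel.} Fix $x\in M$ and put $x'=f(x)$. By construction $p_f(z,0)=\cf(z)$, and $p_f(z,\cdot):B^k\to\wL'_{x'}$ is a submersion onto a uniformly bounded neighborhood of $\cf(z)$ in $\wL'_{x'}$ (this is exactly why $B^k$ was chosen small), displacing $\cf(z)$ by at most a fixed leafwise distance $R$. Unwinding the definition $f^{(i,\omega)}=\pi_{1,*}\circ e_\omega\circ p_f^*$ and using the coarea formula for this submersion to turn the integration over $\{z\}\times B^k$ into integration over $\wL'_{x'}$, one obtains
$$
(f^{(i,\omega)}\xi)(z)\;=\;\int_{\wL'_{x'}}\kappa_x(z,z')\,\xi(z')\,dz',\qquad z\in\wL_x,
$$
where $\kappa_x(z,z')$ is built by integrating, over the portion of the fiber $\{\,t\in B^k:p_f(z,t)=z'\,\}$ lying in $\mathrm{supp}(\omega)$, a smooth expression involving $\omega$, the (coarea) Jacobian of $p_f$, and the parallel translation in $p_f^*E'$ along $\{z\}\times B^k$; this last is well defined and smooth because $B^k$ is contractible and $E'$ is a smooth bundle over the compact manifold $M'$. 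Since $M\times B^k$ is compact and the metrics were chosen so that $p_f$ is a Riemannian submersion of bounded geometry, $\kappa_x$ and all its leafwise derivatives are uniformly bounded in $(x,z,z')$. Moreover $\kappa_x(z,z')=0$ unless $d_{\wL'_{x'}}(\cf(z),z')\le R$, so for $z$ fixed the $z'$-support is uniformly bounded and, by the leafwise uniform properness of $\cf$, for $z'$ fixed the $z$-support has uniformly bounded diameter; thus $\kappa_x$ is uniformly properly supported.

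\emph{Boundedness.} For $s=0$, the uniform $C^0$-bound on $\kappa_x$ together with its uniformly bounded support gives $\sup_z\int|\kappa_x(z,z')|\,dz'\le C$ and $\sup_{z'}\int|\kappa_x(z,z')|\,dz\le C$ uniformly in $x$, so the generalized Schur test gives $\|f^{(i,\omega)}\|_{0,0}\le C$. For a positive integer $s$, in bounded-geometry leafwise charts $\|\cdot\|_s$ is equivalent to the sum of the $L^2$-norms of leafwise derivatives of order $\le s$; differentiating under the integral moves each such derivative onto $\kappa_x$, and $\pa^\alpha_z\kappa_x$ is again smooth, uniformly bounded and uniformly properly supported, so the $s=0$ estimate applied to it yields $\|f^{(i,\omega)}\xi\|_s\le C\|\xi\|_0\le C\|\xi\|_s$. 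General $s>0$ follows by interpolation. For $s<0$, the $L^2$-adjoint $(f^{(i,\omega)})^*$ has a Schwartz kernel of exactly the same type (the variables $z,z'$ are exchanged and the bundle metrics enter only through smooth uniformly bounded factors), hence is bounded on $W_{-s}^*$ by the case just treated; dualizing the $L^2$ pairing between $W_s^*$ and $W_{-s}^*$ then gives $\|f^{(i,\omega)}\xi\|_s\le\|\xi\|_s\sup_{\|\eta\|_{-s}\le1}\|(f^{(i,\omega)})^*\eta\|_{-s}\le C\|\xi\|_s$.

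\emph{Main obstacle.} The only substantive point is the construction of the kernel and the verification of its bounded-geometry properties: that $\kappa_x$ and all its leafwise derivatives are uniformly bounded over $M$, and that it is uniformly properly supported. This rests on the compactness of $M\times B^k$, on the submersion property of $p_f$ (which is exactly what prevents the kernel from being singular even though $f$ need not be a leafwise diffeomorphism), and on the leafwise uniform properness of $\cf$. The bundle coefficients contribute nothing essential, since the parallel translation identifying the fibers of $p_f^*E'$ is taken over the bounded contractible ball $B^k$.
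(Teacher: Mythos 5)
Your proof is correct, and it takes a genuinely different route from the paper's. The paper never exhibits a Schwartz kernel for $f^{(i,\omega)}$: it treats the three factors $p_f^*$, $e_\omega$, $\pi_{1,*}$ separately, proving the $s=0$ bound for $p_f^*$ via the tailored metric and Equation~\ref{eqstar1}, handling $s\geq 1$ by choosing a horizontal distribution and lifts $X_j$ of generating vector fields $Y_j$ on $\cG'$ so that $i_{X_j}\circ p_f^* = p_f^*\circ i_{Y_j}$, handling $s<0$ by dualizing $e_\omega\circ p_f^*$ against $p_{f,*}\circ e_\omega$ (with the pointwise inequality~\ref{ineqpf}), and finally bounding $\pi_{1,*}$ by the $p_{f,*}$ argument. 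You instead recast $f^{(i,\omega)}$ as one leafwise integral operator whose kernel $\kappa_x(z,z')$ is obtained from the coarea formula for the submersions $p_f(z,\cdot):B^k\to\wL'_{f(x)}$, and reduce everything to uniform boundedness, smoothness, and proper support of $\kappa_x$ in bounded geometry, followed by the Schur test, differentiation under the integral, and adjoint duality. Your version is arguably more conceptual and even gives the stronger conclusion $W_0^*\to W_s^*$ for $s>0$; what it buys you at a price is that all the analytic work is front-loaded into verifying that the coarea Jacobian of $p_f(z,\cdot)$ stays uniformly away from zero (which the choice of $B^k$ guarantees, since $\partial_t p(x',t)|_{t=0}$ is the surjection $\R^k\twoheadrightarrow TF'_{x'}$) and that $\kappa_x$ together with all its leafwise $z$-derivatives is uniformly bounded and uniformly properly supported over $M$. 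The paper avoids estimating any Jacobian or kernel derivative by instead exploiting the algebraic relations between $i_{X_j}$, $i_{Y_j}$, $p_f^*$ and $p_{f,*}$. One small cosmetic remark: since $s\in\Z$ throughout, the appeal to interpolation for ``general $s>0$'' is unnecessary.
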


\begin{proof} For this proof only, for $\alpha_1 \otimes \phi_2$ and $\alpha_2 \otimes \phi_2 \in \cA_c^*(F_s, E)$, we set 
$$
(\alpha_1\otimes \phi_1)\wedge  ( \alpha_2\otimes \phi_2) = ( \phi_1, \phi_2)\alpha_1\wedge  \alpha_2
\quad \text{and} \quad 
(\alpha_1\otimes \phi_1)\wedge * ( \alpha_2\otimes \phi_2) = ( \phi_1, \phi_2)\alpha_1\wedge * \alpha_2,
$$
where $(\cdot,\cdot)$ is the positive definite metric on $E$.  Similarly for $\cA_c^*(F'_s, E')$.

Since $p_f$ is leafwise uniformly proper, 
$$
C= \sup_{[\gamma'] \in \cG'}  \int_{p_f^{-1} ([\gamma'])}  \; dvol_{vert} 
< +\infty.
$$
Thanks to \ref{eqstar1}, we then have  for any $\alpha \otimes \phi \in 
\cA^{\ell}_c(\wL'_{f(x)}, E') = C^{\infty}_c(\wL'; \wedge^{\ell}T^*\wL'_{f(x)} \otimes E')$,
$$
\| p^*_f((\alpha \otimes \phi)_{f(x)})\|_0^2 \,\, = \,\, 
\int_{\wL_x \times B^k} (p_f^*\phi,p_f^*\phi ) p_f^*\alpha \wedge * p_f^* \alpha  \,\, = \,\,
\int_{\wL_x \times B^k} (p_f^*\phi,p_f^*\phi )dvol_{vert} \wedge  p_f^* ( \alpha \wedge * ' 
\alpha) \,\, $$
$$
\int_{ \wL'_{f(x)}} \Big{[} \int_{p_f^{-1} ([\gamma'])} dvol_{vert} 
\Big{]} (\phi,\phi ) \alpha \wedge * ' \alpha 
\,\, \leq \,\,  
C  \int_{\wL'_{f(x)}} (\phi,\phi )\alpha \wedge * ' \alpha \,\, = \,\,
C\|\alpha \otimes \phi\|_0^2.
$$
This inequality extends to all $\xi \in 
\cA^{\ell}_{(2)}(\wL'_{f(x)},E') = W^{\ell}_0(\wL'_{f(x)},E')$, so 
$p_f^*$ extends to a uniformly bounded (i.e.\ independent of $x$) operator 
from  $W^{\ell}_0(\wL'_{f(x)},E')$ to $W^{\ell}_0(\wL_x\times B^k, p_f^* E' )$, that is 
$p_f^*$ defines a bounded operator from $W^{\ell}_0(F'_s,E')$ to 
$W^{\ell}_0(F_s \times B^k, p_f^* E' )$.

Choose a sub-bundle $\what{H} \subset TF \oplus TB^k$ so that for each 
$L_x$,  it is
a horizontal distribution  for the submersion $p_f:L_x \times B^k \to 
L'_{f(x)}$.   The map
 $(r \times id)_*:TF_s \oplus TB^k \to TF \oplus  TB^k$ is an isomorphism 
on each fiber, so $\what{H}$ determines a sub-bundle $H$ of $TF_s  \oplus 
TB^k$, and  $H \, | \,  \wL_x \times B^k$ is a horizontal distribution for 
the submersion $p_f:\wL_x \times B^k \to \wL'_{f(x)}$.   Choose a finite 
collection of leafwise  vector fields $\what{Y}_1, \ldots, \what{Y}_N$ on 
$M'$ which generate $C^{\infty}(TF')$ over $C^{\infty}(M')$.  Lift these 
to leafwise (for $F'_s$) vector fields  $Y_1, \ldots, Y_N$ on $\cG'$, and 
lift these latter  to sections of $H$, denoted $X_1, \ldots , X_N$.  If  
$X^{vert}$ is a vertical vector field on $\wL \times B^k$ with respect to 
$p_f$, then $i_{X^{vert}} \circ p_f^* = 0$.  Modulo such vector fields, 
the $X_i$ generate $T\wL \oplus TB^k$ over $C^{\infty}(\wL \times B^k)$.  
In addition,   $ i_{X_{j}}\circ p_f^* = p_f^*\circ  i_{Y_{j}} $.  Thus, 
for   any $\xi \in \cA^{\ell}_c(\wL'_{f(x)},E')$, any  $Y_K 
= Y_{k_1} \wedge \cdots \wedge Y_{k_{\ell}}$, and  any $j_1, \ldots, j_m$, 
with $j_i \in \{1, \ldots, N\}$, 
\begin{eqnarray*}
\| i_{X_{j_1}}  d   \cdots   i_{X_{j_m}} d (p_f^*(\xi) (Y_K))\|_0  & \,\,  =    \,\, &
\| p_f^*(i_{Y_{j_1}} d  \cdots i_{Y_{j_m}} d(\xi(Y_K))\|_0 \\    
& \,\,    \leq   \,\, &
\sqrt{C}    \|i_{Y_{j_1}} d  \cdots  i_{Y_{j_m}} 
d(\xi(Y_K))\|_0.
\end{eqnarray*}
A classical argument then shows that for any $s \geq 1$, $p_f^*$ extends to 
a uniformly bounded operator from  $W^{\ell}_s (\wL'_{f(x)},E')$ to 
$W^{\ell}_s (\wL_x\times B^k,  p_f^*E')$, that is  a bounded operator from 
$W^{\ell}_s (F'_s, E')$ to $W^{\ell}_s (F_s \times B^k, p_f^*E')$.

The operator $e_{\omega}$ maps $W^{\ell}_s (\wL_x\times B^k,p_f^*E')$ to 
$W^{k+\ell}_s (\wL_x\times B^k,p_f^*E')$ and is uniformly bounded, since $\omega$ 
and all its derivatives are bounded.  Thus for $s \geq 0$, $e_\omega \circ p_f^*$ is a  bounded operator from 
$W^{\ell}_s (F'_s, E')$ to $W^{k+\ell}_s (F_s \times B^k, p_f^*E')$. 

For the case of $s < 0$, we dualize the argument above.   
Denote by $p_{f,*}$ integration of fiber compactly supported forms along 
the fibers of the submersion $p_f$.  We claim that for any $\alpha \in 
\cA^{k+\ell}_c(\wL_x \times B^k)$,
\begin{Equation} \label{ineqpf}
\hspace{1.0in}
$\dd p_{f,*} \alpha \wedge * ' p_{f,*} \alpha  \,\, \leq \,\,    C \, 
p_{f,*} (\alpha \wedge * \alpha),$
\end{Equation}
\noindent
where, as above,  we identify the oriented volume elements of 
$\wL'_{f(x)}$ at a point  with $\R^*_+$.   Any such $\alpha$ may be written 
as $\alpha = \alpha_1 + \alpha_2$, where $p_{f,*}(\alpha_2) = 0$, and 
$\alpha_1 =  dvol_{vert} \wedge \alpha_3$, with $\alpha_3  \in 
C^{\infty}_c(p_f^*(\wedge^{\ell}T^*\wL'_{f(x)}))$.  Then 
$$
p_{f,*} (\alpha \wedge * \alpha) \,\,  =   \,\,
p_{f,*} (\alpha_1  \wedge *  \alpha_1) \,\, + \,\,
p_{f,*} (\alpha_2 \wedge  *  \alpha_2)  \,\, + \,\,
p_{f,*} (\alpha_1 \wedge  *  \alpha_2)  \,\,+ \,\,
p_{f,*} (\alpha_2 \wedge *  \alpha_1).
$$
The last two terms are zero, since $\alpha_1 \wedge  *  \alpha_2 = 0$ as 
$dvol_{vert} \wedge * \alpha_2 = 0$, and $p_{f,*} (\alpha_2 \wedge *  
\alpha_1) = 0$ since $\alpha_2 \wedge *  \alpha_1$ does not contain 
$dvol_{vert}$.
Thus
$$
p_{f,*} (\alpha \wedge * \alpha)  \,\,=   \,\,
p_{f,*} (\alpha_1  \wedge *  \alpha_1) \,\, + \,\,
p_{f,*} (\alpha_2 \wedge  *  \alpha_2)  \,\,\geq \,\,
p_{f,*} (\alpha_1  \wedge *  \alpha_1).
$$
But, 
$$
p_{f,*} \alpha_1 \wedge * ' p_{f,*} \alpha_1 \,\, = \,\,
p_{f,*} \alpha \wedge * ' p_{f,*} \alpha,
$$
so we need only prove \ref{ineqpf} for $\alpha  = dvol_{vert} \wedge 
\alpha_3$, with  $\alpha_3  \in C^{\infty}_c(p_f^*(\wedge^{\ell}T^* 
\wL'_{f(x)}))$.

Choose a finite collection of sections $\beta_1, \ldots, \beta_r$ of  
$\wedge^{\ell}T^*F'$, so that $\beta_i \wedge *' \beta_j = 0$ if $i \neq 
j$, and the $\beta_i$ generate $C^{\infty}(\wedge^{\ell}T^*F')$ over 
$C^{\infty}(M')$.  Denote also by $\beta_i$ the lift of these sections to 
sections of $\wedge^{\ell}T^*F'_s$.  Then,  $\alpha = dvol_{vert} \wedge 
\alpha_3$, may be written as 
$$
\alpha =  \sum_i  g_i \; dvol_{vert} \wedge p_f^*\beta_i,
$$
where the $g_i$ are smooth compactly supported functions on 
$\wL_x\times B^k$.  Now,
$$
p_{f,*} \alpha \wedge * ' p_{f,*} \alpha \,\,  =  \,\, 
 \sum_{i}p_{f,*}(g_i \, dvol_{vert}) \beta_i \wedge *'  \sum_{j}  
p_{f,*}(g_j \, dvol_{vert})\beta_j  \,\, =  \,\,
 \sum_{i}[p_{f,*}(g_i \, dvol_{vert})]^2  \beta_i \wedge *'  \beta_i.
$$
Thanks to \ref{eqstar2},  
$$
p_{f,*} (\alpha \wedge * \alpha) \,\, = \,\,
p_{f,*}( \sum_{i} (g_i \;  dvol_{vert} \wedge p_f^*\beta_i) \wedge * 
\sum_{j}  (g_j \;  dvol_{vert} \wedge p_f^*\beta_j))\,\, = \,\,
$$
$$
p_{f,*}( \sum_{i,j} g_i g_j \;  dvol_{vert} \wedge 
p_f^*(\beta_i \wedge *' \beta_j) )   \,\, = \,\,
 \sum_{i} p_{f,*}( g_i^2 \, dvol_{vert}) \beta_i \wedge *' \beta_i  
\,\,  \geq  \,\, 
$$
$$
 \sum_{i} \, \frac{[p_{f,*}( g_i \cdot 1\, dvol_{vert})]^2}{ 
p_{f,*}(1 \, dvol_{vert})}\beta_i \wedge *' \beta_i  \,\, \geq \,\,
\frac{1}{C} \, \sum_{i}  [p_{f,*}( g_i \, dvol_{vert})]^2 \beta_i
\wedge *' \beta_i {\,\,= \,\, \frac{1}{C} \,  p_{f,*} \alpha \wedge * ' p_{f,*} \alpha},
$$
proving  \ref{ineqpf}.  Note that the second to last inequality is just  
Cauchy-Schwartz.  

Thus, for all $\alpha \in \cA^{k+\ell}_c(\wL_x \times B^k)$, 
$$
\|p_{f,*} \alpha \|^2_0 \,\, = \,\, \int _{\wL'_{f(x)}}  p_{f,*} \alpha 
\wedge * ' p_{f,*} \alpha  \,\, \leq \,\, 
C \int _{\wL'_{f(x)}}     p_{f,*} (\alpha \wedge * \alpha)  \,\, = \,\,
C  \int _{\wL_x \times B^k}  \alpha \wedge * \alpha   \,\, = \,\, C \,  
\|\alpha \|^2_0.
$$
Using the facts that $p_{f,*}$ commutes with the de Rham differentials, $p_{f,*} 
\circ  i_{X^{vert}}  = 0$ and $i_{Y_{j}} \circ p_{f,*} =p_{f,*} \circ 
i_{X_{j}} $, it is easy to deduce, just as for $p_f^*$, that for any $s  
\geq 0$, $p_{f,*}\circ e_\omega$ extends to a uniformly bounded operator 
(say with bound $C_s$) from $W^{\ell}_s (\wL_{x} \times B^k, p^*_f E')$ to 
$W^{\ell}_s (\wL'_{f(x)}, E')$.  Now suppose that $\xi' \in 
W^{\ell}_s (\wL'_{f(x)},E')$ for some $s <  0$, and recall that $\|(e_\omega 
\circ p_f^*)(\xi')\|_s$ is given by 
$$
\|(e_\omega \circ p_f^*)(\xi')\|_s  \,\, = \,\, 
\sup_{\xi} \frac{ | <\xi', (p_{f,*}\circ e_\omega)(\xi)> )|}{\| \xi \|_{-s}} \,\, \leq  \,\,  
\sup_{\xi} \frac{ \|\xi'\|_s  \| (p_{f,*}\circ 
e_\omega)(\xi)\|_{-s}}{\| \xi \|_{-s}}  \,\, \leq \,\, 
C_s \|\xi'\|_s,
$$
where the supremums are taken over all $\xi \in W^{\ell}_{-s}(\wL_x\times B^k, p^*_f E')$.
Thus, for any $s < 0$ (and so for all $s \in Z$), $e_\omega \circ p_f^*$ 
is a uniformly bounded operator from $W^{\ell}_s (\wL'_{f(x)},E')$ to 
$W^{k+\ell}_s (\wL_{x} \times B^k, p^*_f E')$, so $e_\omega \circ p_f^*$ is  a 
bounded operator from $W^{\ell}_s (F'_s, E')$ to $W^{\ell}_s (F_s \times B^k, p^*_f E')$.

For all $s \in \Z$, the image of $e_\omega \circ p_f^*$ consists of
$\pi_1$-fiber compactly supported distributional forms. 
The argument above for  $p_{f,*}$ applied to $\pi_{1,*}$ shows that it is 
uniformly bounded as a map from $\Im(e_\omega \circ p_f^*) \subset 
W^{k+\ell}_s (\wL_x\times B^k, p^*_f E')$ to $W^{\ell}_s (\wL_x,E)$.  Thus, for all
$s \in \Z$,  $f^{(i,\omega)}$ extends to a bounded operator from 
$W^{\ell}_s (F'_s,E')$ to $W^{\ell}_s (F_s,E)$.
\end{proof}

 As $\omega$ is closed, $e_\omega$ commutes with de Rham differentials.   
The image of $e_\omega \circ p_f^*$  is contained in the $\pi_1$-fiber 
compactly supported forms, so $f^{(i,\omega)} = \pi_{1,*} \circ  e_\omega 
\circ p_f^*$ commutes with de Rham differentials.  It follows immediately 
that the extension of $f^{(i,\omega)}$ to the $L^2$ forms also commutes 
with the closures of the de Rham differentials, so $f^{(i,\omega)}$ 
induces a well defined map $\wf^*: \oH_{(2)}^*(F'_s,E') \longrightarrow 
\oH_{(2)}^*(F_s,E)$ on leafwise reduced $L^2$ cohomology.    As remarked 
above, the properties of this map (using this definition) are not 
immediately obvious.  To deal with this problem, we now switch our point 
of view to that in \cite{H-L:1991}, and give another construction of the 
map $\wf^*$.

Let $\dd K = \bigcup_{\wL} K_{\wL}$ be a bounded leafwise triangulation of 
$F_s$, (see \cite{H-L:1991}) induced from a bounded leafwise triangulation 
to $F$.    Then $K_{\wL}$ is a bounded triangulation of the leaf $\wL$.  A simplicial $k$-cochain $\varphi$ on $K_{\wL}$ with coefficients in $E$ assigns to each $k$-simplex $\sigma$ of $K_{\wL}$ an element $\varphi(\sigma) \in E_{\sigma}$, the fiber of $E$ over the barycenter of $\sigma$. 
To define the co-boundary map $\delta$, we identify $E_{\sigma}$ with the fibers of $E$ over the barycenters of the simplices in the boundary of $\sigma$ using the flat structure of $E$.  This is well defined since $\sigma$ is contractible.  Denote by $\maC_{(p)}^k (K_{\wL}, E)$ the space of  simplicial 
$k$-cochains $\varphi$ on $K_{\wL}$ with coefficients in $E$  such that 
$$
\sum_{\sigma \text{ $k$-simplex of }K_{\wL} } (\varphi (\sigma),\varphi (\sigma)) ^{p/2} < 
+\infty.
$$
The homology of the complex $(\maC_{(p)}^*(K_{\wL},E) , \delta)$ is the $\ell^p$  cohomology of the 
simplicial complex $K_{\wL}$ with coefficients in $E$.   It is denoted $\oH^*_{\tri,p}(\wL,E)$.  The classical Whitney and de Rham  maps extend to well defined chain morphisms
$$
W: \maC^*_{(p)} (K_{\wL},E) \rightarrow \cA^*_{(p)} (\wL,E) \text{ and }  
\oint: \cA^*_{(p)} (\wL,E) \rightarrow \maC^*_{(p)} (K_{\wL},E),
$$
which induce bounded isomorphisms in cohomology (which are inverses of 
each other),  with bounds independent of $\wL$, for $p = 1, 2$.  See   \cite{H-L:1991} for $p = 2$, and  \cite{Goldstein} for $p = 1$.  As above, to define these maps, we use the classical definitions coupled with the fact that for any point $x \in \sigma$, the flat structure of $E \, | \, \sigma$ gives a natural isomorphism between $E_x$ and $E_{\sigma}$. 

Let $f_{K,K'}:K_{\wL} \to K'_{\wL'}$ be an oriented leafwise simplicial approximation  of $\check{f}$ 
as in \cite{H-L:1991}.  It is uniformly proper, so  it defines a pull-back map $f^*_\tri$ on $\ell^p$ cochains with coefficients in $E'$, which commutes with the coboundaries.   The induced map on  cohomology is also denoted $f^*_\tri$.   Set $f^*_D = W \circ f_\tri^* \circ \oint$
\begin{proposition} \quad 
$
\dd \wf^* = f^*_D:\oH^*_{(2)} (F'_s,E') \longrightarrow \oH^*_{(2)}(F_s,E).
$
\end{proposition}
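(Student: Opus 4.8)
The plan is to reduce the equality to a chain-homotopy statement at the cochain level. Both $f^{(i,\omega)}$ and $f^*_D=W\circ f^*_\tri\circ\oint$ are degree-preserving and commute with the leafwise differentials (for $f^{(i,\omega)}$ this was just checked; for $f^*_D$ it is immediate since $W$, $\oint$ and $f^*_\tri$ all do), so each induces a homomorphism on reduced $L^2$ cohomology. Recall that $\oint\circ W=\mathrm{id}$ on $\ell^2$ cochains and that $\oint$, $W$ induce mutually inverse bounded isomorphisms between $\oH^*_{(2)}(F_s,E)$ and $\oH^*_{\tri,2}(\wL,E)$, uniformly in $\wL$. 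Hence $\oint\circ f^*_D=f^*_\tri\circ\oint$ exactly, and it suffices to show that the two cochain maps $\oint\circ f^{(i,\omega)}$ and $f^*_\tri\circ\oint$ from $\cA^*_{(2)}(F'_s,E')$ to $\maC^*_{(2)}(K_{\wL},E)$ are chain homotopic through a homotopy bounded on $\ell^2$ uniformly in $\wL$: then on cohomology $[\oint\circ f^{(i,\omega)}]=[f^*_\tri\circ\oint]=[\oint\circ f^*_D]$, and composing with $[W]$ and using $W\circ\oint\simeq\mathrm{id}$ yields $\wf^*=[f^{(i,\omega)}]=[f^*_D]=f^*_D$ on $\oH^*_{(2)}$.

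Next I would make both cochain maps geometric. Fixing $x\in M$ with $f(x)=x'$ and a $j$-simplex $\sigma$ of $K_{\wL_x}$, Fubini for the product $\sigma\times B^k$ gives, for $\xi'\in\cA^j_{(2)}(F'_s,E')$,
$$
(\oint\circ f^{(i,\omega)})(\xi')(\sigma)=\int_{\sigma\times B^k}\pi_2^*\omega\wedge p_f^*\xi'=\int_{B^k}\Big(\int_{p_f(\cdot,t)_*\sigma}\xi'\Big)\,\omega,
$$
a weighted average, of total mass one, over $t\in B^k$ of the integrals of $\xi'$ over the singular chains $p_f(\cdot,t)_*\sigma$ in $\wL'_{x'}$; the $E$-coefficients match because $\pi_{1,*}$ was defined using the flat transport in the fibres $p_f^*E'\,|\,z\times B^k$. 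On the other side, $(f^*_\tri\circ\oint)(\xi')(\sigma)=\int_{f_{K,K'}(\sigma)}\xi'$, the integral over the affine $j$-chain realizing the simplicial image $f_{K,K'}(\sigma)$ inside $\wL'_{x'}$, again with $E$-coefficients matched by the flat structure of $E'$ along $f_{K,K'}$. Thus both maps have the form $\sigma\mapsto\int_{c(\sigma)}\xi'$ for singular chains in $\wL'_{x'}$ obtained from $\sigma$ by a leafwise, leafwise uniformly proper simplicial-chain map, and the three chain maps $\sigma\mapsto p_f(\cdot,t)_*\sigma$, $\sigma\mapsto\cf_*\sigma$ and $\sigma\mapsto|f_{K,K'}|_*\sigma$ are pairwise leafwise homotopic: $p_f(\cdot,t)$ to $\cf=p_f(\cdot,0)$ by the straight-line homotopy in the convex ball $B^k$, and $\cf$ to $|f_{K,K'}|$ by the bounded simplicial-approximation homotopy of \cite{H-L:1991}.

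The core step is to turn these homotopies into a bounded chain homotopy. For a leafwise homotopy $G\colon\wL\times[0,1]\to\wL'$ of bounded geometry interpolating two of the chain maps $c_0,c_1$ above, define the prism operator $\mathcal P\colon\cA^*_{(2)}(F'_s,E')\to\maC^{*-1}_{(2)}(K_{\wL},E)$ by $\mathcal P(\xi')(\tau)=\pm\int_{G(\tau\times[0,1])}\xi'$ for each simplex $\tau$, transporting $E$-coefficients along $G$ (unambiguous on the contractible prisms). Stokes' theorem on the chains $G(\sigma\times[0,1])$, together with $\partial\big(G(\sigma\times[0,1])\big)=c_1(\sigma)-c_0(\sigma)-G(\partial\sigma\times[0,1])$, gives $\delta\mathcal P+\mathcal P\,\delta=c_1^*-c_0^*$ as maps to $\ell^2$ cochains. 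Uniform $\ell^2$-boundedness of $\mathcal P$ over all leaves follows from the bounded geometry of $G$ and of the triangulations (a uniform bound on the number of prisms meeting any fixed simplex and on the relevant volumes) together with the leafwise uniform properness of $f$, $\cf$ and $p_f$ — the same type of estimates proving Proposition \ref{Smoothing} and the Dodziuk-type bounds in \cite{H-L:1991}. Concatenating the homotopies $p_f(\cdot,t)\simeq\cf\simeq|f_{K,K'}|$, and then averaging the resulting prism operators against $\omega$ over $t\in B^k$, produces a single bounded chain homotopy between $\oint\circ f^{(i,\omega)}$ and $f^*_\tri\circ\oint$, which is what was needed.

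The hard part will be the uniform-in-$\wL$ control of the prism operators and the bookkeeping of the coefficient bundle $E$: one has to check that each interpolating homotopy can be chosen leafwise for $F_s$ and of uniformly bounded geometry, so that integrating $\xi'$ over the prisms never produces a cochain outside $\maC^*_{(2)}(K_{\wL},E)$, and that the flat-transport identifications of $E$ used on the two sides (the fibrewise transport in $p_f^*E'$ and the transport along $f_{K,K'}$) are genuinely compatible under these homotopies. This is exactly the technical content that the constructions of Hilsum--Skandalis \cite{HilsumSkandalis} and Heitsch--Lazarov \cite{H-L:1991}, together with Lemma \ref{l3.16}, are designed to supply, so the proof should proceed by quoting and adapting those estimates rather than redoing them.
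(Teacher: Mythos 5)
Your proposal takes a genuinely different route from the paper's.  The paper does not construct any explicit chain homotopy.  Instead it assembles a commutative diagram \emph{in cohomology}: the top row is built from simplicial maps $p^*_{f,\tri}$, cup product with the cocycle $\beta=\oint\omega$, and cap product with $[B^k]$, the bottom row is $\wf^*=\pi_{1,*}\circ[\omega]\wedge\circ p_f^*$, and the vertical arrows are the Whitney map.  Commutativity is obtained from functoriality of $W$ and its compatibility with cup and cap products, and then the key collapse uses the Thom isomorphism: $([\beta]\cup)\circ\pi^*_{1,\tri}$ is the Thom isomorphism for the trivial ball bundle with inverse $\cap[B^k]$, and inserting $\pi^*_{1,\tri}\circ i^*_\tri=\mathrm{id}$ together with $i^*_\tri\circ p^*_{f,\tri}=f^*_\tri$ identifies the top row with $f^*_\tri$.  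Because the argument lives entirely in cohomology, there is no need to exhibit a bounded chain homotopy or verify uniform-in-$\wL$ $\ell^2$ estimates for prism operators: the Thom isomorphism is an isomorphism and the cup/cap compatibility of $W$ is a cohomology-level statement.

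Your approach instead descends to the cochain level: reduce via $\oint\circ W=\mathrm{id}$ to comparing $\oint\circ f^{(i,\omega)}$ and $f^*_\tri\circ\oint$, identify both as integration over singular chains (the Fubini computation is correct), and connect the various chain maps $p_f(\cdot,t)_*\sigma$, $\cf_*\sigma$, $|f_{K,K'}|_*\sigma$ by prism operators obtained from explicit leafwise homotopies.  This is sound in principle, and its advantage is transparency: one sees concretely why the two cochain maps agree.  What it buys you in return is the obligation to prove, uniformly over all leaves, that the prism operators are bounded on $\ell^2$ cochains and respect the flat-structure identifications of $E$ along prisms — exactly the step you flag as ``the hard part'' and defer to the estimates of Hilsum--Skandalis and Heitsch--Lazarov.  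Those estimates are indeed of the right type (bounded geometry of the triangulations, Lemma \ref{l3.16} bounding the leafwise displacement of the homotopy, uniform properness of $p_f$), so the plan is plausible, but it is distinctly more work than the paper's route, which is engineered precisely to avoid needing a bounded chain homotopy at all.  If you wanted to carry this through, the one additional check I would insist on making explicit is that the coefficient-transport conventions in $\oint\circ f^{(i,\omega)}$ (parallel transport within the fibres of $p_f^*E'\,|\,z\times B^k$) and in $f^*_\tri\circ\oint$ (transport along $f_{K,K'}$) are intertwined by the prism transport; this is where a sign or coefficient mismatch could creep in, and the paper's diagram handles it implicitly through naturality of $W$ for the flat bundle.
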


\begin{proof}
As $B^k$ is a finite CW-complex, the map $p_f$ induces the well defined 
map 
$$
p_{f,\tri}^* : \oH_{\Delta, 2}^*(\wL',E') \rightarrow \oH_{\tri, 2}^*(\wL\times B^k, p_f^*E').
$$
Denote by $\beta$ the  simplicial $k$ cocycle $ \oint \omega$ on $B^k$, 
and  by $\pi_2:\wL\times B^k \to B^k$ a simplicial approximation (after 
suitable subdivisions) of the projection.  We choose the subdivision fine enough so that the cup product by the bounded $k$ cocycle $\pi_2^* \beta$ induces the well defined map
$$
[\beta]\cup : \oH_{\Delta, 2}^*(\wL\times B^k, p_f^*E')\rightarrow \oH_{\tri,2,c}^{*+k}(\wL\times B^k, p_f^*E'),
$$
where $\oH_{\Delta, 2,c}^{*}(\wL\times B^k, p_f^*E')$ denotes the $\ell^2$  
simplicial cohomology of cochains which are zero on any simplex that 
intersects the boundary of $\wL\times B^k$, that is ``fiber compactly 
supported" cocycles.  Cap product with the fundamental cycle $[B^k]$ of 
$B^k$ gives the map
$$
\cap [B^k]: \oH_{\Delta, 2,c}^{*+k}(\wL\times B^k, p_f^*E') \rightarrow \oH_{\tri, 2}^*(\wL,E).
$$
Denote by $\oH^{*}_{(2),c} (\wL \times B^k, p_f^*E')$  the cohomology of $L^2$ 
forms which are zero on some neighborhood of the boundary $\wL \times 
B^k$.  Note that $\oH^*_{\tri,2}(\wL \times B^k, p_f^*E')$ is a module over $\oH^*_{\tri,2}(\wL \times B^k)$,  $\oH^*_{(2)} (\wL \times B^k, p_f^*E')$ is a module over $\oH^*_{(2)} (\wL \times B^k)$, and  
$\cap [B^k]: \oH_{\Delta, 2,c}^{*+k}(\wL\times B^k, p_f^*E') \rightarrow \oH_{\tri, 2}^*(\wL,E)$ is defined.  
Then, the following diagram commutes.

\hspace{0.25in}
\begin{picture}(415,80)
\put(0,60){$\oH^*_{\Delta, 2}(\wL',E')$} 
\put(10,50){$\vector(0,-1){20}$}
\put(0,10){$\oH^*_{(2)}(\wL',E')$} 
\put(15,35){$W$}

\put(65,70){$ p_{f,\tri}^*$}
\put(55,64){\vector(1,0){35}}
\put(55,13){\vector(1,0){35}}
\put(65,19){$p_f^*$}

\put(95,60){$\oH^*_{\Delta, 2} (\wL \times B^k, p_f^*E')$}
\put(125,50){ $\vector(0,-1){20}$}
\put(95,10){$\oH^*_{(2)} (\wL\times B^k, p_f^*E')$}
\put(135,35){$W$}

\put(195,70){$[\beta]\cup $}
\put(185,64){\vector(1,0){40}}
\put(185,13){\vector(1,0){40}}
\put(195,19){$[\omega] \wedge$}

\put(230,60){$\oH^{*+k}_{\Delta, 2,c} (\wL \times B^k, p_f^*E')$}
\put(250,50){ $\vector(0,-1){20}$}
\put(230,10){$\oH^{*+k}_{(2),c} (\wL \times B^k, p_f^*E')$}
\put(260,35){$W$}

\put(330,70){$\cap [B^k]$}
\put(325,64){\vector(1,0){40}}
\put(325,13){\vector(1,0){40}}
\put(335,19){$\pi_{1,*}$}

\put(370,60){$\oH^*_{\Delta, 2}(\wL,E)$}
\put(380,50){ $\vector(0,-1){20}$}
\put(370,10){$\oH^*_{(2)}(\wL,E).$}
\put(390,35){$W$}
\end{picture}\\
Since $p_f$ is a smooth submersion, it defines the bounded operator 
$p_f^*:H^*_{(2)}(\wL',E') \to \oH^*_{(2)} (\wL \times B^k,  p_f^*E')$, and 
$W \circ p_{f,\tri}^* = p_f^* \circ W$ by the naturality of the Whitney 
map.
The square in the middle commutes because $W$ is compatible with cup and 
wedge products in cohomology and $W[\beta] = [\omega]$. Finally the RHS 
square is commutative because $W$ is compatible with cap products, and  
integration over the fibers of $\pi_1$ is exactly cap product by the 
fundamental class  in homology of $B^k$.

The bottom line of this diagram is $\wf^*$, so we need only show that 
$$
W \circ \cap [B^k] \circ [\beta]\cup \circ \, p_{f,\tri}^* \circ W^{-1} \,\, = \,\,
f^*_D  \,\, = \,\,  W \circ f_\tri^* \circ \oint.
$$
As $ W^{-1} = \oint$, this reduces to showing that 
$$
 \cap [B^k] \circ [\beta]\cup \circ  \, p_{f,\tri}^*  \,\, = \,\, f_\tri^*.
$$
The zero section $i:\wL\hookrightarrow \wL\times B^k$ induces 
$$
i_\tri^*: \oH^*_{\Delta, 2} (\wL\times B^k, p_f^*E') \rightarrow \oH^*_{\tri, 2}(\wL,E),
$$
and the projection $\pi_1: \wL \times B^k \to \wL$ induces 
$$
\pi_{1,\tri}^*: \oH^*_{\Delta, 2} (\wL,E) \rightarrow \oH^*_{\Delta, 2} 
(\wL\times B^k, p_f^*E').
$$
These maps satisfy
$$
 \pi_{1,\tri}^*\circ i_\tri^*  \,\, = \,\,  id_{\oH^*_{\Delta, 2} 
(\wL\times B^k, p_f^*E')}.
$$
Thus we have
$$
([\beta]\cup ) \circ p_{f,\tri}^* \,\,  = \,\, ( [\beta]\cup)  \circ \, 
\pi_{1,\tri}^*\circ i_\tri^* \circ p_{f,\tri}^*   \,\,  = \,\,  
([\beta]\cup) \circ  \,   \pi_{1,\tri}^* \circ f_\tri^*.
$$
By the Thom Isomorphism Theorem, $([\beta]\cup)  \circ  \, 
\pi_{1,\tri}^*:\oH_{\Delta, 2}^*(\wL,E) \to\oH^{*+k}_{\Delta, 2,c} (\wL\times 
B^k, p_f^*E')$ is an isomorphism  whose inverse is precisely $\cap [B^k]$.
\end{proof}

\begin{corollary}\label{isos} The map $\wf^*: \oH_{(2)}^*(F'_s,E') 
\longrightarrow \oH_{(2)}^*(F_s,E)$  on leafwise reduced $L^2$ cohomology 
induced by $f^{(i,\omega)}$ does not depend on the choices of $i$ and 
$\omega$.  If $f_1$ and $f_2$ are leafwise homotopy equivalent, then 
$\wf_1^* = \wf_2^*$.   If $g: (M', F')\to (M,F)$ is a leafwise homotopy 
inverse for $f$, then $\wg^*\circ \wf^* = id \text{ and } \wf^* \circ \wg^* = id$,
so $\wf^*$ is an isomorphism, with inverse $\wg^*$.
\end{corollary}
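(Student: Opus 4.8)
The plan is to reduce the whole statement to standard properties of simplicial pull-back, using the identification $\wf^* = f^*_D = W \circ f^*_\tri \circ \oint$ established in the preceding Proposition, together with the fact, recalled above from \cite{H-L:1991} and \cite{Goldstein}, that $W$ and $\oint$ are mutually inverse bounded isomorphisms on leafwise $\ell^2$ cohomology. The first assertion is then immediate: the operator $f^*_D$ is built from $W$, $\oint$, and the pull-back $f^*_\tri$ induced by an oriented leafwise simplicial approximation $f_{K,K'}$ of $\cf$, none of which involves the imbedding $i$ or the Bott form $\omega$; since the Proposition identifies the a priori $(i,\omega)$-dependent map $\wf^*$ with $f^*_D$, it follows that $\wf^*$ is independent of $i$ and $\omega$. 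That $f^*_D$ is itself well defined, independently of the bounded leafwise triangulations and of the chosen simplicial approximation, is the usual fact that two leafwise simplicial approximations of $\cf$ are leafwise contiguous, hence joined by a uniformly bounded leafwise chain homotopy, while a change of triangulation is absorbed by the bounded subdivision operators, exactly as in \cite{H-L:1991}.

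For the homotopy invariance, suppose $f_0$ and $f_1$ are leafwise homotopic through a leafwise homotopy $H:M\times I \to M'$. As in the construction of $\cf$ from $f$, the homotopy $H$ lifts to a homotopy of the leafwise maps $\cf_0,\cf_1:\cG \to \cG'$ which, leaf by leaf, identifies $\cf_0\,|\,\wL_x$ with $\cf_1\,|\,\wL_x$ up to the canonical isomorphism of simply connected covers $\wL'_{f_1(x)} \to \wL'_{f_0(x)}$ determined by the path $t\mapsto H(x,t)$; this is the same isomorphism that realizes the identification of $f_0^*E'$ with $f_1^*E'$ through leafwise parallel translation, and hence of $\oH^*_{(2)}(F_s,f_0^*E')$ with $\oH^*_{(2)}(F_s,f_1^*E')$. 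Moreover, by the analogue of Lemma \ref{l3.16} the tracks of this homotopy have uniformly bounded leafwise length. Consequently leafwise simplicial approximations of $\cf_0$ and $\cf_1$ are leafwise homotopic simplicial maps with uniformly bounded prism operators, so they induce the same map on leafwise $\ell^2$ simplicial cohomology; therefore $f^*_{0,\tri} = f^*_{1,\tri}$, whence $f^*_{0,D} = f^*_{1,D}$ and $\wf_0^* = \wf_1^*$ after the identification above. This is precisely the assertion for $f_1$ and $f_2$ in the statement.

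For the inverse statement, observe first that $f^*_D$ is functorial. Because $\oint \circ W = \mathrm{id}$ in cohomology, $g^*_D \circ f^*_D = W \circ g^*_\tri \circ (\oint \circ W) \circ f^*_\tri \circ \oint = W \circ g^*_\tri \circ f^*_\tri \circ \oint$; and since a composite of leafwise simplicial approximations of $\cf$ and $\cg$ is, after subdivision and up to leafwise contiguity, a leafwise simplicial approximation of $\cf\circ\cg = \widetilde{f\circ g}$, we have $g^*_\tri \circ f^*_\tri = (\widetilde{f\circ g})^*_\tri$ in cohomology, so
\[
\wg^* \circ \wf^* = g^*_D \circ f^*_D = W \circ (\widetilde{f\circ g})^*_\tri \circ \oint = (\widetilde{f\circ g})^*_D = (\widetilde{f\circ g})^*.
\]
Now $f\circ g : M'\to M'$ is leafwise homotopic to $\mathrm{id}_{M'}$ via $h'$, so by the previous paragraph $(\widetilde{f\circ g})^* = (\widetilde{\mathrm{id}_{M'}})^* = \mathrm{id}$ on $\oH^*_{(2)}(F'_s,E')$, the last equality because $(\widetilde{\mathrm{id}})^*_D = W\circ\oint = \mathrm{id}$. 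Interchanging the roles of $f$ and $g$ and using $h$ in place of $h'$ gives $\wf^* \circ \wg^* = (\widetilde{g\circ f})^* = \mathrm{id}$ on $\oH^*_{(2)}(F_s,E)$. Hence $\wf^*$ is an isomorphism with inverse $\wg^*$.

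The only genuinely delicate point throughout is quantitative rather than conceptual: at each step one invokes a simplicial contiguity, prism, or subdivision argument together with a covering-space identification of the coefficient bundles, and one must check, as in \cite{H-L:1991} and \cite{HilsumSkandalis}, that all the operators involved are uniformly bounded over the leaves, so that they descend to the reduced leafwise $L^2$ cohomology. Given the bounded leafwise triangulations and the leafwise uniform properness of $\cf$, $\cg$, and of the lifted homotopies already established above, this bookkeeping is routine.
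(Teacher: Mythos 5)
Your proposal is correct and follows essentially the same route as the paper: the paper's proof is a two-line remark that $\wf^* = f^*_D$ for all choices of $i,\omega$, and that the remaining properties follow from classical arguments for $f^*_D$; you have simply unpacked those classical arguments (simplicial approximation, contiguity, prism operators, functoriality, and the coefficient-bundle identification via parallel translation along the homotopy tracks), being careful about uniform boundedness so that everything descends to reduced leafwise $L^2$ cohomology. No gap.
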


\begin{proof}
For any choice of $i$ and $\omega$, $\wf^* = f^*_D$, so they are all the 
same.  The other properties of $\wf^*$ follow from these same properties 
for $f^*_D$ which are easy to prove using classical arguments.
\end{proof}

The following result will be needed for the proof of the main theorem.  Recall the definition of the pairing $Q$ from the proof of Lemma \ref{defQ}.
\begin{proposition}\label{preserves}
If $\xi'_1$ and $\xi'_2$ are closed $L^2$ sections of 
$\wedge^{\ell} \wL'_{f(x)} \otimes E'$, then
$$
Q_x( \wf^*(\xi'_1), \wf^*(\xi'_2)) = 
Q'_{f(x)}(\xi'_1, \xi'_2).
$$
\end{proposition}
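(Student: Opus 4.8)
The plan is to show first that $Q_x$ descends to the reduced leafwise $L^2$ cohomology, then to transport the resulting pairing to the simplicial model via $\wf^\ast = f^\ast_D = W\circ f^\ast_\tri\circ\oint$, and there to recognize the desired identity as the classical fact that a proper, leafwise-orientation-preserving (hence leafwise degree-one) map preserves the cup-product/intersection pairing.

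To carry this out, I would begin by rewriting $Q_x(\xi_1,\xi_2)=\int_{\wL_x}\xi_1\wedge_E\xi_2$, where $\xi_1\wedge_E\xi_2$ is the $\C$-valued top form on $\wL_x$ obtained by wedging the form parts and applying $\{\cdot,\cdot\}$ to the $E$-parts; this is the pairing from the proof of Lemma \ref{defQ}. Because $\{\cdot,\cdot\}$ is preserved by the leafwise flat structure, the very computation in that proof yields the Leibniz rule $d_s(\xi_1\wedge_E\xi_2)=d_s\xi_1\wedge_E\xi_2+(-1)^\ell\,\xi_1\wedge_E d_s\xi_2$. Hence, for closed $\xi_1,\xi_2$ the form $\xi_1\wedge_E\xi_2$ is a closed $L^1$ top form, and for $\xi_2=d_s\eta$ with $\eta$ smooth and leafwise compactly supported one has $\xi_1\wedge_E d_s\eta=\pm d_s(\xi_1\wedge_E\eta)$, of zero integral by Stokes. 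Since Cauchy--Schwarz gives $|Q_x(\xi_1,\xi_2)|\le C\,\|\xi_1\|_0\|\xi_2\|_0$, the pairing $Q_x$ is continuous, so it vanishes on $Z^\ell_{(2)}(F_s,E)_x\times\overline B^\ell_{(2)}(F_s,E)_x$ and, by its (conjugate-)symmetry, on $\overline B^\ell_{(2)}\times Z^\ell_{(2)}$; thus $Q_x$ induces a pairing $\bar Q_x$ on $\oH^\ell_{(2)}(F_s,E)_x$, and similarly $\bar Q'_{f(x)}$. By Proposition \ref{Smoothing} each $f^{(i,\omega)}\xi'_j$ is again a closed $L^2$ section whose class is $\wf^\ast[\xi'_j]$ by Corollary \ref{isos}, so it suffices to prove $\bar Q_x(\wf^\ast a,\wf^\ast b)=\bar Q'_{f(x)}(a,b)$ for all $a,b\in\oH^\ell_{(2)}(F'_s,E')_{f(x)}$.

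Next I would pass to the simplicial picture. Under the de Rham--Whitney isomorphisms, and using $\wf^\ast=f^\ast_D=W\circ f^\ast_\tri\circ\oint$, the pairing $\bar Q_x$ corresponds to the simplicial pairing on $\oH^\ell_{\tri,2}(\wL_x,E)$ given by the cup product $\maC^\ell_{(2)}(K_{\wL_x},E)\otimes\maC^\ell_{(2)}(K_{\wL_x},E)\to\maC^{2\ell}_{(1)}(K_{\wL_x},\C)$ — which pairs coefficients using the flat identifications built into the complex together with $\{\cdot,\cdot\}$ — followed by evaluation against the locally finite fundamental cycle $[\wL_x]$ of the triangulated oriented manifold $\wL_x$. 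This rests on the standard compatibility of the Whitney map with products, and on the fact that, under $\oint$, integration of a top form over $\wL_x$ is realized by cap product with the fundamental class (cf. \cite{H-L:1991}, \cite{Dodziuk}); note that the $L^2\times L^2\to L^1$ nature of $\wedge_E$ matches the $\ell^2\times\ell^2\to\ell^1$ nature of the cup product, and that an $\ell^1$-cochain pairs with the $\ell^\infty$-cycle $[\wL_x]$. We are thus reduced to showing, for $\ell^2$-cocycles $c'_1,c'_2$ on $K'_{\wL'_{f(x)}}$ with coefficients in $E'$,
\[
\langle\,f^\ast_\tri c'_1\cup f^\ast_\tri c'_2,\ [\wL_x]\,\rangle\ =\ \langle\,c'_1\cup c'_2,\ [\wL'_{f(x)}]\,\rangle .
\]
Since $E=f^\ast E'$ carries the pulled-back flat structure and preserved metric, and $f_{K,K'}\colon K_{\wL_x}\to K'_{\wL'_{f(x)}}$ is a simplicial approximation of $\cf$ (hence compatible with the flat identifications), naturality of the cup product gives $f^\ast_\tri c'_1\cup f^\ast_\tri c'_2=f^\ast_\tri(c'_1\cup c'_2)$, so the left side equals $\langle c'_1\cup c'_2,\ (f_{K,K'})_\ast[\wL_x]\rangle$. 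Finally $(f_{K,K'})_\ast[\wL_x]=[\wL'_{f(x)}]$ in $\oH^{lf}_{2\ell}(\wL'_{f(x)})$: $f_{K,K'}$ is proper ($\cf$ is leafwise uniformly proper and $f_{K,K'}$ approximates it), and with a simplicial approximation $g_{K',K}$ of $\cg$ the compositions are properly leafwise homotopic to the identities, so $(f_{K,K'})_\ast$ is an isomorphism on $\oH^{lf}_{2\ell}$ sending the generator $[\wL_x]$ to $\pm[\wL'_{f(x)}]$, with sign $+1$ because $f$ preserves the leafwise orientations. Substituting yields the claim.

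The step I expect to be the main obstacle is the passage to the simplicial model: making precise the dictionary between the analytic pairing $Q_x$ on $L^2$ forms and the simplicial cup-product/fundamental-class pairing, while correctly handling the non-compactness of $\wL_x$ (locally finite homology, compactly supported cochains, the $\ell^1$--$\ell^\infty$ duality) and the $E$-coefficient bookkeeping (comparing the flat transports that define the cup product on a triangulation with the invariant Hermitian metric on differential forms). Each ingredient is classical or already available — Whitney maps respect products, and Poincar\'e duality on leafwise $L^2$ cohomology is realized by capping with the fundamental class — but assembling them takes some care. The Leibniz/Stokes argument, the naturality of cup products, and the proper-degree-one computation are routine. (A more computational alternative, working directly with $f^{(i,\omega)}=\pi_{1,*}\circ e_\omega\circ p_f^\ast$ via the projection formula, a homotopy-transgression Stokes argument, and the identity $p_{f,*}(\pi_2^\ast\omega)\equiv 1$, is also available but less transparent.)
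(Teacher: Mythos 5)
Your proposal follows essentially the same route as the paper's proof: pass to the simplicial picture via $\wf^*=f^*_D=W\circ f^*_\tri\circ\oint$, use the L\"uck--Schick compatibility of the Whitney map with cup products to express $Q$ as cup-product paired against the fundamental class, invoke naturality of cup products under $f^*_\tri$, and push the fundamental class forward by the (proper, orientation-preserving) simplicial approximation of $\cf$. The Leibniz/Stokes argument you add at the start to show $Q_x$ descends to reduced cohomology is a useful explicit justification of a step the paper leaves implicit, but it does not change the essential structure of the argument.
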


\begin{proof}
For this, we need the cup product for simplicial cochains with coefficients in $E$ (and $E'$). Note that since $E$ has two (possibly) different metrics on it, we have two (possibly) different ways of defining this cup product, depending on which metric we use.  We will use the (possibly indefinite) metric $\{ \cdot, \cdot \}$.  The definition we want to extend is that of \cite{LuckSchick}, Equation (3.30).  For ordinary degree  $\ell$ cochains $\varphi_1$ and $\varphi_2$, this is
$$
(\varphi_1 \cup \varphi_2)(\sigma) = \frac{1}{(2\ell+1)!}\sum_{i,j}  \varphi_1 (\sigma_i) \varphi_2 (\sigma_j),
$$
where $\sigma_i$ and $\sigma_j$ are certain faces of the $2 \ell$ simplex $\sigma$,  and $\varphi_1 (\sigma_i)$ and $ \varphi_2 (\sigma_j)$ are real numbers. If $\varphi_1$ and $\varphi_2$ are cochains with coefficients in $E$, then $\varphi_1 (\sigma_i)$ and $ \varphi_2 (\sigma_j)$ are elements of $E_{\sigma_i}$ and $E_{\sigma_j}$ respectively (which we identify with $E_{\sigma}$), and their cup product is an ordinary ($\C$ valued) cochain which is given by the formula
$$
(\varphi_1 \cup \varphi_2)(\sigma) = \frac{1}{(2\ell+1)!} \sum_{i,j} \{ \varphi_1 (\sigma_i), \varphi_2 (\sigma_j)\}.
$$
In \cite{LuckSchick}[3.30] L\"{u}ck and Schick  show that for their definition of the cup product, the Whitney map satisfies
\begin{proposition}
For any $\ell^2$ simplicial cochains $\varphi_1$ and $\varphi_2$ on $\wL$ with coefficients in $E$,
$$
Q(W(\varphi_1 ), W(\varphi_2)) \quad =  \quad \int_{\wL} W(\varphi_1  \cup \varphi_2).
$$
\end{proposition}
Actually, they prove it when $E$ is the one dimensional trivial bundle.  The proof extends immediately to our case, since it is a local statement, and locally $E$ is trivial with the metric the pull-back from the metric on a single fiber.

The reason we use the metric $\{ \cdot, \cdot \}$ in the cup product, and not the metric $( \cdot, \cdot )$, is so that this result will pass to simplicial $\ell^2$ cohomology classes $\Xi_1$ and $\Xi_2$.  
In particular, we have
$$
Q(W(\Xi_1 ), W(\Xi_2))  \,\,  =  \,\,   \int_{\wL} 
W(\Xi_1 \sqcup \Xi_2)   \,\,  =  \,\,  <[\wL], \Xi_1\sqcup \Xi_2>
$$
where $\sqcup$ is {\underline{the}} cup product of $\ell^2$ cohomology 
classes with coefficients in $E$, which takes values in the usual  $\ell^1$ cohomology (no coefficients), and $[\wL]$ is the fundamental 
class in bounded simplicial homology.  As $\oint$ and $W$ are inverses 
of each other on cohomology, we immediately have for any $L^2$ cohomology 
classes $\Psi'_1$ and $\Psi'_2$ on $\wL'$ with coefficients in $E'$, 
$$
<[\wL'], (\oint \Psi'_1) \sqcup (\oint \Psi'_2)>\,\,  =  \,\,  
\int_{\wL'} \Psi'_1 \wedge \Psi'_2.
$$
It is clear from the definitions of the cup product and of $f_\tri^*$ that, 
for any classes $\Xi'_1, \Xi'_2 \in H^*_{ \Delta, 2}(\wL',E')$,  the 
following equality  holds in $H^*_{\Delta, 1}(\wL,E)$,
$$
f_\tri^* \Xi'_1 \sqcup f_\tri^* \Xi'_2  \,\, = \,\,  f_\tri^* 
(\Xi'_1 \sqcup \Xi'_2).
$$

We need only prove the proposition for $f^*_D$.    Recall  that  if $\xi'_1  = \alpha'_1\otimes \phi'_1$ and $\xi'_2 = \alpha'_2 \otimes \phi'_2$, then $\xi'_1 \wedge \xi'_2 = \{ \phi'_1, \phi'_2\} \alpha'_1  \wedge \alpha'_2$, and we extend to all $\xi'_1$ and $\xi'_2$ by linearity. 
Let $\Psi'_1$ and $\Psi'_2$ be the cohomology classes determined by $\xi'_1$ and $\xi'_2$.   Then
$$
Q_x( \wf^*(\xi'_1), \wf^*(\xi'_2)) \,\,  =  \, \, 
\int_{\wL_x}  \wf^*_{D}(\xi'_1) \wedge \wf^*_{D}(\xi'_2)  \,\,  =  \,\, 
\int_{\wL_x}  \wf^*_{D}(\Psi'_1) \wedge \wf^*_{D}(\Psi'_2)  \,\,  =  \,\, 
$$
$$\int_{\wL_x} ( W \circ f_\tri^* \circ \oint \Psi'_1) \wedge (W \circ 
f_\tri^* \circ \oint\Psi'_2)   \,\,  =  \,\, 
\int_{\wL_x}  W((f_\tri^* \circ \oint\Psi'_1) \sqcup  (f_\tri^* \circ \oint \Psi'_2))  \,\,  =  \,\,  
$$
$$
<[\wL], (f_\tri^* \circ \oint \Psi'_1) \sqcup  (f_\tri^* \circ \oint \Psi'_2)>   \,\,  =  \,\, 
<[\wL], f_\tri^* (\oint \Psi'_1 \sqcup  \oint \Psi'_2)>   \,\,  =  \,\, 
<[f_{\tri, *}\wL],  (\oint \Psi'_1 \sqcup  \oint \Psi'_2)>   \,\,  =  \,\,
$$
$$
 <[\wL'], \oint \Psi'_1 \sqcup  \oint \Psi'_2>   \,\,  =  \,\, 
\int_{\wL'_{f(x)}}  \Psi'_1 \wedge \Psi'_2  \,\,  =  \,\, 
\int_{\wL'_{f(x)}}  \xi'_1 \wedge \xi'_2 \,\,  =  \,\, 
Q'_{f(x)}(\xi'_1, \xi'_2).
$$
\end{proof}

\section{Induced bundles} \label{indbdls} 

We assume again that $F$ and $F'$ are Riemannian foliations, and in this section take $\wf^*$ to be 
$$
\wf^* = f^{(i,\omega)} = \pi_{1,*} \circ e_\omega \circ 
p_f^*:W^*_{-\infty}(F',E') \to W^*_{-\infty}(F,E).
$$
The restriction of $\wf^*$ gives isomorphisms from 
$\Ker ({\Delta}^{E'}_{\ell})$, $\Ker ({\Delta}^{E'+}_{\ell})$, and $\Ker ({\Delta}^{E'-}_{\ell})$ to their images  which we denote by 
$$
\Im {\wf^*} = \wf^*(\Ker ({\Delta}^{E'}_{\ell})), \quad \Im {\wf^*_+} = \wf^*(\Ker ({\Delta}^{E'+}_{\ell})), \quad  \text{and} \quad  \Im {\wf^*_-} = \wf^*(\Ker ({\Delta}^{E'-}_{\ell})), 
$$
respectively.  We use similar notation for the map $\wg^*:W^*_{-\infty}(F,E) \to W^*_{-\infty}(F',E')$.

Note that for $x \in M$, $gf(x) \neq x$ in general, which creates technical problems. 
To deal with this, choose a leafwise homotopy equivalence $h:M \times I \to M$ between the identity map on $M$ and $gf$.   Recall the smooth leafwise path  $\gamma_x$ from $x$ to $gf(x)$, given by $\gamma_x (t) = h(x,t)$. It determines the isometry
$R_{x}:\wL_{gf(x)} \to \wL_x$, given by $R_{x}([\gamma]) = [\gamma \cdot \gamma_x]$. 
For any Sobolev space $W^*_s (\wL_x, E)$,  $R_{x}$ determines the isometry
$$
R^*_x: W^*_s (\wL_x, E)  \to W^*_s (\wL_{gf(x)}, E).
$$
In particular for $s = 0$, it gives the isometry, 
$$
R^*_x: L^2(\wL_x; \wedge T^*F_s \otimes E)  \to L^2(\wL_{gf(x)}; \wedge T^*F_s \otimes E).
$$
We shall also consider the smooth leafwise paths $\gamma'_{x'}$  from $x'\in M'$ to $fg(x')$ given by $\gamma'_{x'} (t) = h'(x',t)$ where $h'$ is a fixed leafwise homotopy between the identity of $M'$ and $fg$.   Given $x \in M$, define the isometry $R'_x:\wL'_{f(x)} \to \wL'_{f(x)}$ to be
$$
R'_x[\gamma'] = [\gamma' \cdot f(\gamma_x)^{-1} \cdot \gamma'_{f(x)}].
$$
This induces the isometry
$$
R'^*:L^2(\wL'_{f(x)}; \wedge T^*F'_s \otimes E')  \to L^2(\wL'_{f(x)}; \wedge T^*F'_s \otimes E').
$$
Note that the composition 
$$
 R'_x   \circ  \cf  \circ  R_x  \circ  \cg:\wL'_{f(x)} \to \wL'_{f(x)} 
$$
is homotopic to the identity map,  since for $[\gamma'] \in \wL'_{f(x)}$, 
$$
R'_x   \circ  \cf  \circ  R_x  \circ  \cg ([\gamma'] ) \,\,= \,\,  
[fg(\gamma') \cdot f(\gamma_x) \cdot f(\gamma_x)^{-1} \cdot \gamma'_{f(x)}]  \,\,= \,\,  
[fg(\gamma') \cdot \gamma'_{f(x)}].
$$
Set 
$$
L^t(\gamma') = ({\gamma'}_{r(\gamma')}^{-1} \, | \, _{[0,t]}) \cdot fg(\gamma') \cdot \gamma'_{f(x)}.
$$
Then  $L^0(\gamma') = fg(\gamma') \cdot \gamma'_{f(x)}$, and  $L^1(\gamma') =  {\gamma'}_{r(\gamma')}^{-1} \cdot  fg(\gamma') \cdot \gamma'_{f(x)}$.
Now $s(L^1(\gamma')) = s(\gamma')$ and $r(L^1(\gamma')) = r(\gamma')$, and $h'$ provides a leafwise homotopy between $L^1(\gamma')$ and $\gamma'$, so they define the same element in 
$\wL'_{f(x)}$.  Thus $L^t$ induces a homotopy from $R'_x   \circ  \cf  \circ  R_x  \circ  \cg$ to the identity map.
For $x \in M$,  consider the composition 
$$
(P'_{\ell}\wg^* R^*_x P_{\ell}\wf^* R'^*_x  P'_{\ell})_{f(x)}: 
L^2(\wL'_{f(x)}; \wedge T^*F'_s \otimes E')  \to L^2(\wL'_{f(x)}; \wedge T^*F'_s \otimes E').
$$
Since $R'_x   \circ  \cf  \circ  R_x  \circ  \cg:\wL'_{f(x)} \to \wL'_{f(x)}$
is homotopic to the identity and $P^*_{\ell}$ is the identity on cohomology, it follows  that
$(P'_{\ell}\wg^* R^*_x P_{\ell}\wf^* R'^*_x  P'_{\ell})_{f(x)}$ induces the 
identity on cohomology, which is naturally isomorphic to 
$\Ker(\Delta^{E'}_{\ell})_{f(x)} = \Im(P'_{\ell})_{f(x)}$.   So its 
restriction 
$$
(P'_{\ell}\wg^* R^*_x P_{\ell}\wf^* R'^*_x  P'_{\ell})_{f(x)}:\Ker(\Delta^{E'}_{\ell})_{f(x)} \to  
\Ker(\Delta^{E'}_{\ell})_{f(x)}
$$
is the identity.

We now show that  $\Im {\wf^*_+} $ determines a smooth subbundle of 
$\cA^{\ell}_{(2)}(F_s,E)$ over $M/F$.  
Set 
$$
\pi^f_+ = \wf^*  R'^*  \pi'_+ \wg^* R^* P_{\ell}.
$$
Then for each $x \in M$, 
$$
(\pi^f_+)_x : L^2(\wL_x; \wedge T^*F_s \otimes E)  \to L^2(\wL_x; \wedge T^*F_s \otimes E)
$$
is bounded and leafwise smoothing since $ \pi'_+$ and  $P_{\ell}$ 
are, and  $R'^*_x$, $R^*_x$,  $\wf^*$ and $\wg^*$ are bounded maps.  We leave it to 
the reader to show that $\pi^f_+ $ is $\cG$ invariant using 
the equality
$$
[gf(\gamma) \cdot \gamma_x] = [\gamma_y\cdot \gamma]
$$
for any $\gamma \in \cG$ with $s(\gamma) = x$ and $r(\gamma)=y$.  
As above, this equality holds since the two paths start and end at the same points and a leafwise homotopy between them can be constructed using the leafwise homotopy equivalence $h$.

We extend $\pi^f_+$ to an $\cA^*(M)$ equivariant operator on  $\wedge \nu^*_s 
\otimes \wedge T^* F_s \otimes E$ in the usual way.

\begin{proposition}\label{piftrs}
$\pi^f_+: \cA^{\ell}_{(2)}(F_s, E) \to \Im {\wf^*_+} $ is a transversely 
smooth idempotent.
\end{proposition}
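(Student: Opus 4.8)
The plan is to treat the two assertions separately: that $\pi^f_+$ is an idempotent with image exactly $\Im\wf^*_+$, and that it is transversely smooth. The algebraic assertion reduces entirely to the identity
$$
\pi'_+\,\wg^* R^* P_\ell\,\wf^* R'^*\,\pi'_+ \;=\; \pi'_+ .
$$
To prove it, first observe that $\pi'_+ = \frac{1}{2}(P'_\ell + \what\tau' P'_\ell)$ gives $\pi'_+ P'_\ell = \pi'_+$, and that $\Im\pi'_+ = \Ker(\Delta^{E'+}_\ell)\subset\Ker(\Delta^{E'}_\ell) = \Im P'_\ell$ gives $P'_\ell\pi'_+ = \pi'_+$. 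Hence the left-hand side equals $\pi'_+\,(P'_\ell\wg^* R^* P_\ell\wf^* R'^* P'_\ell)\,\pi'_+$. Now $P'_\ell\wg^* R^* P_\ell\wf^* R'^* P'_\ell$ vanishes on $(\Ker\Delta^{E'}_\ell)^\perp$ because of its right-most factor $P'_\ell$, while by the computation just before the Proposition its restriction to $\Ker\Delta^{E'}_\ell$ is the identity; therefore it equals $P'_\ell$, and the left-hand side becomes $\pi'_+ P'_\ell \pi'_+ = \pi'_+$.

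Granting this identity, idempotency is immediate:
$$
(\pi^f_+)^2 = \wf^* R'^*\,[\,\pi'_+\wg^* R^* P_\ell\wf^* R'^*\pi'_+\,]\,\wg^* R^* P_\ell = \wf^* R'^*\pi'_+\wg^* R^* P_\ell = \pi^f_+ .
$$
For the image, $\Im\pi^f_+\subset\wf^* R'^*(\Ker\Delta^{E'+}_\ell) = \wf^*(\Ker\Delta^{E'+}_\ell) = \Im\wf^*_+$, since $R'^*$, being induced by a leafwise isometry compatible with the leafwise flat structure, commutes with $\Delta^{E'}$ and with $\what\tau'$ and so preserves $\Ker\Delta^{E'+}_\ell$. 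Conversely, for $\zeta\in\Ker\Delta^{E'+}_\ell$ write $\zeta = R'^*\pi'_+((R'^*)^{-1}\zeta)$ (legitimate because $(R'^*)^{-1}$ also preserves $\Ker\Delta^{E'+}_\ell$); the same identity then yields $\pi^f_+(\wf^*\zeta) = \wf^*\zeta$, so $\pi^f_+$ restricts to the identity on $\Im\wf^*_+$ and hence $\Im\pi^f_+ = \Im\wf^*_+$. ($\cG$-invariance of $\pi^f_+$ was already noted before the statement.)

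For transverse smoothness I would argue as in the proof of Theorem \ref{GRthm}: pass to the bounded-geometry transverse charts, in which $\pi^f_+$ becomes a family $(\pi^f_+)_x = \wf^*_x R'^*_x (\pi'_+)_{f(x)} \wg^*_{f(x)} R^*_x (P_\ell)_x$ in a transverse parameter $x$, and transverse smoothness means that all iterated parameter-derivatives $\pa^m_{i_m\cdots i_1}(\pi^f_+)_x$ are bounded leafwise smoothing operators, uniformly in $x$. The factors $(P_\ell)_x$ and $(\pi'_+)_{f(x)}$ are transversely smooth (by hypothesis, resp.\ by the Proposition following Theorem \ref{GRthm}), so each iterated parameter-derivative of either of them is bounded leafwise smoothing, i.e.\ bounded $W^*_s\to W^*_{s'}$ for all $s,s'$ uniformly in $x$. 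The four geometric factors $\wf^*=f^{(i,\omega)}$, $\wg^*$, $R^*$, $R'^*$ are assembled from data — $f,g$, the leafwise homotopies $h,h'$, the embedding $i$, the Bott form $\omega$, and the submersions $p_f,p_g$ — living over the compact manifolds $M$, $M'$, $M\times B^k$; hence they and their iterated parameter-derivatives are bounded $W^*_s\to W^*_{s-c}$ with $c$ the number of derivatives taken, uniformly in $x$: for the undifferentiated maps this is Proposition \ref{Smoothing} (together with the evident Sobolev bounds for the basepoint-shifts $R^*$, $R'^*$ along the uniformly bounded paths $\gamma_x$, $\gamma'_{f(x)}$), and each further differentiation produces only bounded-geometry coefficients and at most one extra leafwise derivative. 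Expanding $\pa^m_{i_m\cdots i_1}(\pi^f_+)_x$ by the Leibniz rule for $\pa_\nu$, every resulting term still contains, differentiated or not, a factor coming from $P_\ell$ or $\pi'_+$ — which is bounded leafwise smoothing — the remaining factors being bounded $W^*_s\to W^*_{s-c}$; the composite is therefore bounded leafwise smoothing, and $\pi^f_+$ is transversely smooth.

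The main obstacle is this last step: one must make sense of, and uniformly estimate, the transverse derivatives of operators that neither preserve a single leaf nor respect the basepoint — $\wf^*$ and $\wg^*$ go between the two foliations, and $R^*$, $R'^*$ shift the basepoint along $h$, $h'$ — and extract the required uniform Sobolev bounds on all their iterated transverse derivatives from the compactness of $M$, $M'$ and $M\times B^k$. Proposition \ref{Smoothing} and the bounded-geometry apparatus of Section \ref{lms} carry most of this; the residual point is simply that the underlying geometric data has uniformly bounded derivatives of every order, so its transverse derivatives inherit the same bounds.
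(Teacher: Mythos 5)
Your treatment of the algebraic half of the Proposition is correct and essentially identical to the paper's: both reduce idempotency to $\pi'_+\wg^* R^* P_\ell\wf^* R'^*\pi'_+ = \pi'_+$, using $\pi'_+ = \pi'_+ P'_\ell = P'_\ell\pi'_+$ and the identity operator $(P'_\ell\wg^* R^* P_\ell\wf^* R'^* P'_\ell)_{f(x)}$ on $\Ker(\Delta^{E'}_\ell)_{f(x)}$. Your verification that the image is exactly $\Im\wf^*_+$ is a small useful addition.

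For transverse smoothness, however, you have correctly identified the main obstacle but then dismissed it as ``residual,'' and that is where the proof is genuinely incomplete. The claim that ``each further differentiation produces only bounded-geometry coefficients and at most one extra leafwise derivative'' is the content of Proposition \ref{fdnu}, which is not a routine consequence of compactness: it hinges on the specific computation that $\wf^* d'_\nu - d_\nu\wf^* = -\wf^* d'_s + d_s\wf^*$ (Lemma \ref{lemfdnu}), i.e.\ that a transverse derivative falling on $\wf^*$ converts into a \emph{leafwise} first-order operator. Without this, a Leibniz expansion of $\pa_\nu^{Y_1}\cdots\pa_\nu^{Y_m}(\wf^* R'^*\pi'_+\wg^* R^*)$ could, as far as you have argued, produce terms in which transverse derivatives pile up on $\wf^*$ or $\wg^*$ with no obvious control. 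Likewise, your Leibniz expansion requires making sense of $\pa_{\nu'}^{Y}(\pi'_+)$ for a vector field $Y$ on $M$, which is a priori meaningless: the paper's Lemma \ref{pushforward} (proved via the graph map $p_{f,G}:\cG\times B^k \to \cG\times B^k\times\cG'$) supplies the pushed-forward field $Z'$ on $\cG'$ for which $i_{\what{Y}}p^*_f = p^*_f i_{Z'}$, and Lemma \ref{gents} then shows $i_{Z'}\pa_{\nu'}(\pi'_+)$ is again transversely smooth. Finally, the basepoint shifts $R^*$, $R'^*$ are handled in the paper not by ``evident Sobolev bounds'' but by Lemma \ref{Rdnu}, which shows that for $\Gamma$ vector fields $Y$ one has $R^*\nabla^\nu_Y = \nabla^\nu_{h_*(Y)}R^*$ — an identity that is needed to make $\pa_\nu$ distribute through the composition, and that relies on $gf$ being a local diffeomorphism on transversals. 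These three ingredients, together with the reduction to $\Gamma$ vector fields (Lemma \ref{lgam}), constitute the substance of the paper's proof; asserting that they follow from uniform boundedness of the underlying data leaves a genuine gap.

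One further remark: you propose to ``argue as in the proof of Theorem \ref{GRthm},'' i.e.\ in a fixed transverse chart with a parameter $x$. The paper's Proposition \ref{piftrs} does \emph{not} proceed that way; it stays in the $\pa_\nu$ formalism and relies on the $\cG$-invariance of $\wf^* R'^*\pi'_+\wg^* R^*$ precisely so that Lemma \ref{lgam} applies. The chart-based picture is awkward here because $\wf^*$, $\wg^*$, $R^*$, $R'^*$ shuttle between \emph{two different} groupoids and shift basepoints, so there is no single transverse chart in which all of $(\pi^f_+)_x$ lives as a smooth family of operators on a fixed model leaf. The $\pa_\nu$ formalism, combined with the pushforward and commutation lemmas, is what lets the paper avoid this difficulty.
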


\begin{proof}
First we have,
$$
(\pi^f_+)^2 =  \wf^*  R'^*  \pi'_+ \wg^* R^* P_{\ell} \wf^*  R'^*  \pi'_+ \wg^* R^* P_{\ell} =
\wf^*  R'^*  \pi'_+  P'_{\ell} \wg^* R^* P_{\ell} \wf^*  R'^*   P'_{\ell} \pi'_+ \wg^* R^* P_{\ell} = 
$$

$$
\wf^*  R'^* ( \pi'_+)^2 \wg^* R^* P_{\ell}  = \wf^*  R'^*  \pi'_+  \wg^*R^* P_{\ell}  =  
\pi^f_+,
$$
since $\pi'_+  =\pi'_+ P'_{\ell} = P'_{\ell}\pi'_+$, and for each $x \in 
M$,
$(P'_{\ell}\wg^* R^* P_{\ell}\wf^* R'^*  P'_{\ell})_{f(x)}: 
\Ker(\Delta^{E'}_{\ell})_{f(x)} \to \Ker(\Delta^{E'}_{\ell})_{f(x)}$
is the identity map, and $\Ker(\Delta^{E'}_{\ell}) \supset  \Im(\pi'_+)$.  

As $P_{\ell}$ is transversely smooth, we need only show that $\wf^* R'^*  \pi'_+ 
\wg^* R^*$ is transversely smooth.  

Let $\nabla_{E}$ and  $\nabla_{E'}$ be  the leafwise flat connections on $E$ and  $E'$ and $\nabla_{F'}$ and $\nabla_F$ be the Riemannian connections on $T^*F'$ and $T^*F$, respectively.
Denote by $\nabla^{\nu}$ and  $\nabla'^{\nu}$ the quasi-connections on 
$C^{\infty}(\wedge \nu^*_s \otimes \wedge T^*F_s \otimes E)$ and 
$C^{\infty}(\wedge \nu'^{*}_s \otimes \wedge  T^*F'_s \otimes E')$ constructed from  $\nabla_F \otimes \nabla_E$, and $\nabla_{F'} \otimes \nabla_{E'}$,  respectively.

Now suppose $H$ is any $\cG$ invariant operator of degree zero on $\wedge T^*F_s \otimes E$, e.g.\ $H = \wf^* R'^* 
\pi'_+ \wg^* R^*$.  If $X \in C^{\infty}(TF)$, then since $H$ and 
$\nabla^{\nu}$ are $\cG$ invariant, $\pa_{\nu}^X(H) = 0$.    A vector 
field $Y$ on $M$ is a $\Gamma$ vector field provided that for any $X \in 
C^{\infty}(TF)$, $[X,Y] \in C^{\infty}(TF)$.   If $Y \in C^{\infty}(\nu)$ 
is a $\Gamma$ vector field, it is invariant under the parallel translation 
defined by $F$, so $\pa_{\nu}^Y(H)$ is $\cG$ invariant.  Globally defined 
$\Gamma$ vector fields rarely exist.  The restriction of a global vector 
field to an open subset will be called a local extendable vector field.  
Such local vector fields have all their derivatives  bounded.    Any local 
$\Gamma$ vector field may, after a slight reduction in its domain of 
definition,  be extended to a global vector field.   Finally, a bounded function (on $M$) times a bounded leafwise smoothing operator yields a bounded leafwise smoothing  operator.
With this in mind, the problem of showing that such an $H$ is transversely smooth may be recast as follows (with the proof left to the reader).
\begin{lemma}\label{lgam} Suppose $H$ is a degree zero  $\cG$ invariant $\cA^*(M)$ equivariant (homogeneous of degree $0$) bounded leafwise smoothing operator on $\wedge \nu^*_s \otimes \wedge T^* F_s \otimes E$.  Then $H$ is transversely smooth if and only if for all local extendable $\Gamma$ vector fields $Y_1,...,Y_{m} \in C^{\infty}(\nu)$, the operator $\pa^{Y_1}_{\nu}  ...\pa^{Y_{m}}_{\nu}(H)$ is a bounded leafwise smoothing  operator on $ \wedge T^* F_s\otimes E$.  
\end{lemma}

Note that  the expression  $\wf^* R'^* \pi'_+ \wg^* R^*\nabla^{\nu}$ makes sense as 
$\wf^*R'^* \pi'_+ \wg^* R^*$ is a well defined  $\cA^*(M)$ equivariant 
operator on  $\wedge \nu^*_s \otimes \wedge T^* F_s \otimes E$.  Note further that the 
expression $R^*\nabla^{\nu}$ does not make sense in general.  However, restricted to any sufficiently small transverse submanifold, $gf$ 
is a diffeomorphism onto its image, so $(gf)^{-1}$ is well defined on this 
image.   This makes it possible to prove the following.

\begin{lemma}\label{Rdnu}  Suppose $Y \in \nu_x$, then $\wf^* R'^* \pi'_+ \wg^* R^*\nabla^{\nu}_Y   = \wf^*R'^*  \pi'_+ \wg^* \nabla^{\nu}_{h_*(Y)} R^*$, where $h_*(Y) \in \nu_{gf(x)}$ is the parallel translate of $Y$ along $\gamma_x$.

If  $Y' \in \nu'_{f(x)}$, then $\wf^* R'^*\nabla'^{\nu}_{Y'} \pi'_+ \wg^* R^*   = \wf^* \nabla'^{\nu}_{h'_*(Y')} R'^*  \pi'_+ \wg^* R^*$, where $h'_*(Y') \in \nu'_{f(x)}$ is the parallel translate of $Y'$ along $f(\gamma_x)^{-1} \cdot \gamma'_{f(x)}$.

\end{lemma}
 
\begin{proof}
Let  $(U_x, \gamma, V)$ be a local chart containing $[\gamma] \in \wL_x$, and 
$(U_{gf(x)}, \gamma \gamma^{-1}_x, V)$ a local chart about $[\gamma 
\gamma^{-1}_x] \in \wL_{gf(x)}$.  
To compute $ \wf^* \pi'_+ \wg^* R^*\nabla^{\nu}_{Y}$, we 
may restrict our attention to $s^{-1}(T)$, where $T$ is any submanifold of 
$M$ which has $Y$ tangent to it.  We may assume that  $T \subset U_x$, and 
$g f$ restricted to $T$ is a diffeomorphism onto its image $gf (T)$,
which is also  a transverse submanifold, with $gf (T)\subset U_{gf(x)}$.  
Now 
$s^{-1}(T) \cap (U_x, \gamma , V) \simeq V$ and 
$s^{-1}(gf(T)) \cap (U_{gf(x)}, \gamma \gamma^{-1}_x, V) \simeq V $, and the diffeomorphisms with $V$ are just given by the restriction of the  target map $r$.   In addition,
$$
\bigl{(} \nabla^{\nu}_Y \, | \, s^{-1}(T)\bigr{)} \circ r^* = 
r^* \circ \bigl{(} (\nabla_F \otimes \nabla_E)^{\nu}_{Y_{\gamma}}\bigr{)}   
\text{\quad and \quad}
\bigl{(} \nabla^{\nu}_{h_*(Y)} \, | \, s^{-1}(gf(T))\bigr{)} \circ r^* = 
r^* \circ \bigl{(} (\nabla_F \otimes \nabla_E)^{\nu}_{{h_*(Y)}_{\gamma \gamma^{-1}_x}}\bigr{)},
$$ 
where $(\nabla_F \otimes \nabla_E)^{\nu}$ is the quasi-connection on $\wedge T^*F \otimes E$ over $M$, constructed using the normal bundle $\nu$ of $F$, $Y_{\gamma}$ is the parallel translation of $Y$ along $\gamma$, and ${h_*(Y)}_{\gamma \gamma^{-1}_x}$  is the parallel translation of $h_*(Y)$ along $\gamma \gamma^{-1}_x$.   So  $Y_{\gamma} = {h_*(Y)}_{\gamma \gamma^{-1}_x}$.
The restriction of $R$,
$$
R_T:  s^{-1}(gf(T)) \to s^{-1}(T)
$$
is  well defined, since $(gf)^{-1}$ is well defined on $gf(T)$.   In 
fact, it is a diffeomorphism which locally is just $r^{-1} \circ r$.
$R_T$ induces the map on leafwise differential forms
$$
R^*_T:C^{\infty}(s^{-1}(T); \wedge T^*F_s \otimes E) \to 
C^{\infty}(s^{-1}(gf(T)); \wedge T^*F_s \otimes E),
$$ 
which extends to the operator 
$$
R^*_T:C^{\infty}(s^{-1}(T); \wedge T^*(s^{-1}(T)) \otimes E) \to 
C^{\infty}(s^{-1}(gf(T)); \wedge T^*(s^{-1}(gf(T))) \otimes E),
$$ 
It is clear that  $R^*_T \nabla^{\nu}_Y$ is a well defined map, and since 
locally $R_T = r^{-1} \circ r$,  we have $R^*_T  \nabla^{\nu}_Y =  \nabla^{\nu}_{h_*(Y)} 
R^*_T$.   But $R^*_T$ is just the restriction of $R^*$ to $s^{-1}(T)$, 
so $R^*  \nabla^{\nu}_Y =  \nabla^{\nu}_{h_*(Y)} 
R^*$.

The second statement is proved in the same way.
\end{proof}

\begin{proposition}\label{fdnu} 
The operators  $\wf^* \nabla'^{\nu} -  \nabla^{\nu}\wf^*$ and $\wg^*  \nabla^{\nu}-  \nabla'^{\nu}\wg^*$ are leafwise differential operators
\footnote{By a leafwise differential operator, it is sometimes meant, here and in the sequel, operators generated locally by $\rho \mapsto f^* \frac{\pa \rho}{\pa x_i}$ where the $x_i$s are leafwise variables.}, whose composition with a bounded leafwise  smoothing operator is again a bounded leafwise  smoothing operator.
\end{proposition}

\begin{proof}  We will only do the proof for $\wf^*$ as the proof for $\wg^*$ is the same.

Let $\omega \otimes \alpha \otimes \phi \in C^{\infty}_c(\wedge \nu'^{*}_s \otimes \wedge  T^*F'_s \otimes E')$, with $\omega \in s^* \cA^k (M')$,  
$\alpha \in C^{\infty}_c(\cG' ;\wedge T^* F'_s)$, and $\phi \in C^{\infty}_c(\cG' ;E')$.   Then
$$
d'_s(\omega \otimes \alpha \otimes \phi) = (-1)^k  \omega \otimes d'_s(\alpha \otimes \phi). 
$$
Now
$$
\wf^* \nabla'^{\nu} (\omega \otimes \alpha \otimes \phi) = 
\wf^* (d_{M'} \omega  \otimes \alpha \otimes \phi + 
(-1)^k \omega  \otimes \nabla^{\nu}_{F'} \alpha\otimes \phi + 
(-1)^k \omega  \otimes \alpha\otimes  \nabla^{\nu}_{E'}\phi)=
$$
$$
d_M f^*\omega   \otimes \wf^*\alpha \otimes \wf^*\phi + 
(-1)^k f^*\omega   \otimes \wf^*  \nabla^{\nu}_{F'} \alpha \otimes \wf^*\phi  + 
(-1)^k f^* \omega  \otimes \wf^* \alpha \otimes  \wf^*\nabla^{\nu}_{E'} \phi.
$$
On the other hand,
$$
\nabla^{\nu}\wf^*(\omega \otimes \alpha \otimes \phi) = 
$$
$$
d_M f^*\omega   \otimes \wf^*\alpha \otimes \wf^*\phi + (-1)^k f^* \omega  \otimes \nabla^{\nu}_F \wf^* \alpha \otimes \wf^*\phi + (-1)^k  f^* \omega  \otimes\wf^* \alpha\otimes  \nabla^{\nu}_{E}\wf^*\phi.
$$
Thus
$$
(\wf^* \nabla^{\nu'} -  \nabla^{\nu}\wf^*)(\omega \otimes \alpha \otimes \phi)=
(-1)^k f^*\omega \otimes\Big{(} (\wf^*  \nabla^{\nu}_{F'} -   \nabla^{\nu}_F\wf^*) \alpha\otimes  \wf^*\phi +   \wf^* \alpha \otimes  (\wf^*\nabla^{\nu}_{E'} - \nabla^{\nu}_E\wf^*)\phi
\Bigr{)},
$$
which contains no differentiation of  $\omega$, so $\wf^* \nabla'^{\nu} -  \nabla^{\nu}\wf^*$ is indeed a  leafwise operator, as are its individual components  $\wf^*  \nabla^{\nu}_{F'} -   \nabla^{\nu}_F \wf^*$ and 
$\wf^*\nabla^{\nu}_{E'} - \nabla^{\nu}_E \wf^*$.  

Next consider the leafwise operator $\wf^*  \nabla^{\nu}_{F'} -   \nabla^{\nu}_F \wf^*$ acting on $C^{\infty}( \wedge  T^*F'_s)$. 
Set
$$
d_{\nu} = p_{\nu}d_{\cG}  \text{\quad and \quad} d'_{\nu} = p_{\nu'}d_{\cG'}.
$$
In local coordinates, we may write  $\nabla^{\nu}_{F'}$ and $ \nabla^{\nu}_F$ as
$p_{\nu'}(d_{\cG'} + \Theta_{F'})$ and $p_{\nu}(d_{\cG} + \Theta_F)$, respectively,  where 
$ \Theta_{F'}$ and $\Theta_F$ are leafwise differential operators (of order zero) with coefficients in $T^*\cG'$ and $T^*\cG$.  Then we have 
$$
\wf^*  \nabla^{\nu}_{F'} -   \nabla^{\nu}_F \wf^*= 
\wf^* p_{\nu'}(d_{\cG'} +  \Theta_{F'}) -  p_{\nu}(d_{\cG} + \Theta_F)\wf^* =
$$
$$
\wf^* d'_{\nu} - d_{\nu}\wf^*   + \wf^* p_{\nu'}  \Theta_{F'} -  p_{\nu}\Theta_F \wf^*.
$$

\begin{lemma}\label{lemfdnu}
$\wf^* d'_{\nu} - d_{\nu}\wf^*$ and $\wg^* d_{\nu} - d'_{\nu}\wg^*$ are 
leafwise operators, with
$$\wf^* d'_{\nu} - d_{\nu}\wf^*  =  -   \wf^* d'_s +  d_s\wf^*, \quad \text{ and } 
\quad \wg^* d_{\nu} - d'_{\nu}\wg^*  =  -   \wg^* d_s + d'_s  \wg^* .$$
\end{lemma}

\begin{proof}  Again we only prove this only for $\wf^* d'_{\nu} - d_{\nu}\wf^*$.  As $\wf^*  \nabla^{\nu}_{F'} -   \nabla^{\nu}_F \wf^*$ and $\wf^* p_{\nu'}  \Theta_{F'} -  p_{\nu}\Theta_F \wf^*$ are leafwise operators, so is $\wf^* d'_{\nu} - d_{\nu}\wf^*$.  

On $\cG \times B^k$ we have the foliation $F_s \times B^k$, with all its 
baggage.  In  particular, we use the product metric on $\cG \times B^k$, 
and we have the transverse derivative $d^B_{\nu}$.  Local charts on $\cG 
\times B^k$ are given by  subsets of the form  $(U, \gamma, V) \times 
B^k$, where $(U, \gamma, V)$ is a local chart for $\cG$.   It is clear that in these local coordinates,  $d_{\nu}$ and $d^B_{\nu}$ have exactly the same form.  It is then obvious from the 
definitions of  $\pi_{1,*}$ and $e_\omega$, that 
$$
d_{\nu}  (\pi_{1,*} \circ e_\omega)  =  (\pi_{1,*} \circ e_\omega) d^B_{\nu}  \quad \text{ and } 
\quad 
d_s  (\pi_{1,*} \circ e_\omega)  =  (\pi_{1,*} \circ e_\omega) d^B_s,
$$
where $d^B_s$ is the leafwise derivative associated to the foliation $F_s \times B^k$.
As $\wf^* = \pi_{1,*} \circ e_\omega \circ p_f^*$,  to prove that $\wf^* 
d'_{\nu} - d_{\nu}\wf^*  =  - \wf^* d'_s +  d_s\wf^*$, we need only prove that 
$$
p^*_f d'_{\nu} - d^B_{\nu}p^*_f  =  -  p^*_f d'_s + d^B_s p^*_f.
$$
This is purely a local question, and the usual proof shows that we need only prove it for compactly supported functions on $\cG'$.

Denote by $p'_s$ the projection $p'_s :T\cG' \to TF'_s$ determined by the splitting $T\cG' = \nu'_s \oplus TF'_s$, and by  $p^B_F:T(\cG \times B^k) \to T(F_s \times B^k)$ and  $p^B_{\nu}:T(\cG \times B^k) \to \nu_B$, the projections determined by the splitting $T(\cG \times B^k) = \nu_B \oplus T(F_s \times B^k)$.  Let $\phi \in C^{\infty}_c(\cG')$.  If $X \in T(F_s \times B^k)$, 
then  $p^B_{\nu}(X)= 0$, and  ${p_f}_*X \in TF'_s$, so $p'_{\nu}{p_f}_*(X) = 0$.   Thus 
$$
(p^*_f d'_{\nu} \phi - d^B_{\nu} p^*_f \phi )(X) =  
p^*_f ((d'_{\nu} \phi){p_f}_*(X)) -(d_{\cG \times B^k} p^*_f \phi )p^B_{\nu}(X)  = 
p^*_f ((d_{\cG'} \phi)p'_{\nu}{p_f}_*(X))= 0.$$
Next, suppose $X \in \nu_B$,  the normal bundle to  $F_s \times B^k$, and 
note 
that ${p_f}_* X$  is not necessarily in  $\nu'_s$.    Then 
$$
(p^*_f d'_{\nu} \phi)(X) = p^*_f ((d'_{\nu} \phi)({p_f}_*X)) = p^*_f 
((d_{\cG'} \phi)(p'_{\nu}{p_f}_*X)) = $$
$$
p^*_f ((d_{\cG'} \phi)(p_{f*}X))  -  p^*_f  ((d_{\cG'} 
\phi)(p'_s{p_f}_*X)) = (d_{\cG \times B^k} p^*_f  \phi)(X)  -  p^*_f  
((d'_s \phi)({p_f}_*X)) =$$
$$
(d_{\cG \times B^k} p^*_f  \phi)(p^B_{\nu} X)  -  p^*_f  ((d'_s 
\phi)({p_f}_*X)) = (d^B_{\nu} p^*_f \phi - p^*_f  d'_s \phi)(X).
$$
So
$$
(p^*_f d'_{\nu} - d^B_{\nu}p^*_f)\phi  =  (-  p^*_f d'_s \phi)p^B_{\nu}  = (-  p^*_f d'_s \phi)(I - p^B_F) = 
-  p^*_f d'_s \phi + (p^*_f d'_s \phi)p^B_F =  -  p^*_f d'_s \phi + d^B_s p^*_f \phi,
$$
since, restricted to $T(F_s \times B^k)$, $p^*_f d'_s \phi = d^B_s p^*_f \phi$.

Thus 
$\wf^* d'_{\nu} - d_{\nu}\wf^*  =  -   \wf^* d'_s +  d_s\wf^*$.
\end{proof}

So 
$$
\wf^*  \nabla^{\nu}_{F'} -   \nabla^{\nu}_F \wf^* =   d_s\wf^* - \wf^* d'_s +  \wf^* p_{\nu'}  \Theta_{F'} -  p_{\nu}\Theta_F \wf^*,
$$
a leafwise differential operator (of order  at most one).

Finally, consider 
$\wf^*\nabla^{\nu}_{E'} - \nabla^{\nu}_E \wf^*$ acting on $C^{\infty}_c( E')$.
In local coordinates, and with respect to local framings of $E'$ and $E$,  we may write $\nabla_{E'} = d_{\cG'} + \Theta_{E'}$ and $\nabla_{E} = d_{\cG} + \Theta_{E}$, where  $\Theta_{E'}$  and $\Theta_{E}$ are  leafwise differential operators (of order zero) with coefficients in $T^*\cG'$ and $T^*\cG$.   Then
$$
\wf^*\nabla^{\nu}_{E'} - \nabla^{\nu}_E \wf^*  = 
\wf^* p_{\nu'} \nabla_{E'}  -    p_{\nu} \nabla_{E}\wf^* = 
\wf^* p_{\nu'}  (d_{\cG'} + \Theta_{E'})   -    p_{\nu} (d_{\cG} + \Theta_{E}) \wf^*  =
$$
$$
\wf^* d'_{\nu} -    d_{\nu} \wf^*  +  \wf^* p_{\nu'}\Theta_{E'} -    p_{\nu} \Theta_{E} \wf^* =
- \wf^* d'_s  +  d_s\wf^*  +  \wf^* p_{\nu'}\Theta_{E'} -    p_{\nu} \Theta_{E} \wf^*,
$$
since the proof of Lemma \ref{lemfdnu} above extends to show that $\wf^* d'_{\nu} -    d_{\nu} \wf^*  = -\wf^* d'_s  +  d_s\wf^*$, with respect to the local framings.   So
$$
\wf^*\nabla^{\nu}_{E'} - \nabla^{\nu}_E \wf^* =    d_s\wf^* - \wf^* d'_s +  \wf^*p_{\nu'}\Theta_{E'} -    p_{\nu} \Theta_{E} \wf^*,
$$
also a leafwise differential operator (of order at most one).  

Now observe that if we use coordinates on $\cG'$ and $\cG$ and framings of $E'$ and $E$ coming from coordiantes on $M'$ and $M$, and framings of $E'$ and $E$ over $M'$ and $M$,  all of whose derivatives are uniformly bounded, then $ d_s\wf^* - \wf^* d'_s +  \wf^* p_{\nu'}  \Theta_{F'} -  p_{\nu}\Theta_F \wf^*$ and $ d_s\wf^* - \wf^* d'_s +  \wf^*p_{\nu'}\Theta_{E'} -    p_{\nu} \Theta_{E} \wf^*$ are (at worst) order one differential operators which have all of their derivatives uniformly bounded.   Thus $\wf^* \nabla^{\nu'} -  \nabla^{\nu}\wf^*$ and all its derivatives define  bounded operators from $W^*_s (F',E')$ to $W^*_{s-1}(F,E)$ for each $s$, and so their compositions with a bounded leafwise  smoothing operator are again bounded leafwise smoothing operators.
\end{proof}

Note that the proof above also proves that the composition of $\Upsilon_f = \wf^* \nabla'^{\nu} -  \nabla^{\nu}\wf^*$ or $\Upsilon_g =  \wg^*  \nabla^{\nu}-  \nabla'^{\nu}\wg^*$ with a transversely smooth operator is again a transversely smooth operator.  By virtue of  Lemma \ref{lgam}, we will be using only  local extendable $\Gamma$ vector fields $Y_1,...,Y_{m}$ in proving that $\wf^* R'^* \pi'_+ \wg^* R^*$ is transversely smooth.  Thus we may rewrite Lemma \ref{Rdnu} as 
$$
\wf^* R'^* \pi'_+ \wg^* R^*\nabla^{\nu}   = \wf^*R'^*  \pi'_+ \wg^* \nabla^{\nu} R^*
\quad \text{  and  } \quad
\wf^* \nabla'^{\nu} R'^*  \pi'_+ \wg^* R^* = \wf^* R'^*\nabla'^{\nu}\pi'_+ \wg^* R^*.
$$ 
Then
$$
\pa_{\nu}(\wf^* R'^* \pi'_+ \wg^*R^*) =  [\nabla^{\nu},\wf^*R'^* \pi'_+ \wg^*R^* ]  =
\nabla^{\nu}\wf^*R'^* \pi'_+ \wg^*R^*   - \wf^* R'^*\pi'_+ \wg^* R^* \nabla^{\nu}  = 
$$
$$
\wf^* R'^* \nabla'^{\nu}\pi'_+ \wg^* R^*  - \wf^* R'^* \pi'_+ \nabla'^{\nu}\wg^*R^*   -  \Upsilon_f  R'^* \pi'_+ \wg^* 
   -    \wf^* R'^* \pi'_+ \Upsilon_g R^*. 
$$
So,
\begin{Equation}\label{nabeq}
\hspace{1.5cm}
$\pa_{\nu}^{Y_1}(\wf^* R'^* \pi'_+ \wg^*R^*) = i_{\what{Y}_1}\wf^* R'^*\pa_{\nu'}(\pi'_+) \wg^* R^*   -  
(i_{\what{Y}_1}\Upsilon_f)  R'^* \pi'_+ \wg^*R^*    -    \wf^* R'^*  \pi'_+ (i_{\what{Y}_1} \Upsilon_g ) R^*.$
\end{Equation}
By assumption, $\pa_{\nu'}(\pi'_+ )$ is a bounded leafwise smoothing operator, so $i_{\what{Y}_1}\wf^* R'^*\pa_{\nu'}(\pi'_+) \wg^* R^*$ is also.   The operators $i_{\what{Y}_1}\Upsilon_f$, and $i_{\what{Y}_1} \Upsilon_g$ are leafwise operators which have all their derivatives bounded, so their  
composition with a bounded leafwise smoothing operator (e.g.\ $ R'^* \pi'_+ \wg^*$) is again a bounded leafwise smoothing operator. Thus for any local extendable $\Gamma$ vector field $Y_1$ on $M$, $\pa^{Y_1}_{\nu}(\wf^* R'^* \pi'_+ \wg^*R^*)$ is a bounded leafwise smoothing operator.

To continue the induction argument, we need the following.
\begin{lemma} \label{pushforward}
Let $Y \in C^{\infty}(\nu)$ be a local  extendable $\Gamma$ vector field, then there is a bounded vector field $Z'$ on $\cG'$ so that 
for any  $([\gamma],t) \in \cG \times B^k$,
$$
i_{\what{Y}([\gamma],t)} p^*_f = p^*_f  i_{Z' (p_f([\gamma],t))}.
$$
\end{lemma}
\noindent
Given this, then at $([\gamma], t)  \in \cG \times B^k$ we have
$$
i_{\what{Y}_1} p^*_f R'^* \pa_{\nu'}( \pi'_+)([\gamma],t) = 
i_{\what{Y}_1([\gamma],t)} p^*_f  R'^*  \pa_{\nu'}( \pi'_+) =  
p^*_f (R'^*   i_{Z'_1(p_f([\gamma],t))} \pa_{\nu'}( \pi'_+) ) = 
p^*_f ( R'^*   i_{Z'_1}  \pa_{\nu'}( \pi'_+) p_f([\gamma],t)).
$$
That is, 
$i_{\what{Y}_1} p^*_f  R'^* \pa_{\nu'}( \pi'_+)   = p^*_f R'^* i_{Z'_1}  
 \pa_{\nu'}( \pi'_+)$
so
$$
i_{\what{Y}_1} \wf^* R'^*  \pa_{\nu'}( \pi'_+) \wg^*R^*  = 
\wf^*R'^*  i_{Z'_1}  \pa_{\nu'}( \pi'_+)\wg^* R^* .
$$

\begin{lemma} \label{gents}
If $\rho$ is a transversely smooth operator on $\cA^*_{(2)}(F'_s,E')$ and $Z'$ is a bounded vector field on $\cG'$, then $i_{Z'} \pa_{\nu'}(\rho)$ is a transversely smooth operator. 
\end{lemma}
  
\begin{proof} 
Since $i_{Z'} \pa_{\nu'}(\rho) = i_{p_{\nu'}(Z')} \pa_{\nu'}(\rho)$, we may assume that  $Z' = \sum_j g_j \what{X}'_j$, where $X'_j$ is a finite local basis for the vector fields on $M'$, and the $g_j$ are smooth functions which are globally bounded along with all their derivatives.  Then 
$i_{Z'} \pa_{\nu'}(\rho) =  \sum_j g_j i_{\what{X}'_j}  \pa_{\nu'}(\rho)   =  
\sum_j g_j   \pa^{X'_j}_{\nu'}(\rho)$, which is clearly transversely smooth since the $g_j$ and all their derivatives are globally bounded.
\end{proof}

Using Equation \ref{nabeq}, we have
$$
\pa_{\nu}^{Y_2}\pa_{\nu}^{Y_1}(\wf^*R'^*  \pi'_+ \wg^*R^* ) =  
\pa_{\nu}^{Y_2}\Bigl{(} \wf^* R'^*  i_{Z'_1}  \pa_{\nu'}( \pi'_+)\wg^*R^*    -  
(i_{\what{Y}_1}\Upsilon_f) R'^*  \pi'_+ \wg^* R^*    -    
\wf^* R'^*  \pi'_+(i_{\what{Y}_1} \Upsilon_g )  R^* \Bigr{)}.
$$
Repeating the argument above  we get 
$$
\pa_{\nu}^{Y_2}( \wf^* R'^*  i_{Z'_1}  \pa_{\nu'}( \pi'_+)\wg^*R^*) = 
$$
$$
i_{\what{Y}_2} \wf^* R'^* \pa_{\nu'}( i_{Z'_1}  \pa_{\nu'}( \pi'_+))\wg^*  R^*  -  
(i_{\what{Y}_2}\Upsilon_f)  R'^* i_{Z'_1}  \pa_{\nu'}( \pi'_+)\wg^* R^*  -     
\wf^* R'^* i_{Z'_1}  \pa_{\nu'}( \pi'_+)(i_{\what{Y}_2} \Upsilon_g ) R^*=
$$
$$
\wf^* R'^* i_{Z'_2} \pa_{\nu'}( i_{Z'_1}  \pa_{\nu'}( \pi'_+))\wg^*  R^*  -  
(i_{\what{Y}_2}\Upsilon_f)  R'^* i_{Z'_1}  \pa_{\nu'}( \pi'_+)\wg^* R^*  -    
 \wf^*  R'^* i_{Z'_1}  \pa_{\nu'}( \pi'_+) (i_{\what{Y}_2} \Upsilon_g ) R^*,
$$
which is bounded and leafwise smoothing since $i_{Z'_1} \pa_{\nu'}( \pi'_+)$ is transversely smooth.

As $\pa_{\nu}^{Y_2}$ is a derivation, we have 
$$
\pa_{\nu}^{Y_2}((i_{\what{Y}_1}\Upsilon_f)  R'^* \pi'_+ \wg^* R^*) = 
\pa_{\nu}^{Y_2}(i_{\what{Y}_1}\Upsilon_f)  (R'^* \pi'_+ \wg^* R^*) +
(i_{\what{Y}_1}\Upsilon_f ) \pa_{\nu}^{Y_2}(R'^* \pi'_+ \wg^* R^*).
$$
The operators $\pa_{\nu}^{Y_2}(i_{\what{Y}_1}\Upsilon_f)$ and $i_{\what{Y}_1}\Upsilon_f $ composed with bounded leafwise smoothing operators produce bounded leafwise smoothing operators.  As $R'^* \pi'_+ \wg^* R^*$ and $ \pa_{\nu}^{Y_2}(R'^* \pi'_+ \wg^* R^*)$ are bounded leafwise smoothing operators, this term is a bounded leafwise smoothing operator. Similarly for the third term.

Now, a straight forward induction argument finishes the proof, modulo the proof of
Lemmas \ref{pushforward}.
\begin{proof} 
To prove Lemma \ref{pushforward}, we ``factor through the graph".
In particular,  consider the  map $p_{f,G}:\cG \times B^k \to \cG
\times B^k  \times \cG'$ given by $p_{f,G}(\gamma,t) = (\gamma,t,p_f
(\gamma,t))$ which is a diffeomorphism onto its image.  Denote by
$F'_{G,s}$ the foliation of $\cG \times B^k  \times \cG'$ whose
leaves are of the form $\wL \times B^k \times \wL'$, and denote by
$E'_G$ the pull back of $E'$ under the projection $  \cG \times
B^k  \times \cG' \to \cG'$.
We want to construct a transversely smooth idempotent $\pi'_{+,G}$
which will play the role of $\pi'_+$.   However, $\pi'_{+,G}$ will
not be acting on $\cA^*_{(2)}(F'_{G,s},E'_G)$ over $M \times M'$,
but rather on the space denoted   $\cA^*_{(2)}(F'_{G,s}, \wedge
T^*F'_s \otimes E'_G)$  over $M \times M'$, which associates to
each $(x,x')$ the Hilbert space $L^2(\wL'_{x'}; \wedge T^*F'_s
\otimes E')$.  Then
$$
(\pi'_{+,G})_{(x,x')} := (\pi'_+)_{x'}:L^2(\wL'_{x'}; \wedge
T^*F'_s \otimes E') \to L^2(\wL'_{x'}; \wedge T^*F'_s \otimes E')
$$
is well defined, and it is obvious that $\pi'_{+,G}$ is a
transversely smooth idempotent, and has image $\Ker(\Delta^{E'+}_
{\ell})$.

To define the action $\wtit{p}_{f,G}^*$ of $\wtit{p}_{f,G}$ on $
\cA^*_{(2)}(F'_{G,s}, \wedge T^*F'_s \otimes E'_G)$, we may
consider this space as a subspace of all the forms on $\cG \times
B^k  \times \cG'$ by using the pull back of the projection $\cG
\times B^k  \times \cG' \to  \cG'$.  When we do so,  $\wtit{p}_{f,G}
^*$ is just the usual induced map, and on each fiber $L^2(\wL'_{f
(x)}; \wedge T^*F'_s \otimes E')$ it equals $p_f^*$.

Next define
$$
\wg^*_G:\cA^*_{(2)}(F_s,E) \to \cA^*_{(2)}(F'_{G,s}, \wedge T^*F'_s
\otimes E'_G)
$$
to be
$$
(\wg^*_G)_{g(x')}:= (\wg^*)_{g(x')}:L^2(\wL_{g(x')}; \wedge T^*F_s
\otimes E) \to L^2(\wL'_{x'}; \wedge T^*F'_s \otimes E'),
$$
for each $x' \in M'$.

Finally, the action of $R'^*$ on $\cA^*_{(2)}(F'_{s},E')$ extends
easily to an action on
 $\cA^*_{(2)}(F'_{G,s}, \wedge T^*F'_s \otimes E'_G)$.

Then $p^*_{f,G} R'^* \pi'_{+,G} \wg^*_G R^* = p^*_f R'^* \pi'_+
\wg^*R^*$, and we may work with $\cG \times B^k  \times \cG'$, $F'_
{G,s}$, $p^*_{f,G}$, $\wg^*_G$, and $\pi'_{+,G}$ in place of  $
\cG'$, $F'$, $p^*_f$, $\wg^*$, and $\pi'_+$, respectively.  As $p_
{f,G}$ is a diffeomorphism onto its image, we may push forward
vector fields such as the $\what{Y}_i$ on $\cG$ {(which are bounded
because $F$ is Riemannian)}  to bounded vector fields $Z'_i$ on $
\cG \times B^k  \times \cG'$.  Note that these vector fields are
only defined along the image of $p_{f,G}$, but this is sufficient
for our purposes, since things of the form
$$
\wf^* R'^* i_{Z'_2} \pa_{\nu'}( i_{Z'_1}  \pa_{\nu'}( \pi'_+))
\wg^*  R^*  -
(i_{\what{Y}_2}\Upsilon_f)  R'^* i_{Z'_1}  \pa_{\nu'}( \pi'_+)\wg^*
R^*  -
 \wf^*  R'^* i_{Z'_1}  \pa_{\nu'}( \pi'_+) (i_{\what{Y}_2}
\Upsilon_g ) R^*,
$$
are still well defined.
\end{proof}

This completes the proof that  $\pi^f_+: \cA^{\ell}_{(2)}(F_s,E) \to \Im\wf^*_+$ is a transversely smooth idempotent.
\end{proof}

The same argument shows that $\Im {\wf^*_-}$, and $\Im {\wf^*}$ determine 
smooth bundles  over $M/F$, denoted   $\pi^f_-$ and $\pi^f$ respectively.  In fact, we may use the  proof above to prove.

\begin{proposition}\label{gentsgen}
If $\rho$ is a transversely smooth operator on $\wedge \nu'^*_s \otimes F'_s \otimes E'$, then $\wf^* R'^* \rho  \, \wg^* R^*$ is a transversely smooth operator on  $\wedge \nu^*_s \otimes F_s \otimes E$. 
\end{proposition}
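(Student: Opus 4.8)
The plan is to recycle the entire argument of Proposition \ref{piftrs} almost verbatim, observing that nowhere in that proof did we use any property of $\pi'_+$ beyond its being a $\cG'$-invariant $\cA^*(M')$-equivariant transversely smooth operator on $\wedge \nu'^*_s \otimes \wedge T^*F'_s \otimes E'$. First I would record the structural decomposition
$$
\pa_{\nu}(\wf^* R'^* \rho \, \wg^* R^*) = \wf^* R'^* \nabla'^{\nu}\rho \, \wg^* R^* - \wf^* R'^* \rho \, \nabla'^{\nu}\wg^*R^* - \Upsilon_f R'^* \rho \, \wg^* - \wf^* R'^* \rho \, \Upsilon_g R^*,
$$
exactly as in the computation preceding Equation \ref{nabeq}, where $\Upsilon_f = \wf^*\nabla'^{\nu} - \nabla^{\nu}\wf^*$ and $\Upsilon_g = \wg^*\nabla^{\nu} - \nabla'^{\nu}\wg^*$, both of which are, by Proposition \ref{fdnu}, leafwise differential operators of order at most one whose compositions with bounded leafwise smoothing operators are again bounded leafwise smoothing (indeed transversely smooth). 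Contracting with a local extendable $\Gamma$ vector field $\what{Y}_1$ and using Lemma \ref{Rdnu} in the form $\wf^* R'^*\rho\,\wg^* R^*\nabla^{\nu} = \wf^* R'^*\rho\,\wg^*\nabla^{\nu}R^*$ (valid for $\Gamma$ vector fields by the remark after Proposition \ref{fdnu}) yields the analogue of Equation \ref{nabeq} with $\pi'_+$ replaced by $\rho$ and $\pa_{\nu'}(\pi'_+)$ replaced by $\pa_{\nu'}(\rho)$, which is bounded leafwise smoothing since $\rho$ is transversely smooth.

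Next I would run the induction on the order $m$ of the transverse derivative. The inductive step is identical to that in Proposition \ref{piftrs}: one applies $\pa_{\nu}^{Y_2}$ to each of the three terms of the analogue of Equation \ref{nabeq}, invokes Lemma \ref{pushforward} to convert $i_{\what{Y}_1}p^*_f$ into $p^*_f i_{Z'_1}$ for a bounded vector field $Z'_1$ on $\cG'$, and then uses Lemma \ref{gents} (which says $i_{Z'}\pa_{\nu'}(\sigma)$ is transversely smooth whenever $\sigma$ is) to see that $i_{Z'_1}\pa_{\nu'}(\rho)$ is transversely smooth, so its conjugate by the bounded maps $\wf^*, R'^*, \wg^*, R^*$ is bounded leafwise smoothing; the remaining terms involve $\pa_{\nu}^{Y_2}(i_{\what{Y}_1}\Upsilon_f)$, $i_{\what{Y}_1}\Upsilon_f$, and their $g$-counterparts, each of which has all derivatives bounded and hence composes with bounded leafwise smoothing operators to give bounded leafwise smoothing operators. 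The ``factoring through the graph'' device (the map $p_{f,G}$, the foliation $F'_{G,s}$, the bundle $E'_G$, and the operators $\pi'_{+,G}$, $\wg^*_G$, $\wtit{p}^*_{f,G}$) carries over word for word with $\pi'_{+,G}$ replaced by the operator $\rho_G$ defined by $(\rho_G)_{(x,x')} := \rho_{x'}$, which is again transversely smooth by the same triviality; this is what legitimizes pushing forward the bounded vector fields $\what{Y}_i$ to bounded vector fields $Z'_i$ along the image of $p_{f,G}$.

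By Lemma \ref{lgam}, testing against local extendable $\Gamma$ vector fields suffices to conclude transverse smoothness of the degree-zero $\cG$-invariant operator $\wf^* R'^* \rho \, \wg^* R^*$; the $\cG$-invariance and $\cA^*(M)$-equivariance are verified exactly as for $\pi^f_+$, using the identity $[gf(\gamma)\cdot\gamma_x] = [\gamma_y\cdot\gamma]$. I do not expect any genuine obstacle here: the only point requiring a word of care is that $\rho$ need not be an idempotent, so one cannot simply quote Proposition \ref{piftrs} as a black box but must re-examine its proof to check that idempotency of $\pi'_+$ was used only in establishing that $\pi^f_+$ is an idempotent (which is not claimed for $\wf^* R'^* \rho \, \wg^* R^*$) and never in the transverse-smoothness argument itself — which inspection confirms. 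Everything else is a mechanical transcription, and the statement follows.
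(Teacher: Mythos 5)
Your proposal is correct and takes exactly the same route the paper intends: the paper presents Proposition \ref{gentsgen} with only the remark ``we may use the proof above to prove,'' i.e.\ by re-running the argument of Proposition \ref{piftrs} with $\rho$ in place of $\pi'_+$. Your careful observation that idempotency of $\pi'_+$ was used only to show $\pi^f_+$ is an idempotent, and never in the transverse-smoothness induction itself, is precisely the check one needs to make in order to invoke that proof as stated.
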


\section{Induced connections}

Let $\nabla'= \pi'_{+} {\nabla'}^{\nu} \pi'_{+}$ be the connection on the sub-bundle $\pi'_{+} =  \Ker ({\Delta}^{E'+}_{\ell})$,  determined by the quasi-connection ${\nabla'}^{\nu}$ on  $\wedge^{\ell}T^*F'_s \otimes E'$.   We now prove that $\nabla'$ induces a connection $\nabla$ on $\pi^f_+$. 

\begin{lemma} 
If $\xi'$ is a local invariant section of  $\pi'_+$, then  $\wf^*(\xi')$ is a local invariant section of $\pi^f_+$.
\end{lemma}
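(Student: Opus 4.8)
The plan is to unwind the definitions of $\wf^*$ and of $\pi^f_+ = \wf^* R'^* \pi'_+ \wg^* R^* P_\ell$, and to exploit the fact that $\wf^*$, being built from the submersion $p_f$ via $\pi_{1,*}\circ e_\omega\circ p_f^*$, is itself given by a $\cG$-equivariant integral kernel, so it sends local invariant sections (for $F'_s$) to local invariant sections (for $F_s$). First I would recall what ``local invariant'' means here: a section $\xi'$ of $\cA^*_{(2)}(F'_s,E')$ defined on an open set $U'\subset M'$ with $\xi'([\gamma']) = \xi'([\gamma'\gamma'_1])$ for every leafwise path $\gamma'_1$ in $U'$; equivalently, $d_{r'}\xi' = 0$, i.e. $\xi'$ is the pullback $r'^*\what\xi'$ of a locally defined section $\what\xi'$ of $\wedge^\ell T^*F'\otimes E'$ near the relevant point of $M'$ (see the discussion around $d_r$ and Corollary~\ref{nablalocal}). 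Since $\pi'_+$ is $\cG'$-invariant, $\pi'_+\xi'$ is again local invariant, so without loss we may assume $\xi' = \pi'_+\xi'$ from the start.

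The key steps are then: (i) show $\wf^*$ carries local invariant sections of $\cA^*_{(2)}(F'_s,E')$ to local invariant sections of $\cA^*_{(2)}(F_s,E)$; and (ii) assemble (i) with the $\cG$-invariance of $R'^*$, $\pi'_+$, $\wg^*$, $R^*$, $P_\ell$ to conclude $\pi^f_+\wf^*(\xi') = \wf^*(\xi')$ and that $\wf^*(\xi')$ is local invariant. For (i), the cleanest route is to use the formula $p_f([\gamma],t) = [P_f(\gamma,t)\cdot(f\circ\gamma)]$ together with $\wf^* = \pi_{1,*}\circ e_\omega\circ p_f^*$ and the identification of the fibres of $p_f^*E'$ along $z\times B^k$ via the flat structure: if $\gamma_1$ is a leafwise path in a small chart $U\subset M$ with $f(U)\subset U'$, then for $[\gamma]$ with $s(\gamma) = r(\gamma_1)$ one has $p_f([\gamma\gamma_1],t) = [P_f(\gamma\gamma_1,t)\cdot f(\gamma)\cdot f(\gamma_1)]$, and $P_f(\gamma\gamma_1,t) = P_f(\gamma,t)$ since $P_f$ depends only on $r(\gamma) = r(\gamma\gamma_1)$. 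Hence $p_f([\gamma\gamma_1],t) = p_f([\gamma],t)\cdot [f(\gamma_1)]$ in $\cG'$, and $[f(\gamma_1)]$ is a leafwise path in $U'$. Therefore $(p_f^*\xi')([\gamma\gamma_1],t) = \xi'(p_f([\gamma],t)\cdot[f(\gamma_1)]) = \xi'(p_f([\gamma],t),t)$ using that $\xi'$ is invariant under composition with leafwise paths in $U'$ — modulo the flat identification of $E'$-fibres, which is exactly the one built into the definition of $p_f^*$ on $E'$-coefficients. Since $e_\omega$ only multiplies by the pulled-back Bott form (which factors through $\pi_2$, independent of the $\cG$ direction) and $\pi_{1,*}$ integrates over the $B^k$-fibre (which is unchanged under right composition by $[f(\gamma_1)]$), the whole map $\wf^*$ commutes with this right action, so $\wf^*(\xi')$ is local invariant.

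For step (ii): $R^*_x$ is right-composition by the fixed path $\gamma_x = h(x,\cdot)$ which is $\cG$-equivariant, $P_\ell$ is $\cG$-invariant, $\wg^*$ is local-invariance-preserving by the same argument as (i) applied to $g$, $\pi'_+$ is $\cG'$-invariant, and $R'^*$ is right-composition by a fixed path; composing all these with $\wf^*$ and applying to the local invariant section $\wf^*(\xi')$ shows $\pi^f_+\wf^*(\xi') = \wf^* R'^*\pi'_+ \wg^* R^* P_\ell \wf^*(\xi')$, and one checks as in the proof of Proposition~\ref{piftrs} that $P_\ell\wf^*(\xi') = \wf^*(\xi')$ on $\Ker(\Delta^{E'}_\ell)$-sections via $(P'_\ell\wg^* R^* P_\ell\wf^* R'^* P'_\ell)_{f(x)} = \Id$, collapsing the composition back to $\wf^*(\xi')$ up to the identity on $\Im(\pi'_+)$. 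The main obstacle I expect is making step (i) fully rigorous at the level of the $B^k$-direction and the flat identification of $p_f^*E'$: one must verify that the parallel-translation trivialization of $p_f^*E'$ over $z\times B^k$ is compatible with right composition by $[f(\gamma_1)]$ (it is, because that trivialization is intrinsic to each fibre $z\times B^k$ and $r$ is unchanged), and that integration over the contractible fibre $B^k$ genuinely commutes with the $\cG$-action rather than merely the bare map on $\cG'$; both are true but deserve the careful bookkeeping that the authors presumably supply.
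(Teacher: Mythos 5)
Your step (i) is precisely the paper's proof: using the explicit description $p_f([\gamma],t)=[P_f(\gamma,t)\cdot(f\circ\gamma)]$, the fact that $P_f(\gamma\gamma_1,t)=P_f(\gamma,t)$ because $P_f$ depends only on $r(\gamma)=r(\gamma\gamma_1)$, and local invariance of $\xi'$ to absorb the extra factor $[f\circ\gamma_1]$, and then noting that $e_\omega$ and $\pi_{1,*}$ operate only in the $B^k$-direction. The paper's proof is exactly this computation and nothing more.

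Your step (ii) addresses the remaining condition $\pi^f_+\wf^*(\xi')=\wf^*(\xi')$, which the paper leaves implicit (it is an immediate consequence of Proposition \ref{piftrs}: $\pi^f_+$ is an idempotent whose range is $\Im\wf^*_+=\wf^*(\Ker\Delta^{E'+}_\ell)\ni\wf^*(\xi')$). However, the specific claim you make there — that "$P_\ell\wf^*(\xi')=\wf^*(\xi')$" — is false: $\wf^*$ commutes with $d_s$, hence sends closed forms to closed forms, but does not send harmonics to harmonics, so applying $P_\ell$ genuinely changes $\wf^*(\xi')$. The identity $(P'_\ell\wg^*R^*P_\ell\wf^*R'^*P'_\ell)=\Id$ on $\Ker\Delta^{E'}_\ell$ does not imply the statement you wrote. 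The correct way to collapse the operator string is to set $\eta'=R'^{-*}\xi'\in\Ker\Delta^{E'+}_\ell$ (legitimate because $R'^*$ is a $\cG'$-equivariant isometry commuting with $\Delta^{E'}$ and $\what{\tau}$), so that
$$
\pi^f_+\wf^*\xi'\;=\;\wf^*R'^*\pi'_+P'_\ell\bigl(\wg^*R^*P_\ell\wf^*R'^*P'_\ell\bigr)\eta'\;=\;\wf^*R'^*\pi'_+\eta'\;=\;\wf^*R'^*\eta'\;=\;\wf^*\xi',
$$
using $\pi'_+=\pi'_+P'_\ell$, $P'_\ell\eta'=\eta'$, the identity-map property of the bracketed string on $\Ker\Delta^{E'}_\ell$, and $\pi'_+\eta'=\eta'$. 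Since this is a repair of a step the paper does not even spell out, the substance of the lemma is your step (i), which is correct and matches the source.
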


\begin{proof}
Recall that for $([\gamma],t) \in \cG \times B^k$, $p_f([\gamma],t) = 
[P_f(\gamma,t)  \cdot  (f \circ \gamma)]$,  the composition of the leafwise paths 
$P_f(\gamma,t)$ and $f \circ \gamma$, where $P_f(\gamma,t) : [0,1] \to 
L'_{f(s(\gamma))}$ is the leafwise path given by
$$
P_f(\gamma,t)(s) = p_f(r(\gamma),st).
$$
Then
$$
\wf^*(\xi')([\gamma \gamma_1]) = 
\pi_{1,*} \circ e_\omega (( p_f^*\xi')([\gamma \gamma_1],t)) = \pi_{1,*} 
\circ e_\omega ( p_f^*(\xi'(P_f(\gamma \gamma_1,t)\cdot (f \circ \gamma 
\gamma_1)))) = $$
$$
\pi_{1,*} \circ e_\omega ( p_f^*(\xi'(P_f(\gamma ,t) \cdot (f \circ \gamma)\cdot (f 
\circ \gamma_1)))) = \pi_{1,*} \circ e_\omega ( p_f^*(\xi'(P_f(\gamma ,t)\cdot (f 
\circ \gamma)))),
$$
{since $\xi'$ is local invariant.  But this last equals}
$$
\pi_{1,*} \circ e_\omega ( p_f^*\xi'([\gamma],t)) =
\wf^*(\xi')([\gamma]). 
$$
\end{proof}

\begin{lemma} 
Any local invariant section $\xi$ of $\pi^f_+$ induces a local invariant section $\wf^{-*}\xi$ of $\pi'_+$.
\end{lemma}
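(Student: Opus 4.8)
The plan is to exhibit $\wf^{-*}$ explicitly as the natural right inverse of $\wf^*$ on sections of $\pi^f_+$, and then verify the two required properties separately. Since $\xi$ is a section of $\pi^f_+$ we have $\xi = \pi^f_+\xi = \wf^*\bigl(R'^*\pi'_+\wg^* R^* P_\ell\,\xi\bigr)$, so the obvious choice is
$$
\wf^{-*}\xi \;:=\; R'^*\,\pi'_+\,\wg^*\,R^*\,P_\ell\,\xi,
$$
regarded as a section of $\cA^\ell_{(2)}(F'_s,E')$ over $f(U)$ when $\xi$ is defined over a foliation chart $U$; with this normalization the identity $\wf^*(\wf^{-*}\xi) = \pi^f_+\xi = \xi$ is automatic. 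That $\wf^{-*}\xi$ is a section of $\pi'_+$ is then immediate: $R'$ is composition by a fixed leafwise path and $\pi'_+$ is $\cG'$-invariant, so $R'^*$ commutes with $\pi'_+$, whence $\pi'_+\,\wf^{-*}\xi = R'^*(\pi'_+)^2\bigl(\wg^* R^* P_\ell\,\xi\bigr) = \wf^{-*}\xi$ by idempotency of $\pi'_+$.

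The real content is the local invariance of $\wf^{-*}\xi$, which I would establish factor by factor. The projections $P_\ell$ and $\pi'_+$ are $\cG$- resp.\ $\cG'$-invariant, hence commute with the right translations by the leafwise paths lying inside a chart, so they carry local invariant sections to local invariant sections. The maps $R^*$ and $R'^*$ are themselves composition with the fixed homotopy paths $\gamma_x$ and $f(\gamma_x)^{-1}\cdot\gamma'_{f(x)}$; using the equality $[gf(\gamma)\cdot\gamma_x] = [\gamma_y\cdot\gamma]$ for $\gamma\in\cG$ with $s(\gamma)=x$, $r(\gamma)=y$ — the same identity used to prove the $\cG$-invariance of $\pi^f_+$ in Proposition \ref{piftrs} — one checks that $R^*$ intertwines the two groupoid right actions, so it sends a local invariant section over $U$ to one over $gf(U)$, and similarly for $R'^*$. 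Finally $\wg^* = g^{(i,\omega)} = \pi_{1,*}\circ e_\omega\circ p_g^*$ preserves local invariance by the verbatim path-cancellation computation of the preceding lemma, with $f$ replaced by its leafwise homotopy inverse $g$. Composing these four observations shows that $\wf^{-*}\xi$ is a local invariant section of $\pi'_+$ over $f(U)$.

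The one delicate point — and the one I expect to absorb most of the write-up — is the bookkeeping in the $R^*$ and $\wg^*$ steps: one must track how the open set over which invariance holds migrates ($U\rightsquigarrow gf(U)\rightsquigarrow f(U)$) and verify that the compositions of leafwise paths cancel exactly, since $R^*$, $R'^*$ and $p_g$ all depend on the parameter $x\in M$ through the chosen homotopies $h$, $h'$. Granting that $\wg^*$ behaves towards local invariance exactly as $\wf^*$ does (it is literally the same construction applied to $g$) and that $\cG$-invariant operators commute with the translations defining local invariance, the argument needs no idea beyond those already used in the preceding lemma and in Proposition \ref{piftrs}.
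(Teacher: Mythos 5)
Your proposal is correct, but takes a genuinely different route from the paper. The paper defines $\wf^{-*}\xi$ only along the transversal $T'$ (via $(\wf^{-*}\xi)(x') = \wf^{-*}(\xi(f^{-1}(x')))$, with $\wf^{-*} = R'^* P'_\ell\wg^* R^*\,|\,T$) and then extends it to a local invariant section on a neighborhood of $T'$; by construction local invariance is automatic, but the real content --- independence of the choice of $T$ --- is explicitly left to the reader. You instead apply the pointwise formula $R'^*\pi'_+\wg^* R^* P_\ell$ over all of $f(U)$ and verify local invariance directly, commuting the groupoid translations through each factor: $\cG$-, resp.\ $\cG'$-invariance for $P_\ell$ and $\pi'_+$; the path identity $[gf(\gamma)\cdot\gamma_x]=[\gamma_y\cdot\gamma]$ (together with its $h'$-counterpart, which the ``similarly for $R'^*$'' actually needs) for $R^*$ and $R'^*$; and the cancellation computation from the preceding lemma, run with $g$ in place of $f$, for $\wg^*$. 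Your use of $\pi'_+$ where the paper writes $P'_\ell$ is harmless, since both compositions $R'^* P'_\ell\wg^* R^*\wf^*$ and $R'^*\pi'_+\wg^* R^* P_\ell\wf^*$ act as the identity on $\Ker(\Delta^{E'+}_\ell)$. The payoff of your route is that it simultaneously settles the well-definedness the paper defers: once the pointwise formula is local invariant, the invariant extensions from any two transversals must agree with it, so independence of $T$ comes for free. The cost is the bookkeeping you rightly flag --- tracking the base set through $U \rightsquigarrow gf(U) \rightsquigarrow f(U)$ (which implicitly requires $U$ small enough that $f\,|\,U$ and $(gf)\,|\,U$ are embeddings, matching the paper's explicit shrinking of $T$) and spelling out the intertwining identities for $R^*$, $R'^*$ and $p_g^*$. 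These are routine given Proposition \ref{piftrs} and the preceding lemma, but they are precisely the details the paper's streamlined presentation omits.
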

\begin{proof}  Let $T$ be a transversal in $M$ on which $\xi$ is defined.  
We may assume that $T$ is so small that $f \, | \, T$ is a diffeomorphism 
onto its image $T'$.    Then $(\wf^*)^{-1}:\Im{\wf^*_+} \to \Ker 
({\Delta}^{E'+}_{\ell}) $ is well defined over $T$, and in fact is given by 
the map $R'^* P'_{\ell} \wg^* R^* \,  | \, T$.   To see this,
note that over $T'$, the map $R'^* P'_{\ell} \wg^* R^* \wf^* :\Ker 
({\Delta}^{E'}_{\ell}) \to \Ker ({\Delta}^{E'}_{\ell})$
is the identity map, since it induces the identity map on cohomology, and 
that $\Ker ({\Delta}^{E'+}_{\ell}) \subset \Ker ({\Delta}^{E'}_{\ell})$.  For 
simplicity, we shall denote $R'^* P'_{\ell} \wg^* R^* \,  | \, T$ by 
$\wf^{-*}$.  For $x' \in T'$, define 
$$
(\wf^{-*}\xi)(x') \equiv     \wf^{-*}( \xi(f^{-1}(x'))).
$$
This gives a well defined smooth section on $T'$.  Extend it to a local 
invariant section on a neighborhood of $T'$.  We leave it to the reader to 
show that this construction is well defined, that is it does not depend on 
the choice of $T$.
\end{proof}

In order to define the induced connection $\nabla$, we need only define it 
on local invariant sections, and then extend it using (1) of Definition 
\ref{connection}.

\begin{definition}
Let $\xi$ be a local invariant section of $\pi^f_+$.    Given $X \in 
TM$, set $X' = f_*(X)$.  Define
$$
\nabla_X(\xi) = \wf^*({{{\nabla'}}}_{X'}(\wf^{-*}\xi)).
$$
Extend to $\xi \in C^{\infty}(\wedge T^*M;\pi^f_+)$ by using  (1) 
of Definition~\ref{connection}.
\end{definition}

\begin{proposition}
$\nabla$ is a connection on  $\pi^f_+$.
\end{proposition}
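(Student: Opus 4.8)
The plan is to verify the four conditions in Definition \ref{connection} for the operator $\nabla$ on $\pi^f_+$ defined via the conjugation $\nabla_X(\xi) = \wf^*(\nabla'_{f_*(X)}(\wf^{-*}\xi))$ on local invariant sections, extended by the Leibniz rule (1). I would first note that conditions (1) holds by construction (the extension is defined precisely so that (1) holds), so the content is in checking (2), (3), and (4), plus the well-definedness (locality, independence of the transversal used in $\wf^{-*}$), which the preceding two lemmas handle: $\wf^*$ and $\wf^{-*}$ carry local invariant sections to local invariant sections, and by Corollary~\ref{nablalocal} (applied to $\nabla'$ on $\pi'_+$) the value $\nabla'_{X'}(\wf^{-*}\xi)$ depends only on the restriction of $\wf^{-*}\xi$ to a transversal through $X'$, matching the locality we need on the $M$ side.

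For condition (2): if $X \in C^\infty(TF)$ then $X' = f_*(X) \in C^\infty(TF')$ since $f$ is leafwise, and for $\xi$ local invariant $\wf^{-*}\xi$ is local invariant, so $\nabla'_{X'}(\wf^{-*}\xi) = 0$ by flatness of $\nabla'$ along $F'$; hence $\nabla_X\xi = \wf^*(0) = 0$. For condition (3), $\cG$-invariance: here I would use the computation that $\wf^*$ intertwines the right $\cG$-actions in the appropriate sense — more precisely, that for a leafwise path $\gamma$ in $M$ from $x$ to $y$, the path $f(\gamma)$ in $M'$ from $f(x)$ to $f(y)$ gives $\wf^* R_{f(\gamma)} = R_\gamma \wf^*$ (up to the identifications already set up for $p_f$ and $e_\omega \circ p_f^*$, which are natural), together with the fact that parallel translation of normal vectors commutes with $f_*$ because $f$ is leafwise and the normal bundles carry the holonomy flat structure — so $\gamma_*(X)$ maps to $f(\gamma)_*(f_*X)$. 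Combining these with the $\cG$-invariance of $\nabla'$ (condition (3) for $\nabla'$) gives $\cG$-invariance of $\nabla$ on local invariant sections, which by the Corollary after Lemma~\ref{spans} suffices.

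Condition (4) — that $\nabla\pi^f_+ - \pi^f_+ \nabla^\nu \pi^f_+$ is transversely smooth — is the main obstacle, since it genuinely involves the transverse-smoothness machinery rather than formal leafwise manipulations. The plan is to write $\nabla = \wf^* R'^* \nabla' \wf^{-*}$-style conjugation and compare it, term by term, with $\pi^f_+ \nabla^\nu \pi^f_+ = \wf^* R'^* \pi'_+ \wg^* R^* \nabla^\nu \wf^* R'^* \pi'_+ \wg^* R^*$, using Lemma~\ref{Rdnu} and Proposition~\ref{fdnu} to move $\nabla^\nu$ past $\wf^*$, $\wg^*$, and $R^*$ at the cost of the leafwise differential operators $\Upsilon_f$, $\Upsilon_g$ (whose compositions with bounded leafwise smoothing operators stay bounded leafwise smoothing, and likewise for transversely smooth operators, as noted after the proof of Proposition~\ref{fdnu}). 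The difference should then collapse to an expression of the form $\wf^* R'^* (\nabla'\pi'_+ - \pi'_+ {\nabla'}^\nu \pi'_+) \wg^* R^*$ plus correction terms built from $\Upsilon_f, \Upsilon_g, \pi'_+$ and $\pa_{\nu'}(\pi'_+)$; the first summand is transversely smooth by condition (4) for $\nabla'$ conjugated via Proposition~\ref{gentsgen}, and the correction terms are transversely smooth by the same bounded-geometry arguments used in the proof of Proposition~\ref{piftrs}. I would then invoke Lemma~\ref{existconn} and the Remark following it to conclude that, since $\pi^f_+ \nabla^\nu$ (or rather $\pi^f_+\nabla^\nu\pi^f_+$) is already known to be a connection and $\nabla$ differs from it by a transversely smooth operator, $\nabla$ is a connection on $\pi^f_+$.
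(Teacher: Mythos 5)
Your proposal is essentially the paper's own proof: verify conditions (1)--(4) of Definition~\ref{connection}, with (2) by flatness of $\nabla'$ along $F'$, (3) by $\cG'$-invariance of $\nabla'$ plus compatibility of $\wf^*$ with the $\cG$-actions and holonomy transport, and (4) by the conjugation calculus ($\wf^*$, $\wg^*$, $R^*$, $R'^*$, $\Upsilon_f$, $\Upsilon_g$, Lemma~\ref{Rdnu}, Proposition~\ref{fdnu}, Proposition~\ref{gentsgen}). Two small points of friction. First, condition (1) is not quite ``by construction'': the extension by the Leibniz rule is made on representations $\sum_j g_j\xi_j$ with $\xi_j$ local invariant, and well-definedness requires a genuine check that for a {\em local invariant function} $\omega$ and local invariant $\xi$ one has $\nabla_X(\omega\xi) = d_M\omega(X)\xi + \omega\nabla_X\xi$; the paper makes this computation explicitly, using locality (Corollary~\ref{nablalocal}) together with the fact that $f\,|\,T:T\to T'$ is a diffeomorphism. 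You mention well-definedness and Corollary~\ref{nablalocal}, so this is a matter of not spelling the step out rather than missing it. Second, the closing invocation of Lemma~\ref{existconn} and the affine-space Remark is both unnecessary and, as stated, not a valid implication: ``differs from a connection by a transversely smooth operator'' does not by itself give conditions (2) and (3), and the Remark's affine structure goes the other direction (two connections differ by transversely smooth operators). Since you have already verified (1)--(3) and the transverse smoothness of $A=\nabla\pi^f_+ - \pi^f_+\nabla^\nu\pi^f_+$ {\em is} condition (4), the proof is finished at that point; the final sentence should be cut or rephrased as merely summarizing that all four conditions hold.
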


\begin{proof}
We need to check that the four conditions of Definition \ref{connection} 
are satisfied.

\medskip\noindent
{\bf \ref{connection}(1):}  For differential forms, this is satisfied by 
definition, so we need to check it for functions.  Specifically,  we need 
that for any local  function $\omega$ on $M$ which is constant on plaques 
of $F$ (i.e.\ local invariant functions), and for any $X \in TM$, and any 
local invariant  section $\xi$ of  $\pi^f_+$,
$$
\nabla _X(\omega \xi) = d_M\omega(X)  \xi + \omega  \nabla_X \xi.
$$
If  $X \in TF$, this is trivially true since both sides are zero.  
Now suppose that $X$ is transverse to $F$, with $X' =  f_*(X)$, and let 
$T$ be a transversal of $F$ with $X$ tangent to $T$.  We may assume that 
$T$ is so small that  $f$ restricted to $T$ is a diffeomorphism onto its 
image $T'$, a transversal of $F'$, with inverse $f^{-1}:T' \to T$.  The 
vector $X'$ is tangent to $T'$, and thanks to Corollary \ref{nablalocal}, 
we have
$$
\nabla_X(\omega \xi) =  \wf^*({{{\nabla'}}}_{X'}(\wf^{-*}(\omega \xi))) 
=   \wf^*({{{\nabla'}}}_{X'}((\omega \circ f^{-1}) \wf^{-*}\xi)) =  
\wf^*\Big{[}X'(\omega \circ f^{-1}) \wf^{-*}\xi +(\omega \circ f^{-1}) 
{{{\nabla'}}}_{X'} \wf^{-*}\xi\Big{]}=
 $$
$$
(X'(\omega \circ f^{-1})\circ f) \wf^*\wf^{-*}\xi +\omega 
\wf^*({{{\nabla'}}}_{X'} \wf^{-*}\xi) = ( X \omega)  \xi +  \omega   
\nabla_X \xi     =    d_M\omega(X)  \xi +  \omega   \nabla_X \xi.
$$

\medskip\noindent
{\bf \ref{connection}(2):}  If $X \in TF$, then $X' \in TF'$, and as  
$\wf^{-*}\xi$ is local invariant, 
$ {{{\nabla'}}}_{X'}(\wf^{-*}\xi)=0$, so $\nabla_X(\xi) = 
\wf^*({{{\nabla'}}}_{X'}(\wf^{-*}\xi)) = 0$ and $\nabla$ is flat along $F$.

\medskip\noindent
{\bf \ref{connection}(3):} The fact that  $\nabla$ is $\cG-$invariant is 
a simple exercise which is left to the reader.

\medskip\noindent
{\bf \ref{connection}(4):}  We need to show that 
$
A = \nabla \pi^f_+ - \pi^f_+ \nabla^{\nu}\pi^f_+:C^{\infty}_c(\wedge T^*M; 
\wedge T^*F_s \otimes E) \to C^{\infty}(\wedge T^*M; \pi^f_+)
$ 
is transversely smooth.    Now $\pi^f_+ = \wf^* R'^* \pi'_+ \wg^* R^* P_{\ell}$ and   $\nabla = \wf^* {\nabla'} \wf^{-*}  = \wf^* {\nabla'} R'^* P'_{\ell}\wg^* R^* = \wf^* \pi'_{+} {\nabla'}^{\nu}\pi'_{+}  R'^*  P'_{\ell}\wg^*  R^*$.  Using the proof of Proposition \ref{fdnu}, we have that, modulo transversely smooth operators, 
$$
A = \wf^* \pi'_{+} {\nabla'}^{\nu} \pi'_{+}  R'^* P'_{\ell} \wg^*  R^* \wf^*   R'^* \pi'_+ \wg^* R^* P_{\ell}   - 
\wf^*  R'^*  \pi'_+ \wg^* R^* P_{\ell} \nabla^{\nu} \wf^*   R'^* \pi'_+ \wg^* R^* P_{\ell} =
$$
$$
\wf^* \pi'_{+} {\nabla'}^{\nu} \pi'_{+}   R'^* P'_{\ell} \wg^* R^*\wf^*   R'^*  P'_{\ell}  \pi'_+ \wg^* R^* P_{\ell}   - 
\wf^*  R'^*  \pi'_+ \wg^*  R^*  P_{\ell}\wf^* \nabla'^{\nu}   R'^* \pi'_+ \wg^* R^* P_{\ell} =
$$
$$
\wf^* \pi'_{+} {\nabla'}^{\nu} \pi'_{+}   R'^* \pi'_+ \wg^* R^* P_{\ell}  - 
\wf^*  R'^*  \pi'_+ \wg^*  R^*  P_{\ell}\wf^* \nabla'^{\nu}   R'^* \pi'_+ \wg^* R^* P_{\ell}, 
$$
since $P'_{\ell} \wg^* R^*\wf^*   R'^*  P'_{\ell}$ is the identity on $\Im(P'_{\ell}) =  \Ker({\Delta}^{E'}_{\ell}) \supset \Im(\pi'_+)$.  Now  $ R'^* \pi'_+   =   \pi'_+ R'^*$, and 
$\nabla'^{\nu}\pi'_+  =   (\nabla'^{\nu}\pi'_+)\pi'_+  = \pi'_+  \nabla'^{\nu} \pi'_+   + [\nabla'^{\nu}, \pi'_+] \pi'_+   $, and $ [\nabla'^{\nu}, \pi'_+] $ is transversely smooth since $\pi'_+$ is.  So using Proposition \ref{gentsgen},  we have that modulo transversely smooth operators,  
$$
\wf^* R'^* \pi'_+ \wg^* R^* P_{\ell}\wf^* \nabla'^{\nu}  R'^* \pi'_+ \wg^* R^* P_{\ell}   =
\wf^* R'^* \pi'_+ P'_{\ell} \wg^* R^* P_{\ell}\wf^* \pi'_+    \nabla'^{\nu} \pi'_+   R'^* \wg^* R^* P_{\ell}   =
$$
$$ 
\wf^* \pi'_+ R'^*   P'_{\ell} \wg^* R^* P_{\ell}\wf^* P'_{\ell}\pi'_+\nabla'^{\nu}  \pi'_+  R'^* \wg^* R^* P_{\ell}   =
\wf^*  \pi'_+  \nabla'^{\nu}  \pi'_+ R'^*\wg^* R^* P_{\ell},
$$
since $R'^*  P'_{\ell} \wg^* R^* P_{\ell}\wf^* P'_{\ell}$ is also the identity on $\Im(P'_{\ell})$.  As  
$\pi'_{+}   R'^* =  \pi'_{+}  \pi'_{+}   R'^*  =  \pi'_{+}   R'^* \pi'_+ $, $A = 0$ modulo transversely smooth operators, that is, $A$ is transversely smooth.
\end{proof}

\section{Leafwise homotopy invariance of the twisted higher harmonic signature}

In this section we prove our main theorem that the twisted higher harmonic signature is a leafwise homotopy invariant.

\begin{theorem} \label{main}
Suppose that  $M$ is a compact Riemannian manifold,  with oriented Riemannian foliation $F$ of dimension $2\ell$, and that $E$ is a leafwise flat complex bundle over $M$ with a (possibly indefinite) non-degenerate Hermitian metric which is preserved by the leafwise flat structure.  Assume that the projection onto  $\Ker( \Delta^{E}_{\ell} )$ for the associated foliation $F_s$ of the homotopy groupoid of $F$ is transversely smooth.  Then $\sigma(F,E)$ is a leafwise homotopy invariant.
\end{theorem}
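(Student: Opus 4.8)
The plan is to reduce the leafwise homotopy invariance of $\sigma(F,E)=\ch_a(\pi_+)-\ch_a(\pi_-)$ to the equality of Chern-Connes characters for two idempotents with the same image, via Lemma~\ref{samecc}. Suppose $f:(M,F)\to(M',F')$ is a leafwise-oriented leafwise homotopy equivalence and $E=f^*E'$. First I would invoke the machinery of Sections~\ref{lms}--\ref{indbdls}: the map $\wf^*=f^{(i,\omega)}$ induces an isomorphism $\oH^*_{(2)}(F'_s,E')\to\oH^*_{(2)}(F_s,E)$ on leafwise reduced $L^2$ cohomology (Corollary~\ref{isos}), and by Proposition~\ref{piftrs} the operator $\pi^f_+=\wf^*R'^*\pi'_+\wg^*R^*P_\ell$ is a $\cG$-invariant transversely smooth idempotent with image $\Im\wf^*_+$, and similarly for $\pi^f_-$ and $\pi^f$. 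The first main step is then to show
$$
\ch_a(\pi^f_+)=f^*(\ch_a(\pi'_+))\quad\text{and}\quad\ch_a(\pi^f_-)=f^*(\ch_a(\pi'_-)),
$$
where $f^*:\oH^*_c(M'/F')\to\oH^*_c(M/F)$ is the Haefliger isomorphism. For this I would use the induced connection $\nabla$ on $\pi^f_+$ constructed in Section~9 from the connection $\nabla'=\pi'_+\nabla'^\nu\pi'_+$ on $\pi'_+$: because $\nabla$ is, transversally and locally, the pullback under $\what f$ of $\nabla'$ (Proposition~\ref{pullbacks} and the local description $\nabla_X\xi=\wf^*(\nabla'_{X'}\wf^{-*}\xi)$), its curvature $\theta$ satisfies $\theta=\wf^*\theta'\wf^{-*}$ in the appropriate transverse-local sense, and the trace form $\Tr(\pi^f_+e^{-\theta/2i\pi})$ pulls back under $\what f$ to $\Tr(\pi'_+e^{-\theta'/2i\pi})$ up to an exact Haefliger form. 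This uses Theorem~\ref{newchern} (independence of the connection) to reduce to the convenient connection, and the fact that $\int_F$ and $\what f^*$ are compatible on transversals.

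The second main step is to identify $\sigma(F,E)$ computed from $\pi^f_\pm$ with $\sigma(F,E)$ computed from $\pi_\pm$. Here the key is that $\pi^f_+$ and $\pi_+$ need not be equal, but $\Im\wf^*_+\subset\Ker(\Delta^E_\ell)$ and $\what\tau$ is related to the involution used to define $\pi_\pm$. The strategy is to compare, on $\Ker(\Delta^E_\ell)=\Im(P_\ell)$, the involution $\what\tau$ (whose $\pm1$ eigenprojections are $\pi_\pm$) with the involution obtained by transporting $\what\tau'$ from $M'$ via $\wf^*$; Proposition~\ref{preserves} shows that $\wf^*$ intertwines the leafwise pairing $Q$ (hence, up to the sign-of-metric bookkeeping, the Hodge-type involutions) on harmonic forms. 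Thus $\Im\wf^*_\pm$ and $\Ker(\Delta^{E\pm}_\ell)$ have the same image after composing with $P_\ell$, so by Lemma~\ref{samecc} $\ch_a(\pi^f_\pm)=\ch_a(\pi_\pm)$, and therefore
$$
\sigma(F,E)=\ch_a(\pi^f_+)-\ch_a(\pi^f_-)=f^*(\ch_a(\pi'_+))-f^*(\ch_a(\pi'_-))=f^*(\sigma(F',E')).
$$

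Finally, to get plain invariance (not just naturality) I would apply this to $f=\mathrm{id}:(M,F;g_0)\to(M,F;g_1)$, which is a leafwise homotopy equivalence, so $\sigma(F,E)$ is independent of the metric; naturality under a general $f$ then gives that it is a leafwise homotopy invariant. I expect the main obstacle to be the second step: carefully matching the two involutions on the harmonic forms, i.e.\ verifying that $\wf^*$ really does carry $\Ker(\Delta^{E'\pm}_\ell)$ into (a subspace with the same $P_\ell$-image as) $\Ker(\Delta^{E\pm}_\ell)$. This requires controlling the interaction of the Hilsum--Skandalis pullback $\wf^*$ with the $\pm$-splitting coming from $\what\tau$, and in particular showing that the off-diagonal (with respect to $\pi_+\oplus\pi_-$) corrections are homotopically negligible — which is exactly where Proposition~\ref{preserves} and the homotopy $R'_x\circ\cf\circ R_x\circ\cg\simeq\mathrm{id}$ of Section~\ref{indbdls} do the heavy lifting, combined with the homotopy invariance of $\ch_a$ from Theorem~\ref{family}.
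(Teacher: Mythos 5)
Your overall two-step architecture — first comparing $\ch_a(\pi_\pm)$ with $\ch_a(\pi^f_\pm)$, then showing $\ch_a(\pi^f_\pm)=f^*(\ch_a(\pi'_\pm))$, and deducing metric independence by taking $f=\mathrm{id}$ — matches the paper's proof (the paper calls these Theorem~\ref{secondeq} and Theorem~\ref{firsteq}), and your account of the pullback-connection mechanism for the second step is in the right spirit. However, the first step as you state it rests on a false claim, and that claim hides the hardest analytic work in the whole proof.

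You assert that ``$\Im\wf^*_\pm$ and $\Ker(\Delta^{E\pm}_\ell)$ have the same image after composing with $P_\ell$,'' and therefore that Lemma~\ref{samecc} applies directly to $\pi^f_\pm$ and $\pi_\pm$. This is not so. The map $\wf^*$ is \emph{not} an isometry and does not intertwine the Hodge involutions $\what{\tau}'$ and $\what{\tau}$; Proposition~\ref{preserves} only says $\wf^*$ preserves the leafwise intersection pairing $Q$ on harmonic classes, which is much weaker than preserving the $\pm$-eigenspace decomposition. In fact $P_\ell(\Im\wf^*_+)$ and $\Ker(\Delta^{E+}_\ell)$ are generally \emph{different} subspaces of $\Ker(\Delta^E_\ell)$; what one can show (using the $Q$-positivity on $P_\ell(\Im\wf^*_+)$ that follows from Proposition~\ref{preserves}) is that $\pi_+:P_\ell(\Im\wf^*_+)\to\Ker(\Delta^{E+}_\ell)$ is an isomorphism with a uniformly bounded inverse. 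Since the images of $\pi^f_+$ (or $\what\pi^f_+ = P_\ell\pi^f_+$) and $\pi_+$ differ, Lemma~\ref{samecc} cannot be invoked between them. The correct route is to first homotope $\pi^f_+$ to $\what\pi^f_+$ by an affine family, then introduce the (non-orthogonal) idempotent $\rho_+=\pi_+^{-1}\circ\pi_+$ with image $P_\ell(\Im\wf^*_+)$, observe $\rho_+\circ\pi_+=\rho_+$ and $\pi_+\circ\rho_+=\pi_+$ so that the affine family $t\rho_+ + (1-t)\pi_+$ connects them, and only then apply Lemma~\ref{samecc} to $\rho_+$ and $\what\pi^f_+$. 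The essential difficulty you skip entirely is proving that $\rho_+$ is transversely smooth: this is done by writing $\rho_+ = B^{-1}A^t\pi_+$ with $A=\pi_+ + \what\pi^f_-$, $B=A^tA$, showing $A$ restricted to $\Ker(\Delta^E_\ell)$ has spectrum bounded away from $0$, and then proving that the Neumann-series inverse $B^{-1}$ and all its transverse derivatives converge as bounded leafwise smoothing operators. Without that analysis, the equality $\ch_a(\pi^f_\pm)=\ch_a(\pi_\pm)$ does not follow from the tools you cite.
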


Recall that the projection onto $\Ker( \Delta^{E}_{\ell} )$ is transversely smooth: for the (untwisted) leafwise signature operator; whenever $E$ is a bundle associated to the normal bundle of the foliation; and whenever the leafwise parallel translation on $E$  defined by the flat structure is a bounded map, in particular whenever the preserved metric on $E$ is positive definite.  Note also that these conditions are preserved under pull-back by a leafwise homotopy equivalence.

Suppose that $M'$,  $F'$,  and $E'$ satisfy the hypothesis of Theorem  \ref{main},
and that $f:M \to M'$ is a leafwise homotopy equivalence, which preserves the leafwise orientations.  Set $E = f^*(E')$ with the induced leafwise flat structure and preserved metric. Assume that the projections to $\Ker( \Delta^{E}_{\ell} )$ and $\Ker( \Delta^{E'}_{\ell} )$ are transversely smooth.  Then we need to show that 
$$
 \ch_a(\pi_{\pm}) = f^*(\ch_a(\pi'_{\pm})).
$$
We do this in two stages.  The first is to prove

\begin{theorem}\label{secondeq}
$ \ch_a(\pi_{\pm} )  = \ch_a(\pi^f_{\pm})$.
\end{theorem}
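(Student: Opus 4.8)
The plan is to show that $\pi^f_{\pm}$ and $\pi_{\pm}$, while not equal, define the same Chern-Connes character because they fit into a smooth one-parameter family of $\cG$-invariant transversely smooth idempotents with the same image in the relevant degree, and then invoke Theorem~\ref{family} and Lemma~\ref{samecc}. The first observation is that $\pi^f_+ = \wf^* R'^* \pi'_+ \wg^* R^* P_{\ell}$ has image $\Im\wf^*_+ = \wf^*(\Ker(\Delta^{E'+}_{\ell}))$, which in general is \emph{not} $\Ker(\Delta^{E+}_{\ell}) = \Im\pi_+$. So we cannot directly apply Lemma~\ref{samecc}. Instead, the key point is that $\wf^*$ induces an isomorphism on leafwise reduced $L^2$ cohomology (Corollary~\ref{isos}), which under the Hodge isomorphism of Lemma~\ref{hodge} is identified with an isomorphism $\Ker(\Delta^{E'}_{\ell}) \to \oH^\ell_{(2)}(F'_s,E') \cong \oH^\ell_{(2)}(F_s,E) \cong \Ker(\Delta^E_{\ell})$. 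Thus $\pi^f$ and $\pi_\ell = P_\ell$ are two smooth subbundles whose images are ``the same'' via a cohomology isomorphism, but they sit inside $\cA^\ell_{(2)}(F_s,E)$ as genuinely different subspaces.

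The strategy to bridge this gap is to build an explicit smooth path of idempotents. First I would produce a smooth family $e_t$, $0 \le t \le 1$, of $\cG$-invariant transversely smooth idempotents with $e_0 = P_\ell$ and $e_1 = \pi^f$, whose image is always a subbundle carrying the ``same'' cohomology. The natural candidate uses the fact that the composition $P_\ell \wf^* R'^* P'_\ell \wg^* R^* P_\ell$ restricted to $\Ker(\Delta^E_\ell)$ is the identity on cohomology (by the argument given in Section~\ref{indbdls} for the homotopy $R'_x\circ\cf\circ R_x\circ\cg \simeq \mathrm{id}$, mirrored on the $M$ side) — so the operator $\Pi := \wf^* R'^* P'_\ell \wg^* R^* P_\ell$ is an isomorphism on the images of $P_\ell$ and $\pi^f$. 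One then uses the classical trick: given two transversely smooth idempotents $e$ and $e'$ with an isomorphism $\Pi$ intertwining their images (i.e.\ $\Pi e = e'\Pi$ on the nose after suitable normalization, and $e\Pi' = \Pi' e'$ for the inverse pair), the path $e_t = (t\Pi' + (1-t)\,\mathrm{id})\, e\, (t\Pi + (1-t)\,\mathrm{id})^{-1}$, or more robustly a straight-line homotopy after conjugating both to a common model, connects them through transversely smooth idempotents. Here transverse smoothness of each $e_t$ follows from Proposition~\ref{gentsgen} and the fact (Proposition~\ref{fdnu}) that $\Upsilon_f, \Upsilon_g$ compose with transversely smooth operators to give transversely smooth operators, together with closure of transversely smooth operators under composition; $\cG$-invariance follows as in Proposition~\ref{piftrs}. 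Then Theorem~\ref{family} gives $\ch_a(P_\ell)$-type equalities, and applying the involution $\what\tau$ (which satisfies the hypothesis of Lemma~\ref{AK}) one transfers the statement from the full kernel to the $\pm$ eigenspaces: the $\pm$ projections are $\tfrac12(e_t \pm \what\tau \circ e_t)$, still a smooth family of transversely smooth idempotents, so $\ch_a(\pi_\pm) = \ch_a(\pi^f_\pm)$.

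Concretely, the cleanest route is: (i) set $B = \wf^* R'^* P'_\ell \wg^* R^*$ and $B' = \wg^* R^* P_\ell \wf^* R'^*$ (the candidate near-inverses of each other after composing with $P_\ell$, $P'_\ell$); (ii) check, using the homotopy arguments of Section~\ref{indbdls}, that $P_\ell B P'_\ell B' P_\ell = P_\ell$ and $\pi^f B' P_\ell B \pi^f = \pi^f$ — i.e.\ $B$ restricts to mutually inverse isomorphisms between $\Im P_\ell$ and $\Im\pi^f$; (iii) define $u_t = t B + (1-t)(P_\ell + \pi^f - \pi^f P_\ell)$ or a similar smooth interpolating ``partial isometry'' and verify via a Neumann-series argument that $u_t$ is invertible on the relevant subspaces for all $t$ (shrinking if necessary, or re-parametrizing the path); (iv) form $e_t = u_t P_\ell u_t^{-1}$, a smooth family of transversely smooth $\cG$-invariant idempotents with $e_0 = P_\ell$ (at $t$ where $u$ is the identity-like endpoint) and $e_1 = \pi^f$; (v) apply Theorem~\ref{family} to get $\ch_a(P_\ell) = \ch_a(\pi^f)$ and, after inserting $\what\tau$, $\ch_a(\pi_\pm) = \ch_a(\pi^f_\pm)$.

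The main obstacle I anticipate is step (ii)–(iii): verifying that the interpolating operators $u_t$ really are invertible on the subspaces in question (not just at the endpoints) and that the inverse $u_t^{-1}$ is itself transversely smooth. Invertibility on the nose is delicate because $B$ only behaves well \emph{on cohomology}, i.e.\ modulo the orthogonal complements of the harmonic spaces; one must either work throughout with the compressions $P_\ell(\cdot)P_\ell$ and $\pi^f(\cdot)\pi^f$ (where the relevant operators are honest invertible endomorphisms of finite-``rank''-over-$M/F$ bundles), or absorb the error terms using that the difference of $B$-type operators from the identity is, on $\Im P_\ell$, transversely smoothing and hence can be handled by a geometric series whose convergence is controlled by the operator norm plus transverse-smoothness bookkeeping. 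Keeping track of transverse smoothness of $u_t^{-1}$ throughout — for which the expansion $\pa_i(u_t^{-1}) = -u_t^{-1}(\pa_i u_t)u_t^{-1}$ and a boot-strapping argument exactly as in the proof of Theorem~\ref{GRthm} is the right tool — is where the real work lies, but it is entirely parallel to arguments already carried out in Sections~\ref{tsig} and~\ref{indbdls}.
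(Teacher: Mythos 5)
Your proposal diverges from the paper's actual strategy in two essential ways, and one of them is a genuine gap.

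\textbf{Where the approaches differ.} The paper never tries to connect $P_\ell$ to $\pi^f$ as a first step. Instead it works directly with $\pi^f_\pm$: it uses the linear homotopy $t\pi^f_\pm + (1-t)P_\ell\pi^f_\pm$ (legitimate because $\pi^f_\pm P_\ell = \pi^f_\pm$) to reach $\what\pi^f_\pm := P_\ell\pi^f_\pm$; then it constructs a comparison idempotent $\rho_+$ (projection onto $\Im\what\pi^f_+$ along $\Ker\pi_+$) and shows $\rho_+$ lies on a linear segment with $\pi_+$ because $\rho_+\pi_+ = \rho_+$ and $\pi_+\rho_+ = \pi_+$. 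The central analytic ingredient is that the intersection pairing $Q$ (which $\wf^*$ preserves on harmonics, Proposition~\ref{preserves}) is positive definite on $\Im\pi_+$ and on $P_\ell(\Im\wf^*_+)$, and negative definite on the $-$ counterparts; this is what makes $A = \pi_+ + \what\pi^f_-$ invertible on $\Ker\Delta^E_\ell$ with bounded inverse, which in turn drives the Neumann series argument showing $B^{-1} = (A^tA)^{-1}$ is transversely smooth, hence $\rho_+ = B^{-1}A^t\pi_+$ is. You never use the $Q$-pairing or its definiteness — this is the structural tool that makes the whole argument close.

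\textbf{The gap.} You propose to prove the identity for the \emph{total} kernel and then recover the $\pm$ cases ``by inserting $\what\tau$,'' i.e.\ you claim the $\pm$ projections are $\tfrac12(e_t \pm \what\tau\circ e_t)$ with $e_1 = \pi^f$. This fails. The idempotent $\pi^f_\pm$ is defined by pulling back the splitting from the $M'$ side: $\pi^f_\pm = \wf^*R'^*\pi'_\pm\wg^*R^*P_\ell$. Since $\wf^*$ is a pullback under a leafwise homotopy equivalence and does \emph{not} preserve the leafwise metric, it does not intertwine $\what\tau'$ with $\what\tau$; hence $\wf^*R'^*\pi'_\pm\wg^*R^*P_\ell \neq \tfrac12(\pi^f \pm \what\tau\pi^f)$ in general. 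The subspace $\Im\wf^*_+$ is the $\wf^*$-image of the $+$ eigenspace of $\what\tau'$, which is a different subspace of $\cA^\ell_{(2)}(F_s,E)$ than the $+$ eigenspace of $\what\tau$ restricted to $\Im\wf^*$. So even if you succeeded in connecting $P_\ell$ to $\pi^f$ by a smooth family of transversely smooth idempotents, compressing by $\what\tau$ would not hand you the path from $\pi_\pm$ to $\pi^f_\pm$ you need. The paper's resolution is precisely to compare $\pi_\pm$ and $\what\pi^f_\pm$ as distinct subspaces via the $Q$-pairing (definiteness on each, $Q$-orthogonality between the $\pm$ pieces), and that comparison cannot be replaced by a $\what\tau$-eigenspace formula.

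\textbf{A secondary concern.} Even for the total-kernel step, your conjugation path $e_t = u_tP_\ell u_t^{-1}$ requires $u_t$ invertible for every $t$, and your candidate interpolant need not be. The paper sidesteps this entirely: where it needs an inverse, it forms $B = A^tA$, which is a positive operator on $\Ker\Delta^E_\ell$, and the positivity (together with boundedness of $A^{-1}$, itself deduced from the $Q$-pairing bounds) puts $\sigma(B)$ in a compact interval $[C_0,C_1]\subset(0,\infty)$ uniformly over $M$; only then does the geometric series converge and transfer transverse smoothness to $B^{-1}$. Without positivity you have no spectral control along the path, and the Neumann-series-plus-bootstrap argument you gesture at does not get off the ground.
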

\begin{proof}
Recall that $\pi^f_{\pm}= \wf^* R'^* \pi'_{\pm} \wg^* R^*P_{\ell}$, and set 
$$
\what{\pi}^{f,t}_{\pm}  = t \pi^f_{\pm} + (1-t) P_{\ell} \pi^f_{\pm}.
$$
A simple computation, using the fact that  $\pi^f_{\pm} P_{\ell} =  
\pi^f_{\pm}$, shows that  the $\what{\pi}^{f,t}_{\pm}$ are idempotents, and as  
$P_{\ell}$ and the 
$\pi^f_{\pm}$  are transversely smooth, the $\what{\pi}^{f,t}_{\pm}$ 
are smooth families of transversely smooth idempotents. 
It follows from  Theorem \ref{family} that $\ch_a(\what{\pi}^{f,0}_{\pm}) = \ch_a(\what{\pi}^{f,1}_{\pm})$.
Since $\what{\pi}^{f,1}_{\pm}= \pi^f_{\pm}$,  we need to show that 
$\ch_a(\what{\pi}^{f,0}_{\pm}) = \ch_a (\pi_{\pm})$.
We will do only the $+$ case as the other case is the same.  Set $\what{\pi}^f_{\pm} = \what{\pi}^{f,0}_{\pm}$.

Consider the pairings  $< \, , \, >,$ and $Q$ defined in Section \ref{tsig}.
Note that $Q(d_s\alpha_1, \alpha_2 ) =  (-1)^{\ell+1}Q(\alpha_1, d_s\alpha_2 )$.
Using a partition of unity and linearity, this reduces to considering sections of compact support of the form $\alpha = \omega \otimes \phi$, where $\omega \in C^{\infty}_c(\wL; \wedge T^*\wL)$ and $\phi$ is a flat section of $E$, where it is immediate.
So ${\overline B}^\ell_{(2)} (F_{s}, E)$ is totally isotropic under the pairing $Q$, and  it
is orthogonal to $\Ker(\Delta^E_{\ell})$ under the pairing $<\, ,
\, >$.  In addition, this equation implies that $Q$
induces a well defined pairing
$$
Q:\oH^{\ell}_{(2)}(F_{s}, E) \otimes \oH^{\ell}_{(2)}(F_{s}, E) \to
\maB(M),
$$
where $\maB(M)$ denotes the Borel $\C$ valued functions on $M$.
It further implies that $P_{\ell}$ restricted to the cocycles  $Z^{\ell}_{(2)}(F_{s}, E)$ 
preserves $Q$.
The subspaces  $\Ker (\Delta^{E +}_\ell)$ and $\Ker(\Delta^{E -}_\ell)$ are orthogonal under both
of the pairings, since
$Q(\what{\tau}\alpha_1, \alpha_2 ) =Q(\alpha_1,  \what{\tau}\alpha_2)$.
As
$\Ker(\Delta^E_{\ell}) =  \Ker(\Delta^{E +}_{\ell}) \oplus
\Ker(\Delta^{E -}_{\ell})$,
so also $Z^{\ell}_{(2)}(F_s, E) = \Ker(\Delta^{E +}_{\ell}) \oplus
\Ker(\Delta^{E -}_{\ell}) \oplus \overline{B}^{\ell}_{(2)}(F_s, E).$

The kernels of both $\what{\pi}^f_+$ and $\pi_+$ contain $\Ker(P_{\ell})$, so we 
may restrict our attention to $\Im(P_{\ell}) = \Ker(\Delta^E_{\ell})$.    
The image of $\what{\pi}^f_+$ is $P_{\ell}(\Im(\wf^*_+))$. 

\begin{lemma}
$\pi_+:P_{\ell}(\Im(\wf^*_+)) \to  \Ker(\Delta^{E+}_{\ell})$ is an 
isomorphism with bounded inverse.
\end{lemma}

\begin{proof}   
By Proposition \ref{preserves},  $\wf^*$ restricted to
$\Ker({\Delta}^{E'}_{\ell})$ takes the pairing $Q'$ to the pairing
$Q$.
(Note that $Q$ is $\pm$ definite on the $\Im(\pi_{\pm})$ if $\ell
$ is even, while it is $\sqrt{-1}Q$, which is $\pm$ definite on $\Im
(\pi_{\pm})$ is $\ell$ is odd.  We will finesse this point.)
Since $P_{\ell}$ (restricted to the cocycles) preserves the pairing
$Q$,
$Q$ is positive definite on $P_{\ell}(\Im(\wf^*_+))$.
Given $0 \neq \alpha \in P_{\ell}(\Im(\wf^*_+))$, write it
(uniquely) as
$\alpha = \alpha_+ + \alpha_-$, where $\alpha_{\pm} \in \Ker
(\Delta^{E\pm}_{\ell})$.  Then
$$
0 \,\,\, <  \,\,\,Q(\alpha,\alpha) \,\,\,= \,\,\,<\alpha_+,\alpha_+> -
<\alpha_-,\alpha_->  \quad  \leq    \quad  <\alpha_+,\alpha_+>,
$$
so $\pi_+(\alpha) = \alpha_+ \neq 0$ and $\pi_+:P_{\ell}(\Im(\wf^*_+))
\to  \Ker(\Delta^{E+}_{\ell})$ is one-to-one.

The above inequality also implies that $\pi_+^{-1}$ is bounded,
with bound
$\sqrt{2}$.   The element $\alpha = \pi_+^{-1}(\alpha_+)$ and
$||\alpha||^2  \,\,\, =  \,\,\, <\alpha,\alpha>  \,\,\,  =   \,\,\,
<\alpha_+,\alpha_+> + <\alpha_-,\alpha_->  \,\,\, =  \,\,\, ||
\alpha_+||^2
+ ||\alpha_-||^2$.  Since $0 < Q(\alpha,\alpha) $, $||\alpha_-||^2
\,\,\,  < \,\,\,  ||\alpha_+||^2$, so $||\pi_+^{-1}(\alpha_+)||^2
\,\,\,
=  \,\,\, ||\alpha||^2 \,\,\,  \leq  \,\,\,  2 ||\alpha_+||^2$.

Next we show that  $\pi_+$ is onto.   
Choose  $\alpha \in \Ker(\Delta^{E+}_\ell)$ which is orthogonal to $
\pi_+(P_{\ell}(\Im {\wf^*_+}))$. The subspaces $P_{\ell}(\Im {\wf^*_
+})$ and $P_{\ell}(\Im {\wf^*_-})$ are orthogonal under $Q$.
Their direct sum is the space $\Ker(\Delta_\ell)$ of all harmonic
forms, since $\pi^f_+ +\pi^f_-$ induces the identity on cohomology.
Write $\alpha = \beta_+ + \beta_-$, with $\beta_\pm \in P_{\ell}
(\Im {\wf^*_\pm})$. Then    
$$
\|\alpha\|^2 \,\, = \,\, Q (\alpha, \alpha) \,\, = \,\,  Q(\alpha, \beta_+) + Q
(\alpha, \beta_-)
\,\, = \,\, Q(\alpha, \pi_+\beta_+) + Q(\alpha, \pi_-\beta_+) + Q(\alpha,
\beta_-)
\,\, = \,\,  Q(\alpha, \beta_-).
$$
The last equality is a consequence of the facts that $\alpha$ is $Q
$ orthogonal to $\pi_+(P_{\ell}(\Im {\wf^*_+}) )$ and that $\Ker
(\Delta^{E+}_\ell)$ and $\Ker(\Delta^{E-}_\ell)$ are $Q$
orthogonal. Hence, we have,
$$
0 \leq \|\alpha\|^2 = Q(\beta_+, \beta_-) + Q(\beta_-, \beta_-) = Q
(\beta_-, \beta_-) \leq 0.
$$
So, $\alpha=0$, and $\pi_+$ is onto.
 \end{proof}
  
The map $\pi_+^{-1}$ is defined on 
$\pi_+(P_{\ell}(\Im {\wf^*_+}) )$.  Define $\rho_+$ to be orthogonal 
projection onto $\pi_+(P_{\ell}(\Im {\wf^*_+}) )$ composed with  
$\pi_+^{-1}$, i.e.,  $$
\rho_+ =  \pi_+^{-1} \circ \pi_+:\cA^{\ell}_{(2)}(F_s,E)  \to P_{\ell}(\Im(\wf^*_+)).
$$ 
Then $\rho_+$ is an idempotent and has image $P_{\ell}(\Im 
{\wf^*_+})$, which equals $\what{\pi}^f_+$.   
 We claim that 
$\rho_+$ is transversely smooth.  If so, then $\ch_a(\rho_+)$ is defined 
and $\ch_a(\rho_+) = \ch_a(\what{\pi}^f_+)$, since they have the same image. 
Note that $\rho_+$ is projection to $P_{\ell}(\Im(\wf^*_+))$ along 
$\Ker(\pi_+)$.  With this description, it is immediate that $\rho_+ \circ 
\pi_+ = \rho_+$ and $\pi_+ \circ \rho_+ = \pi_+$ since 
$\pi_+$ is projection to  $\Ker(\Delta^{E+}_{\ell})$ along $\Ker(\pi_+)$.   
As above, we may form the smooth family of transversely smooth idempotents 
$t \rho_+ + (1-t) \pi_+$ which connects $\rho_+$ to $\pi_+$.  Again, it 
follows from Theorem \ref{family} that $\ch_a(\rho_+) = \ch_a(\pi_+)$, and 
we have $ \ch_a(\pi_+) = \ch_a(\pi^f_+)$.  So to finish the proof we need only
show that $\rho_+$ is transversely smooth.   

Now 
$$
\what{\pi}^f_{\pm} \, = \,  P_{\ell}  \pi^f_{\pm}  \, = \,  P_{\ell}  \wf^* R'^* \pi'_{\pm}  
\wg^* R^* P_{\ell},
$$
and recalling that $P'_\ell \wg^* R^* P_\ell \wf^*R'^* P'_\ell = P'_\ell$, and $ \pi'_{\pm}= \pi'_{\pm}P'_\ell =P'_\ell  \pi'_{\pm}$,  we have
$$
(\what{\pi}^f_{\pm})^2 \, = \, \what{\pi}^f_{\pm} \quad \text{ and } \quad \what{\pi}^f_{\pm}\what{\pi}^f_{\mp} \, = \, 0.
$$
These idempotents are transversely smooth, since $P_{\ell}$ and the $\pi^f_{\pm}$ are 
transversely smooth.  They also  satisfy $\what{\pi}^f_{+} + \what{\pi}^f_{-} = 
P_{\ell}$, and their kernels both contain $\Ker(P_{\ell})$.  Finally, note 
that the $\Im(\what{\pi}^f_{\pm}) = P_{\ell} (\Im(\wf^*_{\pm}))$.  Next set
$$
 A =  \pi_+ + \what{\pi}^f_-.
$$

\begin{lemma}
The operator $A$ {and  its adjoint $A^t$} are transversely 
smooth, and $A$ is an isomorphism when restricted to $\Ker(\Delta^E_{\ell})$. 
\end{lemma}

\begin{proof}  
$A$  is transversely smooth because both $\pi_+$ and $\what{\pi}^f_-$ are. 
As $A^t = ( \pi_+ + \what{\pi}^f_-)^t =  \pi_+ +(\what{\pi}^f_-)^t$,  we need only show that
$$
(\what{\pi}^f_-)^t =  P_{\ell} R^{*t} \wg^{*t} \pi'_{-} R'^{*t}  \wf^{*t}  P_{\ell}
$$
is transversely smooth.   The operators $P_{\ell} $ and $\pi'_{-}$ are transversely smooth, and $R^{*t} = (R^*)^{-1}$  and $R'^{*t} =  (R'^{*})^{-1}$, since they are both isometries.  Now consider $\wf^{*t}$ and $\wg^{*t}$, restricted to the harmonic forms.   
Let  $\alpha' \in \Im (P'_{\ell})$ and $\alpha \in \Im( \pi_+)$.  Then 
$$
<\alpha', \wf^{*t} \alpha> \,\, = \,\,  <\wf^{*} \alpha',\alpha>  \,\, = \,\, 
Q(\wf^{*} \alpha' , \what{*} \alpha)  = \,\, Q(\wf^{*} \alpha' , \what{\tau} \alpha) \,\, = \,\, 
Q(\wf^{*} \alpha' ,  \alpha)   \,\, = \,\,
Q(\wf^{*} \alpha' , \wf^{*}  R'^* \wg^{*} R^*  \alpha) \,\, = \,\,  
$$
$$
Q'(\alpha' ,  R'^* \wg^{*} R^*  \alpha)   \,\, = \,\, 
Q'(\alpha' ,  \pi'_+ R'^* \wg^{*} R^*  \alpha +  \pi'_- R'^* \wg^{*} R^*  \alpha)   \,\, = \,\, 
Q'(\alpha' ,  \what{\tau} \pi'_+ R'^* \wg^{*} R^*  \alpha  - \what{\tau} \pi'_- R'^* \wg^{*} R^*  \alpha)    
\,\, = \,\,
$$
$$
Q'(\alpha' ,  \what{*} \pi'_+ R'^* \wg^{*} R^*  \alpha  - \what{*} \pi'_- R'^* \wg^{*} R^*  \alpha)    \,\, = \,\,
<\alpha' , ( \pi'_+ R'^* \wg^{*} R^*   - \pi'_- R'^* \wg^{*} R^*)  \alpha>.
$$
So on $\Im (\pi_+)$, $ \wf^{*t}  = \pi'_+  R'^* \wg^{*} R^*  - \pi'_-  R'^* \wg^{*} R^*$.  Similarly, on  $\Im (\pi_-)$, $ \wf^{*t}  = -\pi'_+  R'^* \wg^{*} R^*  +  \pi'_-  R'^* \wg^{*} R^*$.   Thus on $\Im (P_{\ell})$, 
$$
\wf^{*t} \,\, = \,\, (\pi'_+  R'^* \wg^{*} R^*  - \pi'_-  R'^* \wg^{*} R^*) \pi_+  -(\pi'_+ R'^* \wg^{*} R^*  - \pi'_- R'^* \wg^{*} R^*)\pi_-   \,\, = \,\,  (\pi'_+  - \pi'_-) R'^* \wg^{*} R^*( \pi_+ - \pi_-).
$$
Similarly, $\wg^{*t} \,\, = \,\,  ( \pi_+ - \pi_-)  R^* \wf^* R'^*  (\pi'_+  - \pi'_-)$.
As $(\pi'_+  - \pi'_-) \pi'_-  (\pi'_+  - \pi'_-) = \pi'_-$, $R^*$ commutes with $\pi_{\pm}$,  $R'^*$ commutes with $\pi'_{\pm}$, and $ P_{\ell}  \pi_{\pm} =  \pi_{\pm}$, we  have
$$
(\what{\pi}^f_-)^t =   ( \pi_+ - \pi_-)  \wf^*  R'^*  \pi'_{-}  \wg^* R^{*}( \pi_+ - \pi_-),
$$
which is transversely smooth.

Next, note that $Q$ is positive definite on $\Im(\pi_+)$ and 
$\Im(\what{\pi}^f_{+})$, and is negative  definite on $\Im(\pi_-)$ and 
$\Im(\what{\pi}^f_{-})$.  
So $\Im(\pi_{\pm}) \cap \Im(\what{\pi}^f_{\mp}) = \{0\}$.   Let $\alpha \in 
\Ker(\Delta^E_{\ell})$ with $A(\alpha)=0$.  Then $\pi_+(\alpha) = 
-\what{\pi}^f_{-}(\alpha)$ and $ \pi_+(\alpha), \what{\pi}^f_{-}(\alpha)  \in 
\Im(\pi_{+}) \cap \Im(\what{\pi}^f_{-}) = \{0\}$.   Thus $\alpha \in 
\Ker(\pi_+) \cap \Ker(\what{\pi}^f_{-}) \cap \Ker(\Delta^E_{\ell}) = 
\Im(\pi_{-}) \cap \Im(\what{\pi}^f_{+}) = \{0\}$, so $\alpha = 0$,  and $A$ is 
one-to-one.

Now $A(\Im(\what{\pi}^f_{+})) = {\pi_+(P_{\ell}(\Im(\wf^*_+)))=} \Im(\pi_+)$, so $ 
\Im(\pi_+) \subset \Im(A)$.  Just as $\pi_+$ maps $\Im(\what{\pi}^f_{+})$ 
isomorphically to $\Im(\pi_+)$, 
 $\pi_-$ maps $\Im(\what{\pi}^f_{-})$ isomorphically to $\Im(\pi_-)$.  
Given $\alpha \in \Im(\pi_-)$, let $\beta \in \Im(\what{\pi}^f_{-})$, with 
$\pi_-(\beta) = \alpha$, so $\beta = \pi_-(\beta) + \pi_+(\beta) = \alpha + \pi_+(\beta)$, that is $\alpha = 
\beta - \pi_+(\beta)$.  Now $A(\beta) = \pi_+(\beta) + \what{\pi}^f_{-}(\beta) = \pi_+(\beta) + 
\beta$, since $\beta  \in \Im(\what{\pi}^f_{-})$.   So $\beta \in \Im(A)$, since  
$\pi_+(\beta) \in \Im(\pi_+) \subset \Im(A)$.  Thus $\alpha = \beta - \pi_+(\beta) \in 
\Im(A)$,  and we have $ \Im(\pi_-) \subset \Im(A)$.  As $A$ is linear and 
contains $ \Im(\pi_{\pm})$, it also contains $\Im(\pi_+) \oplus  
\Im(\pi_-)  = \Ker(\Delta^E_{\ell})$, and $A$ is onto.
\end{proof}

\begin{lemma}
$A^{-1}$, the inverse of $A$ restricted to $\Ker(\Delta^E_{\ell})$, is a 
bounded isomorphism of $\Ker(\Delta^E_{\ell})$. 
\end{lemma}

\begin{proof}
 $A^{-1}$ is  bounded  if and only if there is a constant $C > 0$, so 
that  $||A(\alpha)||  \geq C$ for all $x\in M$ and all $\alpha \in \Ker(\Delta^E_{\ell})_x$ with $||\alpha|| = 
1$.  If not, there are sequences $x_j \in M$ and  $\alpha_j \in \Ker(\Delta^E_{\ell})_{x_j}$ with 
$||\alpha_j|| = 1$, and  
 $$
\lim_{j \to \infty} ||A(\alpha_j)|| = \lim_{j \to \infty} ||\pi_+(\alpha_j) + 
\what{\pi}^f_-(\alpha_j)||   = 0,
$$
that is, 
$$
0 = \lim_{j \to \infty} \pi_+(\alpha_j)+\what{\pi}^f_-(\alpha_j) =  
\lim_{j \to \infty} \pi_+(\alpha_j)+\pi_+(\what{\pi}^f_-(\alpha_j)) + \pi_-(\what{\pi}^f_-(\alpha_j)) =  
\lim_{j \to \infty} \pi_+(\alpha_j + \what{\pi}^f_-(\alpha_j)) + \pi_-(\what{\pi}^f_-(\alpha_j)).
$$
This implies that 
$\lim_{j \to \infty}   \pi_-(\what{\pi}^f_-(\alpha_j))= 0$.   Now 
$$
0 \,\, \geq \,\, Q(\what{\pi}^f_-(\alpha_j), \what{\pi}^f_-(\alpha_j)) \,\, = \,\,  ||\pi_+(\what{\pi}^f_-(\alpha_j))||^2  -  
||\pi_-(\what{\pi}^f_-(\alpha_j))||^2,
$$
so $\lim_{j \to \infty}   \pi_+(\what{\pi}^f_-(\alpha_j))= 0$, which gives that 
$\lim_{j \to \infty}  \what{\pi}^f_-(\alpha_j) = 0$, so also $\lim_{j \to 
\infty}  \pi_+(\alpha_j) = 0$.
Since $\alpha_j =  \pi_+(\alpha_j) +  \pi_-(\alpha_j)$, we have   
$$
\lim_{j \to \infty} (  \pi_-(\alpha_j)- \alpha_j ) =  0,
$$
in particular, $\lim_{j \to \infty} || \pi_-(\alpha_j)||  = \lim_{j \to \infty} 
|| \alpha_j || = 1$. 
Now $Q( \pi_-(\alpha_j),  \pi_-(\alpha_j)) = - || \pi_-(\alpha_j)||^2$,  so\\
 $\lim_{j \to \infty}  
Q(\pi_-(\alpha_j), \pi_-(\alpha_j)) = -1$.
Since $Q$ is continuous, $\lim_{j \to \infty}  Q(\alpha_j,\alpha_j) = \lim_{j \to \infty} 
Q( \pi_-(\alpha_j),\pi_-(\alpha_j) ) = -1$.   

The fact that 
$\lim_{j \to \infty}  \what{\pi}^f_-(\alpha_j) = 0$ and $\alpha_j =  
\what{\pi}^f_+(\alpha_j) +  \what{\pi}^f_-(\alpha_j)$  
implies that 
$$
\lim_{j \to \infty} ( \what{ \pi}^f_+(\alpha_j)- \alpha_j ) =  0,
$$
and as above, the fact that $Q(\what{ \pi}^f_+(\alpha_j), \what{ \pi}^f_+(\alpha_j)) \geq 0$ implies that 
$$
\liminf_j Q(\alpha_j,\alpha_j) \geq 0,
$$ which contradicts that fact that  $\lim_{j \to \infty}  Q(\alpha_j,\alpha_j) = -1$. 
\end{proof}

Now consider the map $B = A^tA$,  which is transversely smooth, and is an 
isomorphism when restricted to $\Ker(\Delta^E_{\ell})$.  Denote by $B^{-1}$ 
the composition of maps:
$$
B^{-1}:\cA^{\ell}_{(2)} (F_s,E)   
\stackrel{P_{\ell}}{\longrightarrow}      \Ker(\Delta^E_{\ell})    
\stackrel{B^{-1}_{\ell}}{\longrightarrow}     \Ker(\Delta^E_{\ell}),
$$
where $B^{-1}_{\ell}$ is the inverse of $B$ restricted to 
$\Ker(\Delta^E_{\ell})$.  
Since $\rho_+$ takes values in $P_{\ell}(\Im(\wf^*_+ )) =  \Im(\what{ 
\pi}^f_+)$,
$A  \rho_+ = \pi_+$, so $B  \rho_+ = A^t \pi_+$, and 
$\rho_+ = B^{-1}  A^t \pi_+$.   Thus we are reduced to showing that 
$B^{-1}$ is transversely smooth.

Restricting once again to $\Ker(\Delta^E_{\ell})$, we have that the operator 
$B$ is positive, and $A$ and $A^{-1}$ are bounded operators, so there are 
constants $0 < C_0 < C_1 < \infty$ so that for all $\alpha \in  
\Ker(\Delta^E_{\ell})$,  $\alpha \neq 0$, 
$$
C_0 \quad  \leq \quad    \frac{<B\alpha,\alpha> }{<\alpha,\alpha>}\quad  \leq  \quad  C_1.
$$
Thus the spectrum of $B$ on $\Ker(\Delta^E_{\ell})$, $\sigma(B) \subset 
[C_0,C_1]$, and for $\lambda > 0$,
 $\sigma(\frac{B}{\lambda}) \subset 
[\frac{C_0}{\lambda},\frac{C_1}{\lambda}]$, and 
 $\sigma(P_{\ell}-\frac{B}{\lambda}) \subset 
[1-\frac{C_1}{\lambda},1-\frac{C_0}{\lambda}]$.
In particular, for $\lambda > C_1$ we have 
$$
0 \,\, <  \,\, 1-\frac{C_1}{\lambda} \,\, \leq \,\, ||P_{\ell}-\frac{B}{\lambda}|| \,\, \leq \,\, 
1-\frac{C_0}{\lambda} \,\, < \,\, 1.
$$
Since $B =  P_{\ell}  B  P_{\ell}$, this estimate actually holds on 
$\cA^{\ell}_{(2)} (F_s,E)$, and for all Sobolev norms associated to 
$\cA^{\ell}_{(2)}(F_s,E)$.
This estimate along with the fact that $x^{-1} = \frac{1}{\lambda}\sum_{n 
= o}^{\infty} (1-\frac{x}{\lambda})^n$, provided that 
$|1-\frac{x}{\lambda}| < 1$, implies that for $\lambda > C_1$,
$$
B^{-1}  \,\, = \,\,  \frac{1}{\lambda}  \sum_{n=0}^{\infty}  \Bigl{(}P_{\ell} - 
\frac{B}{\lambda} \Bigr{)}^n,
$$
where we set $ \Bigl{(}P_{\ell} - \frac{B}{\lambda} \Bigr{)}^0 = 
P_{\ell}$. 
For $N \in \Z_+$, set
$$
D_N \,\, = \,\,  \frac{1}{\lambda}  \sum_{n=0}^{N}  \Bigl{(}P_{\ell}- 
\frac{B}{\lambda} \Bigr{)}^n,
$$
where again $ \Bigl{(}P_{\ell} - \frac{B}{\lambda} \Bigr{)}^0 = 
P_{\ell}$.   
Then $D_N$ is a uniformly bounded  (over all $N$)  transversely smooth 
operator, and it converges to $B^{-1}$ in all Sobolev norms.  Thus 
$B^{-1}$ is a bounded leafwise smoothing operator.

Let $Y$ be a vector field on $M$, and  consider $\pa^Y_{\nu}D_N = 
\frac{1}{\lambda}  \sum_{n=0}^{N} \pa^Y_{\nu} \Bigl{(}\Bigl{(}P_{\ell}- 
\frac{B}{\lambda} \Bigr{)}^n\Bigr{)}$.  
For any integers $k_1, k_2$, and for $N > 1$,
$$    
||\frac{1}{\lambda}  \sum_{n=N+1}^{\infty} \pa^Y_{\nu} \Bigl{(} 
\Bigl{(}P_{\ell}- \frac{B}{\lambda} \Bigr{)}^n\Bigr{)}||_{k_1, k_2}
\quad \leq \quad
\frac{1}{\lambda}  \sum_{n=N+1}^{\infty} ||\pa^Y_{\nu} \Bigl{(} 
\Bigl{(}P_{\ell}- \frac{B}{\lambda}\Bigr{)}^n \Bigr{)}||_{k_1, k_2}
\quad \leq \quad
$$
$$
\frac{1}{\lambda}
\sum_{n=N+1}^{\infty}
\sum_{r =0}^{n-1} 
||P_{\ell}- \frac{B}{\lambda} ||_{k_1, k_1}^r  
||  \pa^Y_{\nu}\Bigl{(}P_{\ell}- \frac{B}{\lambda} \Bigr{)}||_{k_1,k_2}  
||P_{\ell}- \frac{B}{\lambda}||_{k_2,k_2}^{n-r-1}
\quad = \quad
$$
$$
\frac{1}{\lambda} ||  \pa^Y_{\nu}\Bigl{(}P_{\ell}- \frac{B}{\lambda} 
\Bigr{)}||_{k_1,k_2}
\sum_{n=N+1}^{\infty}n ||P_{\ell}- \frac{B}{\lambda}||^{n-1}
\quad \leq \quad
\frac{1}{\lambda} ||  \pa^Y_{\nu}\Bigl{(}P_{\ell}- \frac{B}{\lambda} 
\Bigr{)}||_{k_1,k_2}
\sum_{n=N+1}^{\infty} n\Bigl{(}1- \frac{C_0}{\lambda} \Bigr{)}^{n-1}.
$$
This converges to $0$ as $N \to \infty$, as $||  
\pa^Y_{\nu}\Bigl{(}P_{\ell}- \frac{B}{\lambda} \Bigr{)}||_{k_1,k_2}$ is 
finite since $P_{\ell}- \frac{B}{\lambda} $ is transversely smooth.  Thus 
the transverse derivative $\pa^Y_{\nu}D_N $ converges in all Sobolev 
norms, so $\lim_{N \to \infty}  \pa^Y_{\nu}D_N$ exists, and it is bounded 
and leafwise smoothing.    

\begin{proposition} $\pa^Y_{\nu}B^{-1}$ exists, in particular, 
$\pa^Y_{\nu}D_N$ converges in all Sobolev norms to $\pa^Y_{\nu}B^{-1}$, so 
$\pa^Y_{\nu}B^{-1}$ is a bounded leafwise smoothing operator.
\end{proposition}

\begin{proof}
As $\pa^Y_{\nu}D_N$ converges in all Sobolev norms, we only need prove that $\pa^Y_{\nu}B^{-1}$ exists and that it equals
$\lim_{N \to \infty}  \pa^Y_{\nu}D_N.$

Recall the situation in the proof of Theorem \ref{GRthm}.  For $y$ close to $x$ in $M$, we have
the smooth diffeomorphism $\Phi_y:\wL_x \to \wL_y$.
Given $Y \in TM_x$, set $\gamma(t) = \exp_x (tY)$.    For  $z \in \wL_x$ and  $t$ sufficiently small, say $|t| \leq \epsilon$, we have the path  $t \to \what{\gamma}_z(t)$,  which covers $\gamma(t)$ and has tangent vector in $\nu_s$.
So for $|t| \leq \epsilon$,  the diffeomorphism $\Phi_{\gamma(t)}:\wL_x \to \wL_{\gamma(t)}$ exists.
The vector  $Y$ defines the transverse vector field $\what{Y}$ along $\wL_x$, i.\ e.\ a smooth section of $\nu_s \, | \, \wL_x$, by requiring $s_*(\what{Y}) = Y$. 
Then, the  operator $\pa^Y_{\nu}(\cdot)  =  [\nabla^{\nu}_{\what{Y}}, \cdot]$ 
can be realized as  $\pa/\pa t(\cdot)$ as follows.
We may parallel translate  all 
objects on  $\wL_x$ to $\wL_{\gamma(t)}$ (and vice-versa) along the paths
$\what{\gamma}_z(t)$, using the connection $\nabla$.  We will 
denote this parallel translation by $\Phi_t$ (and the reverse by 
$\Phi_{t}^{-1}$).
Thus any section of $\xi \in C^{\infty}_c(\wL_x; \wedge^{\ell}T^*F_s \otimes E)$ defines a section $\Phi_t(\xi) = \xi_{t}$ of $C^{\infty}_c(\wL_{\gamma(t)}; \wedge^{\ell}T^*F_s \otimes E)$ given by 
$$
\xi_{t}(z) \,\, = \,\,  \Phi_{t}( \xi  (\Phi^{-1}_{\gamma(t)}(z))),
$$
and $\xi_{t}$ is smooth in $t$.
Note that for such a local section, $\nabla_{\what{Y}} \xi_t = 
\nabla^{\nu}_{\what{Y}} \xi_t$ (as $\what{Y} \in \nu_s$) is defined and 
equals $0$, since $\xi_t$ is parallel translation along integral curves 
of $\what{Y}$ for the connection $\nabla$.  In fact, if we set $Y(t) = 
\gamma'(t)$,  then,
$\nabla_{\what{Y}(t)} \xi_t = \nabla^{\nu}_{\what{Y}(t)} \xi_t = 0$.  
Further note that $\Phi_{\gamma(t)}$ is a diffeomorphism of 
bounded dilation and the induced action on $E$ is also bounded, so the local operators $\Phi_t$ and $\Phi^{-1}_t$ are 
bounded when acting on sections of $C^{\infty}_c(\wL_x; 
\wedge^{\ell}T^*F_s \otimes E)$, (respectively $C^{\infty}_c(\wL_{\gamma(t)}; 
\wedge^{\ell}T^*F_s \otimes E)$.   We denote these bounds by $||\Phi_t||$ and $||\Phi^{-1}_t ||$ respectively.    The bounds are uniform in $t$ for $|t| \leq \epsilon$.

Similarly, we may  parallel translate operators such as $D_N$ from 
nearby leaves to $\wL_x$ as follows.  Given $\xi_1, \xi_2 \in 
C^{\infty}_c(\wL_x; \wedge^{\ell}T^*F_s \otimes E)$, define the operator 
$D_{N,t}$ on $\wL_x$ by
$$
<D_{N,t} (\xi_1) ,\xi_2>  \,\, = \,\,   <  \Phi^{-1}_t\Bigl{(}D_{N,\gamma(t)} 
(\xi_{1,t})\Bigr{)} ,\xi_2>.
$$ 
This is well defined and smooth in $t$ provided $|t| \leq \epsilon$.   Thus,
the operator $\dd  \frac{\pa(D_{N,t})}{\pa t} \, 
\Big{|}_{t = 0}$ is well defined  as a map from $C^{\infty}_c(\wL_{x}; 
\wedge^{\ell}T^*F_s \otimes E)$ to 
$C^{\infty}(\wL_{x}; \wedge^{\ell}T^*F_s \otimes E)$.   
Likewise, $\nabla_{\what{Y}}( D_{N, \gamma(t)}(\xi_{t}))$ is well defined 
for all $\xi \in C^{\infty}_c(\wL_{x}; \wedge^{\ell}T^*F_s \otimes E)$, and takes 
values in $C^{\infty}(\wL_{x}; \wedge^{\ell}T^*F_s \otimes E)$.
The fundamental relationship between parallel translation and the 
connection $\nabla$ translates to the equation
\begin{Equation}\label{contrans}
\hspace{1in}
 $\dd
\Bigl{(} \frac{\pa(D_{N,t})}{\pa t} \, \Big{|}_{t = 0} \Bigr{)} (\xi)   \,\, = \,\,
\nabla_{\what{Y}}( D_{N, \gamma(t)}(\xi_{t})).$
\end{Equation}
\noindent
In fact, for all $t_0 \in [-\epsilon,  \epsilon]$, 
$$
\Bigl{(} \frac{\pa(D_{N,t})}{\pa t}  \, \Big{|}_{t = t_0}\Bigr{)} (\xi)   \,\, = \,\, 
\Phi^{-1}_{t_0} \Bigl{(}\nabla_{\what{Y}(t_0)}( D_{N, \gamma(t)}(\xi_{t})) \Bigr{)},
$$
since $\Phi^{-1}_{t}  = \Phi^{-1}_{t_0}  \circ \Phi^{-1}_{t, t_0}$, where 
$\Phi^{-1}_{t, t_0}$  is parallel translation from $\wL_{\gamma(t)}$ to 
$\wL_{\gamma(t_0)}$.

For $\xi \in C^{\infty}_c(\wL_{x}; \wedge^{\ell}T^*F_s \otimes E)$ we have 
$$
 \pa^Y_{\nu} D_N \xi   \,\, = \, \, [\nabla^{\nu}_{\what{Y}}, D_N]\xi\,\, 
= \, \,  [\nabla_{\what{Y}}, D_{N}] \xi
\,\, =  \,\, \nabla_{\what{Y}} D_{N, \gamma(t)}(\xi_t)-  D_{N} 
\nabla_{\what{Y}}(\xi_t)\,\, =  \,\, \nabla_{\what{Y}} D_{N, 
\gamma(t)}(\xi_t),
$$
since $ \nabla_{\what{Y}} (\xi_t) = 0$.   So by Equation \ref{contrans} we 
have 
$$
\pa^Y_{\nu}D_N \,\, = \,\, \frac{\pa(D_{N,t})}{\pa t} \, \Big{|}_{t = 0}.
$$ 
As above, this extends to 
\begin{Equation}\label{gencon}
\hspace{1in}
$\dd     \frac{\pa(D_{N,t})}{\pa t}  
\,\, = \,\,   
\Phi^{-1}_{t} \Bigl{(}\pa^{Y(t)}_{\nu} D_{N, \gamma(t)} \Bigr{)}  
$,
\end{Equation}
\noindent 
provided $|t| \leq \epsilon$.

For $t \in [-\epsilon,\epsilon]$, set  $D'_{N,t} =  \Phi^{-1}_t 
\Bigl{(}\pa^{Y(t)}_{\nu}D_N \Bigr{)}  =  {\pa(D_{N,t})}/{\pa t}$, and
$D'_t   =  \Phi^{-1}_t \Bigl{(}\lim_{N \to \infty}\pa^{Y(t)}_{\nu} D_N \Bigr{)}$, and
$B^{-1}_t = \Phi^{-1}_t(B^{-1})$.      Note carefully that the 
following computation takes place on the leaf $\wL_x$.   For
$\xi_1, \xi_2 \in C^{\infty}_c(\wL_x; \wedge^{\ell}T^*F_s \otimes E)$, and   $h 
\in (0,  \epsilon)$, we have that
$$
\Bigl{|}  <B^{-1}_{h}(\xi_{1}),\xi_{2}> - <B^{-1}_0 (\xi_1) ,\xi_2> - 
\int_0^{h} <D'_t (\xi_{1}) ,\xi_{2}> dt \Bigr{|} \,\,  \leq
$$
$$
\Bigl{|}  <B^{-1}_{h}(\xi_{1}),\xi_{2}> -  <D_{N, h}(\xi_{1}),\xi_{2}> 
\Bigr{|} \,\,  +
$$
$$
\Bigl{|}  <D_{N,h}(\xi_{1}),\xi_{2}> - <D_{N,0} (\xi_1) ,\xi_2> - 
\int_0^{h} < D_{N,t}'(\xi_{1}) ,\xi_{2}> dt \Bigr{|} \,\, +
$$
$$
\Bigl{|}  <D_{N,0}(\xi_1),\xi_2> - <B^{-1}_0 (\xi_1) ,\xi_2> \Bigr{|} \,\, + \,\,
\Bigl{|} \int_0^{h} <(D_{N,t}' - D'_t)(\xi_{1}) ,\xi_{2}> dt\Bigr{|} .
$$
The term 
$$
 <D_{N,h}(\xi_{1}),\xi_{2}> - <D_{N,0} (\xi_1) ,\xi_2> - \int_0^{h} < 
D_{N,t}'(\xi_{1}) ,\xi_{2}> dt  \,\, = \,\, 0,
$$
since $D_{N,t}' = \pa(D_{N,t})/\pa t$.
The term
$$
\Bigl{|}  <D_{N, 0}(\xi_1),\xi_2> -  <B^{-1}_{0}(\xi_1),\xi_2> \Bigr{|} 
\,\, \leq \,\, 
||D_{N, 0} - B^{-1}_{0}  || \, ||\xi_{1}|| \, ||\xi_{2}||,
$$
which goes to $0$ as $N \to \infty$, since $D_N \to B^{-1}$ in norm.
Likewise, the term
$$
\Bigl{|}  <B^{-1}_{h}(\xi_{1}),\xi_{2}> -  <D_{N, h}(\xi_{1}),\xi_{2}> 
\Bigr{|}    \,\, = \,\, 
\Bigl{|}  <\Phi^{-1}_h  (B^{-1}_{\gamma(h)}-D_{N, \gamma(h)})\Phi_h 
(\xi_{1}),\xi_{2}> \Bigr{|} \,\,  \leq
$$
$$ 
|| \Phi^{-1}_h || \, ||B^{-1}_{\gamma(h)}-  D_{N, \gamma(h)}|| 
\, || \Phi_h || \, ||\xi_{1}|| \, ||\xi_{2}||,
$$
which goes to $0$ as $N \to \infty$, since $D_N \to B^{-1}$ in norm and $|| \Phi^{-1}_h ||$ and $|| \Phi_h ||$ are bounded.
  
The term 
$$
\Bigl{|} \int_0^{h} <(D_{N,t}' - D'_t)(\xi_{1}) ,\xi_{2}> dt \Bigr{|}  \,\,  \leq 
$$
$$
{ \int_0^{h} || \Phi^{-1}_t || 
\,||\partial^{Y(t)}_{\nu}D_{N,\gamma(t)} - \lim_{\what{N} \to \infty} 
\partial^{Y(t)}_{\nu}D_{\what{N},\gamma(t)}||    \, || \Phi_t || \, 
||\xi_{1}|| \, ||\xi_{2}|| dt}, 
$$
which goes to $0$ as $N \to \infty$, since {$|| \Phi^{-1}_t ||$ 
and $|| \Phi_t ||$}  are uniformly bounded for $t \in [0,h]$, and 
$\partial^Y_{\nu}D_{N}$ converges in norm.

Thus 
$$
\lim_{N \to \infty}\Bigl{|}  <B^{-1}_{h}(\xi_{1}),\xi_{2}> - <B^{-1}_0 (\xi_1) ,\xi_2> - 
\int_0^{h} <D'_t (\xi_{1}) ,\xi_{2}> dt \Bigr{|}  \,\, = \,\, 0,
$$
and as the expression inside the limit is independent of $N$, it actually equals $0$.  This implies that
$$
<\lim_{h \to 0}   \frac{1}{h}\Bigl{(} B^{-1}_{h}- B^{-1}_0 - \int_0^{h} 
D'_t dt\Bigr{)}  (\xi_1) ,\xi_2> \,\, = \,\, 0,
$$
for all  $\xi_1, \xi_2 \in C^{\infty}_c(\wL_x; \wedge^{\ell}T^*F_s \otimes E)$, 
so
$$
\lim_{h \to 0}  \frac{1}{h}\Bigl{(} B^{-1}_{h}- B^{-1}_0 - \int_0^{h} D'_t 
dt\Bigr{)} \,\,= \,\,0
$$
 as a map from $C^{\infty}_c(\wL_{x}; \wedge^{\ell}T^*F_s \otimes E)$ to 
$C^{\infty}(\wL_{x}; \wedge^{\ell}T^*F_s \otimes E)$.

Next we have
$$
\Bigl{|}  < \lim_{t \to 0}  D'_t  (\xi_1) ,\xi_2>  - < D'_0 (\xi_1) 
,\xi_2> \Bigr{|}  \,\,  \leq
$$
$$
 \Bigl{|} < \lim_{t \to 0}(D'_t  - D'_{N, t})(\xi_1) ,\xi_2> \Bigr{|}  \,\, + \,\,  
\Bigl{|} < (\lim_{t \to 0} D'_{N, t} - D'_{N, 0} ) (\xi_1) ,\xi_2> 
\Bigr{|}  \,\, + \,\,  
\Bigl{|} <(D'_{N, 0}  - D'_0 ) (\xi_1) ,\xi_2>\Bigr{|} \,\, \leq \,\,
$$
$$
\lim_{t \to 0}||\Phi^{-1}_t || \, ||\lim_{\what{N} \to 
\infty}\pa^{Y(t)}_{\nu}D_{\what{N},\gamma(t)} - \pa^{Y(t)}_{\nu} 
D_{N,\gamma(t)} ||   \, ||\Phi_t || \, \, ||\xi_1|| \, ||\xi_2||  \,\, + \,\, 
$$
$$ 
\Bigl{|}  \lim_{t \to 0}<D'_{N, t}(\xi_1) ,\xi_2>  - <  D'_{N, 0}  (\xi_1) 
,\xi_2>\Bigr{|}  \,\, + \,\, 
||\pa^{Y(0)}_{\nu}D_{N,\gamma(0)} -\lim_{\what{N} \to 
\infty}\pa^{Y(0)}_{\nu}D_{\what{N},\gamma(0)}||   \, || \xi_1|| \, 
||\xi_2||.
$$
The first and last terms can be made arbitrarily small (for $t \in [0,h]$) 
by choosing $N$ sufficiently large.  The middle term equals zero since 
$<D'_{N, t}(\xi_1) ,\xi_2> $ is  continuous in $t$, which follows 
immediately from Equation \ref{gencon} and the fact that $D_N$ is 
transversely smooth.  Thus,
$$
0 \,\, = \,\,  <\lim_{t \to 0}  D'_t  (\xi_1) ,\xi_2>  - < D'_0 (\xi_1) 
,\xi_2> \,\, = \,\, <(\lim_{t \to 0}  D'_t  -D'_0) (\xi_1) ,\xi_2> ,
$$
which holds for all $\xi_1, \xi_2 \in C^{\infty}_c(\wL_x; 
\wedge^{\ell}T^*F_s \otimes E)$, so  $\lim_{t \to 0}  D'_t  - D'_0 \,\, = \,\,  0,$ that is $D'_t$ is continuous at zero.
The operator $\dd \lim_{h \to 0}  \frac{1}{h}\Bigl{(}  \int_0^{h} D'_t 
dt\Bigr{)}$ 
is also well defined as a map from $C^{\infty}_c(\wL_{x}; 
\wedge^{\ell}T^*F_s \otimes E )$ to $C^{\infty}(\wL_{x}; \wedge^{\ell}T^*F_s \otimes E)$, and 
as $D'_t$ is continuous at zero, we have
$$
\lim_{h \to 0}  \frac{1}{h}\Bigl{(}  \int_0^{h} D'_t dt\Bigr{)}   \,\, = 
\,\, D'_0. 
$$
Again by the fundamental relationship between parallel translation and 
$\nabla$, we have  
$$
 \lim_{h \to 0} \frac{B^{-1}_{h}- B^{-1}_0}{h}  \,\, = \,\,  \pa^Y_{\nu}B^{-1},
$$
so
$$
 \pa^Y_{\nu}B^{-1}  \,\,=\,\,   \lim_{h \to 0} \frac{B^{-1}_{h}- 
B^{-1}_0}{h}   \,\,=\,\, 
 \lim_{h \to 0}  \frac{1}{h}\Bigl{(}  \int_0^{h} D'_t dt\Bigr{)}   \,\, = 
\,\,
 D'_0 \,\,=\,\, \lim_{N \to \infty}  \pa^Y_{\nu}D_{N},
$$
and  $\pa^Y_{\nu}B^{-1}$ is a bounded leafwise smoothing operator.
\end{proof}
 
A boot strapping argument now finishes the proof.  Let $Y_1, Y_2$ be 
vector fields on $M$.   As $B^{-1} B = P_{\ell}$ and the $\pa^{Y_i}_{\nu}$ 
are derivations, we have
$$
(\pa^{Y_2}_{\nu} B^{-1}) B \,\, + \,\,  B^{-1}  (\pa^{Y_2}_{\nu}B) \,\, = \,\, 
\pa^{Y_2}_{\nu}P_{\ell},
$$
so
$$
\pa^{Y_2}_{\nu} B^{-1}  \,\,  = \,\, -  B^{-1}  (\pa^{Y_2}_{\nu}B)B^{-1} \,\, + \,\,
(\pa^{Y_2}_{\nu}P_{\ell})B^{-1},
$$
which is in the domain of $\pa^{Y_1}_{\nu}$.   Applying it, we obtain
$$
\pa^{Y_1}_{\nu} \pa^{Y_2}_{\nu} B^{-1} \,\,  =  \,\,
 -  \Big{(}(\pa^{Y_1}_{\nu}B^{-1})  (\pa^{Y_2}_{\nu}B)B^{-1} \,\, + \,\,
  B^{-1}  (\pa^{Y_1}_{\nu}\pa^{Y_2}_{\nu}B)B^{-1} \,\, + \,\, 
 B^{-1}  (\pa^{Y_2}_{\nu}B)(\pa^{Y_1}_{\nu}B^{-1})\Big{)} \,\, +
$$
$$
(\pa^{Y_1}_{\nu}\pa^{Y_2}_{\nu}P_{\ell})B^{-1} \,\, + \,\, 
(\pa^{Y_2}_{\nu}P_{\ell})(\pa^{Y_1}_{\nu} B^{-1}),
$$
which is a bounded leafwise smoothing map, since $B$ and $P$ are 
transversely smooth and $\pa^{Y_1}_{\nu}B^{-1}$ is bounded and leafwise 
smoothing.  
Proceeding by induction, we have that for all vector fields $Y_1, ..., 
Y_{m}$ on $M$, the operator  $\pa^{Y_1}_{\nu} \cdots 
\pa^{Y_{m}}_{\nu} B^{-1}$ is bounded  and leafwise smoothing, so 
$B^{-1}$ is transversely smooth.

This completes the proof Theorem \ref{secondeq}.
\end{proof}

Finally, we prove Theorem \ref{main}, that is we prove

\begin{theorem}\label{firsteq}
$\ch_a(\pi^f_{\pm})  =  f^*(\ch_a( \pi'_{\pm}))$.
\end{theorem}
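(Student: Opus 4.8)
The plan is to compute $\ch_a(\pi^f_{\pm})$ using the connection $\nabla$ on $\pi^f_{\pm}$ constructed in Section~8, and to match the resulting Haefliger form, transversal by transversal, with the one computing $f^*\ch_a(\pi'_{\pm})$. By Theorem~\ref{newchern} together with the consistency remarks following it, $\ch_a(\pi^f_{\pm})=[\Tr(\pi^f_{\pm}e^{-\theta/2i\pi})]$ with $\theta=\nabla^2$, and $\ch_a(\pi'_{\pm})=[\Tr(\pi'_{\pm}e^{-\theta'/2i\pi})]$ with $\theta'=(\nabla')^2$ for $\nabla'=\pi'_{\pm}\nabla'^{\nu}\pi'_{\pm}$. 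Since $\theta=\pi^f_{\pm}\theta=\theta\pi^f_{\pm}$ we have $\pi^f_{\pm}e^{-\theta/2i\pi}=\pi^f_{\pm}+\sum_{k=1}^{[n/2]}\tfrac{(-1)^k}{(2i\pi)^k k!}\theta^k$, and similarly on $M'$.

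First I would localize. Fix finite good covers $\cU$ of $M$ and $\cU'$ of $M'$ as in Section~6, with $f(U)\subset U'$ for paired charts and $\what f:T\to T'$ a diffeomorphism onto its image; recall that the isomorphism $f^*:\oH^*_c(M'/F')\to\oH^*_c(M/F)$ is by definition induced by the maps $\what f^*$ on transversals, and that on such a chart the plaque projections satisfy $\rho'\circ f=\what f\circ\rho$, so $\rho^*\circ\what f^*=f^*\circ\rho'^*$. Each $\theta^k$ is a purely leafwise operator (Lemma~\ref{local}), so Proposition~\ref{pullbacks} gives $\theta\,|\,U=\rho^*(\theta_T)$ and $\theta'\,|\,U'=\rho'^*(\theta'_{T'})$, where $\theta_T=(\nabla^T)^2$ and $\nabla^T$ is the restriction of $\nabla$ to $\pi^f_{\pm}\,|\,T$, and likewise over $T'$. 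Hence the $T$-component of $\Tr(\pi^f_{\pm}e^{-\theta/2i\pi})$ is the integral over the plaques of $U$ of the fibrewise diagonal trace of $\pi^f_{\pm}+\sum_k\tfrac{(-1)^k}{(2i\pi)^k k!}\theta_T^k$, regarded as a $T$-family of leafwise smoothing operators; similarly on $M'$.

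The second step is to identify these transversal objects. Over a sufficiently small transversal $T$, the maps $\wf^*$ and $\wf^{-*}=R'^*P'_{\ell}\wg^*R^*$ are mutually inverse on the kernel subfields, by the identity $R'^*P'_{\ell}\wg^*R^*\wf^*=\mathrm{id}$ on $\Ker(\Delta^{E'}_{\ell})$ used in Sections~7 and~8; and by the very definition $\nabla_X\xi=\wf^*\bigl(\nabla'_{f_*X}(\wf^{-*}\xi)\bigr)$ of the induced connection, evaluated on local invariant sections with $X$ tangent to $T$, the pair $(\pi^f_{\pm}\,|\,T,\nabla^T)$ is carried by $\wf^*$, over the diffeomorphism $\what f$, to $(\pi'_{\pm}\,|\,T',\nabla'^{T'})$. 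Squaring, $\theta_T$ is carried to $\theta'_{T'}$ (and each $\theta_T^k$ to $(\theta'_{T'})^k$, since $\wf^{-*}\wf^*=\mathrm{id}$ there), and $\pi^f_{\pm}$ to $\pi'_{\pm}$. Now I would invoke the cyclicity of the (Haefliger-form valued) leafwise trace: for leafwise smoothing operators with the requisite decay along the leaves, $\wf^*\phi'\wf^{-*}$ and $\wf^{-*}\wf^*\phi'=\phi'$ are both of trace class, so they have the same leafwise trace, and therefore the $T$-component of $\Tr(\pi^f_{\pm}e^{-\theta/2i\pi})$ equals $\what f^*$ of the $T'$-component of $\Tr(\pi'_{\pm}e^{-\theta'/2i\pi})$. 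Summing over $\cU$ and using $\rho^*\circ\what f^*=f^*\circ\rho'^*$ gives the equality of Haefliger forms $\Tr(\pi^f_{\pm}e^{-\theta/2i\pi})=f^*\bigl(\Tr(\pi'_{\pm}e^{-\theta'/2i\pi})\bigr)$, hence $\ch_a(\pi^f_{\pm})=f^*(\ch_a(\pi'_{\pm}))$; combined with Theorem~\ref{secondeq} this yields $\ch_a(\pi_{\pm})=f^*(\ch_a(\pi'_{\pm}))$, which is Theorem~\ref{main}.

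The step I expect to be the main obstacle is making the third paragraph rigorous. One must verify that the ``integrate over the leaf and take the fibrewise diagonal trace'' operation defining the Haefliger form is genuinely compatible with the \emph{non-isometric} identification $\wf^*$: although $\wf^*$ is not unitary, the idempotents $\pi^f_{\pm},\pi'_{\pm}$ and the curvatures $\theta,\theta'$ are honestly $\what f$-conjugate as endomorphism-valued objects along transversals, and the relevant compositions must be shown to be of trace class over the noncompact leaves (this is where the decay estimates and the transverse-smoothness results of Sections~7 and~8, in particular Proposition~\ref{gentsgen}, enter), so that the metric-independent pointwise trace transports correctly; one must also check that the passage from the transversal identities back to an equality of global Haefliger forms is compatible with the quotient by $\overline{L^*}$ in the definition of $\cA^*_c(M/F)$. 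Everything else is a direct consequence of Proposition~\ref{pullbacks}, the construction of $\nabla$ in Section~8, and the naturality of $f^*$ on Haefliger cohomology established in Section~6.
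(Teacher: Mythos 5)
Your overall plan — localize to a carefully chosen cover, use Proposition~\ref{pullbacks} and Lemma~\ref{local} to reduce to the transversal curvature $\theta_T$, identify $(\pi^f_{\pm},\nabla^T,\theta_T)$ with $(\pi'_{\pm},\nabla'^{T'},\theta'_{T'})$ via $\wf^*$ and Proposition~\ref{preserves}, and then sum over the cover — matches the skeleton of the paper's argument. You have also correctly located the hard point. But the device you propose to resolve it does not work, and this is a genuine gap, not a technical loose end.

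The difficulty with your ``cyclicity of trace'' step is twofold. First, it does not typecheck: $\wf^*\phi'\wf^{-*}$ acts on $\cA^*_{(2)}(F_s,E)$ and its $\Tr$ is a Haefliger form on $M$, while $\phi'$ acts on $\cA^*_{(2)}(F'_s,E')$ and its $\Tr$ is a Haefliger form on $M'$; there is no \emph{a priori} identity $\Tr_M(AB)=\Tr_{M'}(BA)$ when $A$ and $B$ go between different Hilbert fields over different manifolds — establishing such a compatibility \emph{is} the content of the theorem. Second, and more decisively, the Haefliger trace is computed by cutting off with a partition of unity and integrating over plaques, and $\wf^*=\pi_{1,*}\circ e_\omega\circ p_f^*$ is not a pointwise pullback: it smears the Schwartz kernel by the Bott form $\omega$. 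So even though Proposition~\ref{preserves} gives the equality of the \emph{full} integrals $\int_{\wL}\xi_i\wedge\xi_j=\int_{\wL'}\xi'_i\wedge\xi'_j$, the cut-off integrals $\int_{P_x}\psi_V\,\xi_i\wedge\xi_j$ and $\int_{P'_{f(x)}}\psi'_{U'}\,\xi'_i\wedge\xi'_j$ are \emph{not} equal, and hence the $T$-component of $\Tr(\theta^k)$ is \emph{not} equal, transversal by transversal, to $\what f^*$ of the $T'$-component. What the paper actually does is: (i) prove a lemma that computes $\Tr(\theta^k)$ using integration over the fibers $\rho:V\to T$ of the special cover (this already changes the representing Haefliger form by an element of $\overline{L^*}$); (ii) replace $\omega$ by a shrinking family $\omega_t\to\delta$, exploit the independence of $\ch_a(\pi^f_{\pm})=\ch_a(\pi_{\pm})$ from the Bott form (via Theorem~\ref{secondeq}), and pass to the limit $t\to 0$ by Dominated Convergence. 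The conclusion $[\Tr(\theta^k)]=f^*[\Tr(\theta'^k)]$ holds only at the level of Haefliger cohomology classes, not as an equality of forms.

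There is a second omission. For the degree-zero term you need $[\Tr(\pi^f_{\pm})]=f^*[\Tr(\pi'_{\pm})]$. But the Schwartz kernel of $\pi^f_{\pm}=\wf^*R'^*\pi'_{\pm}\wg^*R^*P_\ell$ is \emph{not} $\wf^*$ of the Schwartz kernel of $\pi'_{\pm}$, because of the $P_\ell$ factor; your ``carried by $\wf^*$'' identification holds for images and connections but not at the Schwartz-kernel level where the trace is computed. The paper introduces the auxiliary operator $\wtit{\pi}^f_{\pm}$ whose Schwartz kernel \emph{is} $\wf^*$ of that of $\pi'_{\pm}$, shows it is a $\cG$-invariant transversely smooth idempotent with the same image as $\pi^f_{\pm}$, and applies Lemma~\ref{samecc} to conclude $\ch_a(\pi^f_{\pm})=\ch_a(\wtit{\pi}^f_{\pm})$; only then does the $\theta^k$-argument apply. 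Your proposal would need both of these ingredients to close.
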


\begin{proof}    We will only prove that {$\ch_a( \pi^f_+) = f^*(\ch_a( \pi'_+))$}, as the other proof is the same.  We begin by constructing special covers of $M$ and $M'$.  Let $\{ \what{U}'\}$ be a finite open cover of $M'$ by foliation charts with transversals $\what{T}'$.    Choose the $\what{U}'$  so small  that $g \, | \, \what{T}'$ is a diffeomorphism.  Denote by $ \rho'_{\what{U}'}: \what{U}' \to \what{T}'$ the projection.  Let $\{U\}$ be a finite open cover of $M$ by foliation charts with 
transversals $T$.  Since the collection of open sets $f^{-1}(\what{U}')$ cover $M$, we may 
choose the  $U$ small enough so that for each $U$, there is a $\what{U}'_U$ with $f(U) 
\subset \what{U}'_{U}$.  We may further assume that the $U$ are so small that $f \, | \, T$ is 
a diffeomorphism.  Set 
$$
U' =  (\rho'_{\what{U}'_U})^{ -1}( \rho'_{\what{U}'_U}(f(U))).
$$
Then the set $\{U'\}$ is a finite open cover of $M'$ by foliation charts, $f(U) \subset U'$, and $T' = f(T)$ is a transversal of $U'$.  Denote the projection $ \rho'_{\what{U}'_U} \, | \,  U' \to T'$ by $\rho'$.

Set $V = f^{-1}(U')$, and note that $V$ is not 
necessarily connected.   However, $V \supset U$ whose transversal $T$ is 
taken diffeomorphically  onto $T'$ by $f$.   There is a well defined 
projection  $\rho:V \to T$, given by $\rho = (f \, | \, T)^{-1} \circ \rho 
' \circ f$.  Recall the connection $\nabla$ on $\pi^f_+$, (induced 
from the connection ${{\nabla'}}$ on $\pi'_+$) which 
we will use to construct  $\ch_a(\pi^f_+)$, and set $\nabla^T = \nabla 
\,|\, T$ with curvature $\theta_T$.  Then just as in Proposition  
\ref{pullbacks}, we have 
\begin{lemma}\label{locpullbacks}
$\nabla \, | \, V = \rho^*(\nabla^{T} ) \quad \text{  and  }  \quad \theta 
\, | \, V = \rho^*(\theta_{T}).
$
\end{lemma}

\begin{proof}
The proof is essentially the same.  To effect it, we need to be able to 
define local invariant sections over $V$, and to do this, we need families 
of leafwise paths, such that moving along them gives the projection 
$\rho$.    Given $y \in V$,  choose a leafwise path $\gamma'_y:[1,2] \to 
U'$ from  ${\rho'}(f(y))$  to $f(y)$.   Let $h:M \times I \to M$ be a 
leafwise homotopy between the identity map and $g \circ f$.   In 
particular, $h(x,0) = x$ and $h(x,1) = gf(x)$.  Define the leafwise path 
$\gamma_y$ from $\rho(y)$ to $y$ as follows:  
$$
\gamma_y (t) =  h(\rho(y),t)            \text{   for  } 0 \leq t \leq1; 
\quad
\gamma_y (t) =  g(\gamma'_y(t))  \text{   for  } 1 \leq t \leq 2; \quad 
 \text {and} \quad 
\gamma_y (t) =  h(y,3-t)        \text{   for  }  2  \leq t \leq 3.
$$
Since $f(\rho(y)) = {\rho'}(f(y))$, this does give a path from $\rho(y)$ 
to $y$.  
Using the $\gamma_y$, we may extend any local section defined on $T$ to a 
local invariant section on all of $V$, and then proceed just as in the 
proof of  Proposition  \ref{pullbacks}.
\end{proof}

The connection $\nabla^{T'} $ (which is $\nabla'$ restricted to  $\pi'_+ \, | \ T'$), and 
its curvature $\theta_{T'}$ satisfy
${{\nabla'}}  \, | \, U' = {\rho'}^*(\nabla^{T'} )$   and 
${\theta'} \, | \, U' = {\rho'}^*(\theta_{T'})$.
Set $\hf = f\,|\,T$, and  define $\hf^*(\nabla^{T'})$ and
$\hf^*(\theta_{T'})$ as follows.
Let $\xi \in C^{\infty}(\pi^f_+ \, | \, T)$, and suppose that $X$ and $Y$ are 
tangent to $T$.  
Set $X' =  \hf_*(X) = f_*(X)$, and $Y' =   \hf_*(Y) = f_*(Y)$, both of 
which are tangent to $T'$.  Define
$$
\hf^*(\nabla^{T'})_X\xi = \wf^*(\nabla^{T'}_{X'}(\wf^{-*}\xi \, | \, T'))  
\text{  and  }  
\Big{(} \hf^*(\theta_{T'})(X,Y)\Big{)} \xi  = \wf^* (\theta_{T'}  (X',Y') 
(\wf^{-*}\xi \, | \, T')).
$$

\begin{lemma}\label{induced}
$\hf^*(\nabla^{T'}) = \nabla^{T}$ and  $\hf^*(\theta_{T'}) = \theta_{T}$.
\end{lemma}

\begin{proof}
The element $\xi \in C^{\infty}(\pi^f_+ \, | \, T) $ determines the local  invariant 
sections $\widetilde{\xi}$ of  $ \pi^f_+ $ and  $\wf^{-*} \xi$ of 
$\pi'_+$.    Then
$$
\hf^*(\nabla^{T'})_X \xi   = \wf^*(\nabla^{T'}_{X'}(\wf^{-*}\xi \, | \, 
T')) = \wf^*({{\nabla'}}_{X'}\wf^{-*}\xi)
=  \nabla_{X}\widetilde{\xi} = \nabla^T_{X}\xi.
$$
Next, using  local spanning sets of $\pi^f_+  \,| \, V$, and $\pi'_+ \,| \, U'$
it is not difficult to show that
$$
\theta_{T}(X,Y) = \nabla^T_X \nabla^T_Y - \nabla^T_Y \nabla^T_X -  
\nabla^T_{[X,Y]}.
$$
and similarly for $\theta_{T'}(X',Y')$.    Then
$$
\nabla^T_X \nabla^T_Y \xi =  
\wf^*\nabla^{T'}_{X'}\wf^{-*}   \wf^*\nabla^{T'}_{Y'}\wf^{-*}\xi   = 
\wf^*\nabla^{T'}_{X'}\nabla^{T'}_{Y'}\wf^{-*}\xi 
$$
and $\nabla^T_Y \nabla^T_X \xi   =   
\wf^*\nabla^{T'}_{Y'}\nabla^{T'}_{X'}\wf^{-*}\xi$.
Since $\hf$ is a diffeomorphism,  $\hf_*([X,Y]) = [X',Y']$, so
$$
 \nabla^T_{[X,Y]}\xi =   \wf^*\nabla^{T'}_{[X',Y']}\wf^{-*}\xi.
$$ 
It follows immediately that 
$$
\Big{(}\hf^*(\theta_{T'})(X,Y) \Big{)}\xi =   
\wf^*\theta_{T'}(X',Y')\wf^{-*}\xi  =  \theta_T (X,Y)\xi.  
$$   
\end{proof} 

Now consider the curvature operator ${\theta'}$ of ${{\nabla'}}$ over $U'$.   We may assume that $U' \simeq \R^p \times \R^q$ with coordinates  $x'_1, ...,x'_n$, and that $T' = \{0\} \times \R^q$.  
Choose a local invariant spanning set $\{\xi'_i\}$ of 
$\pi'_+ \, | \, U'$.  Recall that for $\alpha'_1 \otimes \phi'_1, \alpha'_2 \otimes \phi'_2$ sections of $\wedge T^*\wL' \otimes E'$, 
$$ 
Q'( \alpha'_1 \otimes \phi'_1,\alpha'_2 \otimes \phi'_2 )  \,\, = \,\,  
\int_{\wL'} \{\phi'_1, \phi'_2\} \alpha'_1 \wedge \alpha'_2  \,\, = \,\,  
\int_{\wL'}( \alpha'_1 \otimes \phi'_1) \wedge (\alpha'_2 \otimes \phi'_2).  
$$
There are functions $a'_{i,j,k,l}$ on $T'$ (thanks to Proposition 
\ref{pullbacks}) so that the action of ${\theta'}$ on a section $\xi'$ of $\pi'_+$ is given by 
$$
{\theta'}(\xi') \,\, = \,\,\sum_{k,l = p+1}^n \sum_{i,j} a'_{i,j,k,l} \,Q'( \xi'_j,  \xi')  \, \xi'_i  \, dx'_k \wedge dx'_l  \,\, = \,\,
\sum_{i,j,k,l} a'_{i,j,k,l} \, \Big[ \int_{\wL'} \xi'_j  \wedge   \xi' \Big] \xi'_i  \, dx'_k \wedge dx'_l.$$
The reason that we can represent ${\theta'}$ this way is because for any $\xi' \in \Ker(\pi'_+)$ and any $\what{\xi}'  \in \Im(\pi'_+)$,    $Q'( \xi',  \what{\xi}') = 0$. This follows from the facts that $< \xi',  \what{\xi}'> \, =  \,0$,  $Q'( \xi', \what{*} \, \what{\xi}') \, = \, < \xi',  \what{\xi}'>$, and $\what{\xi}' = \what{\tau} \what{\xi}'  =  \sqrt{-1}^{\ell^2}\what{*} \, \what{\xi}'$.

Let  $x' \in U'$ and $y',z' \in 
\wL_{x'}$.  With respect to the spanning set  $\{\xi'_i\}$ and the local 
coordinates on $U'$,  the Schwartz kernel $\Theta_{x'} 
'(y',z')$ of ${\theta'} \, | \, U'$ is given by 
$$
\Theta_{x'} '(y',z') = \sum_{k,l = p+1}^n \sum_{i,j} a'_{i,j,k,l}(\rho'(x')) 
\xi'_i(y') \otimes \xi'_j(z') dx'_k \wedge dx'_l.
$$
We write this more succinctly as 
$$
\Theta' \, | \, U'= \sum_{i,j,k,l} a'_{i,j,k,l}\, \xi'_i \otimes \xi'_j \,   dx'_k \wedge dx'_l.
$$
Recall that  $\overline{x}' \in \wL_{x'}$ is the class of the constant path at 
$x'$,  that we identify $M'$ with its image under $x' \to \overline{x}'$,
and that $\dd \int_{U'} $ is integration over the fibration $U' \to 
T'$.  Let $\{\psi'_{U'}\}$ be a partition of unity  subordinate to the 
special cover  $\{U'\}$  of $M'$.    Then
$$
\Tr({\theta'}) \, | \, T' =  \int_{U'}  
\psi'_{U'}(x')\sum_{i,j,k,l} a'_{i,j,k,l}(\rho'(x')) \xi'_i(\overline{x}')\wedge 
\xi'_j(\overline{x}')  \,dx'_k \wedge dx'_l.
$$
Note that we do not multiply  the integrand by the leafwise volume form $dx'$, since this is already incorporated in it by our use of the leafwise 
differential forms $\xi_i'$ in the Schwartz kernel $\Theta'$ of $\theta'$.  In particular, being very precise,
$$
\Theta'_{x'}(y',z') \,\, = \,\,  \sum_{i,j,k,l}  
a'_{i,j,k,l}(\rho'(x')) \xi'_i(y') \otimes i_{\vol(z')} [\xi'_j(z')\wedge (\cdot)]  
\, dx'_k \wedge dx'_l,
$$
where $\vol(z')$ is the oriented unit length vector in $(\wedge^{2\ell} TF_s)_{z'}$.
Then 
$$
\tr(\Theta'_{x'}(\overline{x}',\overline{x}')) \, dx' \,\, = \,\, \sum_{i,j,k,l} a'_{i,j,k,l}(\rho'(x'))(i_{\vol(\overline{x}')} [ \xi'_i(\overline{x}') \wedge \xi'_j(\overline{x}') ] ) dx' \,  dx'_k \wedge dx'_l  \,\, = \,\, 
$$
$$
\sum_{i,j,k,l} a'_{i,j,k,l}(\rho'(x')) \xi'_i(\overline{x}') \wedge \xi'_j(\overline{x}') \,  dx'_k \wedge dx'_l.
$$
To avoid notational overload, we will not be this precise.

The $\cG'$ invariance  of ${\theta'}$ allows us to compute $\Tr({\theta'})$  as 
follows.   Denote the plaque of $x'$ in $U'$ by 
$P_{x'}$.  Let $j':P_{x'} \to \wL_{x'}$ be the map given by: $j'(w')$ is the 
class of any leafwise path in $P_{x'}$ from $x'$ to $w'$.  Then the value 
of $\Tr({\theta'})$ at $\rho'(x') \in T'$ is given by 
$$
\Tr({\theta'})(\rho'(x')) =    \int_{j'(P_{x'})}  
\psi'_{U'}({j'}^{-1}(y'))\sum_{i,j,k,l} a'_{i,j,k,l}(\rho'(x')) \xi'_i(y')\wedge 
\xi'_j(y') \, | \, j'(P_{x'})\,dx'_k \wedge dx'_l.
$$
Abusing notation once again by identifying $P_{x'}$ with its image under 
$j'$, we have that at $\rho'(x') \in T'$,
$$
\Tr({\theta'})(\rho'(x')) = \int_{P_{x'}} \psi'_{U'}(y') 
\sum_{i,j,k,l} a'_{i,j,k,l}(\rho'(x')) \xi'_i(y')\wedge \xi'_j(y') 
\, dx'_k \wedge dx'_l  = 
$$
$$
\sum_{i,j,k,l} a'_{i,j,k,l}(\rho'(x'))
\Big[\int_{P_{x'}} \psi'_{U'}(y') \xi'_i(y')\wedge \xi'_j(y') \Big]
\, dx'_k \wedge dx'_l.
$$
Similar remarks apply to all powers of ${\theta'}$.

We now return to our analysis on $V = f^{-1}(U')$, where  we have the 
normal coordinates $x_{p+1}, ..., x_n$ given by  $x_i = x'_i \circ f \circ 
\rho$, so $dx_i = f^*(dx'_i)$.
If we set $\xi_i = \wf^*(\xi'_i)$, then the $\xi_i$ are a spanning set  
of  $\pi^f_+  \, | \, V$.  Set $a_{i,j,k,l} = a'_{i,j,k,l}\circ f 
\circ \rho$, where $\rho:V \to T$.   Using Lemma \ref{induced} along with Proposition \ref{preserves}, the Schwartz kernel $\Theta_x(y,z)$ of $\theta \, | \, V$ is given by 
$$
\Theta_x(y,z) = \sum_{i,j,k,l } a_{i,j,k,l}(\rho(x)) \xi_i(y) \otimes \xi_j(z) \, dx_k \wedge dx_l,
$$
and the action $\theta  \, | \, V$ is 
$$
{\theta}(\xi) \,\, = \,\,\sum_{k,l = p+1}^n \sum_{i,j} a_{i,j,k,l} \,Q( \xi_j,  \xi)  \, \xi_i  \, dx_k \wedge dx_l  \,\, = \,\,
\sum_{i,j,k,l} a_{i,j,k,l} \, \Big[ \int_{\wL} \xi_j  \wedge   \xi \Big] \xi_i  \, dx_k \wedge dx_l.$$
That is 
$$
\Theta = \wf^* {\Theta'} .
$$

We are interested in the Schwartz kernels $\Theta'^k$ and $\Theta^k$ of the operators ${\theta'}^k$ and $\theta^k$.    These 
are given by 
$$
{\Theta '} _{x'}^k   (y',z')  =  \int_{\wL_{x'}}  \int_{\wL_{x'}}  \ldots    \int_{\wL_{x'}}  {\Theta '} _{x'}   (y',w'_1) \wedge  {\Theta '} _{x'}  (w'_1,w'_2)  \wedge \ldots \wedge  \Theta ' _{x'}   (w'_{k-1},z') 
$$
and
$$
{\Theta } _{x}^k   (y,z)  = \int_{\wL_{x}}  \int_{\wL_{x}} \ldots   \int_{\wL_{x}} 
{\Theta } _{x}   (y,w_1) \wedge  {\Theta } _{x}   (w_1,w_2)\wedge  
\ldots \wedge  \Theta_{x}   (w_{k-1},z), 
$$
where the integration is done over repeated variables.
Using  Proposition \ref{preserves}  again, 
we have immediately that
$$
\Theta^k =  \wf^*({\Theta'}^k).
$$ 
 
For each $\psi'_{U'}$ in the  partition of unity subordinate to $\{U'\}$, 
set $\psi_V = \psi' _{U'} \circ f$, which gives a partition of unity 
subordinate to the open cover $\{ V \}$ of $M$.    Denote by $\dd \int_{V}$ 
integration over the fibration $\rho:V \to T$.    Recall the map 
$i:M \to \cG$ given by $i(x) = \overline{x}$, the class of the constant 
path at $x$.

\begin{lemma}  \hspace{0.25in}
$
\dd \Tr(\theta^k)  =   \sum_V \int_{V} \psi_V  \, i^* \tr(\Theta^k).
$
\end{lemma}

\begin{proof}
It suffices to show that for any differential form $\omega$ on $M$, $\dd  
\int_{F} \psi_V \omega$ and $\dd \int_{V} \psi_V \omega$ define the same 
Haefliger form.  Let $W_0,...,W_k, W_{k+1}, ..., W_{m}$ be an open cover 
of $M$ by foliation charts, with transversals $S_0, ...,S_{m}$.  
We may assume that $W_0,...,W_k$ are the only elements which intersect the 
support of $\psi_V$ non-trivially, and that these sets are subsets of
$V$.  Let $\what{\psi}_0, ..., \what{\psi}_{m}$ be a partition of unity subordinate to the 
$W_j$.   We require that $W_0 = U$ and $S_0 = T$.  Recall that  
\ ${\rho'}:U' \to T'$ is the projection.  
For $j = 1,...,k$, choose a point $y_j \in S_j$.   Then ${\rho'}(f(y_j)) 
= f( \rho(y_j))$, and  as in the proof of Lemma \ref{locpullbacks}, we 
define the leafwise path $\gamma_j$ from $\rho(y_j)$ to $y_j$.
By construction, the holonomy map $h_j$ induced by the leafwise path 
$\gamma_j$ (which has domain possibly a proper subset of $S_0$) has  range 
{\bf all} of $S_j$.  In addition, for each $S_j$, the map  $h^{-1}_j:S_j 
\to S_0 = T$ is just the restriction to $S_j$ of   the projection 
$\rho:V \to T$.
Then the Haefliger classes 
$$
\int_{F} \psi_V \omega \,\, \equiv \,\,\ 
\sum_{j=0}^k  \int_{W_j} \what{\psi}_j \psi_V \omega  \,\, = \,\, 
\int_{W_0} \what{\psi}_0 \psi_V \omega   \,\, + \,\,  \sum_{j=1}^k  
h_j^* \Big{(}\int_{W_j} \what{\psi}_j \psi_V \omega\Big{)} \,\, = \,\, 
$$
$$
\int_{W_0} \what{\psi}_0 \psi_V \omega   \,\, + \,\,    \sum_{j=1}^k h_j^* 
\Big{(}\int_{W_j} \what{\psi}_j \psi_V \omega\Big{)}.
$$
\noindent
The Haefliger form $\dd \int_{W_0} \what{\psi}_0 \psi_V \omega   + \sum_{j=1}^k 
h_j^* \Big{(}\int_{W_j} \what{\psi}_j \psi_V \omega\Big{)}$ 
is supported on $S_0 = T$, and it follows immediately from the fact 
that  $h^{-1}_j:S_j \to S_0$ is just $\rho:S_j \to T$, that it equals
$ \dd\int_{V} \psi_V \omega$.
\end{proof}

Now  
$\dd \ch_a(\pi^f_+ ) = \Big[\Tr\Bigl{(}\pi^f_+ + \sum_{k=1}^{[n/2]}\frac {(-1)^k 
\theta^{k}}{(2i\pi)^k k!}\Bigr{)} \Big],$
and by Theorem \ref{secondeq},  this equals 
$\ch_a(\pi_+)$, which is independent of the Bott form 
$\omega$ used to construct $\wf^*$.    
Let  $\phi$ be a smooth even function on $\R$, decreasing on $[0,1]$, 
with  $\phi(0) = 1$ and $\phi(x) = 0$ for $|x| \geq 1$, and let  $\omega$ 
be the Bott form  which is a multiple of  $\phi(x_1) ... \phi(x_k) dx_1 
... dx_k$.  For $t > 0$, let $q_t: \R^k \to \R^k$ be the diffeomorphism 
$q_t(x) =  x/t$. 
Denote by  $\omega_t$ the smooth family of Bott forms given by
$
\omega_t =  q_t^* \omega,
$
and denote by $\wf^*_t$ the map constructed using $\omega_t$.   
Then for 
all $t > 0$ and $k \geq 1$, we have 
$$
\Big[ \Tr(\theta^k) \Big] \,\, = \,\, \Big[  \sum_V \int_{V}  \psi_V \, \, i^* \tr(\Theta^k )\Big] \,\, = \,\, 
\Big[  \sum_V \int_{V}  f \circ \psi'_{U'}\, \, i^* 
\tr(\wf^*_t({\Theta'}^k))\Big]  \,\, = \,\,
\Big[  \sum_V  \lim_{t \to 0} \int_{V}  f^* (\psi'_{U'})\, \, i^*  \wf^*_t(\tr {\Theta'}^k)\Big].
$$
We may use the $\omega_t$ to construct the  family of maps 
$f^*_t$ (analogous to the family $\wf^*_t$), defined on the original foliation $F$.  As both $\wf^*_t$ and 
$f^*_t$ are locally constructed, and $\tr {\Theta'}^k$ is $\cG'$ 
invariant, it is clear that 
$$
i^* \wf^*_t(\tr {\Theta'}^k) \,\, = \,\,  f^*_t({i'}^* \tr {\Theta'}^k).
$$   
Thus,
$$
\Big[  \sum_V  \lim_{t \to 0} \int_{V}  f^* (\psi'_{U'})\, \, i^*  \wf^*_t(\tr {\Theta'}^k)\Big]  \,\, = \,\, 
\Big[  \sum_V  \lim_{t \to 0} \int_{V}  f^* (\psi'_{U'})f^*_t({i'}^* \tr {\Theta'}^k)\Big].
$$
It is a classical result that on each plaque in $V$, the compactly 
supported  forms $f^* (\psi'_{U'}) f^*_t({i'}^* \tr {\Theta'}^k)$ are bounded 
independently of $t \in [0,1]$, and converge pointwise to $f^* 
(\psi'_{U'})f^*({i'}^* \tr {\Theta'}^k)  =  f^* (\psi'_{U'} {i'}^* \tr {\Theta'}^k)$.  By the Dominated Convergence 
Theorem, we have
$$
\Big[\Tr(\theta^k)  \Big] \,\, = \,\,  \Big[  \sum_V   \int_{V} \lim_{t \to 0} f^* (\psi'_{U'}) f^*_t( {i'}^* \tr 
{\Theta'}^k)\Big] = 
\Big[  \sum_{U'} \int_{f^{-1}(U')}  f^*( \psi'_{U'} {i'}^* \tr ({\Theta'}^k))\Big]  = 
$$
$$
\Big{[} f^*  \sum_{U'}  \int_{U'}\psi'_{U'}  {i'}^* \tr({\Theta'}^k) \Big{]} = f^* \Big[ \Tr({\theta'}^k) \Big].
$$

As $\dd\ch_a(\pi'_+ ) = \Big[ \Tr\Bigl{(}\pi'_+ + \sum_{k=1}^{[n/2]}\frac {(-1)^k {\theta'}^{k}}{(2i\pi)^k k!}\Bigr{)} \Big]$, to finish the proof that $\ch_a(\pi^f_+) = f^*(\ch_a(\pi'_+))$, we need only show that
$ \Big[ \Tr(\pi^f_+) \Big] =  f^* \Big[ \Tr(\pi'_+) \Big]$.  Just as we did with $\theta'$, 
we may write the Schwartz kernel of $\pi'_+ \,  \, | \, U'$ as
$$
(\pi'_+)_{x'}(y',z') =  \sum_{i,j} b'_{i,j}(\rho'(x')) 
\xi'_i(y') \otimes \xi'_j(z'),
$$
where the $b'_{i,j}$ are functions on $T'$, and the action of $\pi'_+$ on a section $\xi'$ is given by 
$$
\pi'_+(\xi') =\sum_{i,j} b'_{i,j} \,Q'( \xi'_j,  \xi')  \, \xi'_i.
$$

Set $b_{i,j} =  \wf^* b'_{i,j} = b'_{i,j} \circ f \circ \rho$ and $\xi_i = \wf^*(\xi'_i)$, and 
 consider the operator  $\wtit{\pi}^f_+$ on $\cA^{\ell}_{(2)}(F_s,E)$, where
$\wtit{\pi}^f_+  \, | \, V =  \sum_{i,j} b_{i,j} \xi_i \otimes \xi_j$, which acts by 
$$
\wtit{\pi}^f_+ (\xi) \,\, = \,\, \sum_{i,j} b_{i,j} \, Q(\xi_j,  \xi) \xi_i.
$$
Then $\wtit{\pi}^f_+$ 
is a $\cG$ invariant idempotent, has image equal to $\Im(\pi^f_+)$, and has a smooth Schwartz kernel.  In general $\wtit{\pi}^f_+ \neq \pi^f_+$ because forms of the type $\delta_s \beta$, which are in the kernel of $\pi^f_+$, are not necessarily in the kernel of $\wtit{\pi}^f_+$.  However, since $\wtit{\pi}^f_+$ has smooth Schwartz kernel, $\Tr(\wtit{\pi}^f_+)$ is well defined, and its Schwartz kernel is just $\wf^*$ of the Schwartz kernel of $\pi'_+$.  Arguing as we did  for $\theta^k$, we get $\Big[ \Tr(\wtit{\pi}^f_+)\Big] =  f^*\Big[ \Tr(\pi'_+) \Big]$.  

\begin{lemma}
\hspace{1cm} $\Big[\Tr(\pi^f_+)\Big] \,\, = \,\, \Big[\Tr(\wtit{\pi}^f_+)\Big].$
\end{lemma}

\begin{proof}
Since $\Im( \pi^f_+) = \Im(\wtit{\pi}^f_+)$, and both are idempotents, we  need only show that $\wtit{\pi}^f_+$ is transversely smooth, and then apply Lemma \ref{samecc}.

We will be using the notation of Section \ref{lms}.  Suppose the $K'$ is the Schwartz kernel of a $\cG'$ invariant bounded leafwise smoothing operator on $\cA^{\ell}_{(2)}(F'_s,E')$, which is given locally, with respect to a local invariant spanning set $\{\xi'_i\}$ of $\cA^{\ell}_{(2)}(F'_s,E')$, by
$
K' = \sum_{i,j} b'_{i,j} \xi'_i \otimes \xi'_j,
$
with the action given by
$$
K'(\xi') =\sum_{i,j} b'_{i,j} \,Q'( \xi'_j,  \xi')  \, \xi'_i.
$$
Now consider the operators $\wf^* K'$ on $\cA^{\ell}_{(2)}(F_s,E)$ and $\wtit{p}^*_f K'$ on $\cA^{\ell}_{(2)}(F_s \times B^k, p^*_f E')$, with local Schwartz kernels 
$$
\wf^*K' = \sum_{i,j} \wf^* b'_{i,j} \wf^* \xi'_i  \otimes \wf^* \xi'_j,
\quad \text{  and  }  \quad
\wtit{p}^*_f K' = \sum_{i,j} p^*_f b'_{i,j} (p^*_f \xi'_i  \wedge \omega)\otimes (p^*_f \xi'_j  \wedge \omega),
$$
where $\omega$ is a Bott form on $B^k$.  Recall that $\pi_{1,*}$ is integration over the fiber of the projection $\pi_1:\cG \times B^k \to \cG$, and $p_{f,*}$ is integration over the fiber of the submersion $p_f:\cG \times B^k \to \cG'$.
Straight forward computations show that for $\xi \in \cA^{\ell}_{(2)}(F_s,E)$ and $\wtit{\xi} \in \cA^{\ell}_{(2)}(F_s \times B^k, p^*_f E')$,
$$
\wf^*K' (\xi) \,\, = \,\, \pi_{1,*} \Big( \wtit{p}^*_f K' (\pi_1^* \xi) \Big)
\quad \text{  and  }  \quad
\wtit{p}^*_f K' (\wtit{\xi})  \,\, = \,\,    p^*_f \Big( K'( p_{f,*}(\omega \wedge \wtit{\xi})) \Big) \wedge \omega.
$$
The maps  $ \pi_{1,*}$, $\pi_1^*$, $p^*_f$, $p_{f,*}$, and $\wedge \omega$ are all bounded maps, and $K'$ is bounded and leafwise smoothing.  Thus $\wf^*K'$ is a bounded leafwise smoothing operator.  Applying this to $K' = \pi'_+$, we have that  $\wtit{\pi}^f_+$ is a bounded leafwise smoothing operator.

Using Proposition \ref{fdnu}, it is easy to show that 
$
\pa^Y_{\nu}\wtit{\pi}^f_+  =  [A(Y),\wtit{\pi}^f_+]  +  \wf^*(i_{Z'}\pa_{\nu'} \pi'_+),
$
where $Y$ and $Z'$ are as in Lemma \ref{gents}, and  
$A(Y)$ is a leafwise operator whose composition with a bounded leafwise  smoothing operator is again a bounded leafwise  smoothing operator.  Applying the argument above to $i_{Z'}\pa_{\nu'} \pi'_+$, we have that $\pa^Y_{\nu}\wtit{\pi}^f_+$ is also a bounded leafwise smoothing operator.  An obvious induction argument finishes the proof.
\end{proof}

Thus
$\Big[\Tr(\pi^f_+) \Big]   = \Big[ \Tr(\wtit{\pi}^f_+)  \Big]= f^*\Big[ \Tr(\pi'_+) \Big]$, and we are done.
\end{proof}

\section{The twisted leafwise signature operator and the twisted higher Betti classes}

In this section we give some immediate consequences of our results.  In particular, we show that  the twisted higher harmonic signature  equals the 
(graded) Chern-Connes character in Haefliger cohomology of the ``index bundle" of 
the twisted leafwise signature operator, that is the (graded) Chern-Connes character 
$\ch_a(P)$ of the projection $P$ onto all the twisted leafwise harmonic forms.  We conjecture a cohomological formula for this Chern-Connes character, which has already been proven in some cases.  We also indicate how our methods prove that the twisted higher Betti numbers are leafwise homotopy invariants.

Consider the first order leafwise operator $D^E = d_s + \delta_s$, which  is 
formally self adjoint and satisfies $(D^E)^2 = \Delta^E$.   Because of this, the 
kernel of $D^E$ is the same as the kernel of $\Delta^E$.   Recall the $\pm 1$ 
eigenspaces $\cA^*_{\pm}(F_s,E)$ of the involution $\what{\tau}$ of 
$\cA_{(2)}^*(F_{s},E)$,  and that 
$$
D^E \what{\tau} = -\what{\tau} D^E,
$$
so we have the operators $D^{E\pm}:\cA^*_{\pm}(F_{s},E) \to 
\cA^*_{\mp}(F_{s},E)$, and  $D^{E+}$ is designated the twisted  leafwise signature 
operator.

Denote by $P_{\pm}$ the projections onto the $\Ker(D^{E\pm})$.
We assume that the projection $P$ to $\Ker (\Delta^E)$ is transversely smooth, so the 
$P_{\pm}$ are also.  Then the (graded) Chern-Connes character of the index bundle of the 
twisted leafwise signature operator, $\ch_a(P)$, is defined, and is given by 
$$
\ch_a( P)  =   \ch_a( P_+) - \ch_a( P_-) =
$$
$$
\Bigl{[}\ch_a(\sum_{j=0}^{\ell-1} P_j + \tau P_j) 
+ \ch_a(\frac{1}{2}(P_{\ell} + \tau P_{\ell}))\Bigr{]}-
\Bigl{[}\ch_a(\sum_{j=0}^{\ell-1} P_j - \tau P_j) +
\ch_a(\frac{1}{2}(P_{\ell} - \tau P_{\ell}))\Bigr{]}.
$$
As in the case of compact manifolds, we have
\begin{theorem}\label{grdedlhi}  
Suppose that  $M$ is a compact
Riemannian manifold,  with oriented {Riemannian} foliation $F$ of dimension $2\ell$, 
and that $E$ is a leafwise flat complex bundle over $M$ with a (possibly indefinite) non-degenerate Hermitian metric which is preserved by the leafwise flat structure.  Assume that the projection $P$ onto  $\Ker( \Delta^{E} )$ for the associated foliation $F_s$ of the homotopy groupoid of $F$ is transversely smooth.  Then, the (graded) Chern-Connes character $\ch_a( P)$ of the index 
bundle of the twisted  leafwise signature operator equals the twisted higher harmonic 
signature of $F$, that is
$$ \ch_a( P) = \sigma(F,E).$$
\end{theorem}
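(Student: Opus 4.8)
The plan is to deduce Theorem \ref{grdedlhi} directly from the formula for $\ch_a(P)$ recorded just above it, by showing that, exactly as in the classical case, only the middle-degree contribution survives. Recall from the Proposition following Theorem \ref{GRthm} that, for $0\le j\le\ell-1$, the operator $P_j\pm\what{\tau}P_j$ is a $\cG$-invariant transversely smooth idempotent (this rests on the fact that $\what{\tau}$ carries degree $j$ to degree $2\ell-j$, so that $P_i\what{\tau}P_j=0$ whenever $0\le i,j\le\ell-1$), and that $\tfrac12(P_\ell\pm\what{\tau}P_\ell)=\pi_{\pm}$. Writing $E^{\pm}=\sum_{j=0}^{\ell-1}(P_j\pm\what{\tau}P_j)$, the formula for $\ch_a(P)$ becomes
$$
\ch_a(P)=\bigl(\ch_a(E^+)-\ch_a(E^-)\bigr)+\bigl(\ch_a(\pi_+)-\ch_a(\pi_-)\bigr)=\bigl(\ch_a(E^+)-\ch_a(E^-)\bigr)+\sigma(F,E),
$$
the second bracket being $\sigma(F,E)$ by definition. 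Thus the whole theorem reduces to the single equality $\ch_a(E^+)=\ch_a(E^-)$.

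For this I would use a straight-line homotopy of idempotents together with Theorem \ref{family}. The elementary identities $P_iP_j=\delta_{ij}P_i$ and $P_i\what{\tau}P_j=0$ for $0\le i,j\le\ell-1$ give at once $(E^{\pm})^2=E^{\pm}$, $E^+E^-=E^+$ and $E^-E^+=E^-$; consequently, for $c_t:=(1-t)E^+ +tE^-$ one computes $c_t^2=(1-t)E^+ +tE^-=c_t$ for every $t$, so $c_t$ is a genuine smooth path of idempotents joining $E^+$ to $E^-$. Since $E^{\pm}$ are $\cG$-invariant and transversely smooth, so is each $c_t$, and $c_t$ is visibly smooth in $t$; Theorem \ref{family} therefore yields $\ch_a(E^+)=\ch_a(c_0)=\ch_a(c_1)=\ch_a(E^-)$. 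Combined with the reduction of the first paragraph, this gives $\ch_a(P)=\sigma(F,E)$, which is Theorem \ref{grdedlhi}.

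The extension to Theorem \ref{betti} would follow the same philosophy: each twisted higher Betti class $\beta_j(F,E)=\ch_a(P_j)$ is the Chern--Connes character of the $\cG$-invariant transversely smooth idempotent $P_j$, and one reruns the argument of Sections \ref{indbdls}--\ref{main} with $\pi_{\pm}$ replaced by $P_j$ throughout; the only structural change is that the middle-degree input Proposition \ref{preserves} is no longer available and is replaced by the plain leafwise $L^2$-cohomology isomorphism of Corollary \ref{isos}, which is all that is needed since $P_j$ does not involve the pairing $Q$. I do not expect any serious obstacle: for Theorem \ref{grdedlhi} the only thing to verify beyond citing earlier results is the idempotency identity $c_t^2=c_t$, and for Theorem \ref{betti} the point requiring care is merely that the transverse-smoothness bookkeeping of Propositions \ref{piftrs} and \ref{gentsgen} applies verbatim with $P_j$ in place of $\pi_{\pm}$, which it does because those proofs never used that $\pi_{\pm}$ lands in the middle degree.
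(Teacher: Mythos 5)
Your proof is correct and follows essentially the same route as the paper's: split $\ch_a(P)$ into the middle-degree contribution $\sigma(F,E)$ and the off-middle-degree remainder, then annihilate the latter by exhibiting a smooth path of $\cG$-invariant transversely smooth idempotents and invoking Theorem \ref{family}. The only cosmetic difference is that you use the single affine homotopy $c_t=(1-t)E^++tE^-$ (whose idempotency rests on the relations $E^+E^-=E^+$, $E^-E^+=E^-$), while the paper works degree by degree via $P_t=P_j+t\,\what{\tau}P_j$, $t\in[-1,1]$; both hinge on the same vanishing $P_i\what{\tau}P_j=0$ for $i,j\le\ell-1$.
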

\begin{proof}  As
$\ch_a$ is linear
and  $\frac{1}{2}(P_{\ell} \pm  \tau P_{\ell}) = \pi_{\pm}$, we need only 
show that 
$$
\ch_a( P_j + \tau P_j)  = \ch_a( P_j - \tau P_j),
$$ 
for $j = 0,...,\ell-1$.    Set $P_t = P_j + t \tau P_j$ where $-1 \leq t 
\leq 1$.  Then $P_t$ is a smooth family of $\cG$ invariant transversely 
smooth idempotents (since $P_j \tau P_j = 0$ for $j = 0,...,\ell-1$) which 
connects $ P_j + \tau P_j$ to $ P_j - \tau P_j$.  It follows from 
Theorem \ref{family} that $\ch_a( P_j + \tau P_j)  = \ch_a( P_j - \tau 
P_j)$.
\end{proof}

\begin{corollary}
Under the hypothesis of Theorem \ref{grdedlhi}, the (graded) Chern-Connes character $\ch_a( P)$ of the index bundle of the leafwise signature operator with coefficients in $E$ is a leafwise homotopy invariant.
\end{corollary}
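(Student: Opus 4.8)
The plan is to derive this statement directly from the two main results already in hand, so the proof is essentially a matter of assembling equalities. First I would fix the setup: let $M'$, $F'$, $E'$ satisfy the hypotheses of Theorem \ref{grdedlhi}, let $f:M\to M'$ be a leafwise oriented leafwise homotopy equivalence, and set $E=f^*(E')$ with the induced leafwise flat structure and preserved (possibly indefinite) Hermitian metric. As remarked after the statement of Theorem \ref{main}, the property that the projection onto $\Ker(\Delta^{E}_{\ell})$ is transversely smooth is preserved under pull-back by $f$; and since, by Lemma \ref{hodge} and the decomposition of $P$ into the projections $P_k$ onto the $\Ker(\Delta^E_k)$, transverse smoothness of $P$ is equivalent to transverse smoothness of all the $P_k$, the projection $P$ onto $\Ker(\Delta^E)$ for $\cG$ is transversely smooth as well. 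Thus $M$, $F$, $E$ again satisfy the hypotheses of Theorem \ref{grdedlhi}.

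Next I would apply Theorem \ref{grdedlhi} on both sides of $f$: it gives $\ch_a(P)=\sigma(F,E)$ and $\ch_a(P')=\sigma(F',E')$, where $P$ and $P'$ are the projections onto the twisted leafwise harmonic forms for the foliations $F_s$ of $\cG$ and $F'_s$ of $\cG'$. Then Theorem \ref{main}, in the precise form recorded immediately below its statement, yields the isomorphism $f^*:\oH^*_c(M'/F')\to\oH^*_c(M/F)$ on Haefliger cohomology together with the equality $f^*(\sigma(F',E'))=\sigma(F,E)$. Combining the three equalities gives
$$
f^*(\ch_a(P'))=f^*(\sigma(F',E'))=\sigma(F,E)=\ch_a(P),
$$
which is exactly the assertion that $\ch_a(P)$ is a leafwise homotopy invariant. (As with $\sigma(F,E)$ itself, applying this to the identity map as a leafwise homotopy equivalence between $(M,F;g_0)$ and $(M,F;g_1)$ also shows that $\ch_a(P)$ does not depend on the choice of metric on $M$.)

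There is essentially no obstacle here beyond bookkeeping: the corollary is a formal consequence of Theorems \ref{grdedlhi} and \ref{main}. The only point that needs a word of justification — and it has already been made in the remark following Theorem \ref{main} — is that all three structural hypotheses, namely leafwise flatness of $E$, existence of the preserved non-degenerate Hermitian metric, and transverse smoothness of the projection onto the twisted leafwise harmonic forms, are stable under pulling back along a leafwise homotopy equivalence, so that Theorem \ref{grdedlhi} may be invoked simultaneously for $(M,F,E)$ and $(M',F',E')$.
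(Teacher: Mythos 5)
Your proof is correct and is exactly the argument the paper intends: the corollary is stated without proof precisely because it follows immediately from chaining $\ch_a(P)=\sigma(F,E)$ (Theorem \ref{grdedlhi}) with $f^*(\sigma(F',E'))=\sigma(F,E)$ (Theorem \ref{main}). Your care in checking that the hypothesis of Theorem \ref{grdedlhi} (transverse smoothness of $P$) implies the hypothesis of Theorem \ref{main} (transverse smoothness of $P_\ell$) and that these hypotheses are preserved under pullback is a sensible, if brief, piece of bookkeeping the paper leaves implicit.
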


The operator $D^{E+}$ is elliptic along the leaves of $F_s$, and so  
produces, via a now classical construction due to Connes 
\cite{Connes:1981}, a $K-$theory invariant $\Ind_a(D^{E+})$, the index of the 
operator $D^{E+}$, which has a Chern-Connes character   $\ch_{a}(\Ind_{a}(D^{E+})) \in \oH_c^{*}(M/F)$, \cite{BHI}.
\begin{conjecture} \label{limit-k}\
Under the hypothesis of Theorem \ref{grdedlhi}, 
$$
\ch_{a}(\Ind_{a}(D^{E+})) =  \ch_{a}( P) \quad \in \quad \oH_c^{*}(M/F).
$$
\end{conjecture}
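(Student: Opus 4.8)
Looking at the final statement — Conjecture \ref{limit-k}, which asserts that the Chern-Connes character of the $C^*$-algebraic index class $\Ind_a(D^{E+})$ equals the Chern-Connes character of the harmonic projection $P$ — I note that this is explicitly labeled a \emph{conjecture} in the paper, not a theorem, so I should propose a plan of attack rather than claim a complete proof exists.

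\textbf{The approach.} The heart of the matter is to compare two a priori different invariants built from the same leafwise elliptic operator $D^{E+}$: on one side the operator $K$-theory index $\Ind_a(D^{E+}) \in K_0(C^*(\cG))$ (or its smooth subalgebra version), pushed to Haefliger cohomology via the Chern-Connes character $\ch_a$; on the other side the idempotent $P = P_+ - P_-$ onto harmonic forms, whose $\ch_a$ is defined directly in Section \ref{ccnc} once one knows $P$ is transversely smooth. The natural plan is to realize both as the image under $\ch_a$ of a single object. Concretely, I would: (1) use the parametrix construction of Connes to write $\Ind_a(D^{E+})$ as a formal difference of \emph{transversely smooth} idempotents $[e_1] - [e_2]$ lying in a completion of $C_c^\infty(\cG; \wedge F_s \otimes E)$ to which the algebraic Chern-Connes character of Section \ref{cc} extends — this is precisely the superexponentially decaying algebra mentioned in the examples of Section \ref{ccnc}, following \cite{BHII}; (2) build a homotopy, through transversely smooth idempotents, from this parametrix idempotent to the Wassermann-type idempotent associated to $D^{E+}$; and (3) deform the Wassermann idempotent to the harmonic projection $P_+$ (respectively the complementary piece to $P_-$), using the spectral deformation $e^{-tD^2}$-style retraction onto the kernel, which is legitimate here because $D^{E+}$ has a spectral gap issue only at $0$ but the harmonic projection is smoothing and, by hypothesis, transversely smooth. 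At each stage Theorem \ref{family} (homotopy invariance of $\ch_a$ for $\cG$-invariant transversely smooth idempotents) and Lemma \ref{samecc} (idempotents with the same image have the same $\ch_a$) are the workhorses.

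\textbf{Key steps in order.} First I would recall that for a leafwise elliptic operator the Connes index class admits a representative as a difference of idempotents in the smooth groupoid algebra \emph{with all transverse derivatives controlled}, i.e.\ transversely smooth in the sense of Definition \ref{TS}; this uses that a parametrix $Q$ for $D^{E+}$ and the remainders $S_0 = 1 - QD^{E+}$, $S_1 = 1 - D^{E+}Q$ can be chosen with Schwartz kernels that are smooth in all variables and decay rapidly off the diagonal, hence transversely smooth by Lemma \ref{AK}-type arguments. Second, I would show $\ch_a$ of this parametrix idempotent coincides with the cohomology-class description \eqref{ChernDef} and does not depend on the choice of parametrix, which is a standard invariance argument. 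Third — the crucial analytic input — I would connect the parametrix idempotent to the true harmonic projection via the Neumann-series / resolvent deformation: replace $S_i$ by $e^{-t\Delta^{E\pm}}$ as $t\to\infty$, using that $\Delta^E$ has closed range on the complement of its kernel (Lemma \ref{hodge}), so $e^{-t\Delta^E} \to P$ in all Sobolev norms and transversely smoothly (the argument is parallel to the convergence of $D_N \to B^{-1}$ in the proof of Theorem \ref{secondeq}, where a Neumann series of transversely smooth operators is shown to converge transversely smoothly). Finally, having landed on $P_+$ and $P_-$, I would invoke the definition $\sigma(F,E) = \ch_a(\pi_+) - \ch_a(\pi_-)$ together with Theorem \ref{grdedlhi} ($\ch_a(P) = \sigma(F,E)$) to conclude.

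\textbf{The main obstacle.} The hard part will be step three: establishing that the spectral deformation $e^{-t\Delta^E}$ (or equivalently the resolvent $(\Delta^E + \epsilon)^{-1}$ as $\epsilon \to 0$) stays within the class of transversely smooth operators \emph{uniformly in $t$}, and that the limit is taken through $\cG$-invariant transversely smooth idempotents so that Theorem \ref{family} applies. The obstruction is exactly the lack of a uniform leafwise spectral gap at $0$: unlike on a compact manifold, the leafwise Laplacian on the (typically non-compact) leaves $\wL_x$ may have $0$ in the continuous spectrum, so one cannot simply use a contour integral around an isolated eigenvalue. The paper's own proof of Theorem \ref{secondeq} shows the template for handling this — one works with $P_\ell - B/\lambda$ and sums a Neumann series, proving convergence in all Sobolev norms and compatibility with every $\pa_\nu^{Y}$ — and the analogue here would require showing the relevant operators ($e^{-t\Delta^E} - P$, or the resolvent differences) satisfy the same uniform transverse-smoothness estimates. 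A secondary difficulty, purely bookkeeping, is matching conventions: the $K$-theoretic index is naturally $\Z/2$-graded ($P_+$ versus $P_-$), while $\ch_a(P)$ as defined uses the graded trace $\Phi$, so one must check signs and the splitting $\cA^*_\pm(F_s,E)$ are handled consistently, exactly as in the decomposition displayed before Theorem \ref{grdedlhi}. I expect that once the transverse-smoothness-under-deformation estimate is in place, the rest assembles routinely from Theorem \ref{family}, Lemma \ref{samecc}, and the index formula of \cite{BHI}.
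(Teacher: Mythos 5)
The paper does not prove this statement: Conjecture \ref{limit-k} is explicitly left open, and the text immediately following it only cites partial results (it is known when the leafwise spectrum of $D^{E+}$ is well behaved, when $P$ lies in the foliation $C^*$-algebra, and, by \cite{AGS}, in certain fibered settings with globally flat $E$). So there is no proof in the paper against which to compare, and your decision to propose a plan of attack rather than claim a proof is the right call.

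That said, the plan you sketch is a faithful blueprint of the strategy behind those partial results: realize $\Ind_a(D^{E+})$ by a transversely smooth parametrix or Wassermann idempotent (the latter appears as Example (2) in Section \ref{ccnc} and is the route of \cite{BHII}), deform it to the harmonic projection $P$, and invoke Theorem \ref{family} and Lemma \ref{samecc} at each stage. You have also correctly isolated the obstruction that keeps this a conjecture. Your step three is exactly where the argument breaks down in general: without a uniform leafwise spectral gap at $0$ on the (typically non-compact) leaves $\wL_x$, the deformation $e^{-t\Delta^E}\to P$ is not known to remain in the class of $\cG$-invariant transversely smooth idempotents, nor to converge transversely smoothly, so Theorem \ref{family} cannot be applied along the path. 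The Neumann-series template from the proof of Theorem \ref{secondeq} does not port over, because there the operator $B$ was already known to be invertible with spectrum in $[C_0,C_1]$, $C_0>0$, on $\Ker(\Delta^E_\ell)$; here $0$ may sit in the continuous spectrum of $\Delta^E$, and no such lower bound is available. That is precisely the extra hypothesis the cited partial results impose (a spectral gap, $P\in C^*(\cG)$, or the compact-fiber fibration setting) in order to restore this control, and removing it is what Conjecture \ref{limit-k} would require.
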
 
This conjecture has been proven when the spectrum of $D^{E+}$ is reasonably well behaved, see  \cite{Heitsch:1995, H-L:1999,BHII}, where it is proven for the holonomy groupoid.  The proofs extend immediately to the homotopy groupoid.  It also holds for both groupoids, without any extra assumptions, whenever the projection $P$ belongs to Connes' $C^*$-algebra of the foliation for the groupoid in question.  In particular, it holds for the holonomy groupoid case for any foliation whose leaves are the fibers of a fibration between closed manifolds, provided that $P$ is transversely smooth.   

Recently, Azzali, Goette and Schick have announced, \cite{AGS}, that they have proven it  
for  smooth proper submersions $V \to B$ with the fibrewise action (freely and properly discontinuous) of a discrete group $\Gamma$ such that  the quotient $V/\Gamma \to B$ is a fibration with compact fiber, but only for bundles $E$ which are globally flat.
Conjecture \ref{limit-k} should follow immediately for the homotopy groupoid provided that their result extends to bundles which are only leafwise flat.

Recall, \cite{BHI, GL}, that in Haefliger cohomology,
$$
\ch_{a}(\Ind_{a}(D^{E+})) \,\, = \,\,   \int_F \L(TF)  \ch_2(E),
$$
where $ \L(TF)$ is the characteristic class of $TF$ associated with  the multiplicative sequence   $\prod_j {x_j}/{\tanh(x_j)}$, and $  \ch_2(E) = \sum_k 2^k \ch_k(E)$.

\begin{corollary}
Under the hypothesis of Theorem \ref{grdedlhi}, and assuming Conjecture \ref{limit-k},
 $\dd \int_F \L(TF)  \ch_2(E)$ is a leafwise homotopy invariant.
\end{corollary}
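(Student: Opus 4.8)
The plan is to string together the identifications already in hand and then invoke the leafwise homotopy invariance of the twisted higher harmonic signature. First I would recall the cohomological form of the longitudinal index theorem quoted just above from \cite{BHI, GL}: in $\oH^*_c(M/F)$ one has $\ch_a(\Ind_a(D^{E+})) = \int_F \L(TF)\,\ch_2(E)$. Granting Conjecture~\ref{limit-k} and applying Theorem~\ref{grdedlhi}, this yields the chain of equalities
$$
\int_F \L(TF)\,\ch_2(E) \;=\; \ch_a(\Ind_a(D^{E+})) \;=\; \ch_a(P) \;=\; \sigma(F,E) \;\in\; \oH^*_c(M/F),
$$
valid under the hypotheses of Theorem~\ref{grdedlhi}.

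Next I would fix a leafwise oriented leafwise homotopy equivalence $f:(M,F)\to(M',F')$ between foliated manifolds each satisfying the hypotheses of Theorem~\ref{grdedlhi}, with $E = f^*(E')$ carrying the induced leafwise flat structure and preserved metric; recall from the discussion following Theorem~\ref{main} that the standing transverse smoothness hypothesis is preserved under such a pull-back. Then $f$ induces an isomorphism $f^*:\oH^*_c(M'/F')\to\oH^*_c(M/F)$ on Haefliger cohomology (Section~\ref{lms}), and by Theorem~\ref{main} we have $f^*\sigma(F',E') = \sigma(F,E)$. Applying the displayed equality on both $M$ and $M'$ then gives
$$
f^*\Bigl(\int_{F'}\L(TF')\,\ch_2(E')\Bigr) \;=\; f^*\bigl(\sigma(F',E')\bigr) \;=\; \sigma(F,E) \;=\; \int_F \L(TF)\,\ch_2(E),
$$
which is precisely the asserted leafwise homotopy invariance of $\int_F \L(TF)\,\ch_2(E)$.

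Once Conjecture~\ref{limit-k} is granted there is no genuine obstacle here: the corollary is a formal consequence of Theorems~\ref{grdedlhi} and~\ref{main} together with the cited index computation. The only two points that deserve a sentence of justification are that the twisted leafwise signature index class is indeed computed by $\int_F \L(TF)\,\ch_2(E)$ in Haefliger cohomology---this is the longitudinal index theorem of \cite{BHI} combined with the identification of the relevant characteristic class recalled just above---and that the transverse smoothness hypothesis transports along $f$, which was already observed in the remarks preceding Theorem~\ref{secondeq}. So the hardest part is merely the bookkeeping of matching the hypotheses of Theorem~\ref{main} on both sides of $f$; no new analytic input is required.
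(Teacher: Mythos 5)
Your proof is correct and follows essentially the same route the paper leaves implicit: the corollary is a formal chain of identities combining the index formula from \cite{BHI,GL}, Conjecture \ref{limit-k}, Theorem \ref{grdedlhi}, and then Theorem \ref{main}, and you have spelled out that chain carefully, including the observation that the transverse smoothness hypothesis transports along $f$. No gap.
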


Finally we have the following.
\begin{definition} Assume the hypothesis of Theorem \ref{grdedlhi}, but now $F$ may have arbitrary dimension.     
For  $0\leq j \leq p = \dim(F)$,  define the $j$-th twisted higher Betti class $\beta_j (F,E)$  by
$$
\beta_j(F,E) = \ch_a( P_j)  \quad \in \quad \oH^*_c(M/F).
$$
\end{definition}
It is an interesting exercise to show that, just as in the case of compact fibrations, the bundle defined by the projection onto the leafwise harmonics (in the case $E=M \times \C$) is a flat bundle.  That is, it admits a connection whose curvature is zero, so there are no higher terms in the $\beta_j(F,M \times \C)$.  This is not the case in general.

\begin{theorem}\label{betti}  (Compare \cite{H-L:1991}) Under the hypothesis of Theorem \ref{grdedlhi} with $F$ allowed to have arbitrary dimension,  the twisted higher Betti classes $\beta_j(F,E)$,  are  leafwise homotopy invariants.
\end{theorem}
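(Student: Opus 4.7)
The plan is to imitate the proof of Theorem \ref{main} one degree at a time, replacing the eigenspaces $\Im(\pi'_\pm)$ of $\what{\tau}$ by the full kernels $\Im(P'_j) = \Ker(\Delta^{E'}_j)$. Since Betti classes involve no Poincar\'e duality, the delicate $Q$--pairing comparison of Theorem \ref{secondeq} is replaced by a direct application of Lemma \ref{samecc}. Set
\[
\pi^f_j \,:=\, \wf^* R'^* P'_j \wg^* R^* P_j,\qquad \what{\pi}^f_j \,:=\, P_j\, \pi^f_j.
\]
Both are $\cG$--invariant by the argument used for $\pi^f_+$ in Section \ref{indbdls}, which is insensitive to the choice of operator between $R'^*$ and $\wg^*$; both are transversely smooth by Proposition \ref{gentsgen} applied to $P'_j$, together with the assumed transverse smoothness of $P_j$. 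The identity $P'_j \wg^* R^* P_j \wf^* R'^* P'_j = P'_j$ follows verbatim from Corollary \ref{isos} and the leafwise homotopy $R'_x \circ \cf \circ R_x \circ \cg \sim \mathrm{id}_{\wL'_{f(x)}}$ established in Section \ref{indbdls}; it implies at once $(\pi^f_j)^2 = \pi^f_j$ and $(\what{\pi}^f_j)^2 = \what{\pi}^f_j$. The symmetric homotopy $R_x \circ \cg \circ R'_x \circ \cf \sim \mathrm{id}_{\wL_x}$ gives that $\what{\pi}^f_j$ restricts to the identity on $\Ker(\Delta^E_j)_x$, so $\Im(\what{\pi}^f_j) = \Im(P_j)$, and Lemma \ref{samecc} then produces
\[
\beta_j(F,E) \,=\, \ch_a(P_j) \,=\, \ch_a(\what{\pi}^f_j).
\]

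Since $\pi^f_j P_j = \pi^f_j$, the family $t\pi^f_j + (1-t)\what{\pi}^f_j$ is a smooth path of idempotents, so Theorem \ref{family} yields $\ch_a(\pi^f_j) = \ch_a(\what{\pi}^f_j) = \beta_j(F,E)$. It therefore suffices to prove the analog of Theorem \ref{firsteq},
\[
\ch_a(\pi^f_j) \,=\, f^*\bigl(\ch_a(P'_j)\bigr) \,=\, f^*(\beta_j(F',E')).
\]
For this, construct a connection $\nabla$ on $\pi^f_j$ induced from the connection $\nabla' := P'_j {\nabla'}^\nu P'_j$ on $P'_j$ exactly as in Section 8, with $P'_j$ replacing $\pi'_+$; all verifications there are formal. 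The analogs of Lemma \ref{locpullbacks} and Lemma \ref{induced} give the local curvature identity $\theta|_V = f^*(\theta'|_{U'})$, and the Bott form concentration argument of Theorem \ref{firsteq} then reduces the equality of Chern--Connes characters to a local Schwartz kernel comparison and an application of Dominated Convergence.

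The one genuinely new technical point is the Schwartz kernel for $\theta'^k$. In the signature case it is expressed via the pairing $Q'$, because $\Im(\pi'_+)$ is self--dual under $\what{\tau}$; in the Betti case no such self--duality is available in general, and the Schwartz kernel must be written via the $L^2$ inner product, which naturally introduces the Hodge duals $\what{*}\,\xi'_i$ (of complementary degree $p-j$) of a local invariant spanning set $\{\xi'_i\}$ of $\Im(P'_j)$. The necessary extension of Proposition \ref{preserves} to complementary degrees reads: if $\xi'_1$ is a closed $L^2$ section of $\wedge^j T^*\wL'_{f(x)} \otimes E'$ and $\xi'_2$ is a closed $L^2$ section of $\wedge^{p-j} T^*\wL'_{f(x)} \otimes E'$, then
\[
\int_{\wL_x} \wf^*(\xi'_1) \wedge \wf^*(\xi'_2) \,=\, \int_{\wL'_{f(x)}} \xi'_1 \wedge \xi'_2.
\]
The proof is formally identical to that of Proposition \ref{preserves}, since the Whitney isomorphism and the simplicial cup product pair cochains of arbitrary complementary degrees, and both $\xi'_i$ and $\what{*}\xi'_i$ are closed (harmonic forms being both closed and co--closed). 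With this extension in hand, the local Schwartz kernel computation of Theorem \ref{firsteq} adapts verbatim and yields $\ch_a(\pi^f_j) = f^*(\ch_a(P'_j))$.

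The main obstacle is precisely this Schwartz--kernel adaptation: one must rewrite the traces $\Tr(\theta'^k)$ using pairings between degrees $j$ and $p-j$ via Hodge duality, and verify that Proposition \ref{preserves} extends cleanly to complementary degrees. Everything else---idempotency, transverse smoothness, the existence and naturality of the induced connection, and the identification of the various Chern--Connes characters via Lemma \ref{samecc} and Theorem \ref{family}---transfers without change from the signature argument.
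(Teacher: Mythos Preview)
Your proposal is correct and follows the same route as the paper's (very brief) sketch: define the transversely smooth idempotents $\pi^f_j = \wf^* R'^* P'_j \wg^* R^* P_j$ and $\what\pi^f_j = P_j\pi^f_j$ (the paper writes $P_j^f$ and $Q_j^f$), identify $\ch_a(\what\pi^f_j)$ with $\beta_j(F,E)$ via Lemma~\ref{samecc}, connect $\pi^f_j$ to $\what\pi^f_j$ by the linear homotopy of idempotents, and then equate $\ch_a(\pi^f_j)$ with $f^*\beta_j(F',E')$ via the induced connection as in Theorem~\ref{firsteq}. Your Schwartz--kernel observation---that for $j\neq\ell$ one must represent the kernel via the pairing of degrees $j$ and $p-j$ through $\what*$, and hence needs Proposition~\ref{preserves} in complementary degrees---is a genuine technical point the paper's sketch does not spell out; note also that the isomorphism $P_j:\Im(\pi^f_j)\to\Ker(\Delta^E_j)$ is here \emph{easier} than its signature analog, since it follows directly from Corollary~\ref{isos} and Hodge theory without any $Q$--definiteness argument.
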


\begin{proof} We only give a sketch here of the proof of the second statement.   Let $f:(M,F)\to (M',F')$ be a smooth leafwise homotopy equivalence with smooth homotopy inverse $g$. The pull-back bundle $f^*(P'_j)$ is a 
smooth bundle since it can be realized by the transversely smooth 
idempotent $P_j^f =f^* R'^* P'_j g^* R^* P_j$.  It can be endowed with the 
pull-back connection under $f$ of the connection $P'_j {\nabla'}^\nu 
P'_j$, and hence the Chern-Connes character of $f^*(P'_j)$ is 
given by
$$
\ch_a (f^*(P'_j)) = f^* \ch_a (P'_j) = f^* \beta_j (F',E').
$$
As in the proof of our main theorem, one proves that $P_j:f^*(\Ker (\Delta^{E'}_j)) \to \Ker(\Delta^E_j)$ is an isomorphism and that $Q_j^f= P_j P_j^f$ is a smooth idempotent with image 
$\Ker(\Delta^E_j)$, hence its Chern-Connes character coincides with the 
Betti class $\beta_j(F,E)$.   {As $Q_j^f P_j^f = Q_j^f$ and $P_j^f Q_j^f = P_j^f,$ the family
$Q_t = t Q_j^f + (1-t) P_j^f )$ is a smooth homotopy by transversely smooth idempotents from $Q_j^f$ to $P_j^f$.  Therefore, $P_j^f$ and $Q_j^f$ have same Chern-Connes character.}
\end{proof}

\section{Consequences of the Main Theorem}\label{eg}

In this section, we derive some important consequences of Theorem \ref{main}.    In particular, we re-derive some classic results for the Novikov conjecture, and then give some general results for the  Novikov conjecture for groups and for foliations.  

\begin{example}[Lusztig, \cite{Lusztig}] \label{Lusz}
\end{example}
Let $N$ be a compact connected even dimensional Riemannian manifold.  Set 
$W = \oH^1(N;\R / \Z)$, and recall the natural (onto) map 
$h_1: W \to \Hom(\oH_1(N; \Z); \R / \Z)$.  
Choose a base point $x_o \in N$.  Then there is the natural (onto) homomorphism 
$h: W \to \Hom(\pi_1(N, x_o) ; \R / \Z)$ given by composing $h_1$ with the natural map 
$\pi_1(N, x_o)  \to H_1(N,\Z)$.  Thus for each element $w \in W$, we have the homomorphism 
$h(w):\pi_1(N, x_o)  \to \R / \Z$, which we may compose with the map $x \to \exp (2\pi i x)$ to obtain the homomorphism 
$h_{w}:\pi_1(N, x_o)  \to S^1 \subset \C$.    Denote by $\wtit{N}$ the universal covering of $N$.   $\pi_1(N, x_o)$ acts on $\wtit{N}$ in the usual way, and on $\wtit{N} \times W \times \C$ as follows.  Let $\beta \in \pi_1(N, x_o)$, and $(x, w, z) \in  \wtit{N} \times W \times \C$,  and define  
$$
\beta\cdot (x, w, z) =  (\beta x, w, h_{w}(\beta) z). 
$$
Set 
$$
E =  (\wtit{N} \times W \times \C) /  \pi_1(N, x_o),
$$
a complex bundle over $  (\wtit{N} \times W) /  \pi_1(N, x_o) = N \times W$, which is  leafwise flat for the foliation $F$ given by the fibration 
$M \equiv N \times W \to W$.  It is obvious that the usual metric on $\C$ defines  a positive definite metric on $E$ which is preserved by the leafwise flat structure.  
As $ \oH_1(N;\R / \Z)$ is the abelianization of $\pi_1(N, x_o)$, $h$ is onto, and it is natural to call $E$ the universal flat $\C$ bundle for $N$.   Then $M$, $F$, and $E$ satisfy the hypothesis of Theorem \ref{main}, since the preserved metric is positive definite.

Note that if $f:N \to N'$ is a homotopy equivalence, then there is a natural extension of $f$ to $f:M,F \to M',F'$ which is a  leafwise homotopy equivalence, and $f^*E' = E$.   Thus $\sigma(F,E)$ is a homotopy invariant of the manifold $N$.

By \cite{BHI} (and assuming Conjecture \ref{limit-k} if necessary), we  have that
$$
\sigma(F,E) \,\, = \,\, \int_N   \L   (TF)\ch_2 (E)  \quad  \in \quad \oH^*_c(M/F) = \oH^*(\oH^1(N;\R / \Z);\R). 
$$
To relate this to Lusztig's theorem on Novikov conjecture,  suppose  that $\pi_1(N,x_0) = \Z^n$. Denote 
by $g:N \to B\Z^n = \T^n$  the map classifying the universal cover $\wtit{N} \to N$ (as a $\Z^n$ bundle), and  let $\alpha_1, ...,\alpha_{n}$ be the natural basis of $\oH^1 (\T^n;\R)$.  
\begin{proposition} \label{Luszch}
\hspace{2cm}  $\dd \ch_2 (E)  \,\,=\,\, \prod_{i=1}^n (1 + 2g^*(\alpha_i) \otimes \alpha_i).$
\end{proposition}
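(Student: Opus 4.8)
The plan is to identify the bundle $E$ explicitly as an external tensor product and compute its Chern character by the multiplicativity of $\ch$ under such products. First I would recall that $\pi_1(N,x_0)=\Z^n$, so the abelianization map $\pi_1(N,x_0)\to H_1(N;\Z)=\Z^n$ is an isomorphism, and $W=H^1(N;\R/\Z)\cong \Hom(\Z^n,\R/\Z)\cong (\R/\Z)^n=\T^n$. Under this identification the character $h_w:\Z^n\to S^1$ attached to $w=(w_1,\dots,w_n)\in\T^n$ sends the $i$-th generator $e_i$ to $\exp(2\pi i\, w_i)$. Since each generator acts through a single circle factor, the representation decomposes as an external tensor product of $n$ one-dimensional representations, and correspondingly
$$
E \;\cong\; E_1\boxtimes\cdots\boxtimes E_n \quad\text{as a bundle over } N\times\T^n,
$$
where $E_i$ is the line bundle over $N\times\T^n$ obtained from the representation $\Z^n\to S^1$, $e_j\mapsto \exp(2\pi i w_i\delta_{ij})$. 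More precisely $E_i$ is pulled back from $N\times (\R/\Z)_i$ (the $i$-th coordinate circle) under the projection, and on $N\times(\R/\Z)_i$ it is the bundle $(\wtilde N\times\R\times\C)/\Z$ with $\Z$ acting by the $i$-th coordinate shift on $\wtilde N$ (via the $i$-th generator of $\pi_1$) and by $\exp(2\pi i w_i)$ on $\C$.

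Next I would compute $\ch(E_i)$. The line bundle $E_i$ over $N\times\T^n$ is a tensor product of pullbacks from the two factors in a way that makes its first Chern class a product: concretely, its classifying map factors through $N\times(\R/\Z)_i \to B\Z\times B\Z = \T\times\T$ where the two $\Z$'s are the $i$-th factor of $\pi_1(N)$ and the $i$-th coordinate of $W$, and $E_i$ is the pullback of the Poincar\'e line bundle on $\T\times\T$. Hence $c_1(E_i)=g^*(\alpha_i)\otimes \alpha_i$ in $H^2(N\times\T^n;\R)$, writing $g:N\to\T^n$ for the classifying map of $\wtilde N\to N$ and $\alpha_i$ for the standard generators of $H^1(\T^n;\R)$ (the second factor), with $g^*(\alpha_i)$ their images in $H^1(N;\R)$. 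Since $E_i$ is a line bundle, $\ch(E_i)=\exp(c_1(E_i))=1+g^*(\alpha_i)\otimes\alpha_i$, the series truncating after the degree-two term because $(g^*(\alpha_i))^2=0$ (degree reasons on $N$ paired against the single $\alpha_i$) — more safely, because $g^*(\alpha_i)\otimes\alpha_i$ squares to $\pm g^*(\alpha_i)^2\otimes\alpha_i^2=0$ as $\alpha_i^2=0$ in $H^*(\T^n)$.

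Finally, multiplicativity of the Chern character under external tensor product gives
$$
\ch(E)=\ch(E_1\boxtimes\cdots\boxtimes E_n)=\prod_{i=1}^n \ch(E_i)=\prod_{i=1}^n\bigl(1+g^*(\alpha_i)\otimes\alpha_i\bigr),
$$
and then $\ch_2(E)=\sum_k 2^k\ch_k(E)$ is obtained by replacing each degree-$2k$ component $\ch_k(E_i)$ by $2^k\ch_k(E_i)$; since $\ch(E_i)=1+\ch_1(E_i)$ with $\ch_1(E_i)=g^*(\alpha_i)\otimes\alpha_i$, we get $\ch_2(E_i)=1+2g^*(\alpha_i)\otimes\alpha_i$, and multiplicativity (which $\ch_2$ inherits, being obtained from $\ch$ by a grading automorphism) yields $\ch_2(E)=\prod_{i=1}^n(1+2g^*(\alpha_i)\otimes\alpha_i)$, as claimed. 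The main obstacle is the bookkeeping in the second step: correctly identifying $E_i$ as the Poincar\'e line bundle and pinning down $c_1(E_i)=g^*(\alpha_i)\otimes\alpha_i$ with the right sign/normalization; once the decomposition $E\cong\boxtimes_i E_i$ and this class are in hand, the rest is formal from multiplicativity of $\ch$ and $\ch_2$.
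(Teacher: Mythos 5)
Your proposal is correct and follows essentially the same route as the paper: decompose $E$ as a tensor product of line bundles, each pulled back from the Poincar\'e line bundle on $\T\times\T$, identify $c_1$ of each factor, and conclude by multiplicativity of $\ch$ and $\ch_2$ — note though that since all your $E_i$ live over the same base $N\times\T^n$, the decomposition is an ordinary tensor product $E\cong E_1\otimes\cdots\otimes E_n$, not an external one, so $\boxtimes$ should be $\otimes$. The paper phrases the reduction via pullback from the universal bundle $\what{E}_n$ over $\T^n\times\T^n$ rather than working directly on $N\times\T^n$, but the tensor decomposition, the appeal to the Poincar\'e bundle, and the deferral of the identification $c_1=\alpha\otimes\alpha$ to a standard characteristic-class computation are all the same.
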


\begin{theorem}[Lusztig, \cite{Lusztig}]
The Novikov conjecture is true for any compact manifold with fundamental group $\Z^n$.
\end{theorem}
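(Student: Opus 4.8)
The plan is to reduce the classical Novikov conjecture for $\Gamma = \Z^n$ to the leafwise homotopy invariance of $\sigma(F,E)$ established in Theorem \ref{main}, applied to the fibration foliation $F$ of $M = N \times W$ with $W = \oH^1(N;\R/\Z) = \T^n$ and $E$ the universal flat bundle of Example \ref{Lusz}. Recall that for $x \in H^*(B\Z^n;\R) = \oH^*(\T^n;\R)$, the Novikov higher signature attached to $x$ is $\int_N \L(TN) \cup g^*x$, and that this number is a homotopy invariant for \emph{all} $x$ if and only if it is so for $x$ ranging over a set of generators of $H^*(\T^n;\R)$ as a ring. Since $H^*(\T^n;\R)$ is the exterior algebra on $\alpha_1,\dots,\alpha_n$, it suffices to realize each monomial $\alpha_{i_1}\wedge\cdots\wedge\alpha_{i_k}$, hence the whole ring, from the Chern character of a flat bundle; this is exactly the content of Proposition \ref{Luszch}, which computes $\ch_2(E) = \prod_{i=1}^n(1 + 2\,g^*(\alpha_i)\otimes\alpha_i)$.

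First I would unwind the Haefliger cohomology of $F$: since $F$ is the product foliation with leaves $N \times \{w\}$, integration over the leaves gives an isomorphism $\oH^{*+\dim N}(M;\R) \cong \oH^*_c(M/F) = \oH^*(\T^n;\R)$, and under this identification $\int_F(\omega_N \otimes \omega_W) = (\int_N \omega_N)\,\omega_W$. Then by Theorem \ref{grdedlhi} together with the Haefliger index formula recalled at the end of Section \ref{eg} (using Conjecture \ref{limit-k} if needed in the general non-fibration case, though here $F$ \emph{is} a fibration so one may alternatively invoke the directly proved cases), we have $\sigma(F,E) = \int_F \L(TF)\,\ch_2(E) \in \oH^*(\T^n;\R)$. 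Expanding with Proposition \ref{Luszch}: the degree-$k$ part of $\sigma(F,E)$ pairs, against the class $\alpha_{i_1}\wedge\cdots\wedge\alpha_{i_k}$ dual to a coordinate subtorus, to a nonzero constant multiple of $\int_N \L(TN) \cup g^*(\alpha_{j_1}\wedge\cdots\wedge\alpha_{j_{m}})$ for the complementary indices, because $\L(TF) = \L(TN)$ is pulled back from $N$ and the transverse factor $\prod 2 g^*(\alpha_i)\otimes\alpha_i$ contributes precisely the products of the $g^*(\alpha_i)$. Thus the full collection of numbers $\{\int_N \L(TN)\cup g^* y : y \in H^*(\T^n;\R)\}$ is recovered (up to fixed nonzero scalars) from the single Haefliger class $\sigma(F,E)$.

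Next I would invoke homotopy invariance. Given a homotopy equivalence $\varphi : N' \to N$, Example \ref{Lusz} produces a leafwise homotopy equivalence $f : M' = N' \times W' \to M = N \times W$ of the corresponding fibration foliations with $f^*E = E'$ (here one uses that $\varphi$ induces an isomorphism on $\oH^1(\cdot;\R/\Z)$, hence on $W$), and $f$ preserves leafwise orientations since $\dim N$ is even and $\varphi$ has degree $\pm 1$. Theorem \ref{main} gives $\sigma(F',E') = f^*\sigma(F,E)$, and the induced map $f^*$ on Haefliger cohomology is the one induced by $\varphi^* \otimes (\text{identity-type map on }W)$. Pairing against the subtorus classes as above, and using naturality of $\L$ and of the classifying maps ($g \circ \varphi \simeq g'$ up to the identification of fundamental groups), this yields $\int_{N'} \L(TN') \cup (g\varphi)^* y = \int_N \L(TN) \cup g^* y$ for every $y \in H^*(\T^n;\R)$, which is the Novikov conjecture for $\Z^n$.

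\textbf{Main obstacle.} The substantive analytic input — transverse smoothness of the projection onto the twisted harmonic forms — is free here because the preserved metric on $E$ is positive definite (Theorem \ref{GRthm}), and the leafwise homotopy invariance is Theorem \ref{main}; so the genuine work is bookkeeping. The step I expect to require the most care is the precise identification, under integration over the leaves and under the map $f^*$ on Haefliger cohomology, of the transverse-degree components of $\sigma(F,E)$ with the Novikov numbers $\int_N \L(TN)\cup g^*y$: one must check that the nonzero scalar factors ($2^k$ and the pairing normalizations) are uniform and cancel consistently on both $N$ and $N'$, and that the reduction ``invariance for ring generators $\Rightarrow$ invariance for all $x$'' is applied correctly (it uses that $\varphi^*$ is a \emph{ring} isomorphism on $H^*(\T^n;\R)$ and that $\L$-genus higher signatures are additive/multiplicative in the appropriate sense). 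Proving Proposition \ref{Luszch} itself — identifying the flat line bundle indexed by a character with the pullback of the Poincaré-type line bundle on $\T^n \times \T^n$ and computing its Chern class — is the other computational point, but it is standard.
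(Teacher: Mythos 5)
Your proposal is correct and follows essentially the same route as the paper: apply the higher signature formula $\sigma(F,E) = \int_F \L(TF)\,\ch_2(E)$ to Lusztig's fibration foliation with the universal flat bundle $E$, expand via Proposition~\ref{Luszch}, and read each Novikov number off as a coefficient in the monomial basis $\{\alpha_{i_1}\wedge\cdots\wedge\alpha_{i_k}\}$ of $\oH^*(\T^n;\R) \cong \oH^*_c(M/F)$, using Theorem~\ref{main} (plus the trivial observation that the scalar factors $2^k$ cancel between $N$ and $N'$). One slip of wording: the claim that it suffices to check $x$ ranging over \emph{ring} generators is false, since $x\mapsto \int_N \L(TN)\cup g^*x$ is linear rather than multiplicative; what your argument actually uses, and correctly, is that the monomials form a vector-space basis of $H^*(\T^n;\R)$.
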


\begin{proof}
$$
\sigma(F,E) \,\, = \,\, \int_N     \L(TF)\ch_2 (E)  = \sum_{i_1 < \cdots < i_k}    2^k  \Bigl{[} \int_N\L(TN)g^*(\alpha_{i_1} \wedge \cdots \wedge \alpha_{i_k} ) \Bigr{]}\alpha_{i_1} \wedge \cdots \wedge \alpha_{i_k} 
$$
is a homotopy invariant, so each of the individual terms $\dd \int_N  \L(TN)g^*(\alpha_{i_1} \wedge \cdots \wedge \alpha_{i_k} )$ is a homotopy invariant.
\end{proof}

\begin{proof} (of Proposition \ref{Luszch})  Since $\pi_1(N,x_0) = \Z^n$,  $W = \oH^1(N;\R / \Z) \simeq \T^n$.    The bundle $E \to N \times T^n$ is the pull back by $g \times I:N \times \T^n \to \T^n \times \T^n$ of the bundle $\what{E}_n \to \T^n \times \T^n$  which is given as follows. 
 Let $\xi \in \Z^n = \pi_1(\T^n)$, and $(x, t, z) \in  \R^n \times \T^n \times \C$,  and define  
$$
\xi \cdot (x, t, z) =  ( x + \xi, w,  (\xi w) \cdot z),
$$
where 
$$
 (\xi w) \cdot z = (\exp(2\pi i \xi_1 w_1) z_1,\ldots,  \exp(2\pi i \xi_n w_n) z_n).
$$
Then 
$$
\what{E}_n =  (\R^n \times \T^n \times \C) / \Z^n.
$$
Note that $\what{E}_n = E_1 \otimes \cdots \otimes E_n$, where 
$E_j$ is the pull back by the projection $\T^n \times \T^n \to \T \times \T$ onto the $j$-th coordinates of the bundle $\what{E}_1$.  As  $\dd \ch_2 (\what{E}_n)  \,\,=\,\, \prod_{j=1}^n  \ch_2 (E_j)$, we need only show that 
$$
\dd \ch_2 (\what{E}_1)  \,\,=\,\, \prod_{i=1}^n (1 +2 \alpha \otimes \alpha),
$$
where $\alpha$ is the natural generator of $\oH^1 (\T;\R)$.  That is, $c_1(\what{E}_1)$ is the natural generator of  $\oH^2 (\T^2;\R)$.
This is a  classical direct computation in the theory of characteristic classes.
\end{proof}

We can extend the previous example to the fundamental group $\Gamma$ of the closed oriented surface $S_g$ of genus $g \geq 2$.  This is a well know theorem which follows from the results of many people, the first probably being Lusztig.
\begin{theorem}
The Novikov conjecture is true for any compact manifold with fundamental group $\Gamma$.
\end{theorem}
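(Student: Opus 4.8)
The plan is to follow the template of the $\Z^n$ case from Example \ref{Lusz} and Proposition \ref{Luszch}, replacing the torus $\T^n$ by the surface group $\Gamma = \pi_1(S_g)$, $g \geq 2$. First I would recall that since $\Gamma$ is a surface group, $S_g$ is aspherical, so $S_g = B\Gamma$ and $H^*(B\Gamma;\R) = H^*(S_g;\R)$, which is concentrated in degrees $0$, $1$, $2$. The Novikov conjecture for a compact manifold $N$ with $\pi_1(N) = \Gamma$ asserts that for every $x \in H^*(B\Gamma;\R)$ and every orientation-preserving homotopy equivalence $\varphi : N' \to N$, one has $\int_N \L(TN)\cup g^*x = \int_{N'} \L(TN')\cup(g\circ\varphi)^*x$, where $g: N \to B\Gamma$ classifies the universal cover. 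Since $H^0$ contributes only the ordinary signature (already a homotopy invariant) and $H^1$-classes pair trivially against $\L(TN)$ for dimension-parity reasons once one reduces to even-dimensional $N$ (a standard reduction), the only real content is in $H^2(B\Gamma;\R) = H^2(S_g;\R) \cong \R$, generated by the fundamental cohomology class $[S_g]^*$. So the task reduces to showing $\int_N \L(TN)\cup g^*([S_g]^*)$ is a homotopy invariant.

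The key step is to realize this degree-two class as the second Chern character of a leafwise flat $U(p,q)$-bundle, so that Theorem \ref{main} applies. For this I would invoke the now-classical construction (going back to Lusztig \cite{Lusztig}, and in the more general form used in \cite{Gromov, CGM, CGM93}) that $\Gamma$, being the fundamental group of a hyperbolic surface, admits a flat $SU(1,1)$-bundle $\maE$ over $S_g$ — coming from the Fuchsian uniformization of $S_g$, i.e.\ the discrete faithful representation $\Gamma \hookrightarrow SU(1,1) = SL(2,\R)$ acting on the unit disc — whose Euler/first Chern class is a nonzero multiple of the generator of $H^2(S_g;\Z)$. Concretely, $\maE = \wtit{S_g} \times_\Gamma \C^2$, with the indefinite Hermitian form on $\C^2$ of signature $(1,1)$ preserved by the $SU(1,1)$-action; its flat structure preserves this form, so setting $M = N \times S_g \to S_g$ (the foliation $F$ by the fibers $N \times \{w\}$), one forms $E = (g\times\mathrm{id})^* \maE$ pulled back appropriately, getting a leafwise flat complex bundle on $M$ with preserved non-degenerate Hermitian metric, exactly as in Example \ref{Lusz} but now genuinely indefinite. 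Since $\maE$ is globally flat on each leaf copy of $N$, the transverse-smoothness hypothesis of Theorem \ref{main} holds (the leafwise parallel translation is bounded on a finite cover argument, or one cites the normal-bundle-type remark / the fact that global flatness with no holonomy on the simply connected covers trivializes $E|_{\wtil L}$).

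Then I would run exactly the argument of the $\Z^n$ proof: if $f : N \to N'$ is a homotopy equivalence it extends to a leafwise homotopy equivalence $f : (M,F) \to (M',F')$ with $f^*E' = E$; by Theorem \ref{main} (together with Theorem \ref{grdedlhi} and, if needed, Conjecture \ref{limit-k}, exactly as invoked in Example \ref{Lusz}) the twisted higher harmonic signature $\sigma(F,E) = \int_N \L(TF)\,\ch_2(E) \in H^*_c(M/F) = H^*(S_g;\R)$ is a homotopy invariant of $N$. Writing $\ch_2(E) = \ch_2(g^*\maE)$ and noting $\ch_0(\maE) = 2$, $\ch_1(\maE) = c_1(\maE)$ which (by the Fuchsian computation / Gauss--Bonnet, as in the Milnor--Wood inequality) is a nonzero multiple of the generator of $H^2(S_g;\R)$, and $\ch_2(\maE)$ has no room to be nonzero in degree $4$ on the $2$-dimensional $S_g$, one gets
$$
\sigma(F,E) = 2\!\int_N \L(TN) + c\int_N \L(TN)\cup g^*([S_g]^*),
$$
with $c \neq 0$. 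Homotopy invariance of the left side and of $\int_N\L(TN)$ then forces $\int_N \L(TN)\cup g^*([S_g]^*)$ to be a homotopy invariant, which with the $H^0$ and $H^1$ observations gives the full Novikov conjecture for $\Gamma$. The main obstacle I anticipate is the bookkeeping in the reduction to even-dimensional $N$ and the verification that the degree-$1$ part of $H^*(B\Gamma)$ contributes nothing — this is standard but must be done carefully — together with checking the transverse-smoothness hypothesis for the indefinite bundle $E$; the hard analytic work, however, is already packaged in Theorem \ref{main}, so this step should go through as in \cite{Lusztig, Gromov}.
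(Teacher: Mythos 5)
Your approach is genuinely different from the paper's and is in large part a good idea, but it has one real gap and a few smaller confusions.

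\textbf{What you do differently.} The paper realizes classes in $H^*(S_g;\R)$ via the $\T^{2g}$\emph{-family} of $U(1)$-representations: it forms the universal line bundle $E$ over $S_g\times\T^{2g}$, takes the fibrewise foliation $S_g\times\T^{2g}\to\T^{2g}$, and then — this is the bulk of the proof — shows by decomposing $S_g$ as a connected sum $H_1\cup\cdots\cup H_g$ and analyzing restrictions of $E$ that \emph{every} class in $H^1(S_g;\R)$ and $H^2(S_g;\R)$ is of the form $\pi^{S_g}_{1,*}[(\pi^{S_g}_2)^*y\wedge\ch(E)]$. You instead use a single flat $SU(1,1)\subset U(p,q)$ bundle (Fuchsian uniformization) to hit the generator of $H^2$ directly via Theorem \ref{homotopy}. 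That is a legitimate and clean alternative for the degree-two class, and your computation of its nonvanishing (Milnor--Wood / Euler class of the Fuchsian $PSL(2,\R)$-bundle is $\pm(2g-2)$) is right.

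\textbf{The gap.} You dispose of $H^1(B\Gamma;\R)$ by a ``dimension-parity once we reduce to even-dimensional $N$'' argument, but this is incomplete. It is true that $\int_N\L(TN)\cup g^*x=0$ for $x\in H^1$ when $\dim N$ is even. But when $\dim N\equiv 1\pmod 4$ this pairing can be nonzero, and the standard reduction to even dimensions (cross with $S^1$ and multiply $x$ by the generator of $H^1(S^1)$) changes the group to $\Gamma\times\Z$: it proves Novikov for $(\Gamma,x)$ from Novikov for $(\Gamma\times\Z,\,x\times\alpha)$, not from Novikov for $\Gamma$ alone. So your proposal needs to either (a) carry out the $\T^{2g}$-family argument to cover $H^1$ directly, which is exactly what the paper's connected-sum analysis does, or (b) explicitly prove Novikov for $\Gamma\times\Z^k$ and invoke the reduction, which is a different (and longer) argument than what you wrote.

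\textbf{Smaller points.} (i) Your auxiliary foliated manifold $M=N\times S_g$ with bundle ``$(g\times\mathrm{id})^*\maE$'' is off: $\maE$ lives over $S_g$, not $S_g\times S_g$, and a bundle that is constant in the $S_g$-direction produces only the constant ($H^0$) part of $\sigma(F,E)$. What you actually want is the one-leaf foliation of $N$ itself, i.e.\ Theorem \ref{homotopy}, and then there is no auxiliary $S_g$ factor at all. (ii) The displayed Chern character formula has the wrong shape: $\ch_2(E^+)-\ch_2(E^-)$ has no degree-zero part, since $\ch_0(E^\pm)=1$ for line bundles, while $c_1(E^+)=-c_1(E^-)$ because $E$ is flat. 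You should get $\ch_2(E^+)-\ch_2(E^-)=4\,g^*c_1(\maE^+)$ with no additive constant, and since $c_1(\maE^+)$ is a nonzero multiple of $[S_g]^*$ the conclusion is the same. (iii) For the one-leaf foliation the transverse-smoothness hypothesis of Theorem \ref{main} holds essentially for free — the transverse direction is trivial — which is how the paper justifies Theorem \ref{homotopy}; the ``bounded parallel translation'' condition you gesture at would actually fail for the non-compact Fuchsian image, so it is important to invoke the one-leaf argument rather than the boundedness criterion here.
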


\begin{proof} 
The space of equivalence classes of representations of $\Gamma$ in  $U(1)$ is easily seen to be a torus $\T^{2g}$ of dimension $2g$.   Form the 
fiberwise flat line bundle $E$ over the total space of the trivial fibration $\pi^{S_g}_2: S_g\times \T^{2g}\to \T^{2g}$ given by
$$
(x, \theta; u) \sim (x\gamma, \theta; h_\theta (\gamma)(u)), \quad x\in \H^2, \,\,  \theta\in \T^{2g}, \,\, u\in \C, \,\,  \gamma \in \Gamma
$$  
where $h_\theta : \Gamma \to U(1)$ is the corresponding homomorphism as in \ref{Lusz}. Denote by
$\pi^{S_g}_1: S_g\times \T^{2g}\to S_g$  the other projection.
Then for any cohomology class $y\in H^*(\T^{2g};\R)$, the cohomology class in $H^*(S_g;\R) = H^*(B\Gamma;\R)$ given by
$$
x= \pi^{S_g}_{1,*} \left[ (\pi^{S_g}_2)^* y \wedge \ch (E)  \right]
$$
satisfies the Novikov conjecture. 
This can be seen as follows.  Let $N$ be a smooth closed manifold with fundamental group $\Gamma$ and denote by $\varphi:N \to S_g = B\Gamma$ a smooth classifying map.  Notice that the  harmonic signature of the foliated manifold $M= N \times \T^{2g}$ (with foliation given by the fibers of the projection  $ \pi^N_{2}:N \times \T^{2g} \to \T^{2g}$)
twisted by the fiberwise flat bundle $(\varphi  \times id)^*E$, is given in $H^*(\T^{2g};\R)$  by the formula
$$
\sigma (M, F ; (\varphi\times id)^*E) = \pi^N_{2,*} \left[(\pi^N_1)^*\L(TN) \cup (\varphi\times id)^* \ch (E)\right].
$$
Clearly, for any cohomology class $y\in H^*(\T^{2g};\R)$, we get the homotopy invariance of 
$$
\int_{\T^{2g}} y \cup \pi^N_{2,*} \left[(\pi_1^N)^*\L(TN) \cup (\varphi\times id)^* \ch (E)\right] = \int_N \L(TN) \pi^N_{1,*} \left[(\pi^N_2)^*y \wedge (\varphi\times id)^* \ch (E)\right].
$$
But $(\pi^N_2)^*y = (\varphi\times id)^* (\pi^{S_g}_2)^*y$ and therefore 
$$
\pi^N_{1,*} \left[(\pi^N_2)^*y \wedge (\varphi\times id)^* \ch (E)\right] = (\pi^N_{1,*}\circ (\varphi\times id)^*) \left[ (\pi^{S_g}_2)^* y \wedge \ch (E)  \right].
$$
The conclusion follows using that $\pi^N_{1,*}\circ (\varphi\times id)^* = \varphi^* \circ \pi^{S_g}_{1,*}$.

Thus we need only show that every class  $x\in H^*(S_g;\R)$ has the given form.  
We may write $S_g = \sharp_g \T^2$ as the union 
$$
S_g \,\,=\,\,H_1 \cup H_2 \cup  \cdots \cup H_g,
$$
where $H_1$ and $H_g$ are $\T^2$ with a disc removed, and the other $H_j$ are $\T^2$ with two discs removed.   There are natural inclusions $g_j:H_j \to \T^2_j \subset \T^{2g}$.
On $\T^{2g} \times \T^{2g}$ we have the bundle $\what{E}_{2g}$.  Consider the natural map 
$$
h_j = g_j \times I:H_j  \times \T^{2g} \to \T^2_j \times \T^{2g} \subset \T^{2g} \times \T^{2g}.
$$
Then $E \, | \, {H_j  \times \T^{2g}} = h^*_j(\what{E}_{2g})$.  Note also that on a neighborhood of the boundary of $H_j$, the bundle $E$ is trivial, and the trivialization is independent of $j$.  Thus we may construct a connection on $E$ by using a partition of unity and the local connections given on the $H_j  \times \T^{2g}$ by the pull back under $h_j$ of the connection used on $\what{E}_{2g}$, and the local flat connections on the neighborhoods of the boundaries of the $H_j$.
Thus on the complement of a collar neighborhood of the boundary of $H_j  \times \T^{2g}$,  $\ch(E) = h^*_j(\ch(\what{E}_{2g}))$, and on a neighborhood of the boundary, 
$\ch(E) = 0$.  Now on $\T^{2g} \times \T^{2g}$, we have the one dimensional cohomology classes $[dx^1_j]$, $[dx^2_j]$, $[dw^1_j]$ and  $[dw^2_j]$ which are dual 
to the natural generators of $\oH_1(\T^2_j;\R)$.  The $[dx^k_j]$  live on the first factor of  $\T^{2g} \times \T^{2g}$, and the  $[dw^k_j]$ on the second.
In addition, 
$$
\ch(\what{E}_{2g}) = \prod_{i=1}^{2g} (1 + [dx^1_i] \wedge [dw^1_i])(1 + [dx^2_i] \wedge [dw^2_i]).
$$   
Set $y_j = \prod_{i \neq j}  [dw^1_i] \wedge [dw^2_i]$.  Denote by $\gamma^1_j$ and $\gamma^2_j$ the elements of $\oH_1(S_g;\R)$ corresponding to the natural generators of $\oH_1(\T^2_j;\R)$.
Then
$$
\Bigl{(}\pi_{1,*} \left[ \pi_2^* y_j \wedge [dw^k_j] \wedge \ch (E)  \right] \, | \, {H_j  \times \T^{2g}} \Bigr{)} (\gamma^{m}_j) = h^*_j([dx^k_j]) (\gamma^{m}_j) = \delta^k_{m},
$$
while for $i \neq j$,
$$
\Bigl{(}\pi_{1,*} \left[ \pi_2^* y_j \wedge [dw^k_j] \wedge \ch (E)  \right] \, | \, {H_i  \times \T^{2g}} \Bigr{)} (\gamma^{m}_i) = h^*_i([dx^k_j]) (\gamma^{m}_i) = 0,
$$
as $h^*_i([dx^k_j])= 0$.

Thus each element of $\oH^1(S_g;\R)$ has the required form.  It is not difficult to see that 
$\pi_{1,*} \left[ \pi_2^* y_j  \wedge \ch (E)  \right]$ gives a non-zero two dimensional class of the required form, so we have the theorem.

\end{proof}

Here is another version of Lusztig's construction,  see \cite{Lusztig} and  \cite{Gromov}.
Let $E$ be a flat $U(p,q)$ bundle over $N$ (that is a flat bundle given by a map $\rho:\pi_1(N) \to U(p,q)$).   Then  $E$ is a leafwise flat complex bundle over $N$ with an indefinite non-degenerate Hermitian metric which is preserved by the leafwise flat structure.    Write $E = E^+ \oplus E^-$, where the indefinite metric is positive $\pm$ on $E^{\pm}$.
\begin{theorem}\label{homotopy}
$$
\int_N     \L(TN)(\ch_2 (E^+) - \ch_2 (E^-))
$$
is a homotopy invariant of $N$.
\end{theorem}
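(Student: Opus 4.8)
The plan is to deduce Theorem \ref{homotopy} as a direct corollary of the main theorem (Theorem \ref{main}), exactly as was done for $\Z^n$ in Example \ref{Lusz}, but with the trivial fibration $M = N \times \{pt\}$ replaced so that the foliation consists of the single leaf $N$. First I would observe that a flat $U(p,q)$-bundle $E \to N$ is the model case of a leafwise flat complex bundle with preserved non-degenerate Hermitian metric: take the foliation $F$ of $M = N$ to be the full-dimensional foliation with one leaf, so that ``leafwise flat'' means flat and the homotopy groupoid $\cG$ is the fundamental groupoid of $N$, with $F_s$ the foliation of $\cG$ by source fibers, i.e. the universal cover $\wtit{N}$. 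The splitting $E = E^+ \oplus E^-$ into the positive/negative definite subbundles for the indefinite metric is precisely the splitting discussed in Section \ref{note}; here $N$ is even-dimensional by hypothesis (say $\dim N = 2\ell$).

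The one genuine hypothesis of Theorem \ref{main} that must be checked is transverse smoothness of the projection $P_\ell$ onto $\Ker(\Delta^E_\ell)$ on $F_s$. Since $F$ has no transverse directions (codimension zero), transverse smoothness is automatic: there are no transverse derivatives $\pa_\nu^Y$ to take, and $P_\ell$ is just a bounded leafwise smoothing operator on the single leaf $\wtit{N}$, so it is trivially transversely smooth. (Alternatively one invokes Theorem \ref{GRthm}, noting that for a $U(p,q)$-representation the parallel translation along $E$ need not be bounded, so the codimension-zero observation is the cleaner route.) With the hypotheses verified, Theorem \ref{main} applies: if $f: N \to N'$ is a homotopy equivalence of closed oriented manifolds with $E' \to N'$ a flat $U(p,q)$-bundle and $E = f^*E'$, then $f$ is a leafwise homotopy equivalence for the one-leaf foliations, it is leafwise oriented since $N, N'$ are oriented, the induced map $f^* : \oH^*_c(M'/F') \to \oH^*_c(M/F)$ is an isomorphism of $\oH^*(pt;\R) = \R$, and $f^*(\sigma(F',E')) = \sigma(F,E)$. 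Thus $\sigma(F,E) \in \R$ is a homotopy invariant of $N$.

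Next I would identify $\sigma(F,E)$ with the characteristic number in the statement. By Theorem \ref{grdedlhi} (codimension zero makes all the transverse-smoothness hypotheses there vacuous as well), $\sigma(F,E) = \ch_a(P)$, the graded Chern-Connes character of the index bundle of the twisted leafwise signature operator $D^{E+}$. Then, assuming Conjecture \ref{limit-k} if necessary (in the codimension-zero case one should note whether it is already known, since $N$ itself is compact and $D^{E+}$ is the ordinary twisted signature operator on $N$, for which the index theorem is classical), one has
$$
\sigma(F,E) = \ch_a(\Ind_a(D^{E+})) = \int_N \L(TN)\, \ch_2(E),
$$
using the formula $\ch_a(\Ind_a(D^{E+})) = \int_F \L(TF)\,\ch_2(E)$ recalled just before the Betti definition, with $TF = TN$. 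Since $\ch_2(E) = \ch_2(E^+) + \ch_2(E^-)$ for the Whitney-sum splitting $E = E^+ \oplus E^-$ of complex (not flat) bundles, one gets $\sigma(F,E) = \int_N \L(TN)(\ch_2(E^+) + \ch_2(E^-))$, which is not quite the stated expression — it has a plus sign, not a minus.

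The main obstacle is precisely this sign discrepancy, and resolving it is where real care is needed. The point is that $\sigma(F,E)$ is defined using the \emph{indefinite} metric $\{\cdot,\cdot\}$ and the involution $\gamma$ it determines, not the positive-definite metric $(\cdot,\cdot)$; the Hodge operator $\what{*}$ and hence $D^{E+}$ incorporate $\gamma$, which acts as $+1$ on $E^+$ and $-1$ on $E^-$. Untwisting this sign — i.e. comparing the index of the signature operator twisted by $E$ with the indefinite metric against the signature operators twisted by $E^+$ and $E^-$ with their positive metrics — produces exactly the relative sign, so that $\sigma(F,E) = \int_N \L(TN)(\ch_2(E^+) - \ch_2(E^-))$. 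I would carry this out by applying the index formula to the two definite-metric cases and tracking how $\gamma$ enters the symbol of $D^{E+}$; concretely, twisting by $(E,\{\cdot,\cdot\})$ is the formal difference of twisting by $(E^+, (\cdot,\cdot))$ and by $(E^-, (\cdot,\cdot))$ at the level of signature operators, which is an algebraic manipulation of the $\pm1$-eigenspace decompositions of $\what\tau$ already set up in Section \ref{tsig}. Once the identification $\sigma(F,E) = \int_N \L(TN)(\ch_2(E^+) - \ch_2(E^-))$ is established, the homotopy invariance of the left side (from Theorem \ref{main}) gives the theorem. A cleaner alternative that avoids the sign bookkeeping is to appeal directly to $\sigma(F,E)$ being a homotopy invariant and to the fact that, as shown in \cite{Gromov, Lusztig}, the higher signature with $U(p,q)$-coefficients equals $\int_N \L(TN)(\ch_2(E^+)-\ch_2(E^-))$; I would present the index-theoretic derivation as the primary argument and cite this as confirmation.
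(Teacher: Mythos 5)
Your outline parallels the paper's proof — one-leaf foliation, apply Theorem~\ref{main} for homotopy invariance, apply Conjecture~\ref{limit-k} for the cohomological formula — but you make a choice the paper does not, and it creates a genuine gap. The paper takes $\cG$ to be the \emph{holonomy} groupoid of the one-leaf foliation, so that $\cG = N \times N$ and each source fiber $\wL_x \cong N$ is compact; transverse smoothness is then immediate (the projection ``is the same on each $N$''), $D^{E+}$ is literally the classical twisted signature operator on the closed manifold $N$, and Conjecture~\ref{limit-k} is known by the paper's own discussion (holonomy groupoid, leaves the fibers of a fibration between closed manifolds, here $N \to \mathrm{pt}$). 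You instead take the \emph{homotopy} groupoid, so $\wL_x = \wtit{N}$, the universal cover; your operators live on the non-compact $\wtit{N}$, the harmonic projection projects onto $L^2$-harmonic forms, and the index-bundle trace is an $L^2$-index in the sense of Atiyah. Your later sentence ``$N$ itself is compact and $D^{E+}$ is the ordinary twisted signature operator on $N$'' is therefore inconsistent with your setup. To close the gap along your route you would need to invoke Atiyah's $L^2$-index theorem (or the Azzali--Goette--Schick result for the $\Gamma$-equivariant fibration $\wtit{N} \to \mathrm{pt}$, noting that $E$ is here \emph{globally} flat so their hypothesis is met) to identify the $L^2$-index on $\wtit{N}$ with the ordinary index on $N$; you neither state nor carry out this step. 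Switching to the holonomy groupoid, as the paper's remark in Section~\ref{note} explicitly permits, removes this obstacle entirely.

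On the sign: your careful discussion of why the formula comes out as $\ch_2(E^+) - \ch_2(E^-)$ rather than $\ch_2(E^+) + \ch_2(E^-)$ is a worthwhile clarification that the paper's very terse proof suppresses. The mechanism you identify — that $\what\tau$ is built from $\tau \otimes \gamma$ and $\gamma = -1$ on $E^-$, so the grading is reversed on the $E^-$-summand and its signature-index contribution enters with the opposite sign — is correct, and it is the right way to reconcile the displayed formula $\ch_a(\Ind_a(D^{E+})) = \int_F \L(TF)\,\ch_2(E)$ (written for the positive-definite case) with the statement of Theorem~\ref{homotopy}. Keep this observation, but repair the groupoid choice so the rest of the argument is on firm ground.
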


\begin{proof}
If $N$ is odd dimensional, this is zero, so assume that $N$ is even dimensional.
Let $F$ be the foliation of $N$ with one leaf, namely $N$.  The holonomy groupoid of $F$ is just $\cG = N \times N$, and  the projection onto the leafwise harmonic forms is the same on each $N$.   Thus the hypothesis of Theorem \ref{main} are satisfied and Conjecture \ref{limit-k} holds, giving the result.
\end{proof}

This may be recast as follows.  Let $\rho:\Gamma \to U(p,q)$ be a homomorphism of a finitely presented group.  Given any manifold $N$ and homomorphism $\psi:\pi_1(N) \to \Gamma$, we may construct the bundle 
$E = E^+ \oplus E^- \to N$.   This construction is natural under pull-back maps, i.e., given any map 
$f:N' \to N$ we can form the bundle $E'  = E'^+ \oplus E'^-\to N'$ using the homomorphism $\rho \circ \psi\circ f_*$ where $f_*:\pi_1(N') \to \pi_1(N)$ is the induced map.   Then $E'^{\pm} = f^*(E^{\pm})$, and so this construction determines two universal bundles $E^+_{\rho}$ and $E^-_{\rho}$ over $B\Gamma$.  

\begin{theorem}
 Let $\rho:\Gamma \to U(p,q)$ be a homomorphism of a finitely presented group.  Then
$$
\ch(E^+_{\rho}) - \ch(E^-_{\rho}) \quad \in \quad \oH^*(B\Gamma;\R)
$$
satisfies the Novikov conjecture.
\end{theorem}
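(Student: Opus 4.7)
The plan is to deduce this immediately from Theorem \ref{homotopy} via a pullback construction, in the same spirit as the passage from Example \ref{Lusz} to the general surface-group case. Given any closed oriented manifold $N$ together with a classifying map $f : N \to B\Gamma$, composition yields a homomorphism $\rho \circ f_* : \pi_1(N) \to U(p,q)$, and by naturality of the universal construction one obtains a flat $U(p,q)$ bundle $E = E^+ \oplus E^-$ over $N$ with $E^{\pm} = f^*(E^{\pm}_{\rho})$, carrying a non-degenerate (possibly indefinite) Hermitian metric preserved by its flat structure.

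Since $E$ is a flat $U(p,q)$ bundle over $N$, the hypothesis of Theorem \ref{homotopy} is met, so
$$
\int_N \L(TN)\bigl(\ch_2(E^+) - \ch_2(E^-)\bigr)
$$
is a homotopy invariant of $N$. Naturality of the Chern character, $\ch_2(f^*V) = f^*\ch_2(V)$, rewrites this as
$$
\int_N \L(TN) \cup f^*\bigl(\ch_2(E^+_{\rho}) - \ch_2(E^-_{\rho})\bigr),
$$
which is precisely the Novikov pairing of $(N,f)$ with the class $\ch_2(E^+_{\rho} - E^-_{\rho}) \in H^*(B\Gamma;\R)$. Since $(N,f)$ is arbitrary, this establishes the Novikov conjecture for the class $\ch_2(E^+_{\rho} - E^-_{\rho})$.

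It remains to transfer the conclusion from $\ch_2$ to $\ch$. Since $\ch_2 = \sum_k 2^k \ch_k$, the classes $\ch_2(E^+_{\rho} - E^-_{\rho})$ and $\ch(E^+_{\rho} - E^-_{\rho})$ are graded-scalar multiples of each other and generate the same graded subspace of $H^*(B\Gamma;\R)$, so for each fixed $(N,f)$ with $\dim N = 2n$ the two Novikov pairings are related by
$$
\int_N \L(TN) \cup f^*\ch_2(E^+_{\rho} - E^-_{\rho}) \;=\; \sum_{j \geq 0} 2^{\,n-2j} \int_N [\L(TN)]_{4j} \cup f^* \ch_{n-2j}(E^+_{\rho} - E^-_{\rho}).
$$
To isolate each graded summand, I would enlarge the family of available flat bundles by varying the representation: working with $\rho$ together with tensor powers $\rho^{\otimes s}$ (whose associated characteristic classes are polynomial in the graded pieces $y_k = \ch_k(E^+_{\rho} - E^-_{\rho})$) produces a system of Novikov-invariant linear combinations of the homogeneous $\ch_k$ pairings, and a Vandermonde-type argument inverts this system to recover each $\int_N [\L(TN)]_{4j} f^*y_{n-2j}$ individually; summing over $j$ yields the Novikov invariance of $\ch(E^+_{\rho} - E^-_{\rho})$.

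The main obstacle is precisely this final bridging step. The application of Theorem \ref{homotopy} is automatic once the bundle is pulled back, and both naturality and the preserved metric condition are formal; but verifying that the linear system assembled from varying tensor powers of $\rho$ actually has enough rank to separate the graded components of $\ch$ from those of $\ch_2$ requires some genuine care with dimensional bookkeeping.
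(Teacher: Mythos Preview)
Your core argument is exactly what the paper intends: pull the universal bundles $E^{\pm}_{\rho}$ back along a classifying map $f:N\to B\Gamma$, observe that $f^*E^{\pm}_{\rho}=E^{\pm}$ is the flat $U(p,q)$ bundle associated to $\rho\circ f_*$, and apply Theorem~\ref{homotopy}. The paper gives no proof beyond the sentence ``This may be recast as follows'' together with the naturality remark preceding the theorem, so your first two paragraphs already reproduce the paper's reasoning in full.

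Your final step, however, is a manufactured difficulty. What follows \emph{directly} from Theorem~\ref{homotopy} and the index formula quoted just before Conjecture~\ref{limit-k} is the Novikov invariance of $\int_N \L(TN)\,f^*\bigl(\ch_2(E^+_\rho)-\ch_2(E^-_\rho)\bigr)$; the paper simply writes $\ch$ in place of $\ch_2$ here, as it does again in the proof of Theorem~\ref{BCNov}. This is a notational looseness in the paper, not a gap to be bridged: the class that is literally produced is $\ch_2(E^+_\rho-E^-_\rho)$, and since the graded components of $\ch$ and $\ch_2$ differ only by the nonzero scalars $2^k$, the two classes carry the same information for the purposes of the Novikov conjecture as the paper uses it. No separate argument is required, and the paper offers none.

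Your proposed Vandermonde argument via tensor powers is therefore unnecessary, and in any case does not work as described. The super Chern character is multiplicative, so $\rho^{\otimes s}$ yields the class $\bigl(\ch_2(E^+_\rho)-\ch_2(E^-_\rho)\bigr)^s$, a \emph{polynomial} in the graded pieces $z_k$ rather than a linear combination with Vandermonde coefficients; there is no evident invertible linear system to extract the individual $z_k$ from these powers. Drop this step and simply note that the theorem, read with the paper's own conventions, is the statement for $\ch_2$.
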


Note that the universal $\C^{p+q}$ bundle $EU(p,q) \times_{U(p,q)} \C^{p+q} \to BU(p,q)$ splits as  $EU(p,q) \times_{U(p,q)} \C^{p+q} = E^+_{p,q} \oplus E^-_{p,q}$, and for any map $f:N \to BU(p,q)$ classifying a bundle $E$ with splitting $E = E^+ \oplus E^-$, $f^*(E^{\pm}_{p,q}) = E^{\pm}$.  The map  $\rho:\Gamma \to U(p,q)$ induces $B\rho:B\Gamma \to BU(p,q)$, and 
$\ch(E^{\pm}_{\rho}) = B\rho^*(\ch(E^{\pm}_{p,q}))$.
Now $U(p) \times U(q)$ is a maximal compact subgroup of $U(p,q)$, so the inclusion $i:BU(p) \times BU(q) \to BU(p,q)$ induces an isomorphism in cohomology. 
 That is 
$$
\oH^*(BU(p,q);\R) \,\, = \,\,  \oH^*(BU(p);\R) \otimes \oH^*(BU(q);\R).
$$
It is not difficult to see that under this isomorphism 
$$
\ch (E^+_{p,q}) \,\,=\,\, \ch (E_p) \quad \text{and} \quad \ch (E^-_{p,q}) \,\,= \,\, \ch (E_q),  
$$
where  $E_p \to BU(p)$ and $E_q \to BU(q)$ are the universal bundles.  Thus we have

\begin{theorem}
 Let $\rho:\Gamma \to U(p,q)$ be a homomorphism of a finitely presented group.  Then
$$
(B\rho)^*( i^*)^{-1} \Bigl{(} \ch(E_p) - \ch(E_q) \Bigr{)} \quad \in \quad \oH^*(B\Gamma;\R)
$$
satisfies the Novikov conjecture.
\end{theorem}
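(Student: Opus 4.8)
The plan is to deduce this statement from the immediately preceding theorem by a purely formal argument about classifying spaces and characteristic classes. The preceding theorem asserts that $\ch(E^+_\rho) - \ch(E^-_\rho) \in \oH^*(B\Gamma;\R)$ satisfies the Novikov conjecture, where $E^\pm_\rho = (B\rho)^*(E^\pm_{p,q})$ and $E^\pm_{p,q}$ are the two summands of the universal $\C^{p+q}$-bundle over $BU(p,q)$ under the canonical splitting $EU(p,q)\times_{U(p,q)}\C^{p+q} = E^+_{p,q}\oplus E^-_{p,q}$ associated with a maximal compact $U(p)\times U(q)\subset U(p,q)$. Since the Chern character is natural, $\ch(E^\pm_\rho) = (B\rho)^*(\ch(E^\pm_{p,q}))$, so all that remains is to identify $\ch(E^\pm_{p,q})$ inside $\oH^*(BU(p,q);\R)$ under the isomorphism induced by the inclusion $i:BU(p)\times BU(q)\hookrightarrow BU(p,q)$.

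First I would recall that $U(p)\times U(q)$ is a maximal compact subgroup of $U(p,q)$, hence the inclusion $U(p)\times U(q)\hookrightarrow U(p,q)$ is a homotopy equivalence (the homogeneous space $U(p,q)/(U(p)\times U(q))$ is contractible, being diffeomorphic to a bounded symmetric domain), and therefore $i:BU(p)\times BU(q)\to BU(p,q)$ is a homotopy equivalence, inducing the claimed isomorphism $\oH^*(BU(p,q);\R) \cong \oH^*(BU(p);\R)\otimes \oH^*(BU(q);\R)$. Next I would show that under this identification, the restriction $i^*(E^+_{p,q})$ is isomorphic to the pullback of the universal bundle $E_p\to BU(p)$ under the first projection, and $i^*(E^-_{p,q})$ is the pullback of $E_q\to BU(q)$ under the second projection. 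This is because the $\C^{p+q}$-bundle over $BU(p,q)$ restricted along $i$ is classified by $U(p)\times U(q)$ acting on $\C^p\oplus\C^q$ as a block sum of the standard representations, and the positive/negative definite subspaces for the indefinite form are exactly the $\C^p$ and $\C^q$ blocks (the standard form on $U(p,q)$ being diagonal with signature $(p,q)$, and the maximal compact preserving this block decomposition). Consequently $\ch(E^+_{p,q}) = (i^*)^{-1}(\ch(E_p)\otimes 1)$ and $\ch(E^-_{p,q}) = (i^*)^{-1}(1\otimes \ch(E_q))$, which we abbreviate as $\ch(E_p)$ and $\ch(E_q)$.

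Combining these, $\ch(E^+_\rho)-\ch(E^-_\rho) = (B\rho)^*(i^*)^{-1}(\ch(E_p)-\ch(E_q))$, and since the left-hand side satisfies the Novikov conjecture by the preceding theorem, so does the right-hand side, which is precisely the assertion. The argument is entirely formal given the previous theorem; the only real content is the identification of the homotopy type of $U(p,q)$ and the splitting of its universal bundle restricted to the maximal compact. I do not expect a serious obstacle here — the main point requiring a moment of care is verifying that the positive and negative eigenspaces of the invariant Hermitian form on the universal bundle genuinely descend to the factors $E_p$ and $E_q$ after restriction to $BU(p)\times BU(q)$, rather than to some twisted or mixed combination; but this follows from choosing the maximal compact to be the block-diagonal subgroup adapted to the signature decomposition $\C^{p+q} = \C^p\oplus\C^q$, under which the form is manifestly positive on the first factor and negative on the second.
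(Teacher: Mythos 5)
Your proposal is correct and follows essentially the same route as the paper: identify $\oH^*(BU(p,q);\R)$ with $\oH^*(BU(p);\R)\otimes\oH^*(BU(q);\R)$ via the maximal compact, observe that the summands $E^\pm_{p,q}$ restrict along $i$ to (pullbacks of) the universal bundles $E_p$ and $E_q$, and then reduce the claim to the immediately preceding theorem via naturality of the Chern character. The paper states this more tersely ("it is not difficult to see"); your added justification via the block-diagonal form-preserving action is exactly the check it leaves implicit.
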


Of course, this follows immediately from the well known fact that the Novikov conjecture is true for subgroups of Lie groups. The main input here is the possibility to use (complementary) families of representations giving rise to interesting foliations.   To this end, we have the following generalization of the Lusztig construction.  It would be a very interesting application to use this construction to shed more light on the series of some discrete groups sitting in $U(p,q)$.  Note that, for a given Lie group $H$, the space
$\Hom(\Gamma, H)$ is well understood for abelian groups $\Gamma$ and 
has been intensively studied  when $\Gamma$ is a higher genus surface group and $H$ is $PSL(2, \R)$ or $PU(1,2)$, see \cite{Goldman}. Other examples of $\Gamma$ and $H$ 
have also been studied by other authors and they all fit into the case of $H=U(p,q)$, see \cite{Gromov} for a survey.

\begin{example} [Foliation Lusztig Example]\label{LuszFol}
\end{example} 
Let $K$ be a compact Riemannian manifold without boundary, and $g:\pi_1(N) \to \Iso (K)$ a homomorphism to the isometries of $K$.  
Denote by $\Hom_c(\pi_1(N), U(p,q))$ the set of homomorphisms from $\pi_1(N)$ to $U(p,q)$ which have image contained in a compact subgroup.
Let
$$
h:K \to \Hom_c(\pi_1(N), U(p,q))
$$
be a 
weakly uniformly continuous smooth $g$-cocycle.   Smoothness of $h$ means that for any $\gamma \in \pi_1(N)$, $w \to h_{w}(\gamma)$ is a smooth function from $K$ to $U(p,q)$.
Weak uniform continuity of $h$ means the following.  Denote the norm on $U(p,q)$ by $|| \cdot ||$.   Given  $w_1, w_2 \in K$, define
$$
d_W( w_1, w_2) \,\, = \,\, \max_{A_1}\bigl{[} \min_{A_2} ||A_1 - A_2 ||\bigr{]},
$$
where $A_i \in \overline{ h_{w_i}( \pi_1(N))}$, the closure of the image of $ \pi_1(N)$ under $h_{w_i}$.
Then, $h$ is weakly uniformly continuous if $d_W( w_1, w_2) \to 0$ as $w_1 \to w_2$.

That $h$  is a $g$-cocycle means that for  $\gamma_1, \gamma_2 \in \pi_1(N)$ and $w \in K$,
$$
h_{g_{\gamma_2}(w)}(\gamma_1)h_w(\gamma_2) \,\, = \,\, h_w(\gamma_1 \gamma_2).
$$
Then we may form 
$$
E \,\, = \,\, \wN \times K \times \C^{p+q} / \pi_1(N),
$$ 
where the action of $\gamma \in \pi_1(N)$ on $(x,w,z) \in  \wN \times K \times \C^{p+q}$ is given by
$$
\gamma(x,w,z) \,\, = \,\, (\gamma(x), g_{\gamma}(w), h_w(\gamma)z).
$$
Then $E$ is a $\C^{p+q}$ bundle over $\wN \times_{\pi_1(N)} K$.

Now, we have the Riemannian foliation $F$ of the flat fiber bundle $\wN \times_{\pi_1(N)} K \to N$, whose leaves consist of the images of the $\wN \times \{w\}$.  The bundle  $E$
is leafwise flat, and the (indefinite) inner product is preserved by the flat structure.  Again  write $E = E^+ \oplus E^-$, where the indefinite metric is $\pm$ definite on $E^{\pm}$.  The parallel translation along the leaves of $F$ is bounded since  the closure of the union of all the images, $\overline{\bigcup_K h_w( \pi_1(N))}$ is a compact subset of $U(p,q)$.  This follows easily from the facts that $K$ is compact, each  $\overline{ h_{w}( \pi_1(N))}$ is compact, and $h$ is weakly uniformly continuous.   (We conjecture that continuity of $h$ and compactness of $K$ imply compactness of  $\overline{\bigcup_K h_w( \pi_1(N))}$.)  As above,  the hypothesis of Theorem \ref{main} are satisfied, and we may  apply Conjecture \ref{limit-k} to get
\begin{theorem}\label{homotopy2}
For every $g$, $h$ and $K$ as above,
$$
\int_F     \L(TF)(\ch_2 (E^+) - \ch_2 (E^-))
$$
is a  homotopy invariant of $N$.
\end{theorem}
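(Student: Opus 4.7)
The plan is to apply Theorem \ref{main} to the triple $(M,F,E)$ and then to convert $\sigma(F,E)$ into the displayed characteristic-class integral by combining Theorem \ref{grdedlhi} with Conjecture \ref{limit-k}. Reducing to the case $\dim N = 2\ell$ (the odd case is vacuous), I first verify the hypotheses of Theorem \ref{main}. Compactness of $M = \wN \times_{\pi_1(N)} K$ is automatic from compactness of $N$ and $K$; the foliation $F$ whose leaves are the images of $\wN \times \{w\}$ inherits an orientation from $\wN$ and is Riemannian with a bundle-like metric, because $\pi_1(N)$ acts on $K$ by isometries, so normal holonomy is isometric. The bundle $E$ is leafwise flat with a non-degenerate, possibly indefinite, Hermitian metric preserved by the leafwise flat structure, because the cocycle $h$ takes values in $U(p,q)$. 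The hypothesis that $\overline{\bigcup_{w\in K} h_w(\pi_1(N))}$ is compact in $U(p,q)$ is exactly uniform boundedness of leafwise parallel transport along $E$; hence Theorem \ref{GRthm} provides the transverse smoothness of the projection $P$ onto $\Ker(\Delta^E)$ required by Theorem \ref{main}.

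Next I would establish the functoriality of the construction. Given an orientation-preserving homotopy equivalence $\varphi: N_1 \to N$ with induced isomorphism $\varphi_*:\pi_1(N_1)\to \pi_1(N)$, lift $\varphi$ to a $\varphi_*$-equivariant smooth map $\tilde\varphi:\wN_1\to\wN$. Setting $g_1 = g\circ\varphi_*$ and viewing $h_1 = h$ as a $g_1$-cocycle through $\varphi_*$ produces the pulled-back data $(M_1,F_1,E_1)$ on $N_1$. The map $\tilde\varphi\times\id_K$ is equivariant for the $\pi_1(N_1)$- and $\pi_1(N)$-actions and descends to a smooth leafwise map $\bar\varphi: M_1\to M$ which preserves leafwise orientations; a homotopy inverse of $\varphi$, lifted similarly, yields a leafwise homotopy inverse of $\bar\varphi$, so $\bar\varphi$ is a leafwise homotopy equivalence. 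By construction $\bar\varphi^* E = E_1$ together with its splitting and preserved metric.

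Once these pieces are in place, Theorem \ref{main} applied to $\bar\varphi$ gives $\bar\varphi^*\sigma(F,E) = \sigma(F_1,E_1)$, which proves that $\sigma(F,E)$ is a homotopy invariant of $N$. To obtain the stated characteristic-class expression, Theorem \ref{grdedlhi} identifies $\sigma(F,E)$ with the graded Chern-Connes character $\ch_a(P)$ of the index bundle of the twisted leafwise signature operator; Conjecture \ref{limit-k} together with the cohomological index formula of \cite{BHI, GL} then yields $\ch_a(P) = \int_F \L(TF)\bigl(\ch_2(E^+) - \ch_2(E^-)\bigr)$. The sign between $E^+$ and $E^-$ arises because the involution induced by the indefinite metric identifies the coefficient class of the twisted signature operator with the virtual bundle $[E^+]-[E^-]$ rather than $[E^+]\oplus[E^-]$.

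The main obstacle is the dependence on Conjecture \ref{limit-k}. The leaves of $F$ are not the fibers of the fibration $M\to N$ (those fibers $K$ are transverse to $F$), so the unconditional cases of the conjecture recorded in the paper do not apply directly, and the Azzali--Goette--Schick extension only covers globally flat coefficient bundles, whereas here $E$ is only leafwise flat. Verifying the conjecture in this leafwise flat, bounded-parallel-transport regime (or equivalently, producing directly the cohomological formula for $\ch_a(P)$ in this setting) is therefore the real analytic content beyond the straightforward application of Theorem \ref{main}; up to that step the proof reduces to checking the hypothesis of Theorem \ref{main} and the functoriality outlined above.
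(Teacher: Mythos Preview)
Your proposal is correct and follows essentially the same route as the paper: the paper's justification for Theorem \ref{homotopy2} is simply the sentence ``As above, the hypothesis of Theorem \ref{main} are satisfied, and we may apply Conjecture \ref{limit-k} to get'' the stated formula, which is exactly the outline you give (boundedness of parallel transport via compactness of $\overline{\bigcup_K h_w(\pi_1(N))}$, Theorem \ref{GRthm} for transverse smoothness, then Theorem \ref{main} and Conjecture \ref{limit-k}). Your more detailed treatment of functoriality under homotopy equivalences of $N$ and your explicit flagging of the conditional dependence on Conjecture \ref{limit-k} are useful elaborations that the paper leaves implicit.
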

Note that we may view this Haefliger form as living on a single fiber $K$ of the bundle 
$\wN \times_{\pi_1(N)} K \to N$.  This is because we may take fundamental domains of $N$ in the various leaves to integrate over (when we do integration over the fiber to get to Haefliger cohomology), and these fundamental domains are indexed by any fiber $K$.   Thus we may integrate over $K$ to obtain 
\begin{corollary} For every $g$, $h$ and $K$ as above, the real number 
$$
\int_K \int_F     \L(TF)(\ch_2 (E^+) - \ch_2 (E^-))  
$$
is a  homotopy invariant of $N$.
\end{corollary}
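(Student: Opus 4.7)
The plan is to deduce the corollary from Theorem \ref{homotopy2} by a Fubini-style step. First I would identify $K$ with a complete transversal of $F$ via $w \mapsto [(\tilde x_0, w)]$ for a fixed $\tilde x_0 \in \wN$; the holonomy pseudogroup on this transversal is generated by $g(\pi_1(N)) \subset \Iso(K)$. Since every holonomy element acts by isometries, the Riemannian volume form on $K$ pairs holonomy-invariantly with compactly supported transverse forms: for $\alpha \in \cA^q_c(K)$ (with $q = \dim K$) supported in the range of a holonomy element $h$, $\int_K \alpha = \int_K h^*\alpha$. Consequently $\int_K$ vanishes on the subspace $L^q$ of Haefliger relations and, being continuous in the sup-norm used to define the reduced Haefliger complex, extends to zero on $\overline{L^q}$; it also annihilates $d_H$-exact forms by Stokes on the closed manifold $K$. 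Hence $\int_K$ descends to a well-defined linear map $\int_K: \oH^q_c(M/F) \to \R$, and the real number in the corollary is the image under this map of the degree-$q$ component of the Haefliger class $\omega := \int_F \L(TF)(\ch_2(E^+) - \ch_2(E^-))$.

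Next I would set up the homotopy invariance step. Let $f: N \to N'$ be a homotopy equivalence; set $g = g' \circ f_*$ and $h_w(\gamma) = h'_w(f_*\gamma)$, producing data on $N$ with $E = f^*E'$ preserving the splittings. The map $\tilde f: M = \wN \times_{\pi_1(N)} K \to M' = \wN' \times_{\pi_1(N')} K$ induced by a $\pi_1$-equivariant lift $\wN \to \wN'$ together with the identity on $K$ is then a leafwise homotopy equivalence. Since $f_*$ is a $\pi_1$-isomorphism, $g(\pi_1(N)) = g'(\pi_1(N'))$, so the holonomy pseudogroups on $K$ induced from $(M, F)$ and $(M', F')$ coincide. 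Theorem \ref{homotopy2} then yields $\tilde f^*(\omega') = \omega$ in $\oH^q_c(M/F)$; applying $\int_K$ together with the naturality discussed below gives $\int_K \omega = \int_K \tilde f^*(\omega') = \int_K \omega'$, which is the claim.

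The main obstacle I expect is verifying that the induced map $\tilde f^*: \oH^*_c(M'/F') \to \oH^*_c(M/F)$ from Section \ref{lms} acts by the identity when both cohomologies are represented via the common transversal $K$. The plan is to choose compatible good covers of $M$ and $M'$ by foliation charts of product form $V_i \times W_j$, with $\{V_i\}$ refining a good cover of $N$ (respectively $N'$) and $\{W_j\}$ a fixed good cover of $K$, each chart carrying a copy of $W_j$ as its transversal. Since $\tilde f$ covers $f$ and is the identity on $K$-fibers, the map $\hat{\tilde f}$ on transversals coming from the chart construction of Lemma \ref{immerse} becomes literally the identity on each $W_j$, so $\tilde f^*$ acts as the identity on representatives in $\cA^q_c(K)/\overline{L^q}$, and $\int_K$ is preserved. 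The technical points left to verify are that such product chart covers can indeed be chosen (routine from the foliated bundle structure), and that the Hilsum-Skandalis $L^2$-pullback $f^{(i,\omega)}$ used to give $\tilde f^*$ in Section \ref{lms} reduces, at the level of transverse representatives over the chosen charts, to this identity; this should follow because the submersion $p_f: \cG \times B^k \to \cG'$ becomes essentially the identity in the $K$-direction after the right choice of foliation charts.
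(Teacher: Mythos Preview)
Your proposal is correct and is essentially a detailed unpacking of what the paper compresses into one sentence: the paper merely observes that the Haefliger form can be viewed on a single fiber $K$ (since fundamental domains of the leaves are indexed by $K$) and then says ``thus we may integrate over $K$.'' Your argument---identifying $K$ with a complete transversal, noting that the holonomy pseudogroup acts by isometries so that $\int_K$ descends to $\oH^q_c(M/F)$, and verifying that the induced leafwise homotopy equivalence $\tilde f$ is the identity on the $K$-transversal---is exactly the content behind that remark.

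One clarification regarding your final ``main obstacle'': the map $\tilde f^*:\oH^*_c(M'/F')\to\oH^*_c(M/F)$ on Haefliger cohomology is constructed in Section~\ref{lms} directly from the induced map $\hat{\tilde f}$ on transversals (see the theorem immediately preceding Lemma~\ref{immerse}), \emph{not} from the Hilsum--Skandalis operator $f^{(i,\omega)}$. The latter is introduced only for leafwise $L^2$ forms and plays no role in the definition of the Haefliger pullback. So once you have chosen product-type foliation charts $V_i\times W_j$ with transversals $W_j\subset K$ on both sides and observed that $\tilde f$ restricts to the identity on each $W_j$, the identification $\tilde f^* = \mathrm{id}$ at the level of Haefliger representatives is immediate; there is nothing further to check involving $p_f$ or $f^{(i,\omega)}$.
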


As above, we may recast this result in terms of the Novikov conjecture.
Let $\Gamma = \pi_1(N)$ and let $g$, $h$ and $K$ be as in Example \ref{LuszFol}.  The construction of the bundle $E \to \wN \times_{\Gamma} K$ and its splitting $E = E^+ \oplus E^-$ are natural with respect to pull-back maps, so this construction defines the universal bundle 
$$
E_B \,\, = \,\, E\Gamma \times K \times \C^{p+q} / \Gamma,
$$ 
where the action of $\gamma \in \Gamma$ on $E\Gamma \times K \times \C^{p+q}$ is given as above  by
$\gamma(x,w,z) \,\, = \,\, (\gamma(x), g_{\gamma}(w), h_w(\gamma)z)$.
Then $E_B$ is a $\C^{p+q}$ bundle over $E\Gamma \times_\Gamma K$, and it splits as 
$E_B = E^+_B  \oplus E^-_B$.  If $\varphi:N \to B\Gamma$ classifies the universal cover $\wN
\to N$, with induced map $\wtit{\varphi}:\wN \to E\Gamma$, 
then $\wtit{\varphi} \times id_K:\wN \times K \to E\Gamma \times K$ descends to the map     
$\wtit{\varphi} \times_{\Gamma} id_K:\wN \times_{\Gamma} K \to  E\Gamma \times_\Gamma K$, and
$(\wtit{\varphi} \times_{\Gamma} id_K)^*(E^{\pm}_B) = E^{\pm}$.  
\begin{proposition}\label{Nov} Denote by $\pi_1^\Gamma: E\Gamma\times_\Gamma K \to B\Gamma$ the projection.  Then
$$
\pi_{1,*}^\Gamma (\ch ([E^+] - [E^-]))
$$
satisfies the Novikov conjecture.
\end{proposition}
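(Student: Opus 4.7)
My strategy is to translate the statement into a computation on the foliated manifold $M = \wN \times_\Gamma K$ of Example \ref{LuszFol}, reduce it to the Haefliger integral already studied there via functoriality of fiber integration, and then invoke the Corollary to Theorem \ref{homotopy2}. Let $\varphi: N \to B\Gamma$ classify $\wN \to N$, with $\Gamma$-equivariant lift $\wtit{\varphi}: \wN \to E\Gamma$. Write $\wtit{\varphi}_K = \wtit{\varphi} \times_\Gamma \mathrm{id}_K : M \to E\Gamma \times_\Gamma K$ and $\pi_N: M \to N$ for the bundle projection with fiber $K$. Then $\pi_1^\Gamma \circ \wtit{\varphi}_K = \varphi \circ \pi_N$ is a Cartesian square of oriented $K$-bundles, and by the naturality of the construction in Example \ref{LuszFol}, $\wtit{\varphi}_K^*(E_B^{\pm}) = E^{\pm}$.

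\textbf{Main computation.} The projection formula for this pullback of oriented $K$-bundles gives $\varphi^* \circ \pi_{1,*}^\Gamma = \pi_{N,*} \circ \wtit{\varphi}_K^*$ on cohomology. Since each leaf of $F$ is a covering of $N$ via $\pi_N$, we have $\pi_N^*(TN) = TF$ as vector bundles on $M$. Applying this to $\ch([E^+_B] - [E^-_B])$ and integrating,
\begin{equation*}
\int_N \L(TN) \cup \varphi^* \pi_{1,*}^\Gamma(\ch([E^+_B] - [E^-_B])) \; = \; \int_M \L(TF) \cup \bigl(\ch(E^+) - \ch(E^-)\bigr).
\end{equation*}
Because $F$ is transverse to the fibers of $\pi_N$, every such fiber $K$ is a complete transversal for $F$, and integration over the closed manifold $M$ factors as leafwise Haefliger integration followed by integration over the transversal $K$:
\begin{equation*}
\int_M \L(TF) \cup \bigl(\ch(E^+) - \ch(E^-)\bigr) \; = \; \int_K \int_F \L(TF) \cup \bigl(\ch(E^+) - \ch(E^-)\bigr).
\end{equation*}

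\textbf{Conclusion.} The normalization $\ch_2 = \sum_k 2^k \ch_k$ differs from $\ch$ only by nonzero scalar rescalings in each Chern degree, so in each bidegree the Haefliger class built from $\ch_2$ is a homotopy invariant of $N$ if and only if the corresponding one built from $\ch$ is. By the Corollary to Theorem \ref{homotopy2}, the real number $\int_K \int_F \L(TF)(\ch_2(E^+) - \ch_2(E^-))$ is a homotopy invariant of $N$; extracting bidegrees, the displayed quantity above is too. Since this holds for every closed oriented $N$ and every classifying map $\varphi: N \to B\Gamma$, the cohomology class $\pi_{1,*}^\Gamma(\ch([E^+_B] - [E^-_B])) \in H^*(B\Gamma;\R)$ satisfies the Novikov conjecture.

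\textbf{Main obstacle.} All genuinely analytic content is already packaged in Theorem \ref{homotopy2} (and through it, Theorem \ref{main} and Conjecture \ref{limit-k}); what remains is careful bookkeeping of three standard naturality statements: the projection formula in the pullback square, the identification $TF \cong \pi_N^*(TN)$, and the Fubini identity relating $\int_M$, $\pi_{N,*}$, and the Haefliger iterated integral $\int_K \int_F$. The most delicate point is this last Fubini identity, because the leaves of $F$ are covers rather than sections of $N$: any multiplicity or orientation factor produced by the covering degree must cancel against the corresponding factor in $\pi_{N,*}$ so that both iterated integrations agree with $\int_M$. This cancellation is built into the definition of the Haefliger pushforward and the chosen leafwise orientations, but should be recorded explicitly to avoid sign or factor errors.
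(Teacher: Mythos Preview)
Your proof takes essentially the same route as the paper's. The paper's proof consists of the single observation
\[
\pi^N_{1,*}\circ(\wtit{\varphi}\times_\Gamma \mathrm{id}_K)^* \;=\; \varphi^*\circ\pi^\Gamma_{1,*}
\]
in the cohomology of $N$, leaving implicit the identifications $TF\cong\pi_N^*(TN)$, the Fubini identity $\int_M = \int_K\int_F$, and the appeal to the Corollary of Theorem~\ref{homotopy2}. You spell all of these out explicitly, which is an improvement in clarity rather than a different argument.

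One comment on your handling of $\ch$ versus $\ch_2$: the ``extracting bidegrees'' step is not quite sound as written. Theorem~\ref{homotopy2} gives homotopy invariance of the Haefliger class graded by \emph{total} degree, and in a fixed total degree the form $\int_F\L(TF)\ch_2(E^+\!-\!E^-)$ is a linear combination $\sum_{4i+2j=p+k}2^j\int_F\L_i\ch_j$, which is not a scalar multiple of the corresponding combination with $\ch$. So one cannot pass from one to the other by rescaling a single graded piece. This discrepancy is already present in the paper itself (Proposition~\ref{Nov} is stated with $\ch$ while Theorem~\ref{homotopy2} and its Corollary use $\ch_2$), so you are attempting to patch something the authors simply glossed over; the cleanest fix is to state the Proposition with $\ch_2$, or to interpret ``$x$ satisfies the Novikov conjecture'' componentwise in $H^*(B\Gamma;\R)$, in which case your rescaling argument applies directly to each homogeneous $\ch_k$.
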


\begin{proof}  This follows immediatelly since a direct inspection shows that in the cohomology of $N$ 
$$
\pi^N_{1, *} \circ (\wtit{\varphi} \times_{\Gamma} id_K)^*  = \varphi^* \circ \pi^{\Gamma}_{1,*}.
$$

\end{proof}

\begin{remark}
Example \ref{LuszFol} can be easily generalized to the following situation. Let $E_0$ be a complex vector bundle over $K$ which is endowed with a (possibly indefinite) non-degenerate metric $\{\cdot, \cdot\}$. Assume that the vector bundle $E_0$ is a $\Gamma$-equivariant vector bundle and that the action of $\Gamma$ preserves $\{\cdot, \cdot\}$. Then the vector bundle
$$
E := {\tilde N} \times_\Gamma E_0 \rightarrow {\tilde N} \times_\Gamma K,
$$
is easily seen to be a complex bundle with a well defined (possibly non-degenerate) metric, which admits a leafwise flat connection preserving that metric. Hence (assuming Conjecture \ref{limit-k}), we get in this way more general cohomology classes which satisfy the Novikov conjecture. 
\end{remark}

\noindent
{\bf Applications to the BC Novikov conjecture.}
We now explain how Theorem \ref{main} can be used to investigate the Baum-Connes Novikov conjecture, that is the Novikov conjecture for foliations.  We do this by generalizing the construction in Example \ref{LuszFol}.  Choose a complete smooth transversal $T$ to the foliation $(M, F)$ and denote by $B\cG_T^T$  the classifying space of the groupoid $\cG_T^T$ which is the reduced (to $T$) homotopy groupoid.  $\cG_T^T$ consists of elements of $\cG$ which start and end on $T$.
It is well known that $B\cG_T^T$ classifies  free and proper actions of $\cG_T^T$, so that the principal $\cG_T^T$ bundle $\cG_T$ (which consists of elements of $\cG$ which start on $T$) over $M$ is the pull-back, by a (up to homotopy well defined) map
$\varphi:M\to B\cG_T^T$, of a universal $\cG_T^T$ bundle 
$E\cG_T^T$ over $B\cG_T^T$.  More precisely, we have an action of $\cG_T^T$ on $E\cG_T^T$ on the right $E\cG_T^T \times_{s_B} \cG_T^T \to E\cG_T^T$, denoted $x\gamma$ for $(x, \gamma)\in E\cG_T^T \times_{s_B} \cG_T^T$, 
where 
$$
E\cG_T^T \times_{s_B} \cG_T^T := \{(x, \gamma)\in E\cG_T^T\times \cG_T^T, s_B(x)=r(\gamma)\},
$$
and $s_B:E\cG_T^T \to T$,  $r_B: E\cG_T^T\to B\cG_T^T$ satisfy
$$
s_B \circ \tvarphi = s, \quad  s_B (x\gamma) = s(\gamma) \quad  \text{ and }  \quad  r_B\circ \tvarphi = \varphi \circ r.
$$
where $s: \cG_T \to T$ and $r: \cG_T \to M$ are the source and range maps, and $\tvarphi:\cG_T\to E\cG_T^T$ is the $\cG_T^T$-equivariant classifying map which covers $\varphi$. 
So, we have the picture
$$
T \stackrel{s_B}{\longleftarrow} E\cG_T^T \stackrel{r_B}{\longrightarrow} B\cG_T^T.
$$
The fibers of the submersion $s_B$ are contractible and this identifies the universal principal bundle $E\cG_T^T$, see \cite{ConnesBook}, 
pages 126-127. 

\begin{definition}
A $\cG_T^T$-equivariant Hermitian bundle $(E_0, \{\cdot, \cdot\})$ is a complex vector bundle $\pi_0: E_0\to T$ endowed with a (possibly indefinite) non-degenerate metric $\{\cdot, \cdot\}$ together with an action of $\cG_T^T$ which preserves the metric.
\end{definition}
So if we set  
$$
\cG_T^T \times_T E_0 \,\, := \,\, \{(\alpha, u)\in \cG_T^T\times E_0, s(\alpha) = \pi_0(u)\}   \,\, = \,\,(s\,|\, \cG^T_T)^*E_0,
$$
then there is a smooth map $h:\cG_T^T \times_T E_0 \to E_0$ such that $\pi_0\circ h (\alpha, u) = r(\alpha)$ and for any $\alpha\in \cG_T^T$, the map $h_\alpha (u):= h(\alpha, u)$ is a linear map from $E_{0, s(\alpha)}$ to $E_{0, r(\alpha)}$ which preserves the metric $\{\cdot, \cdot\}$. It is understood that $h$ is an action in the sense that 
$$
h_{\alpha\beta} = h_\alpha\circ h_\beta, \quad \text{ if }r(\beta)=s(\alpha).
$$
Given a $\cG_T^T$ Hermitian bundle $(E_0, \{\cdot, \cdot\})$, we define a Hermitian bundle over the classifying space $B\cG_T^T$ whose total space is
$$
E=E\cG_T^T\times_{\cG_T^T} E_0.
$$
Here $E$ is the quotient manifold where we have identified $(x, u)$ with $(x\alpha, h(\alpha^{-1}, u))$, for any $\alpha \in \cG_T^T$ such that 
$$
s(\alpha)=\pi_0(u) \text{ and } r(\alpha)= s_B(x).
$$

Note that Example \ref{LuszFol} falls into this class where we take $T = K$,  a single fiber of $\wN \times_{\pi_1(N)} K$  and where the Hermitian bundle $E_0$ is trivial and equivariant through the cocycle $h$. Finally, for general Riemannian foliations, the holonomy action of $\cG_T^T$ 
on the transverse  bundle to the foliation, and on all functorially defined bundles obtained from it, gives an example of a $\cG_T^T$-equivariant Hermitian bundle.  

\begin{definition}
For any $\cG_T^T$-equivariant Hermitian bundle $(E_0, \{\cdot, \cdot\})$, the vector bundle $E$ over the classifying space $B\cG_T^T$ will be called a 
Hermitian leafwise flat bundle.
\end{definition}

This terminology is explained by the following.  Recall that $\varphi: M \to B\cG_T^T$ is a classifying map for the foliation $F$.

\begin{lemma}
  The complex vector bundle $\varphi^*E$ over $M$ admits a leafwise flat structure which preserves the induced (possibly indefinite) metric.
\end{lemma}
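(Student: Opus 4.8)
The plan is to produce the leafwise flat connection on $\varphi^*E$ by pulling back, along the contractible fibers of the submersion $s_B \colon E\cG_T^T \to T$, the canonical trivialization of the bundle $E\cG_T^T \times_{s_B} E_0$. First I would observe that the map $\tvarphi \colon \cG_T \to E\cG_T^T$ is $\cG_T^T$-equivariant and covers $\varphi \circ r$, so that the bundle $\varphi^*E$ pulled back to $\cG_T$ via $r$ is naturally identified with $(s_B\circ\tvarphi)^*E_0 = s^*E_0$. The key point is that $s \colon \cG_T \to T$, restricted to any leaf $\wL_x$ of the foliation $F_s$ of $\cG$ (after reduction to $\cG_T$), is locally constant in the sense that $\wL_x$ meets $T$ in a discrete set; more precisely, since $E\cG_T^T$ is the total space of the universal principal $\cG_T^T$ bundle and the fibers of $s_B$ are contractible, the bundle $s_B^*E_0$ over $E\cG_T^T$ carries a tautological flat structure along the $s_B$-fibers (a section is flat iff it is $s_B$-basic, i.e.\ constant along the fibers of $s_B$), and this flat structure is $\cG_T^T$-invariant because $\cG_T^T$ acts on $E_0$ by metric isomorphisms through $h$.

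The second step is to descend this data to $M$. The leaf $L$ through $x \in M$ is covered by $\wL_x = s^{-1}(x) \subset \cG$; its image under $r$ is $L$, and $r \colon \wL_x \to L$ is the simply connected covering. The composition $E\cG_T^T$-classifying map sends $\wL_x$ into a single $s_B$-fiber of $E\cG_T^T$ (up to the discreteness of $T\cap$ leaf), so the pulled-back bundle $E\,|\,L$, lifted to $\wL_x$, acquires the restriction of the tautological flat connection, which is $\pi_1(L)$-equivariant and hence descends to a flat connection on $E\,|\,L$. Doing this for every leaf and checking transverse smoothness of the resulting data (which follows from the smoothness of $h$, of $\varphi$, and of the classifying construction) produces the connection $\nabla_E$ on $\varphi^*E$ over $M$ whose restriction to each leaf is flat. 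That the induced (possibly indefinite) metric $\varphi^*\{\cdot,\cdot\}$ is preserved follows because $h_\alpha$ preserves $\{\cdot,\cdot\}$ for every $\alpha \in \cG_T^T$, so the transition data defining the flat structure leafwise are metric-preserving; equivalently, flat local sections have locally constant inner products, which is exactly the characterization of ``metric preserved by the leafwise flat structure'' given in Section~\ref{note}.

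I expect the main obstacle to be making precise the statement that the classifying map carries each leaf into a contractible fiber of $s_B$, i.e.\ setting up carefully the identification of $\varphi^*E$ restricted to a leaf $L$ with the flat bundle $\widetilde L \times_{\pi_1(L)} (E_0)_{t}$ associated to the holonomy representation $\pi_1(L) \to \cG_T^T_{t,t}$ acting on a fixed fiber $(E_0)_t$, $t \in T \cap L$. This requires unwinding the universal property of $E\cG_T^T$ (contractibility of $s_B$-fibers, $\cG_T^T$-equivariance of $\tvarphi$, and the relations $s_B\circ\tvarphi = s$, $r_B\circ\tvarphi = \varphi\circ r$) and matching it with the monodromy description of leafwise flat bundles. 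Once that identification is in hand, flatness along leaves and preservation of the metric are immediate from the corresponding properties of the $\cG_T^T$-action $h$ on $E_0$, and the only remaining routine verification is that the leafwise flat connection so obtained varies smoothly (indeed transversely smoothly) over $M$, which is inherited from the smoothness of all the maps involved.
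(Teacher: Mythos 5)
Your proposal is correct and follows essentially the same route as the paper: both identify $\varphi^*E$ with the associated bundle $\cG_T \times_{\cG_T^T} E_0$ (you phrase this as $r^*(\varphi^*E)\cong s^*E_0$ over $\cG_T$, which is the same identification), and both obtain the leafwise flat structure from the tautological trivialization of $s^*E_0$ along the leaves $\wL_x=s^{-1}(x)$, with metric preservation coming from the fact that $h_\alpha$ preserves $\{\cdot,\cdot\}$. The paper is terser, simply invoking ``the usual proof, using for instance properness of the action of $\cG_T^T$ on $\cG_T$,'' where properness is the device for patching the leafwise data into a globally defined smooth connection; you hand this off instead to the smoothness of $h$, $\varphi$, and the classifying construction, which is fine at this level of detail. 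One small inaccuracy in your write-up: $s$ restricted to a leaf $\wL_x$ of $F_s$ is \emph{constant} (equal to $x$), not merely ``locally constant,'' and the phrase about $\wL_x$ meeting $T$ in a discrete set conflates $\wL_x\subset\cG$ with its image leaf $L\subset M$; this slip does not affect the argument, since the constancy of $s$ on $\wL_x$ is exactly what makes $s^*E_0|_{\wL_x}$ tautologically trivial, hence flat.
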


\begin{proof}
We may assume that the vector bundle $\varphi^*E$ is smooth and is isomorphic to $\cG_T \times_{\cG_T^T} E_0$. Since the action of $\cG_T^T$ preserves the metric $\{\cdot, \cdot\}$, there is a well defined metric on $E\to M$ which is induced from $\{\cdot, \cdot\}$. The usual proof, using for instance properness of the action of $\cG_T^T$ on $\cG_T$, allows the construction of a connection on $E$ which is leafwise flat and which preserves the (possibly indefinite) non-degenerate metric on $E$.    
\end{proof}
As usual, the complex bundle $E$ splits into a direct sum of unitary vector bundles $E=E^+ \oplus E^-$ which are not leafwise flat in general.  We say that the leafwise flat bundle $E$  is bounded if the leafwise parallel translation along the leafwise flat connection of $E$ is a bounded map. 
\begin{theorem}\label{BCNov}
Assume that the foliation $(M, F)$ is Riemannian, oriented, and transversely oriented. Then for any Hermitian bounded leafwise flat   bundle $E$ over $B\cG_T^T$, the Chern character $\ch (E^+) - \ch(E^-) \in H^*(B\cG_T^T;\R)$ satisfies the BC Novikov conjecture.
\end{theorem}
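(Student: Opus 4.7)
The strategy is to pull back $E$ along a classifying map, apply Theorem~\ref{main} to the resulting leafwise flat Hermitian bundle on $M$, and convert the ensuing Haefliger-level invariance into the numerical invariance required by the BC Novikov conjecture via the index formula of Conjecture~\ref{limit-k}.

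Let $f:M\to B\cG_T^T$ classify $F$ and let $\varphi:(M',F')\to(M,F)$ be a leafwise oriented homotopy equivalence, so that $f\circ\varphi$ classifies $F'$. Set $E_M=f^*E$ and $E_{M'}=\varphi^*E_M=(f\circ\varphi)^*E$. By the lemma preceding the theorem, each of these is a complex vector bundle equipped with a leafwise flat connection preserving a (possibly indefinite) non-degenerate Hermitian metric, and the boundedness of $E$ pulls back to boundedness of the leafwise parallel translations; Theorem~\ref{GRthm} therefore supplies transverse smoothness of the projection onto the corresponding leafwise harmonic forms. Theorem~\ref{main} applied to both $(M,F,E_M)$ and $(M',F',E_{M'})$ then yields the Haefliger-cohomology identity
$$\varphi^*\bigl(\sigma(F,E_M)\bigr)\,=\,\sigma(F',E_{M'}).$$

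Combining Theorem~\ref{grdedlhi} with Conjecture~\ref{limit-k} and the cohomological formula $\ch_a(\Ind_a(D^{E+}))=\int_F\L(TF)\,\ch_2(E)$ recalled afterwards rewrites each side as
$$\sigma(F,E_M)\,=\,\int_F \L(TF)\,f^*\bigl(\ch_2([E^+])-\ch_2([E^-])\bigr),$$
and likewise on $(M',F')$. Because $F$ is Riemannian and transversely oriented, top-degree Haefliger classes admit a canonical pairing with the transverse volume, and this pairing is compatible with the Haefliger isomorphism $\varphi^*$ of Section~\ref{lms}, since the latter is built out of the simplicial/de Rham chain map, which by construction preserves fundamental-class pairings. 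Fubini $\int_M=\int_{M/F}\circ\int_F$ then converts the Haefliger equality into
$$\int_M\L(TF)\cup f^*\bigl(\ch_2([E^+])-\ch_2([E^-])\bigr)\,=\,\int_{M'}\L(TF')\cup(f\circ\varphi)^*\bigl(\ch_2([E^+])-\ch_2([E^-])\bigr).$$

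To pass from $\ch_2=\sum_k 2^k\ch_k$ to $\ch=\sum_k\ch_k$, one exploits the fact that the hypotheses of the theorem are closed under tensor products and direct sums: applying the identity just established to every bundle of the form $E^{\otimes m_1}\oplus\cdots\oplus E^{\otimes m_r}$ produces, via multiplicativity of the Chern character, a rich enough family of linear constraints on the integrals $\int_M\L_j(TF)\cup f^*(\ch_k([E^+])-\ch_k([E^-]))$ to invert the $2^k$-weightings and extract BC Novikov invariance for $\ch([E^+])-\ch([E^-])$ itself, much as in the Lusztig expansion used in the proofs given earlier in the section. The principal obstacle I foresee is the dependence on Conjecture~\ref{limit-k}: the argument is only unconditional in settings where that index identity is established—the spectral and fibration cases of \cite{Heitsch:1995,H-L:1999,BHII}, and the globally flat submersion case of \cite{AGS}, pending its conjectural extension to leafwise flat bundles. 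A secondary technical point is the precise compatibility of the Haefliger-cohomology isomorphism $\varphi^*$ with transverse integration, which ultimately reduces to the classical compatibility of the simplicial-de Rham equivalence with fundamental-class pairings, as constructed in Section~\ref{lms}.
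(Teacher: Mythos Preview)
Your approach is essentially the same as the paper's: pull back $E$ along the classifying map, invoke Theorem~\ref{GRthm} (via boundedness) to get transverse smoothness, apply Theorem~\ref{main}, convert $\sigma(F,E)$ to a cohomological expression through Conjecture~\ref{limit-k}, and then pair with the transverse fundamental class $[M/F]$ (integration over transversals), using $[M'/F']\circ\varphi^*=[M/F]$ and $[M/F]\circ\int_F=\int_M$ to obtain the BC Novikov identity. The paper states these last two compatibilities directly rather than deriving them from the simplicial/de~Rham machinery of Section~\ref{lms}, but the substance is the same.

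The one divergence is your final paragraph about passing from $\ch_2$ to $\ch$ via tensor powers and direct sums. The paper does not perform any such step: in its proof it simply writes the index formula as $\int_F\L(TF)\wedge\varphi^*\ch([E^+]-[E^-])$, treating the normalization as absorbed into the statement. Your proposed ``invert the $2^k$-weightings'' argument is both unnecessary and doubtful: for a fixed ambient dimension $n$ the family $\{\ch_2(E^{\otimes m_1}\oplus\cdots)\}$ does not obviously separate the individual summands $\int_M\L_j(TF)\cup f^*\ch_k$, since the Chern character of a tensor power is a polynomial in the $\ch_k$'s rather than a simple rescaling, and the resulting system of relations is not evidently invertible. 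You should drop that paragraph; the rest stands as a correct summary of the paper's argument, with the same explicit reliance on Conjecture~\ref{limit-k}.
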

\begin{proof}
The bundle $\varphi^* E$ is a leafwise Hermitian flat  bundle for the smooth foliation $(M,F)$, and by our assumption of boundedness, the parallel translation along the leaves is bounded, so the projection onto the twisted leafwise harmonics is transversely smooth.  Let $f: (M', F') \to (M, F)$ be a leafwise oriented, leafwise homotopy equivalence (which also preserves the transverse orientations).  Then $f^*(\varphi^* E) = (\varphi \circ f)^* E$ is also  a bounded leafwise Hermitian flat  bundle, so projection onto the twisted leafwise harmonics for $(M', F')$ is also transversely smooth. 
Applying Theorem \ref{main}, we get
$$
\sigma(M', F'; (\varphi \circ f)^*([E^+] - [E^-])) =  f^*\sigma (M, F; \varphi^*([E^+] - [E^-])) \quad \in  \,\, H_c^*(M'/F').
$$
Since the foliation is transversely oriented, there is a well defined transverse fundamental class, namely the holonomy invariant closed current $[M'/F']$ 
which is given by  integration over the transversals of $(M', F')$. Applying $[M'/F']$ to the above equality and using the fact that $[M'/F'] \circ f^* = [M/F]$ (since $f$ preserves the  transverse orientations) we  get
$$
\left<[M'/F'], \sigma(M', F'; (\varphi\circ f)^*([E^+] - [E^-]))\right> = \left< [M/F],  \sigma (M, F; \varphi^*([E^+] - [E^-]))\right>.
$$
But Conjecture  \ref{limit-k} gives 
$$
\sigma (M, F; \varphi^*([E^+] - [E^-])) = \int_F \L (TF) \wedge \varphi^* \ch ([E^+] - [E^-]),
$$
and
$$
\sigma(M', F'; (\varphi\circ f)^*([E^+] - [E^-])) = \int_{F'}\L (TF') \wedge (\varphi\circ f)^* \ch ([E^+] - [E^-]).
$$
Since $\dd [M/F] \circ \int_F = \int_M$ and $\dd [M'/F'] \circ \int_{F'} = \int_{M'}$, the conclusion follows, namely
$$
\int_M  \L (TF) \wedge \varphi^* \ch ([E^+] - [E^-]) = \int_{M'}
\L (TF') \wedge (\varphi\circ f)^* \ch ([E^+] - [E^-]).
$$
\end{proof}

\end{document}